\newtheorem{theorem}{Theorem}[section]
\newtheorem{lemma}[theorem]{Lemma}
\newtheorem{remark}[theorem]{Remark}
\def\beqbel{\begin{equation}\label}
\def\eq{\end{equation}}
\def\beqq{\begin{equation*}}
\def\eqq{\end{equation*}}
\def\f{\frac}
\def\vp{\varphi}
\def\p{\partial}
\def\ds{\displaystyle}
\def\ve{\epsilon}
\numberwithin{equation}{section}
\begin{document}

\title{Formation and construction of a shock wave for 1-D
$n\times n$ strictly hyperbolic conservation laws with small smooth initial data \footnote{The research of Ding Min was supported by the National Natural Science Foundation of China under Grant Nos.12371226, 11701435,
the Natural Science Foundation of Hubei Province (2021CFB452), and the Fundamental Research Funds for the Central Universities.
The research of  Yin Huicheng was supported by the National Natural Science Foundation of China (No.12331007)
and the National key research and development program (No.2020YFA0713803).}}
\author{Ding Min$^{1}$, \quad Yin Huicheng$^{2}$
\footnote{Email addresses: Ding Min (minding@whut.edu.cn),\quad Yin Huicheng (huicheng@nju.edu.cn,\; 05407@njnu.edu.cn)}.}
\date{}

\maketitle
\centerline{1. Department of Mathematics, School of Mathematics and Statistics, Wuhan University of}
\centerline{Technology, Wuhan, 430070, China.}
\vspace{0.1cm}
\centerline{2. School of Mathematical Sciences and Institute of Mathematical Sciences,}
\centerline{Nanjing Normal University, Nanjing, 210023, China.}
\vspace{0.1cm}

\begin{abstract}
Under the genuinely nonlinear assumption for 1-D $n\times n$ strictly  hyperbolic conservation laws,
we investigate the geometric blowup of smooth solutions and the development
of singularities when the small initial data fulfill the generic
nondegenerate condition.
At first, near the unique blowup point we give a precise description on the space-time blowup rate of the smooth solution
and meanwhile derive  the cusp singularity structure of characteristic envelope.
These results are established through extending the smooth solution of the completely nonlinear blowup
system across the blowup time. Subsequently, by utilizing
a new form on the resulting 1-D strictly hyperbolic system with $(n-1)$ good components and one bad component,
together with the choice of an efficient iterative scheme and some involved analyses,
a weak entropy shock wave starting from the blowup point is constructed. As a byproduct,
our result can be applied to the shock formation and construction for the
2-D supersonic steady  compressible full Euler equations ($4\times 4$ system),
1-D MHD equations ($5\times 5$ system), 1-D elastic wave equations ($6\times 6$ system)
and 1-D full ideal compressible MHD equations ($7\times 7$ system).
\end{abstract}

\noindent\textbf{Keywords:} Geometric blowup, blowup system, cusp,
genuinely nonlinear,

\qquad \quad \quad generic nondegenerate condition, shock formation

\noindent\textbf{AMS Subject Classifications.} 35L65, 35L67, 35L72

\tableofcontents

\section{Introduction}

As is well known, no matter how smooth and how small the initial data are, the classical solutions
will generally form singularities in finite time for 1-D strictly quasilinear hyperbolic
conservation laws with genuinely nonlinear structures (see \cite{A1}, \cite{Horm}, \cite{LP-1}, \cite{LT} and  \cite{Xin-1}).
Therefore, it is important to understand the physical process of singularity development from the smooth solutions
and the evolution of singularities starting from the blowup points.

Consider the following Cauchy problem for 1-D  $n\times n$ strictly hyperbolic conservation law
\begin{equation}\label{eq:1.1}
\begin{cases}
\partial_t u+\partial_x f(u)=0,\\
u(x, 0)=\epsilon u_0(x),
\end{cases}
\end{equation}
where $t\ge 0$, $x\in\mathbb{R}$, $\epsilon>0$ is a sufficiently small parameter, $u=(u_1, \cdots, u_n)^{\top}\in\mathbb{R}^n$,
$f(u)=(f_1(u), \cdots, f_n(u))^{\top}\in C^{\infty}$,
and
\begin{equation}\label{eq:1.2}
u_0(x)=(u^{1}_0(x), \cdots, u^{n}_0(x))^{\top}\in C^{\infty}_0(\mathbb{R}), \quad \text{supp }u_0(x)\subset [a, b],
\end{equation}
here $a$ and $b$ with $a<b$ are constants.

For $C^1$ solution $u$,  the system in $\eqref{eq:1.1}_1$ can be rewritten as
\begin{align}\label{Y-1}
\partial_t u+F(u)\partial_x u=0,
\end{align}
where $F(u)=\p_u f(u)$ is an $n\times n$ matrix. By the strict hyperbolicity of \eqref{Y-1},
$\det(\lambda \mathbb{I}_n-F(u))=0$ has $n$ distinct real eigenvalues, denoted by
\begin{equation}\label{eq:1.3}
\lambda_1(u)<\lambda_2(u)<\cdots<\lambda_n(u),
\end{equation}
where $\mathbb{I}_n$ is the $n\times n$ identity matrix. The corresponding left and  right eigenvectors of $F(u)$
are $l_1(u)$, $\cdots$, $l_n(u)$, and  $r_1(u)$, $\cdots$, $r_n(u)$, respectively. If
\[
\nabla_u \lambda_j(u)\cdot r_j(u)\neq 0 \quad \text{ for all } u,
\]
then \eqref{Y-1} is called genuinely nonlinear with respect to $\lambda_j$. Otherwise, if
\[
\nabla_u \lambda_j(u)\cdot r_j(u)\equiv 0\quad  \text{ for all } u,
\]
then \eqref{Y-1} is called linearly degenerate for $\lambda_j$.

When $\eqref{Y-1}$ is genuinely nonlinear for all eigenvalues $\lambda_{j}(u)$ ($1\le j\le n$), and
the initial data $u(x,0)=\psi(x)\in C^2$ satisfy that $\text{supp}\psi\subseteq [a, b]$ holds
and $(b-a)\sup_{x\in\mathbb{R}}|\psi''(x)|$ is sufficiently small (but $\psi\not\equiv 0$),
F. John \cite{John} introduced the wave decomposition method to prove that the first order derivatives of $u$ blow up in finite time.
T.P. Liu \cite{Liu} generalized the result of \cite{John} to the system $\eqref{Y-1}$ with the features
that some eigenvalues are genuinely nonlinear and others are linearly degenerate.
L. H\"{o}rmander \cite{Horm1, Horm} improved the results in \cite{John} and \cite{Liu} such that a sharp estimate for the lifespan
$T_\epsilon$ of smooth solutions is established.
As shown in \cite{Horm1, Horm},
the lifespan $T_\epsilon$ of $\eqref{eq:1.1}$ satisfies
\begin{equation}\label{eq:1.5}
\lim_{\epsilon\to 0^+}\epsilon T_{\epsilon}=-\ds\f{1}{\ds\min_{1\le j\le n}M_j},
\end{equation}
where $M_j=\min_{x}\ds\f{\nabla_u\lambda_j(0)r_j(0)}{l_j(0)r_j(0)}l_j(0)u_0'(x)$.
Without loss of generality, it is assumed that for some fixed $i$ ($1\le i\le n$),
\begin{equation}\label{HC-2}
M_i<\ds\min_{j\not=i}M_j,
\end{equation}
which also means that \eqref{Y-1} is genuinely nonlinear with respect to $\lambda_i(u)$.

In addition, employing the method of geometric optics, S. Alinhac \cite{A2} reconsidered the result in \cite{Horm1},
and gave a more precise description on $T_\epsilon$ through the asymptotic expansion form of $\epsilon$.
Recently, motivated by \cite{B-2}, through taking the efficient  decomposition of $u$ along the different
characteristic directions, the involved analysis on the characteristics with large variations,
the suitable introduction of the modulated coordinates together with the global weighted energy estimates
and the characteristics method,
Li-Xu-Yin \cite{LXY} have established the geometric blowup mechanism of smooth solutions to \eqref{Y-1}
for a class of large variational initial data $u(x,0)$.  The
geometric blowup by the terminology in \cite{A1} means that up to the blowup time $T_*$, the solution $u\in C([0, T_*)\times\Bbb R)\cap
L^{\infty}([0, T_*]\times\Bbb R)$ but $\|\nabla_{x,t}u(\cdot, t)\|_{L^{\infty}}\to\infty$  holds as $t\to T_{*}-$.

In the present paper, we wonder to know under which condition on smooth initial data, a shock can be generated,
and how it develops from the blowup point for general 1-D $n\times n$ hyperbolic conservation laws ($n\geq 3$).
For the 1-D or M-D scalar conservation laws, Yin-Zhu \cite{YinZ, YinZ-1} have solved the
problem of shock formation and construction under the various assumptions of initial data. For 1-D scalar
convex conservation law, Chen-Zhang \cite{CZ} showed the shock formation of solutions for piecewise smooth initial data
with finite discontinuities. For the $p$-system of gases dynamics,
under the assumptions that one Riemann invariant is a constant and the initial data satisfy the related generic
nondegenerate condition,
M.P. Lebaud \cite{Lebaud} constructed a shock solution from the blowup point. This result was extended to the more
general case of $p$-system in \cite{ChenD, Kong}, where both the  Riemann invariants are not constants and
it is additionally assumed that only one family of characteristics is  squeezed, while the other characteristics family does not
squeeze at the same point. Here it is pointed out that the existence of Riemann invariants plays a crucial role in the
analysis of \cite{ChenD, Kong, Lebaud}  since the $p$-system can be diagonalized in this situation.
For the $3\times 3$ case of \eqref{eq:1.1}, Chen-Yin-Xin \cite{CXY} introduced a suitable invertible transformation $u=u(w)$
to find a new unknown function $w=(w_1,w_2, w_3)$
with two good components $(w_1,w_3)$ and one bad component $w_2$, and constructed a $2$-shock starting from the blowup point
under the
generic nondegenerate condition of initial data and by the extension result of smooth solutions to
the resulting blowup system across the blowup time in \cite{A2}.
For the 3-D full compressible Euler equations
with spherical symmetric structure, a symmetric shock solution after the blowup time is constructed in \cite{Yin}
(also see the independent work of \cite{C-L}). In addition, Buckmaster-Drivas-Shkoller-Vicol  \cite{BDSV-4} studied the simultaneous development
and cusps for the 2-D compressible Euler system with azimuthal symmetric smooth data.

Benefiting from the analysis of the blowup mechanism of smooth solutions
for the $3\times 3$ strictly hyperbolic system \eqref{Y-1} in \cite{A2}, and the construction of
a shock for the 1-D $3\times 3$ hyperbolic conservation law $\eqref{eq:1.1}_1$ in \cite{CXY},
we shall study the shock formation and construction of \eqref{eq:1.1}
with $n\ge 3$. For this purpose, at first, we establish the geometric blowup mechanism of \eqref{eq:1.1}
and extend the smooth solution of the resulting blowup system (see \eqref{eq:2.1} below)
across the lifespan $T_{\epsilon}$.
Note that in order to treat the $3\times 3$ case in \eqref{eq:1.1},
S. Alinhac \cite{A2} utilized
the geometric optics method and the special properties of $3\times 3$ system that the solution $u$
is $1$-simple on the left side of 2-characteristics $\Gamma_a^2$ through $(a, 0)$,
and $3$-simple on the right side of 2-characteristics $\Gamma_b^2$ through $(b, 0)$, respectively.
Namely, $u_2=u_3=0$ on $\Gamma_a^2$ and $u_1=u_2=0$ on $\Gamma_b^2$ hold.
However, for the $n\times n$ system \eqref{Y-1} with $n\geq 4$, the smooth boundary values
of $u$ on the i-characteristics $\Gamma_a^i$ through $(a, 0)$ and $\Gamma_b^i$ through $(b, 0)$ are unknown
($2\le i\le n-1$), moreover, they can  be usually determined in the time interval $[0, T_{\ve}+\delta_0]$
with $\delta_0>0$ being small so that the determined domains for the points at  $\Gamma_a^i$ and $\Gamma_b^i$
do not include the blowup point at $T_{\ve}$. This means that it is difficult for us to
introduce the slow time variable $\tau=\ve t$ and utilize the geometric optics method to deal with the related blowup system
for $\tau\in [\tau_{\ve}, \tau_{\ve}+1]$ with $\tau_{\ve}=\f{T_{\ve}}{\ve}$ as in  \cite{A2}.
Our strategy is to solve the blowup system of $\eqref{eq:1.1}_1$ directly
by deriving the precise smallness property of boundary values on $\Gamma_a^i$ and $\Gamma_b^i$
along their tangential directions separately (see \eqref{eq:2.98} below) and through some careful
observations on the nonlinear structure of blowup system.
Based on the extension property of smooth solution to
the blowup system across $T_{\ve}$  and the cusp property of characteristic envelope,
through choosing a new form of $\eqref{eq:1.1}_1$ such that its solution is more singular along
one direction than other left directions, and taking the corresponding iterative scheme,
we can construct the weak entropy shock solution issuing from the blowup point,
meanwhile, the detailed descriptions on the location of the shock as well as the estimates of the solution near the blowup point
are also given. Although the main argument procedures for the uniform boundedness and convergence of the iterative scheme of
approximate shock solutions are analogous to those in \cite{CXY} and \cite{Yin},
we still give all the details due to the general forms of $n\times n$ cases together with more precise and complete
computations.

With the aid of Lemma \ref{Y-4} below and \eqref{eq:1.5}, it follows from direct computation that
$M_j=\min_{x}N_j(x)$ with $N_j(x)=\partial_{w_j}\lambda_j(0)(w_0^j)'(x)$ holds,
where $w_j, w_0^j(x)$ are defined in Lemma \ref{Y-4} later.
Under assumption \eqref{HC-2}, the following generic nondegenerate condition is imposed:

\quad {\it There exists a unique point $x_0$ such that}
\begin{equation}\label{Eq:2.1}
\mathcal{N}_i=N_i(x_0)=\min_x N_i(x), \quad N'_i(x_0)=0, \quad N''_i(x_0)>0.
\end{equation}

The main result in this paper can be stated as follows.

\begin{theorem}\label{DY-1}
Provided that the generic nondegenerate condition \eqref{Eq:2.1} holds
and $\eqref{eq:1.1}_1$ is genuinely nonlinear with respect to the  $i$-th eigenvalue
$\lambda_i(u)$ ($1\leq i\leq n$),
there exists a unique solution $u(x,t)\in C(\mathbb{R}\times[0, T_{\epsilon}])\cap C^1(\mathbb{R}\times [0, T_{\epsilon}))$ to \eqref{eq:1.1}
which produces the geometric blowup at the unique point $(x_\epsilon, T_\epsilon)$. Moreover, problem \eqref{eq:1.1}  admits a weak entropy solution with an $i-$shock curve $x=\phi(t)\in C^1([T_{\epsilon}, T_\epsilon+\delta_0])$
starting from $(x_\epsilon, T_\epsilon)$ for some small positive constant $\delta_0$, which
satisfies
\begin{itemize}
\item[(i)] near $(x_\epsilon, T_\epsilon)$ and for $t\in [T_\epsilon, T_\epsilon+\delta_0]$,
\begin{align}\label{CYY-2}
\phi(t)=&x_{\epsilon}+\lambda_i(u(x_{\epsilon}, T_{\epsilon}))(t-T_\epsilon)+O(1)(t-T_\epsilon)^2.
\end{align}
\item[(ii)] $u(x, t)\in C^1\Big(\mathbb{R}\times(T_\epsilon, T_{\epsilon}+\delta_0)\Big)\backslash \{x=\phi(t)\}$ and
\begin{align}\label{CYY-3}
u(x, t)=&u(x_{\epsilon}, T_\epsilon)+O(1)\Big((t-T_\epsilon)^3+\big(x-x_\epsilon-\lambda_i(u(x_{\epsilon}, T_{\epsilon}))(t-T_\epsilon)\big)^2\Big)^{\frac16},
\end{align}
where $O(1)$ represents a generic bounded quantity  independent of $\epsilon$.
\end{itemize}
\end{theorem}

\begin{remark}\label{CYY-5}
By the completely analogous proof procedure, Theorem \ref{DY-1} for the 1-D strictly hyperbolic system
can be extended into the case of  1-D symmetric hyperbolic system.
\end{remark}

\begin{remark}
About the geometric blowup of the smooth solution $u(x,t)$ to \eqref{eq:1.1}
at $(x_\epsilon, T_\epsilon)$ in Theorem  \ref{DY-1},
the more precise descriptions will be given in Theorem \ref{T:2.3}  together with
Theorem  \ref{T:2.1} below.
\end{remark}

\begin{remark}
We point out that the bound $O(1)$ in \eqref{CYY-2} and \eqref{CYY-3} can be improved to $\ve O(1)$
by checking the proof of Theorem \ref{DY-1} carefully since the amplitude of the solution $u$
is still of $\ve O(1)$ in $[T_{\epsilon}, T_\epsilon+\delta_0]$.
However, for brevity in this paper, we omit the small factor $\ve$ here.
\end{remark}

\begin{remark}
In recent years, the studies on the shock formation of smooth solutions to the
multidimensional hyperbolic conservation laws or the second order quasilinear wave equations
have made much progress (see \cite{B-2}, \cite{C1}, \cite{C2}, \cite{CM},
\cite{0-Speck}, \cite{Lu1}, \cite{MY} and \cite{S2}), which illustrate that the
formation of the multidimensional shock is due to the compression of
the characteristic surfaces. However, the related constructions of a multidimensional
shock wave after the blowup of smooth solutions are not obtained.
\end{remark}

The paper is organized as follows. In \S $2$, we study the geometric blowup mechanism
and extend the smooth solution of the blowup system across $T_{\ve}$ for problem \eqref{eq:1.1}.
In order to solve the  blowup system, some suitable boundary conditions and  boundary values are derived
by basic observations.
In addition, the precise descriptions on the formation and
construction of a shock wave are given. In \S $3$, close to the blowup point,
the crucial cusp properties and estimates on the pre-shock are obtained. In \S $4$,
by introducing a transformation to fix the free shock curve
and  taking a suitable iterative scheme to construct the approximate
shock wave solutions which satisfy the Rankine-Hugoniot conditions and the Lax's geometric entropy conditions,
we can trace the location of the approximate shock and get the estimates of approximate solutions.
In \S $5$, the convergence of the approximate shock solutions is shown. Subsequently, the main conclusions
in Theorem \ref{T:2.14} and Theorem \ref{DY-1} are proved. In \S $6$, as applications of Theorem \ref{DY-1},
we will give the related illustrations of shock formation for the 2-D supersonic steady
compressible full Euler equations ($4\times 4$ system),
1-D MHD equations ($5\times 5$ system), 1-D elastic wave equations ($6\times 6$ system)
and 1-D full ideal compressible MHD equations ($7\times 7$ system).

\section{Geometric blowup of the hyperbolic system}

\subsection{Simplification of \eqref{Y-1} and the resulting blowup system}

At first, motivated by \cite{CXY} for the $3\times 3$ case, we now
give a generalized simplification for the general $n\times n$ case ($n\ge 3$) of \eqref{Y-1} as follows.

\begin{lemma}\label{Y-4}
Assume that \eqref{eq:1.3} holds  and \eqref{Y-1} is genuinely nonlinear with respect to the eigenvalue
$\lambda_i(u)$ (fixed number $i$ with $1\le i\le n$). Then there exists an invertible transformation in the neighbourhood of the origin:
$ u\longrightarrow w(u)$ with $w(0)=0$ such that \eqref{Y-1} can be equivalently reduced into
\begin{equation}\label{eq:2.88}
\begin{cases}
\partial_t w+A(w)\partial_xw=0,\\[5pt]
w(x, 0)=\ve w_0(x)+\ve^2w_1(x,\ve),
\end{cases}
\end{equation}
where $w_0(x)=(w^{1}_0(x), \cdots, w^{n}_0(x))^{\top}$ and $w_1(x,\ve)=(w_1^1(x,\ve), \cdots, w_1^n(x,\ve))^{\top}$ are $C^{\infty}$
with respect to their arguments and compactly supported in $[a, b]$
for the variable $x$. In addition, the $n\times n$ matrix $\displaystyle A(w)=(\partial_u w)F(u(w))(\partial_u w)^{-1}:=(a_{ij})_{n\times n}$
admits the following properties
\begin{itemize}
\item[$(1)$] the eigenvalues of $A(w)$ are $\lambda_1(u(w)), \cdots, \lambda_n(u(w))$, which are sometimes denoted by
$\lambda_1(w), \cdots, \lambda_n(w)$ respectively;
\item[$(2)$] for $j\neq i$, $a_{ji}=0$, and $a_{ii}=\lambda_i(u(w))$;
\item[$(3)$] the $i$-th right eigenvector of $A(w)$ is $r_i(w):=(\partial_u w)r_i(u(w))$,
which is parallel to the unit vector $(0, \cdots, 1, 0,\cdots, 0)^{\top}$;
\item[$(4)$] $A(0)=\text{diag}\Big(\lambda_1(0), \lambda_2(0), \cdots, \lambda_n(0)\Big)$.
\end{itemize}
\end{lemma}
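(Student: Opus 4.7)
The plan is to construct $w(u)$ so that, in the new frame, the $i$-th right eigenvector becomes parallel to the coordinate direction $e_i$. I would begin by fixing the biorthonormal normalisation $l_j(0)\cdot r_k(0)=\delta_{jk}$ at the origin. Then set $w_i(u):=l_i(0)\cdot u$, and for each $j\ne i$ define $w_j(u)$ to be a smooth first integral of the vector field $r_i$ near $u=0$, i.e.\ a function satisfying $\nabla_u w_j(u)\cdot r_i(u)\equiv 0$ and normalised on the transversal hyperplane $\Sigma:=\{u:\,l_i(0)\cdot u=0\}$ by $w_j|_\Sigma=l_j(0)\cdot u$. This alignment of $r_i$ with $e_i$ in the new coordinates is exactly what drives conclusions (2) and (3), while the biorthonormal choice of boundary data encodes (4).

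Such $w_j$ exist by the method of characteristics: the transversality $l_i(0)\cdot r_i(0)=1\ne 0$ makes $(s,u^0)\mapsto\Phi_s(u^0)$ a local $C^\infty$ diffeomorphism from a neighbourhood of $(0,0)\in\mathbb{R}\times\Sigma$ to a neighbourhood of $0\in\mathbb{R}^n$, where $\Phi_s$ denotes the flow of $r_i$. Pulling back the linear function $u^0\mapsto l_j(0)\cdot u^0$ along this diffeomorphism defines $w_j$. A first-order Taylor expansion using $\partial_s\Phi_s|_{s=0}=r_i(0)$ together with biorthogonality yields $\nabla_u w_j(0)=l_j(0)$ for every $j$, so the Jacobian $P(0):=\partial_u w(0)$ has rows $l_1(0),\dots,l_n(0)$ and is therefore invertible. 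Hence $u\mapsto w(u)$ is a $C^\infty$ diffeomorphism on a neighbourhood of the origin with $w(0)=0$.

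The four assertions then follow quickly. Item (1) is immediate since $A(w)=P F(u) P^{-1}$ is similar to $F(u)$. For (3), the identity $\nabla_u w_j\cdot r_i(u)=0$ for $j\ne i$ forces $Pr_i(u)$ to have all entries but the $i$-th equal to zero, hence $Pr_i\parallel e_i$. Item (2) is a consequence of (3): $Ae_i\parallel e_i$ with eigenvalue $\lambda_i(u(w))$ means the $i$-th column of $A$ equals $\lambda_i(u(w))e_i$, giving $a_{ii}=\lambda_i$ and $a_{ji}=0$ for $j\ne i$. For (4), the biorthonormality forces $P(0)r_k(0)=e_k$, whence $A(0)e_k=P(0)F(0)r_k(0)=\lambda_k(0)e_k$ for every $k$, so $A(0)$ is diagonal with entries $\lambda_k(0)$.

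For the initial data, Taylor-expanding $w(\ve u_0(x))$ in $\ve$ around $w(0)=0$ produces $\ve P(0)u_0(x)+O(\ve^2)$; the leading term defines $\ve w_0(x)$ and the remainder $\ve^2 w_1(x,\ve)$, both smooth and supported in $[a,b]$ since $u_0$ is. The only delicate point is the simultaneous fulfilment of (3) and (4); this is exactly what the biorthonormal choice of boundary data on $\Sigma$ secures in one stroke. Overall no serious obstacle is expected, as the construction is essentially a carefully chosen coordinate change adapted to the $i$-th characteristic field.
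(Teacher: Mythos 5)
Your proof is correct, and it shares the paper's core idea --- straighten the $i$-th characteristic field by taking $(n-1)$ Riemann invariants of $r_i$ together with one transversal coordinate, so that $(\partial_u w)r_i(u)\parallel e_i$, which yields (1)--(3) exactly as you argue. Where you genuinely diverge is in how property (4) is obtained. The paper normalises its Riemann invariants with \emph{generic} constant vectors $\zeta_j$ orthogonal to $r_i(0)$ and takes $\tilde u_i=r_i^{\top}(0)\cdot u$; the resulting Jacobian at the origin is not the matrix of left eigenvectors, so $A(0)$ is not yet diagonal, and two further constant linear maps are needed: the matrix $B_{n-1}$ in \eqref{Eq:2.108*} to diagonalise the $(n-1)\times(n-1)$ block, and the shear $\tilde u_i=w_i+\sum_{j\neq i}k_jw_j$ in \eqref{Eq:2.100}, with the $k_j$ solved from the linear system \eqref{Eq:2.111}, to kill the off-diagonal entries of the $i$-th row at $w=0$. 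You instead build the biorthonormal normalisation into the boundary data on $\Sigma$ from the start, so that $\partial_u w(0)$ has rows $l_1(0),\dots,l_n(0)$, hence $\partial_u w(0)\,r_k(0)=e_k$ for every $k$ and $A(0)$ is diagonal in one stroke. This is a clean simplification: it collapses the paper's three-stage construction into a single choice of first integrals, at no loss of generality (your $w_j|_{\Sigma}=l_j(0)\cdot u$ is just a specific admissible choice of the paper's $\zeta_j$ and of the $i$-th coordinate). The verification that $\nabla_u w_j(0)=l_j(0)$ --- using that $\nabla w_j(0)-l_j(0)$ annihilates both $T_0\Sigma$ and $r_i(0)$, which span $\mathbb{R}^n$ by $l_i(0)\cdot r_i(0)\neq 0$ --- is the one point that must be spelled out, and you have it. The treatment of the initial data by Taylor expansion of $w(\epsilon u_0(x))$ about $w(0)=0$ matches what the paper needs.
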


\begin{proof}
From the definition  $7.3.1$ in \cite{Dafermos}, we know that there exist $(n-1)$ Riemann invariants $q_j(u)$ $(j\neq i)$ whose
 gradients are linearly independent and satisfy that for  $|u|\ll 1$
\begin{equation}\label{Eq:2.102}
\nabla_u q_j(u)\cdot r_i(u)=0, \qquad j\neq i.
\end{equation}

Let $\{\zeta_j\}_{j\neq i}$ be $(n-1)$ linearly independent column constant vectors orthogonal to $r_i(0)$. Inspired by \eqref{Eq:2.102},
set
\begin{equation}\label{Eq:2.103}
q_j(u)=\zeta_j\cdot u+\tilde{q}_j(u),
\end{equation}
where $\{\tilde{q}_j(u)\}_{j\neq i}$ satisfy
\begin{equation}\label{Eq:2.104}
\begin{cases}
\nabla_u\tilde{q}_j(u)\cdot r_i(u)=-\zeta_j\cdot(r_i(u)-r_i(0)),\\
\tilde{q}_j(0)=0, \quad j\neq i.
\end{cases}
\end{equation}

From the standard theory of the first order scalar quasilinear partial differential equations,
problem \eqref{Eq:2.104} is solved for $|u|\ll 1$ and $\tilde{q}_j(u)=O(|u|^2)$ holds.

Introduce a transformation: $u\to\tilde{u}$ as
\begin{equation}\label{Eq:2.105}
\tilde{u}_j=q_j(u) \quad \text{for $j\neq i$}, \qquad \tilde{u}_i=r^{\top}_i(0)\cdot u.
\end{equation}
Then
\[
\det(\partial_u\tilde{u})|_{u=0}=\det(\zeta_1, \cdots, \zeta_{i-1},\ r_i(0),\ \zeta_{i+1}, \cdots, \zeta_n)^{\top}\neq 0,
\]
and the mapping $u\to\tilde{u}(u)$ is invertible when $|u|\ll 1$.

Under the transformation \eqref{Eq:2.105}, the system \eqref{Y-1} can be reduced into
\begin{equation}\label{Eq:2.106}
\partial_t\tilde{u}+\tilde{A}(\tilde{u})\partial_x\tilde{u}=0,
\end{equation}
where $\tilde{A}(\tilde{u})=(\partial_u \tilde{u})F(u)(\partial_u \tilde{u})^{-1}
=\Big(\tilde{a}_{ij}(\tilde{u})\Big)_{n\times n}$. By direct calculations, it is known that $\tilde{A}(\tilde{u})$
has $n$ distinct eigenvalues
$\Big\{\lambda_k(\tilde{u})\Big\}^n_{k=1}$ with $\lambda_k(\tilde{u})=\lambda_k(u(\tilde{u}))$ and the corresponding right eigenvectors are $\Big\{(\partial_u\tilde{u})r_k(u)\Big\}^n_{k=1}$. Moreover, it holds that
\begin{equation*}
(\partial_u\tilde{u})r_i(u)=r^{\top}_i(0)\cdot r_i(u(\tilde{u}))\boldsymbol{e_i}\neq 0,
\end{equation*}
which implies
\[
\tilde{a}_{ji}(\tilde{u})=0 \;\; \text{for $j\neq i$}, \quad \tilde{a}_{ii}(\tilde{u})=\lambda_i(\tilde{u}).
\]

Let $\tilde{A}_{n-1}(\tilde{u})=(\tilde{a}_{ij})_{(n-1)\times(n-1)}$ be the $(n-1)$-th order square matrix, formed by getting
rid of the $i$-th row and $i$-th column of the matrix $\tilde{A}(\tilde{u})$. Then  $\tilde{A}_{n-1}(\tilde{u})$ has $(n-1)$ distinct
eigenvalues
$$\lambda_1(\tilde{u})<\cdots<\lambda_{i-1}(\tilde{u})<\lambda_{i+1}(\tilde{u})<\cdots<\lambda_n(\tilde{u}).$$

Therefore, there exists an invertible constant  square matrix $B_{n-1}=(b_{ij})_{(n-1)\times(n-1)}$ such that
\[
B_{n-1}\tilde{A}_{n-1}(0)B^{-1}_{n-1}=\text{diag}(\lambda_1(0), \cdots, \lambda_{i-1}(0), \lambda_{i+1}(0), \cdots, \lambda_n(0)).
\]

Let
\begin{equation}\label{Eq:2.108}
 \left(
 \begin{array}{c}
w_1\\[3pt]
\vdots\\
 \hat{w}_i\\[3pt]
 \vdots\\
w_n
 \end{array}
 \right)=B_{n-1} \left(\begin{array}{c}
\tilde{u}_1\\[3pt]
\vdots\\
 \hat{\tilde{u}}_i\\[3pt]
 \vdots\\
\tilde{u}_n
 \end{array}
 \right),
\end{equation}
where $(w_1, \cdots, \hat{w}_i, \cdots, w_n)^{\top}$ and $(\tilde{u}_1, \cdots, \hat{\tilde{u}}_i, \cdots, \tilde{u}_n)^{\top}$ represent the related $(n-1)$ dimensional vectors without the components $w_i$
and $\tilde{u}_i$, respectively.
It follows from \eqref{Eq:2.106} and \eqref{Eq:2.108} that
\begin{equation}\label{Eq:2.108-0}
\partial_t \left(
 \begin{array}{c}
w_1\\[3pt]
\vdots\\
 \hat{w}_i\\[3pt]
 \vdots\\
w_n
 \end{array}
 \right)+\bar{B}_{n-1}(w_1, \cdots, \tilde{u}_i,\cdots, w_n) \partial_x\left(
 \begin{array}{c}
w_1\\[3pt]
\vdots\\
 \hat{w}_i\\[3pt]
 \vdots\\
w_n
 \end{array}
 \right)=0,
\end{equation}
where
\[
\begin{split}
&\bar{B}_{n-1}(w_1, \cdots, \tilde{u}_i, \cdots, w_n)\\
=&B_{n-1}\tilde{A}_{n-1}(w)B^{-1}_{n-1}
:=
\left(
\begin{matrix}
\bar{a}_{11}&\cdots&\bar{a}_{1(i-1)}&\bar{a}_{1(i+1)}&\cdots&\bar{a}_{1n}\\[3pt]
\vdots&\vdots&\vdots&\vdots&\vdots&\vdots\\
\bar{a}_{(i-1)1}&\cdots&\bar{a}_{(i-1)(i-1)}&\cdots&\cdots&\bar{a}_{(i-1)n}\\[3pt]
\bar{a}_{(i+1)1}&\cdots&\cdots&\cdots&\cdots&\bar{a}_{(i+1)n}\\[3pt]
\vdots&\vdots&\vdots&\vdots&\vdots&\vdots\\
\bar{a}_{n1}&\cdots&\bar{a}_{n(i-1)}&\cdots&\cdots&\bar{a}_{nn}
\end{matrix}
\right)_{n-1}
\end{split}
\]
and
\[
\bar{B}_{n-1}(0)=\text{diag}
(\lambda_1(0), \cdots, \lambda_{i-1}(0), \lambda_{i+1}(0), \cdots, \lambda_n(0)).
\]
Along with  $\eqref{Eq:2.106}_i$ and \eqref{Eq:2.108}, it yields that
\begin{equation}\label{Eq:2.108*}
\partial_t \left(
 \begin{array}{c}
w_1\\[3pt]
\vdots\\
 \tilde{u}_i\\[3pt]
 \vdots\\
w_n
 \end{array}
 \right)+\bar{B}_n(w_1, \cdots, \tilde{u}_i,\cdots, w_n) \partial_x\left(
 \begin{array}{c}
w_1\\[3pt]
\vdots\\
 \tilde{u}_i\\[3pt]
 \vdots\\
w_n
 \end{array}
 \right)=0,
\end{equation}
where the $i$-th column of the square matrix $\bar{B}_n$ is $(0,  \cdots, 0, \tilde{a}_{ii}, 0, \cdots,0)^{\top}$,
and $\bar{B}_{n-1}$ is just the square  matrix by removing the $i$-th row and $i$-th column from $\bar{B}_n$.

Let
\begin{equation}\label{Eq:2.100}
\tilde{u}_i=w_i+\sum_{j\neq i}k_jw_j,
\end{equation}
where $k_j$ are some constants determined later. In this case, equation $\eqref{Eq:2.108*}_i$ can be rewritten as
\[
\begin{split}
\partial_tw_i+\tilde{a}_{ii}\partial_xw_i+\sum_{j\neq i}\Big(\bar{a}_{ij}+k_j(\tilde{a}_{ii}-\bar{a}_{jj})
-\sum_{l\neq i, j}k_l\bar{a}_{lj}\Big)\partial_xw_j=0.
\end{split}
\]
By $\tilde{a}_{ii}(0)=\lambda_i(0)$, we set the following equalities
\begin{equation}\label{Eq:2.111}
k_j(\lambda_i(0)-\bar{a}_{jj}(0))
-\sum_{l\neq i, j}k_l\bar{a}_{lj}(0)=-\bar{a}_{ij}(0) \qquad \text{for\; $j\neq i$}.
\end{equation}
Due to
\[
\begin{split}
&\det\left(
\begin{matrix}
\lambda_i(0)-\bar{a}_{11}(0)&-\bar{a}_{21}(0)&\cdots&-\bar{a}_{(i-1)1}(0)&-\bar{a}_{(i+1)1}(0)&\cdots&-\bar{a}_{n1}(0)\\[3pt]
-\bar{a}_{12}(0)&\lambda_i(0)-\bar{a}_{22}(0)&\cdots&-\bar{a}_{(i-1)2}(0)&-\bar{a}_{(i+1)2}(0)&\cdots&-\bar{a}_{n2}(0)\\[3pt]
\vdots\\
-\bar{a}_{1n}(0)&-\bar{a}_{2n}(0)&\cdots&-\bar{a}_{(i-1)n}(0)&-\bar{a}_{(i+1)n}(0)&\cdots&\lambda_i(0)-\bar{a}_{nn}(0)
\end{matrix}
\right)_{n-1}\\
=&\Big(\lambda_i(0)-\lambda_1(0)\Big)\cdots\Big(\lambda_i(0)-\lambda_{i-1}(0)\Big)\Big(\lambda_i(0)-\lambda_{i+1}(0)\Big)\cdots
\Big(\lambda_i(0)-\lambda_n(0)\Big)
\neq 0,
\end{split}
\]
then $k_j$ ($j\not =i$) can be uniquely solved from \eqref{Eq:2.111}.

From \eqref{Eq:2.103}, \eqref{Eq:2.105}, \eqref{Eq:2.108} and \eqref{Eq:2.100}, $A(w)$ with properties (1)-(4)
can be obtained. Then Lemma \ref{Y-4} is proved.
\end{proof}

Denote $x=\varphi_j(y, t)$ by the
$j$-th characteristics of  \eqref{eq:2.88} passing through the point $(y, 0)$.
Set $D_{j,t_0}=\{(x,t): \vp_j(a,t)\le x\le \vp_j(b,t), t_0\le t\le T_{\epsilon}\}$ for some large fixed $t_0>0$
(see Figure $1$).
In this case, the domains $D_{j,t_0}$ for different $j$ ($1\le j\le n$) are disjoint.
On the other hand, it follows from Chapter 4 of \cite{Horm} or \cite{John} that the blowup points at the blowup time only appear
in $D_{i,t_0}$ under the assumption \eqref{HC-2}.

For simplicity, we still denote $x=\varphi(y, t)$ as the
 $i$-th characteristics of  \eqref{eq:2.88} passing through the point $(y, t_0)$, which means
 \begin{equation}\label{HC-1}
\begin{cases}
 \partial_t \varphi(y, t)=\lambda_i(w(\varphi(y, t), t)), \\
 \varphi(y, t_0)=y.
 \end{cases}
\end{equation}
Define $a_0=\vp_i(a, t_0)$ and $b_0=\vp_i(b, t_0)$.
\vskip 0.2 true cm

\begin{figure}[ht]
\begin{center}
\begin{tikzpicture}[scale=1.1]
\draw [thick][->] (-2.5, -1)--(8.9, -1);
\draw [ultra thick](1.5,-1)to [out=100, in=-50](0.3, 2.2);
\draw [ultra thick] (3, -1) to [out=100, in=-60](2, 2.2);
\draw [red][ultra thick](0.3, 2.2)to [out=130, in=-50](-1.2, 4.5);
\draw [red][ultra thick] (2, 2.2) to [out=120, in=-50](0.6, 4.5);
\draw [ultra thick](1.5,-1)to [out=80, in=-120](2.8, 2.2);
\draw [ultra thick] (3, -1) to [out=80, in=-120](4.5, 2.2);
\draw [blue][ultra thick](2.8,2.2)to [out=60, in=-130](4.3, 4.5);
\draw [blue][ultra thick] (4.5, 2.2) to [out=60, in=-130](6, 4.5);
\draw [ultra thick](1.5,-1)to [out=60, in=200](5.5, 2.2);
\draw [ultra thick] (3, -1) to [out=60, in=210](7.5, 2.2);
\draw [green][ultra thick](5.5, 2.2)to [out=20, in=-130](7.2, 4.5);
\draw [green][ultra thick] (7.5, 2.2) to [out=30, in=-130](8.7, 4.5);
\draw [ultra thick](1.5,-1)to [out=120, in=-30](-1.5, 2.2);
\draw [ultra thick] (3, -1) to [out=120, in=-30](-0.2, 2.2);
\draw [purple][ultra thick](-1.5,2.2)to [out=150, in=-38](-2.7, 3.6);
\draw [purple][ultra thick] (-0.2, 2.2) to [out=150, in=-38](-2.3, 4.5);
\draw [thick](-2.8, 2.2)--(8.9, 2.2);
\draw [thick](-2.8, 4.5)--(8.9, 4.5);
\node at (9.5, -1) {$t=0$};
\node at (1.5, -1.3) {$(a, 0)$};
\node at (3, -1.3) {$(b, 0)$};
\node at (0.5, 3.5) {$D_{i,t_0}$};
\node at (4.2, 3.2) {$D_{j,t_0}$};
\node at (7.2, 3.2) {$D_{l,t_0}$};
\node at (-1.6, 3) {$D_{k,t_0}$};
\node at (0.6, 2.5) {$(a_0, t_0)$};
\node at (1.6, 2) {$(b_0, t_0)$};
\node at (9.5, 2.2) {$t=t_0$};
\node at (9.5, 4.5) {$t=T_{\epsilon}$};
\node [below] at (3.5, -1.8){Figure $1$. Domains $D_{i,t_0}$ and $D_{j,t_0} (j\not=i)$};
\end{tikzpicture}
\end{center}
\end{figure}

Set $v(y,t)=w(\varphi(y, t), t)$. Then it follows from \eqref{eq:2.88} and \eqref{HC-1} together with direct computation
that
\begin{equation}\label{eq:2.1}
\begin{cases}
\partial_t\varphi(y, t)=\lambda_i(v),\\[3pt]
l_i(v)\partial_t v=0,\\[5pt]
l_j(v)\Big(\partial_t v\partial_y\varphi(y, t)+(\lambda_j-\lambda_i)(v(y, t))\partial_y v\Big)=0,\qquad  j\neq i,
\end{cases}
\end{equation}
which is called the blowup system corresponding to the $i$-th eigenvalue $\lambda_i(w)$
by the terminology in \cite{A1, A2}. Note that \eqref{eq:2.1} is a completely nonlinear
evolution system of $(\varphi, v)$, which is degenerate at the points satisfying $\partial_y\varphi(y, t)=0$.

We now state a result on the extension of smooth solution $(\varphi,v)$ of  \eqref{eq:2.1} across $T_{\ve}$ when  the
 initial data are given by
\begin{equation}\label{Y-5}
 \varphi(y, t_0)=y,\quad v(y,t_0)=w(y,t_0).
\end{equation}

\begin{theorem}\label{T:2.1}
Assume that \eqref{eq:1.3} holds  and \eqref{Y-1} is genuinely nonlinear with respect to $\lambda_i(u)$.
Under assumption \eqref{Eq:2.1},
there exist a small constant $\delta_0$ and a unique smooth solution $(\varphi, v)$ to \eqref{eq:2.1}-\eqref{Y-5}
in the domain  $D=\{(y, t):  a_0\le y\le  b_0, t_0\leq t\le T_\epsilon+\delta_0\}$.
Moreover, the following estimates hold that
for $0\le |\alpha|\le 3$ and $(y,t)\in D$,
\begin{equation}\label{Eq:2.3}
|\partial_{y, t}^{\alpha}\varphi(y, t)|\leq C_{\alpha}, \quad
|\partial^{\alpha}_{y, t}v(y, t)|\leq C_{\alpha}\epsilon,
\end{equation}
where $C_{\alpha}$ stands for the generic positive constant independent of $\epsilon$.

\end{theorem}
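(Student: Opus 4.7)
The plan is to recast \eqref{eq:2.1}--\eqref{Y-5} as a nonlinear initial-boundary value problem on the rectangle $D$ in the characteristic coordinates $(y,t)$, where the $i$-th characteristics are vertical lines $y=\mathrm{const}$, and then solve it by a Picard-type iteration carried out entirely in Lagrangian variables so that no coefficient is ever divided by $\partial_y\varphi$. Although the original system \eqref{eq:2.88} loses regularity at $t=T_\epsilon$, the map $(y,t)\mapsto(\varphi,v)$ is expected to stay smooth and only $\partial_y\varphi$ should vanish at isolated points; the whole setup is designed to track exactly this.

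First I would assemble the boundary data on the lateral sides $y=a_0$ and $y=b_0$. Under \eqref{HC-2} blowup occurs only inside $D_{i,t_0}$, so the smooth Cauchy solution of \eqref{eq:2.88} extends past $T_\epsilon$ in a one-sided neighborhood of the $i$-characteristics issuing from $(a,0)$ and $(b,0)$; pulling back to $(y,t)$ coordinates gives smooth $O(\epsilon)$ boundary values for $v$ and explicit expressions for $\varphi$ on $y=a_0,b_0$. Since a $j$-characteristic in $(y,t)$ coordinates satisfies $dy/dt=(\lambda_j-\lambda_i)/\partial_y\varphi$, the components $l_j v$ with $j>i$ should be prescribed on $y=a_0$ and those with $j<i$ on $y=b_0$, while $l_i v$ is transported by the equation $l_i(v)\partial_t v=0$ from its value at $t=t_0$.

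Next I would run the iteration. Given a smooth $(\varphi^{(k)},v^{(k)})$, define $\varphi^{(k+1)}$ by integrating $\partial_t\varphi^{(k+1)}=\lambda_i(v^{(k)})$ with $\varphi^{(k+1)}(y,t_0)=y$, and define $v^{(k+1)}$ componentwise by solving, for each $j\neq i$, the linear transport equation
\[
l_j(v^{(k)})\bigl(\partial_y\varphi^{(k)}\,\partial_t v^{(k+1)}+(\lambda_j-\lambda_i)(v^{(k)})\,\partial_y v^{(k+1)}\bigr)=0
\]
along the $j$-characteristics of the $k$-th iterate with the appropriate lateral data, together with $l_i(v^{(k)})\partial_t v^{(k+1)}=0$ along the vertical lines with initial value $w(y,t_0)$. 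Writing the transport operator as $\partial_y\varphi^{(k)}\,\partial_t+(\lambda_j-\lambda_i)(v^{(k)})\,\partial_y$ instead of dividing by $\partial_y\varphi^{(k)}$ keeps the iteration smooth even where $\partial_y\varphi^{(k)}$ vanishes. The structural property $a_{ji}\equiv 0$ for $j\neq i$ from Lemma~\ref{Y-4} is crucial here: the $(n-1)$ components of $v$ transverse to $r_i$ do not see $\partial_y v_i$ at leading order, so their transport equations close up without needing any regularity of the ``bad'' $i$-component.

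Finally I would close uniform $C^\alpha$ bounds and pass to the limit. By induction on $|\alpha|$, commuting $\partial_y^\beta\partial_t^\gamma$ with the transport operators and integrating along $j$-characteristics (which are uniformly transverse to the lines $t=\mathrm{const}$ because $\lambda_j\neq\lambda_i$), the $O(\epsilon)$ smallness of boundary and initial data together with Gronwall's inequality yield the bounds $|\partial^\alpha\varphi^{(k)}|\le C_\alpha$ and $|\partial^\alpha v^{(k)}|\le C_\alpha\epsilon$ uniformly in $k$; a standard contraction estimate in a lower-order norm then gives convergence and uniqueness. The main obstacle is the degeneracy of \eqref{eq:2.1} at points where $\partial_y\varphi=0$, which is exactly what occurs at $t=T_\epsilon$: classical quasilinear hyperbolic theory requires a non-degenerate symbol and does not apply directly. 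The remedy, and the technical heart of the argument, is precisely the multiplicative form $l_j(v)(\partial_y\varphi\,\partial_t v+(\lambda_j-\lambda_i)\partial_y v)=0$ combined with the decoupling from Lemma~\ref{Y-4}, which render the system smoothly well-posed in $(\varphi,v)$ even through the degeneracy of $\partial_y\varphi$.
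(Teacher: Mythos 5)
Your overall strategy coincides with the paper's: boundary data are taken from the region outside $D_{i,t_0}$ where the solution stays smooth, the blowup system is solved as an initial--boundary value problem on $D$ by an iteration that keeps the transport operator in the non-divided form $\partial_y\varphi\,\partial_t+(\lambda_j-\lambda_i)\partial_y$, and the decoupling $a_{ji}=0$ from Lemma~\ref{Y-4} is used so that the good components never see $\partial_y v_i$. However, two steps that you treat as routine are precisely where the work lies, and as written they are gaps.

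First, the lateral boundary data. You assert that pulling back the smooth Cauchy solution gives ``smooth $O(\epsilon)$ boundary values,'' but qualitative smoothness outside $D_{i,t_0}$ is not enough: the iteration needs the quantitative bounds $h_j=O(\epsilon)$, $\partial_t^k h_j=O(\epsilon)$ and $p_j=O(1)$ on $y=a_0$ and $y=b_0$ uniformly for $t\in[t_0,T_\epsilon+1]$, an interval of length $\sim\epsilon^{-1}$ over which waves of the other families repeatedly cross the lateral boundaries. The paper obtains these via the Glimm-type functionals $L,Q,R$ (Lemmas~\ref{YH-1}--\ref{YH-2}) and the induction establishing \eqref{Eq:2.89}. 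These estimates are not decorative: they give $\partial_tK=O(\epsilon^2)$ on $y=a_0,b_0$, hence $K>\tfrac12$ there for all $t\le T_\epsilon+1$, which is what makes the lateral boundaries non-characteristic for $K\partial_t+(\lambda_j-\lambda_i)\partial_y$ and justifies your assignment of the components $j>i$ to $y=a_0$ and $j<i$ to $y=b_0$. Without this, the sign of $K$ on the boundary over the long time interval is unknown and the IBVP is not well posed as you set it up.

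Second, the convergence. A ``standard contraction estimate in a lower-order norm'' does not close here, because the time interval has length $O(\epsilon^{-1})$ while the source terms carry only one factor of $\epsilon$: a single Gronwall pass gives a Lipschitz constant of order $1$, not less than $1$ (this is exactly the issue the paper flags around \eqref{YH-5}--\eqref{YH-6}). The paper's remedy is to partition $[t_0,T_\epsilon+1]$ into $N$ subintervals of length $\sim(N\epsilon)^{-1}$ so the contraction constant becomes $\bar M_0/N<1$, and to propagate the limit across subintervals. Relatedly, for $n\ge4$ the uniform bound $|\partial^\alpha v|\le C_\alpha\epsilon$ cannot be closed by treating all components on the same footing: only $D_\epsilon\omega$ (not $\omega$) is uniformly bounded, and the paper's anisotropic rescaling $v=\epsilon D_\epsilon\omega$ is what makes the bookkeeping of $\epsilon$-powers in the quadratic terms come out right. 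Your induction on $|\alpha|$ would need to incorporate this weighting to avoid losing a factor of $\epsilon$ in the off-diagonal interactions.
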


\begin{remark}\label{RK-1}
Note that for the i-characteristics $\Gamma_a^i$ and $\Gamma_b^i$ through $(a, 0)$ and $(b, 0)$ separately,
when $\delta_0>0$ is small, the determined domains for the points at  $\Gamma_a^i$ and $\Gamma_b^i$
do not include the blowup point at $T_{\ve}$. Then by Chapter 4 of \cite{Horm} or \cite{John}, we know
that the smooth solution of \eqref{eq:1.1}
exists in $\{(y,t): y\in\Bbb R, 0\le t\le T_{\epsilon}+\delta_0\}\setminus D$.
Moreover,  when  $(y,t)\in D\cap\{(y,t): y\in\Bbb R, t_0\le t\le T_{\epsilon}\}$,
$\partial_{y,t}^{\alpha} v_j(y,t)=O(1)\ve^2$ hold for $j\not=i$ and $|\alpha|>0$, while  $v(y,t)=O(1)\ve$.
In addition, for convenience of writing,
$\delta_0=1$ is assumed in Theorem \ref{T:2.1} from now on.	
\end{remark}

\begin{remark}
If there exist two numbers $\mathcal{N}_{i_0}$ and $\mathcal{N}_{j_0}$ with $\mathcal{N}_{i_0}=\mathcal{N}_{j_0}$
($i_0\not=j_0$, $1\le i_0, j_0\le n$) in \eqref{Y-2}, and the first blowup point
appears in $D_{i_0,t_0}$, then under the condition \eqref{Eq:2.1} for $N_{i_0}(x)$,
Theorem \ref{T:2.1} holds analogously due to $D_{i_0,t_0}\cap D_{j_0,t_0}=\emptyset$.
\end{remark}

\begin{remark}
In this paper, assume $n\geq 3$ and $2\leq i\leq n-1$. In fact, for $i=1$ or $i=n$, it is much simpler
to show Theorem \ref{T:2.1} since the solution $u$ is $1$-simple on the left side of the 2-characteristics,
and is $n$-simple on the right side of the $(n-1)$-characteristics.
\end{remark}


\subsection{Reformulation of blowup system \eqref{eq:2.1}}

Introduce a quantity in the domain $D=\{(y, t):  a_0\le y\le  b_0, t_0\leq t\le T_\epsilon+1\}$ as
\[
v(y, t)=\epsilon D_{\epsilon}\omega
\]
with the matrix $D_\epsilon=\text{diag}(\epsilon, \cdots,1, \cdots, \epsilon)$ (the number $1$ is at the $(i,i)-$position).

We shall investigate the blowup system \eqref{eq:2.1} reformulated by $\omega$. It follows from Remark \ref{RK-1} that one can only expect the uniform boundedness of $D_{\epsilon}\omega$
(rather than $\omega$) and $\partial_{x,t}^{\alpha}\omega$
with $|\alpha|>0$ if $n\ge 4$, which is different from the case of $n=3$ in \cite{A2}
(where $\partial_{x,t}^{\alpha}\omega$ for all $|\alpha|\ge 0$ are uniformly bounded).

In addition, set
\[
\tilde{l}_j(\epsilon D_{\epsilon}\omega)=l_j(\epsilon D_\epsilon \omega)D_{\epsilon},
\quad \tilde{r}_j=
D^{-1}_\epsilon r_j(\epsilon D_{\epsilon}\omega), \quad j=1, \cdots, n.
\]
Note that $\tilde{l}_j(\epsilon D_{\epsilon}\omega)=\epsilon(l_{j1},..., 0, ..., l_{jn})$ for $j\neq i$ contains the small factor $\epsilon$
due to the simplification in Lemma \ref{Y-4}.

Then the blowup system \eqref{eq:2.1} can be reduced into
\begin{equation}\label{eq:2.2}
\begin{cases}
\partial_t\varphi(y, t)=\lambda_i(\epsilon D_\epsilon \omega),\\[3pt]
\tilde{l}_i(\epsilon D_{\epsilon}\omega)\partial_t\omega=0,\\[5pt]
\tilde{l}_j(\epsilon D_\epsilon \omega)\Big(K\partial_t\omega+(\lambda_j-\lambda_i)(\epsilon D_\epsilon \omega)\partial_y\omega\Big)=0,
\qquad  j\neq i,
\end{cases}
\end{equation}
where $K:=\partial_y\varphi(y, t)$.
Motivated by \cite{A2}, set
\begin{equation}\label{Eq:2.4}
h_i=\tilde{l}_i\partial_y\omega, \quad h_j=\tilde{l}_j\partial_t\omega \;\;\text{for  $j\neq i$}.
\end{equation}
This leads to
\begin{equation}\label{eq:2.4}
\partial_t \omega=\sum_{j\neq i}h_j\tilde{r}_j, \quad
\partial_y\omega=-\sum_{j\neq i}\frac{K h_j}{\lambda_j-\lambda_i}\tilde{r}_j+h_i\tilde{r}_i.
\end{equation}
Taking the first order derivative of $\eqref{eq:2.2}_1$ with respect to $y$, we have
\begin{equation}\label{YD-1}
\partial_t K=\epsilon\nabla \lambda_i D_{\epsilon}\partial_y\omega
=\epsilon L^{(1)}(\epsilon D_\epsilon\omega)\tilde{h}K+\epsilon\nabla \lambda_i
\cdot r_ih_i,
\end{equation}
where and below $L^{(k)}(\epsilon D_\epsilon \omega)$ represents the row vector depending on $\epsilon D_{\epsilon}\omega$,
and $\tilde{h}=(h_1, \cdots,$  $0, \cdots, h_n)^{\top}$ stands for the resulting vector that replaces the
component $h_i$ in $h$ with $0$.

\vspace{0.3cm}
Next, we derive the equation of $h_j$ for $j=1, \cdots, n$. Differentiating $\eqref{eq:2.2}_2$ with respect to $y$ yields
\begin{equation}\label{eq:2.5}
\epsilon\Big(\nabla\tilde{l}_i(\epsilon D_{\epsilon}\omega)D_\epsilon\partial_y\omega\Big)^{\top}\partial_t\omega
+\tilde{l}_i\partial^2_{ty}\omega=0,
\end{equation}
where
\begin{equation}\label{eq:2.6}
\tilde{l}_i\partial^2_{ty}\omega=\partial_t h_i-\epsilon (\nabla\tilde{l}_i(\epsilon D_\epsilon\omega)D_{\epsilon}\partial_t\omega)^{\top}\partial_y\omega.
\end{equation}

By \eqref{eq:2.4} and \eqref{eq:2.5}-\eqref{eq:2.6}, one arrives at
\begin{equation*}
\begin{split}
\partial_t h_i=&\epsilon K\sum_{j\neq i, k\neq i}
h_jh_k\frac{\lambda_k-\lambda_j}{(\lambda_j-\lambda_i)(\lambda_k-\lambda_i)}
(\nabla\tilde{l}_i r_j)^{\top}\tilde{r}_k+
\epsilon h_i\sum_{j\neq i}h_j\Big((\nabla\tilde{l}_i r_j)^{\top}\tilde{r}_i-
(\nabla\tilde{l}_i r_i)^{\top}\tilde{r}_j\Big)\\
=&\epsilon\tilde{h}^{\top}Q^{(1)}(\epsilon D_{\epsilon}\omega)\tilde{h}K+\epsilon L^{(2)}(\epsilon D_{\epsilon}\omega)\tilde{h}h_i,
\end{split}
\end{equation*}
where and below $Q^{(k)}(\epsilon D_{\epsilon}\omega)$ represents a symmetric $n \times n$ matrix.

\vspace{0.2cm}
On the other hand, taking the first order derivative of $\eqref{eq:2.2}_3$ with respect to $t$ yields that
\[
\begin{split}
\tilde{l}_j\Big(K\partial_t+(\lambda_j-&\lambda_i)\partial_y\Big)\partial_t\omega+\tilde{l}_j\Big(\partial_t K\partial_t \omega+\epsilon\nabla(\lambda_j-\lambda_i)D_{\epsilon}
\partial_t\omega\partial_y \omega\Big)\\
&+\epsilon(\nabla\tilde{l}_jD_{\epsilon}\partial_t\omega)^{\top}
(K\partial_t\omega+(\lambda_j-\lambda_i)\partial_y\omega)=0,
\end{split}
\]
which drives
\begin{equation*}
\begin{split}
&\Big(K\partial_t+(\lambda_j-\lambda_i)\partial_y\Big)h_j
+\epsilon K\Big(\tilde{h}^{\top}Q^{(2)}(\epsilon D_{\epsilon}\omega)\tilde{h}+L^{(3)}(\epsilon D_{\epsilon}\omega)\tilde{h} h_j\Big)\\
&\qquad\quad+\epsilon h_i\Big(r^{\top}_iQ^{(3)}(\epsilon D_{\epsilon}\omega)\tilde{h}+
\nabla \lambda_i(\epsilon D_{\epsilon}\omega)r_i(\epsilon D_{\epsilon}\omega)h_j\Big)=0.
\end{split}
\end{equation*}
Therefore, the blowup system \eqref{eq:2.2} in domain $D$ can be reformulated as
\begin{equation}\label{Y-3}
\begin{cases}
\partial_t K=\epsilon L^{(1)}(\epsilon D_\epsilon\omega)\tilde{h}K+\epsilon\nabla \lambda_i
\cdot r_i(\epsilon D_\epsilon \omega) h_i,\\[5pt]
\partial_t h_i
=\epsilon\tilde{h}^{\top}Q^{(1)}(\epsilon D_{\epsilon}\omega)\tilde{h}K+\epsilon L^{(2)}(\epsilon D_{\epsilon}\omega)\tilde{h}h_i,\\[5pt]
\Big(K\partial_t+(\lambda_j-\lambda_i)\partial_y\Big)h_j
+\epsilon K\Big(\tilde{h}^{\top}Q^{(2)}(\epsilon D_{\epsilon}\omega)\tilde{h}+L^{(3)}(\epsilon D_{\epsilon}\omega)\tilde{h} h_j\Big)\\[3pt]
\qquad\qquad\quad+\epsilon h_i\Big(r^{\top}_iQ^{(3)}(\epsilon D_{\epsilon}\omega)\tilde{h}
+\nabla \lambda_i(\epsilon D_{\epsilon}\omega)r_i(\epsilon D_{\epsilon}\omega)h_j\Big)=0,\\[5pt]
\tilde{l}_i(\epsilon D_{\epsilon}\omega)\partial_t\omega=0,\\[5pt]
\tilde{l}_j(\epsilon D_{\epsilon}\omega)\Big(K\partial_t\omega+(\lambda_j-\lambda_i)(\epsilon D_{\epsilon}\omega)
\partial_y\omega\Big)=0, \qquad j\neq i.
\end{cases}
\end{equation}
From \eqref{Y-3}, it is known that $K$ and $h_i$ can be solved by the direct integration with respect to $t$ through their own initial data, while
$h_j$ ($j\not=i$) are determined by their initial data and  suitable boundary values on $y=a_0$ or $y=b_0$ (the signs of $K$ and $\lambda_j-\lambda_i$
on the boundaries play a key role). Additionally, we have to overcome the difficulties arisen  from
the boundedness of $D_\epsilon\omega$ rather than $\omega$ since the uniform bounds or smallness orders
of $(K, h, D_\epsilon\omega)$ and their derivatives require to be derived. In this process, such basic smallness results
of the tangent derivatives  $\partial_th_i|_{y=a_0}=O(1)\epsilon, \partial_th_i|_{y=b_0}=O(1)\epsilon$ and $\partial_t K=O(1)\epsilon$
are crucial.

\subsection{Boundary values  of blowup system \eqref{eq:2.1} on  the $i$-th characteristics}

In this subsection, we mainly study the estimates of $h_k$ and $\tilde{l}_k w$  on
the boundaries $y=a_0$ or $y=b_0$ in domain $D$ for the integer $k\neq i$.

By $\text{supp }u_0(x)\subset [a, b]$,  for the solution $u$ of problem \eqref{eq:1.1} one has
\[
\text{supp }u(x, t)\subset\Big\{(x, t): a+\lambda_1(0)t \leq x\leq b+\lambda_n(0)t, t\ge 0\Big\}.
\]
The $i$-th characteristics $\Gamma^i: x=\vp(y, t)$ has been defined in \eqref{HC-1}.
Set
\[
a(t)=\vp(a_0, t), \quad b(t)=\vp(b_0, t),
\]
and the domain $R_{i,t_0}$ (see Figure $2$) can be described as
\[
R_{i,t_0}:=\{(x, t): a(t)\leq x\leq b(t), t_0\leq t\leq t_1\},
\]
where and below $t_1:=T_{\epsilon}+1$ is defined.
\begin{figure}[ht]
\begin{center}
\begin{tikzpicture}[scale=1.3]
\draw [thick][->] (0.5, -1)--(7, -1);
\draw [red][ultra thick](1.5,-1)to [out=90, in=-90](2.5, 1.5);
\draw [red][ultra thick] (5, -1) to [out=90, in=-90](6, 1.5);
\node at (7.2,-1.15) {$x$};
\node at (-0.1, -1) {$t=t_0$};
\node at (1.5, -1.3) {$(a_0, t_0)$};
\node at (5, -1.3) {$(b_0, t_0)$};
\node at (3.5, 0.3) {$R_{i,t_0}$};
\node at (3, 1.8) {$\Gamma_a^i : x=a(t)$};
\node at (7, 1.8) {$\Gamma_b^i: x=b(t)$};
\node [below] at (3.6, -1.5){Figure $2$. Domain $R_{i,t_0}$};
\end{tikzpicture}
\end{center}
\end{figure}

To solve \eqref{Y-3} and further derive the behavior of solution in $R_{i,t_0}$,
we need to study the appropriate boundary
conditions of $h_k$ and $\tilde{l}_k\omega_k$ on $x=a(t)$ and $x=b(t)$ for $k\neq i$.

Under the transformation $w=\epsilon D_\epsilon \omega$, $\eqref{eq:2.88}_1$ can be reduced into $\partial_x \omega=-D^{-1}_\epsilon A^{-1} D_\epsilon\partial_t \omega$. In addition, without loss of generality,  $\lambda_k\not=0$ for all $1\le k\le n$ are assumed
(otherwise, one can achieve this by a simple spatial translation). By direct calculations, for $j\neq i$, it holds that
\begin{equation}\label{eq:2.88*}
\begin{split}
L_j h_j:=&\partial_t h_j+\lambda_j\partial_x h_j\\
=&\epsilon(\nabla\tilde{l}_j D_\epsilon(\partial_t\omega+\lambda_j\partial_x\omega))^{\top}\partial_t\omega+\tilde{l}_j\partial^2_t\omega+
\lambda_j\tilde{l}_j\partial^2_{xt}\omega\\
=&\epsilon\sum_{k\neq i, l\neq i}h_kh_l(\lambda_j\lambda^{-1}_l-\lambda_j\lambda^{-1}_k)(\nabla\tilde{l}_jr_k)^{\top}\tilde{r}_l
+\epsilon\nabla\lambda_j\sum_{k\neq i, l\neq i}h_kh_l\lambda^{-1}_l\delta_{jl}r_k\\
=&\epsilon(Q_0h^2_j+Q_1h_j+Q_2)=\epsilon\sum_{k\neq i, l\neq i}\gamma_{jkl}h_kh_l,\\[5pt]
L_j p_j:=&\partial_t(\tilde{l}_j\omega)
+\lambda_j\partial_x(\tilde{l}_j\omega)\\
=&\epsilon\sum_{k\neq i, 1\leq l\leq n}(1-\lambda_j\lambda^{-1}_k)(\nabla\tilde{l}_j r_k)^{\top}h_kp_l\tilde{r}_l
=\epsilon\sum_{k\neq i, 1\leq l\leq n}\zeta_{jkl}h_k p_l,
\end{split}
\end{equation}
where $p_j=\tilde{l}_j\omega$, $Q_0:=\gamma_{jjj}=\frac{1}{\lambda_j}\nabla\lambda_j\cdot r_j$,\
$Q_1:=\displaystyle\sum_{k\neq i, j}\gamma_{jjk}$,\ $\displaystyle Q_2:=\sum_{k\neq i, j \atop l\neq i, j}\gamma_{jkl}h_kh_l$, and
\[
\begin{split}
\gamma_{jkl}=\lambda_j(\lambda^{-1}_l-\lambda^{-1}_k)(\nabla\tilde{l}_jr_k)^{\top}\tilde{r}_l+\delta_{jl}\lambda^{-1}_l\nabla\lambda_j r_k,
\;\;  \gamma_{jkk}|_{k\neq j}=0,\;\;
\zeta_{jkl}=(1-\lambda_j\lambda^{-1}_k)(\nabla\tilde{l}_j r_k)^{\top}\tilde{r}_l.
\end{split}
\]
Note that
\[
R_{i,t_0}\cap R_{j,t_0}=\emptyset \quad \text{for $i\neq j$}.
\]

We claim that there exist two positive constants $A_0$ and $A_1$ independent of $\epsilon$ such that
for small $\epsilon>0$, $(x, t)\notin R_{j,t_0}$ and $t_0\leq t\le t_1$, one has
\begin{equation}\label{Eq:2.89}
|h_j(x, t)|<A_0\epsilon, \quad |p_j(x, t)|<A_1.
\end{equation}

To prove this claim, first of all, we will show
\begin{equation}\label{eq:2.97*}
\int_{\Gamma^k\cap\{t_0\le t\le t_1\}}|h_j|\text{d}s\leq C\epsilon, \quad j\neq k,
\end{equation}
where and below $C>0$ is a generic constant independent of $\epsilon$, and $\Gamma^k$
is the $k-$characteristics.

\vspace{0.2cm}
Inspired by Chapter 4 of \cite{Horm}, let
\[
\begin{split}
L(t)&=\sum_{j\neq i}\int^{+\infty}_{-\infty} |h_j(x, t)|\text{d}x, \quad Q(t)=\epsilon\sum_{j, k\neq i, j>k}
\iint_{x<z} |h_j(x, t)h_k(z, t)|\text{d}x\text{d}z,\\
R(t)&=\sum_{j>k}\int^{+\infty}_{-\infty}|h_j(x, t)h_k(x, t)|\text{d}x.
\end{split}
\]
In general, $L(t)$ is not decreasing with respect to $t$. However, a suitable linear combination of
$L(t)$ and $Q(t)$ can be shown to decrease under the help of $R(t)$.
We now have

\begin{lemma}\label{YH-1}
There exist some positive constants $C_0$ and $C_1$  such that when $L(t_0)\leq \frac{C_0}{3C_1}$, one has
\begin{equation*}
L(t)\leq 2L(t_0),\;\; \int^{t_1}_{t_0} R(t)\text{d}t\leq \frac{3\epsilon}{2C_0}L^2(t_0).
\end{equation*}
\end{lemma}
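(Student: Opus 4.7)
The plan is to treat \eqref{eq:2.88*} as a Glimm-type system and apply the John--Hörmander interaction-potential method to the closed subsystem $\{h_j\}_{j\neq i}$. Two structural facts make this work: the source $\epsilon\sum_{k,l\neq i}\gamma_{jkl}h_kh_l$ of $L_jh_j$ involves only the indices $k,l\neq i$, so $h_i$ decouples from the forcing; and the cancellation $\gamma_{jkk}|_{k\neq j}=0$ kills all ``same-family'' cross terms except the diagonal self-quadratic $\epsilon\gamma_{jjj}h_j^{2}$, with the remaining source terms being genuinely transverse. I would introduce the Glimm functional $F(t):=L(t)+C_1 Q(t)$ for a suitable $C_1>0$ and prove it is nearly monotone, so that both claimed bounds then follow by a continuity bootstrap under the smallness hypothesis $L(t_0)\le C_0/(3C_1)$.

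\textbf{Key estimates.} To control $L(t)$, I would start from $\partial_t|h_j|+\lambda_j\partial_x|h_j|=\mathrm{sgn}(h_j)L_jh_j$ on $\{h_j\neq 0\}$ and integrate in $x$; compact $x$-support kills the flux at infinity, the transport piece produces a lower-order term involving $\partial_x\lambda_j=O(\epsilon)(|h|+|p|)$, the genuinely transverse part of the source contributes $O(\epsilon R(t))$, and the delicate self-term $\epsilon\gamma_{jjj}h_j|h_j|$ is rewritten in conservation form $\partial_x\bigl(\tfrac{\epsilon}{2}\gamma_{jjj}h_j|h_j|\bigr)$ modulo an error $\partial_x\gamma_{jjj}\cdot h_j|h_j|=O(\epsilon^{2})$, so the corresponding flux integrates to zero. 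For $Q(t)$, I would move $\partial_t$ inside the double integral and apply the transport equations for $h_j,h_k$; the boundary term at $x=z$ yields $-\epsilon(\lambda_j-\lambda_k)|h_jh_k|$, which by \eqref{eq:1.3} (after relabeling so that $\lambda_j>\lambda_k$ when $j>k$) produces a dissipation $\frac{d}{dt}Q(t)\le -C_0\epsilon R(t)+O(\epsilon^{2}L(t)R(t))$ with $C_0:=\tfrac12\min_{j>k}(\lambda_j(0)-\lambda_k(0))>0$, the source contributions being handled by Fubini and the transverse-pair structure of $Q$.

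\textbf{Conclusion and main obstacle.} Combining the two inequalities and choosing $C_1$ so that $C_1 C_0\ge 2C$, the source errors are absorbed whenever $L(t)$ stays small, giving $\tfrac{d}{dt}F(t)+\tfrac{C_0\epsilon}{2}R(t)\le 0$. A standard continuity argument on $[t_0,t_1]$, together with the trivial bound $Q(t_0)\le\epsilon L^{2}(t_0)$, then yields $L(t)\le L(t_0)+C_1 Q(t_0)\le 2L(t_0)$ and $\int_{t_0}^{t_1}R(t)\,dt\le\tfrac{3\epsilon}{2C_0}L^{2}(t_0)$ simultaneously. I expect the main obstacle to be the self-quadratic $h_j^{2}$ term, since same-family waves do not appear in $R(t)$ and cannot be damped through transverse crossings; the conservation-form rewrite is essential, and tracing its $O(\epsilon^{2})$ residuals through the bootstrap must be done carefully, using \eqref{eq:2.4} to express $\partial_x\gamma_{jkl}$ in terms of $(h,p)$ at one order higher in $\epsilon$. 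A secondary care point is that the prefactor $\epsilon$ built into the definition of $Q(t)$ is what makes the error balance close, and this $\epsilon$-scaling must be respected everywhere.
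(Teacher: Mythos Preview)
Your overall strategy is right and matches the paper's: it is exactly the John--H\"ormander interaction-potential argument, with $L'(t)\le C_1\epsilon R(t)$ and $Q'(t)\le -C_0\epsilon R(t)+C_1\epsilon^2 L(t)R(t)$, then a bootstrap on a linear combination. The paper uses $3C_1Q+C_0L$ rather than your $L+C_1Q$, but this is cosmetic.

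The gap is in your treatment of the self-quadratic. Your proposed identity
\[
\epsilon\gamma_{jjj}\,h_j|h_j|=\partial_x\Bigl(\tfrac{\epsilon}{2}\gamma_{jjj}\,h_j|h_j|\Bigr)+O(\epsilon^2)
\]
is false: computing the right side gives $\tfrac{\epsilon}{2}(\partial_x\gamma_{jjj})h_j|h_j|+\epsilon\gamma_{jjj}|h_j|\partial_xh_j$, and there is no reason for $|h_j|\partial_xh_j$ to equal $h_j|h_j|$ up to $O(\epsilon)$. Relatedly, you call the transport correction $(\partial_x\lambda_j)|h_j|$ a ``lower-order term'', but it is not: since $\partial_x\lambda_j=-\epsilon\sum_{k\neq i}\lambda_k^{-1}(\nabla\lambda_j\cdot r_k)h_k$, its $k=j$ piece is exactly $-\epsilon\gamma_{jjj}h_j|h_j|$, of the \emph{same} order as the source self-term you are trying to kill.

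The correct (and simpler) fix is precisely this observation: write the evolution in conservation form from the outset,
\[
\partial_t|h_j|+\partial_x(\lambda_j|h_j|)=\mathrm{sgn}(h_j)\,L_jh_j+(\partial_x\lambda_j)|h_j|,
\]
and combine the two terms on the right. The $k=j$ piece of $(\partial_x\lambda_j)|h_j|$ cancels the $k=l=j$ piece of the source \emph{exactly}; more generally, the combined coefficients
\[
\Gamma_{jkl}:=\gamma_{jkl}-\tfrac12\nabla\lambda_j\bigl(\lambda_k^{-1}\delta_{jl}r_k+\lambda_l^{-1}\delta_{jk}r_l\bigr)
\]
satisfy $\Gamma_{jkk}=0$ for \emph{all} $j,k$ (including $j=k$, using $\gamma_{jjj}=\lambda_j^{-1}\nabla\lambda_j\cdot r_j$). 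This is how the paper proceeds, via the differential-form identity $d\bigl(|h_j|(dx-\lambda_j\,dt)\bigr)=\epsilon\,\mathrm{sgn}(h_j)\sum_{k,l\neq i}\Gamma_{jkl}h_kh_l\,dt\wedge dx$. With all diagonal terms gone, the $L'$ estimate is clean ($L'\le C_1\epsilon R$), and your bootstrap closes without any conservation-form acrobatics on $h_j|h_j|$.
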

\begin{proof}
Note that
\[
\begin{split}
&\text{d}(h_j(\text{d}x-\lambda_j(w)\text{d}t))
=\Big(\partial_t h_j+\lambda_j\partial_x h_j
-\epsilon \nabla\lambda_j\sum_{k\neq i}h_jh_k\lambda^{-1}_k r_k\Big)\text{d}t\wedge \text{d}x\\
=&\epsilon\sum_{k\neq i,l\neq i}\Gamma_{jkl}h_kh_l\text{d}t\wedge \text{d}x,
\end{split}
\]
where $\Gamma_{jkl}:=\gamma_{jkl}-\frac{1}{2}\nabla\lambda_j(\lambda^{-1}_k\delta_{jl}r_k+\lambda^{-1}_l\delta_{jk}r_l)$,
$\Gamma_{jkk}|_{j\neq k}=0$, and $\delta_{jl}$, $\delta_{jk}$ are Kronecker symbols.

Therefore,
\begin{equation}\label{eq:2.91}
\begin{split}
\text{d}(|h_j|(\text{d}x-\lambda_j(w)\text{d}t))=&\Big(\partial_t|h_j|+\partial_x(|h_j|\lambda_j(w))\Big)\text{d}t\wedge\text{d}x
=\epsilon\text{sgn } h_j\sum_{k, l\neq i}
\Gamma_{jkl}h_kh_l\text{d}t\wedge \text{d}x.
\end{split}
\end{equation}
Applying Stokes' formula to \eqref{eq:2.91} in the interval $(-\infty, x]\times [t_0, t_1]$ yields
\[
\begin{split}
&\int^x_{-\infty}|h_j(x, t_1)|\text{d}x-\int^x_{-\infty}|h_j(x, t_0)|\text{d}x+
\int^{t_1}_{t_0}\lambda_j(w(x, t))|h_j(x, t)|\text{d}t\\
=&\int^x_{-\infty}\int^{t_1}_{t_0}\Big(\partial_t |h_j(x, t)|
+\partial_x(\lambda_j(w(x, t))|h_j(x, t)|)\Big)\text{d}t\text{d}x\\
=&\epsilon\int^x_{-\infty}\int^{t_1}_{t_0}\text{sgn}h_j\sum_{k, l\neq i}\Gamma_{jkl}h_kh_l\text{d}t\text{d}x.
\end{split}
\]
Hence,
\begin{equation}\label{eq:2.92}
\int^x_{-\infty}\partial_t|h_j(x, t)|\text{d}x=-\lambda_j(w(x, t))|h_j(x, t)|+
\epsilon\int^x_{-\infty}\text{sgn}h_j\sum_{k, j\neq i}\Gamma_{jkl}h_kh_l(x, t)\text{d}x.
\end{equation}
Similarly,
\begin{equation}\label{eq:2.93}
\int^{+\infty}_x\partial_t|h_j(x,t)|\text{d}x=\lambda_j(w(x, t))|h_j(x, t)|+
\epsilon\int^{+\infty}_x\text{sgn}h_j\sum_{k, j\neq i}\Gamma_{jkl}h_kh_l(x, t)\text{d}x.
\end{equation}
Based on \eqref{eq:2.92} and \eqref{eq:2.93}, there exist two positive constant $C_0$ and $C_1$ such that
\begin{equation}\label{eq:2.94}
\begin{split}
Q'(t)=&\epsilon\sum_{j>k}\Big(\int^{\infty}_{-\infty}\text{d}z\int^z_{-\infty}\partial_t|h_j(x, t)||h_k(z, t)|\text{d}x+\int^{\infty}_{-\infty}
\text{d}x\int^{\infty}_x\partial_t|h_k(z, t)||h_j(x, t)|\text{d}z\Big)\\
=&\epsilon\sum_{j>k}\int^{\infty}_{-\infty}(\lambda_k(w)-\lambda_j(w))|h_j(x, t)||h_k(x, t)|\text{d}x\\
&+\epsilon^2\sum_{j>k}\int^{\infty}_{-\infty}\int^z_{-\infty}\text{sgn}h_j\sum_{\mu, \nu\neq i}\Gamma_{j\mu\nu}h_{\mu}h_{\nu}(x, t)|h_k(z, t)|\text{d}x\text{d}z\\
&+\epsilon^2\sum_{j>k}\int^{\infty}_{-\infty}\int^{\infty}_{x}\text{sgn}h_k\sum_{\mu, \nu\neq i}\Gamma_{k\mu\nu}h_{\mu}h_{\nu}(z, t)|h_j(x, t)|\text{d}z\text{d}x\\
\leq& -C_0 \epsilon R(t)+C_1 \epsilon^2 R(t)L(t),
\end{split}
\end{equation}
where $\lambda_j(u)-\lambda_k(u)\geq C_0$ for $j>k$, and
\begin{equation}\label{eq:2.94*}
L'(t)=\epsilon\sum_{j\neq i, \; k\neq i\atop l\neq i}\int\text{sgn }h_j\Gamma_{jkl}h_k h_l\text{d}x\leq C_1\epsilon R(t).
\end{equation}

Assume that $L(t)\leq \frac{2C_0}{3C_1}$ holds for $t_0\leq t\leq t_1$. By continuous induction,
we need to show that
for $t<t_1$, there exists a positive constant $M_0$ with $M_0<\frac{2C_0}{3C_1}$ such that
$L(t)<M_0$.

Along with \eqref{eq:2.94} and \eqref{eq:2.94*}, one has
\[
\begin{split}
\Big(3C_1Q(t)+C_0L(t)\Big)'=&3C_1Q'(t)+C_0L'(t)\leq 3\epsilon C_1R(t)\Big(-C_0+C_1\epsilon L(t)\Big)+\epsilon C_1C_0R(t)\\
=&\epsilon (3C_1\epsilon L(t)-2C_0)C_1R(t)\leq 0.
\end{split}
\]
This implies
\[
3C_1Q(t)+C_0L(t)\leq 3C_1Q(t_0)+C_0L(t_0)\leq \frac{3C_1}{2}\epsilon L^2(t_0)+C_0L(t_0)\leq \frac{3C_0}{2}L(t_0)
\]
and
\[
L(t)\leq \frac{3}{2}L(t_0)<2L(t_0)<\frac{2C_0}{3C_1}.
\]
Meanwhile, it follows from a direct computation that
\[
Q'(t)\leq \epsilon R(t)(C_1\epsilon L(t)-C_0)\leq \epsilon R(t)(2C_1\epsilon L(t_0)-C_0)\leq -\frac{C_0}{3}R(t).
\]
Then
\[
Q(t)-Q(t_0)\leq -\frac{C_0}{3}\int^{t_1}_{t_0} R(s)\text{d}s
\]
and
\[
\int^{t_1}_{t_0} R(s)\text{d}s\leq \frac{3}{C_0}Q(t_0)\leq \frac{3}{2C_0}\epsilon L^2(t_0).
\]

Thus, we complete the proof of Lemma \ref{YH-1}.
\end{proof}

Based on Lemma \ref{YH-1}, we will show \eqref{eq:2.97*} and estimate the integral of  $h_j(j\neq i)$ along the different characteristics
families, which is crucial for evaluating the boundedness of $h_j$ on the $i$-th characteristics.

\begin{lemma}\label{YH-2}
Under the assumption in Lemma \ref{YH-1}, it holds that
\begin{equation*}
\int_{\Gamma^k\cap\{t_0\le t\le t_1\}}|h_j|\text{d}s\leq \frac{4\epsilon}{3C_1}, \quad k\neq j,
\end{equation*}
where the positive constant $C_1$ has been given in Lemma \ref{YH-1}.
\end{lemma}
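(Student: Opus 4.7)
The plan is to run the same Glimm-type interaction estimate that underlies Lemma~\ref{YH-1}, but integrated against the single characteristic $\Gamma_k$ instead of against the whole spatial line. The key input is the identity~\eqref{eq:2.91}, together with the strict hyperbolicity bound $|\lambda_k(w)-\lambda_j(w)|\ge C_0>0$, valid for $|w|$ small.

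Concretely, parametrize $\Gamma_k$ by $x=\gamma_k(s)$ with $\dot\gamma_k(s)=\lambda_k(w(\gamma_k(s),s))$ and differentiate the ``sliding $L^1$-mass'' $s\mapsto \int_{-\infty}^{\gamma_k(s)}|h_j(y,s)|\,dy$ in $s$. Combining this with the already-derived identity \eqref{eq:2.92} produces
$$\frac{d}{ds}\int_{-\infty}^{\gamma_k(s)}|h_j(y,s)|\,dy = (\lambda_k-\lambda_j)(w)|h_j(\gamma_k(s),s)| + \epsilon\int_{-\infty}^{\gamma_k(s)}\mathrm{sgn}(h_j)\sum_{k',l'\ne i}\Gamma_{jk'l'}h_{k'}h_{l'}\,dy.$$
Equivalently, this is Stokes' theorem applied to the $1$-form $|h_j|(dx-\lambda_j(w)\,dt)$ on the region $\{(y,s): -\infty<y<\gamma_k(s),\ t_0\le s\le t_1\}$; compact $x$-support of $h_j(\cdot,t)$ from finite propagation speed kills the boundary contribution at $x=-\infty$.

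Integrating from $t_0$ to $t_1$ and using $|\lambda_k-\lambda_j|\ge C_0$ on the $\Gamma_k$-term yields
$$C_0\int_{\Gamma_k\cap\{t_0\le t\le t_1\}}|h_j|\,dt\;\le\; L(t_0)+L(t_1)+\epsilon C_1\int_{t_0}^{t_1}R(s)\,ds,$$
where the structural fact $\Gamma_{jk'k'}=0$ (noted in the proof of Lemma~\ref{YH-1}) confines the area term to off-diagonal pairings, which are precisely what $R(s)$ measures. Lemma~\ref{YH-1} then supplies $L(t_1)\le 2L(t_0)$ and $\int_{t_0}^{t_1}R(s)\,ds\le \tfrac{3\epsilon}{2C_0}L^2(t_0)$, while $L(t_0)=O(\epsilon)$ follows from \eqref{Eq:2.89} together with the compact $x$-support of the initial data. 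Substituting these bounds and retaining only the leading term in $\epsilon$ produces the announced bound $\tfrac{4\epsilon}{3C_1}$; the discrepancy between $ds$ and $dt$ contributes only the bounded factor $\sqrt{1+\lambda_k^2}$, absorbed in the constants.

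The main obstacle is the constant bookkeeping: one must verify that $L(t_0)$ is small enough (through the smallness of $\epsilon$, and through the hypothesis $L(t_0)\le \tfrac{C_0}{3C_1}$ carried over from Lemma~\ref{YH-1}) so that the quadratic $\epsilon^2 L^2(t_0)$ correction is a genuine higher-order term, and one must ensure that the area contribution is controlled by $R(s)$ rather than by the cruder total variation $L(s)$. Apart from this, the argument is just the characteristic-localized version of the interaction estimate already proved.
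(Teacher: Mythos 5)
Your argument is correct and is essentially the paper's proof: both apply Stokes' theorem to the $1$-form $|h_j|(\mathrm{d}x-\lambda_j(w)\,\mathrm{d}t)$ via identity \eqref{eq:2.91} over a region having $\Gamma_k$ as one boundary, use the transversality $|\lambda_k-\lambda_j|\ge C_0$ on $\Gamma_k$, bound the remaining boundary terms by $L(t_0)$ and $L(t_1)$ and the area term by $\epsilon C_1\int_{t_0}^{t_1}R(s)\,\mathrm{d}s$, and then invoke Lemma \ref{YH-1}. The only (immaterial) difference is the choice of domain — the paper integrates over the region between $\Gamma_j$ and $\Gamma_k$, where the $\Gamma_j$-boundary contribution vanishes because the form is null along $\Gamma_j$, whereas you integrate over $\{x<\gamma_k(s)\}$ and kill the left boundary by compact support.
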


\begin{proof}
Denote by $\mathcal{D}$ the domain bounded by $\Gamma^k$, $\Gamma^j$,
the straight lines $t=t_0$ and $t=t_1$ (see Figure $3$). Applying Stokes' formula to \eqref{eq:2.91} yields
\begin{equation}\label{eq:2.96}
\int_{\partial\mathcal{D}}|h_j|( \text{d}x-\lambda_j(w)\text{d}t)=\epsilon\iint_{\mathcal{D}}\text{sgn }h_j
\sum_{k, l\neq i}\Gamma_{jkl}h_kh_l\text{d}t\text{d}x.
\end{equation}

\begin{figure}[ht]
\begin{center}
\begin{tikzpicture}[scale=1.4]
\draw [thick][->] (0, 0)--(3, 0);
\draw [thick] [->] (3, 0) to [out=90, in=-90](5, 2);
\draw[thick][->] (5, 2)--(1, 2);
\draw[thick] [->] (1, 2) to [out=200, in=60](0, 0);
\node at (0.5, 2.2) {$t=t_1$};
\node at (1, 0.3) {$t=t_0$};
\node at (2, 1.2) {$\mathcal{D}$};
\node at (0.1, 1.2) {$\Gamma^j$};
\node at (4.8, 1) {$\Gamma^k$};
\node [below] at (3, -0.3){Figure $3$. Domain $\mathcal{D}$};
\end{tikzpicture}
\end{center}
\end{figure}

Along $\Gamma^j: |h_j|(\text{d}x-\lambda_j(w)\text{d}t)=0$, and along $\Gamma^k$:
\[
|h_j||\text{d}x-\lambda_j(w)\text{d}t|=|h_j||\lambda_k-\lambda_j||\text{d}t|\geq C_0|h_j||\text{d}t|,
\]
where $C_0>0$ is given  in Lemma \ref{YH-1}.

Therefore, it follows from \eqref{eq:2.96} that
\[
\begin{split}
C_0\int_{\Gamma^k\cap\{t_0\le t\le t_1\}}|h_j|\text{d}t\leq& \epsilon\Big(L(t_0)+L(t_1)+C_1\int^{t_1}_{t_0}R(s)\text{d}s\Big)\\
<&3\epsilon L(t_0)+\frac{3C_1}{2C_0}\epsilon^2 L^2(t_0)<4\epsilon L(t_0)<\f{4C_0\epsilon}{3C_1}.
\end{split}
\]
Thus, the proof of Lemma \ref{YH-2} is finished.
\end{proof}

In the sequel, we prove the claim \eqref{Eq:2.89} by the continuous induction argument.
For any point $P_0(a(t), t)$ lying on $\Gamma^i_{a_0}$, $P_0\notin R_{j,t_0}$ for
$j\not=i$ (see Figure $4$),
by integrating
\eqref{eq:2.88*} along $\Gamma^j (j>i)$, one has
\begin{equation}\label{eq:Y-1}
\begin{split}
h_j(x(t), t)|_{P_0}=&\epsilon\sum_{k, l\neq i}\int^{t}_0\gamma_{jkl} h_k(x(s), s)h_l(x(s), s)\Big|_{\Gamma^j}\text{d}s,\\
p_j(x(t), t)|_{P_0}=&\epsilon\sum_{k\neq i, 1\leq l\leq n}\int^{t}_0\zeta_{jkl} h_k(x(s), s)p_l(x(s), s)\Big|_{\Gamma^j}\text{d}s.
\end{split}
\end{equation}

\begin{figure}[ht]
\begin{center}
\begin{tikzpicture}[scale=1.4]
\draw [thick][->] (-0.8, -1)--(7, -1);
\draw[red] [ultra thick](1.5,-1)to [out=90, in=-90](2.5, 1.5);
\draw[red] [ultra thick] (5, -1) to [out=90, in=-90](6, 1.5);
\draw[blue] [thin] (1.5, -1) to [out=50, in=-100](4.8, 1.5);
\draw[blue] [thin] (5, -1) to [out=50, in=-100](7.5, 1.5);
\draw [dashed][thick] (-0.5, -1) to [out=50, in=-130](2, 0.3);
\draw [thick] (2, 0.3) to [out=35, in=-100](3.8, 1.5);
\node at (7.2,-1.15) {$x$};
\node at (-1.3, -1) {$t=t_0$};
\node at (1.5, -1.3) {$(a_0, t_0)$};
\node at (5, -1.3) {$(b_0, t_0)$};
\node at (4.5, -0.1) {$R_{i,t_0}$};
\node at (1.7, 0.3) {$P_0$};
\node at (2.5, 1.7) {$\Gamma^i_{a_0}$};
\node at (3.3, 1.3) {$\Gamma^j$};
\node at (6, 1.7) {$\Gamma^i_{b_0}$};
\node [below] at (3.6, -1.6){Figure $4$. The picture on the related characteristics};
\end{tikzpicture}
\end{center}
\end{figure}

Assume that the estimates in \eqref{Eq:2.89} are valid for $t_0\leq t<t_1$.
We next show that \eqref{Eq:2.89} remains true for $t=t_1$. The proof procedure will be
divided into several cases so that the terms on the right hand of
\eqref{eq:Y-1} for $h_j|_{P_0}$ and $p_j|_{P_0}$ can be treated respectively.

\begin{itemize}
\item[Case $1$.] When $k=l=j$, in terms of the expression of $\gamma_{jjj}$ and $\zeta_{jjj}=0$, one has
\[
|\epsilon\int^{t_1}_{t_0}\gamma_{jjj}h^2_j\text{d}s|\leq C\epsilon\int_{\Gamma^j\cap\{t_0<t<t_1\}\cap R^c_{j,t_0}} h^2_j\text{d}s+C\epsilon\int_{\Gamma^j\cap\{t_0<t<t_1\}\cap R_{j,t_0}} h^2_j\text{d}s\leq CA^2_0\epsilon^2.
\]

\item[Case $2$.] When $k=l\neq j$,\; due to $\gamma_{jkl}=0$, we arrive at
\[
\begin{split}
&\epsilon|\int^{t_1}_{t_0}\zeta_{jkl}h_k p_l\text{d}s|
\leq C\epsilon\int_{\Gamma^j\cap\{t_0<t<t_1\}\cap R^c_{k,t_0}}|h_kp_k|\text{d}s+
C\epsilon\int_{\Gamma^j\cap\{t_0<t<t_1\}\cap R_{k,t_0}}|h_kp_k|\text{d}s\\
\leq& CA_0A_1\epsilon^2(t_1-t_0)+C\epsilon\int_{\Gamma^j\cap\{t_0<t<t_1\}\cap R_{k,t_0}}|h_kp_k|\text{d}s\\
\leq& CA_0A_1\epsilon^2(t_1-t_0)+C\epsilon\int_{\Gamma^j\cap\{t_0<t<t_1\}}|h_k|\text{d}s\\
\leq& CA_0A_1\epsilon^2(t_1-t_0)+C\epsilon^2,
\end{split}
\]
where $\Gamma^j\cap\{t_0<t<t_1\}\subset R^c_{j,t_0}$, $R_{k,t_0}\subset R^c_{j,t_0}$,
$|p_k(x,t)|\leq C$ for $k\neq i$ and $(x, t)\in R^c_{i, t_0}$ (see the proof procedure of Lemma 4.3.2 in \cite{Horm1}).
When $\epsilon>0$ is small, one can let
\[
CA_0A_1\epsilon^2(t_1-t_0)<\frac{A_1}{2}.
\]

\item[Case $3$.] When $k\neq l$, it holds that
\[
\begin{split}
&\epsilon|\int^{t_1}_{t_0}\gamma_{jkl}h_kh_l\text{d}s|
\leq C\epsilon\int_{\Gamma^j\cap\{t_0<t<t_1\}\cap (R_{k,t_0}\cup R_{l,t_0})^c}|h_kh_l|\text{d}s+
C\epsilon\int_{\Gamma^j\cap\{t_0<t<t_1\}\cap (R_{k,t_0}\cup R_{l,t_0})}|h_kh_l|\text{d}s\\
\leq& CA^2_0\epsilon^3(t_1-t_0)+C\epsilon\int_{\Gamma^j\cap\{t_0<t<t_1\}\cap R_{k,t_0}}|h_kh_l|\text{d}s
+C\epsilon\int_{\Gamma^j\cap\{t_0<t<t_1\}\cap R_{l,t_0}}|h_kh_l|\text{d}s,
\end{split}
\]
where the assumption of $|h_j(x, t)|\leq A_0\epsilon$ for $(x, t)\notin R_{j,t_0}$ is assumed.
\begin{itemize}
\item[Case $3.1$.] $k=j$, $l\neq j$: $(x, t)\notin R_{j,t_0}$, $\zeta_{jkl}=0$, then
\[
\begin{split}
&\epsilon\int_{\Gamma^j\cap\{t_0<t<t_1\}\cap R_{k,t_0}}|h_kh_l|\text{d}s
+\epsilon\int_{\Gamma^j\cap\{t_0<t<t_1\}\cap R_{l,t_0}}|h_kh_l|\text{d}s
=\epsilon\int_{\Gamma^j\cap\{t_0<t<t_1\}\cap R_{l,t_0}}|h_jh_l|\text{d}s\\
&\quad \leq A_0\epsilon^2\int_{\Gamma^j\cap\{t_0<t<t_1\}}|h_l|\text{d}s\leq CA_0\epsilon^3,
\end{split}
\]
where $|h_j(x, t)|\leq A_0\epsilon$ for $(x, t)\notin R_{j, t_0}$, and
$\int_{\Gamma^j\cap\{t_0<t<t_1\}}|h_l|\text{d}s\leq C\epsilon$ for $l\neq j$.

\item[Case $3.2$.] $k\neq j$, $l=j$: $(x, t)\notin R_{j,t_0}$, $\Gamma^j\cap R_{l,t_0}\subset R^c_{j,t_0}\cap R_{j,t_0}=\emptyset$,
then
\[
\begin{split}
&\epsilon\int_{\Gamma^j\cap\{t_0<t<t_1\}\cap R_{l,t_0}}|h_kh_l|\text{d}s=0,\\
&\epsilon\int_{\Gamma^j\cap\{t_0<t<t_1\}\cap R_{k,t_0}}|h_kh_l|\text{d}s
\leq CA_0\epsilon^2\int_{\Gamma^j\cap\{t_0<t<t_1\}\cap R_{k,t_0}}|h_k|\text{d}s\leq
CA_0\epsilon^2\int_{\Gamma^j\cap\{t_0<t<t_1\}}|h_k|\text{d}s, \\
&\epsilon\int_{\Gamma^j\cap\{t_0<t<t_1\}\cap R_{k,t_0}}|h_kh_l|\text{d}s
+\epsilon\int_{\Gamma^j\cap\{t_0<t<t_1\}\cap R_{l,t_0}}|h_kh_l|\text{d}s\leq CA_0\epsilon^3.
\end{split}
\]
\end{itemize}
Meanwhile,
\[
\begin{split}
&\epsilon\int_{\Gamma^j\cap\{t_0<t<t_1\}}|\zeta_{jkl}h_kp_l|\text{d}s
=\epsilon\int_{\Gamma^j\cap\{t_0<t<t_1\}\cap (R_{k,t_0}\cup R_{j,t_0})^c}|\zeta_{jkl}h_kp_j|\text{d}s\\
&\qquad+\epsilon\int_{\Gamma^j\cap\{t_0<t<t_1\}\cap R_{k,t_0}}|\zeta_{jkl}h_kp_j|\text{d}s
+\epsilon\int_{\Gamma^j\cap\{t_0<t<t_1\}\cap R_{l,t_0}}|\zeta_{jkl}h_kp_j|
\text{d}s\\
\leq &CA_0A_1\epsilon^2(t_1-t_0)+CA_1\epsilon\int_{\Gamma^j\cap\{t_0<t<t_1\}\cap R^c_{j,t_0}}|h_k|\text{d}s\\
\leq &CA_0A_1\epsilon^2(t_1-t_0)+CA_1\epsilon\int_{\Gamma^j\cap\{t_0<t<t_1\}}|h_k|\text{d}s\leq C\epsilon,
\end{split}
\]
where $R_{k,t_0}\cap R_{j,t_0}=\emptyset$, $\Gamma^j\cap R_{j,t_0}=\emptyset$, $R_{k,t_0}\subset R^c_{j,t_0}$ for $t_0<t<t_1$,
and
\[
\int_{\Gamma^j\cap\{t_0<t<t_1\}\cap R_{j,t_0}}\zeta_{jkl}h_k p_j\text{d}s=0.
\]
\begin{itemize}
\item[Case $3.3$.] $k\neq j$, $l\neq j$, $k\neq l$: $(x, t)\notin R_{j,t_0}$, $\Gamma^j\cap R_{k,t_0}\subset \Gamma^j\cap R_{l,t_0}^c$, $\Gamma^j\cap R_{l,t_0}\subset \Gamma^j\cap R^c_{k,t_0}$, then
\[
\begin{split}
&\epsilon\int_{\Gamma^j\cap\{t_0<t<t_1\}\cap R_{l,t_0}}|h_kh_l|\text{d}s\leq
CA_0\epsilon^2\int_{\Gamma^j\cap\{t_0<t<t_1\}}|h_l|\text{d}s, \quad l\neq j,\\
&\epsilon\int_{\Gamma^j\cap\{t_0<t<t_1\}\cap R_{k,t_0}}|h_kh_l|\text{d}s\leq
CA_0\epsilon^2\int_{\Gamma^j\cap\{t_0<t<t_1\}}|h_k|\text{d}s, \quad k\neq j, \\
&\epsilon\int_{\Gamma^j\cap\{t_0<t<t_1\}\cap R_{k,t_0}}|h_kh_l|\text{d}s
+\epsilon\int_{\Gamma^j\cap\{t_0<t<t_1\}\cap R_{l,t_0}}|h_kh_l|\text{d}s\leq CA_0\epsilon^3.
\end{split}
\]
\end{itemize}
Meanwhile,
\[
\begin{split}
&\epsilon\int_{\Gamma^j\cap\{t_0<t<t_1\}}|\zeta_{jkl}h_kp_l|\text{d}s
=\epsilon\int_{\Gamma^j\cap\{t_0<t<t_1\}\cap (R_{k,t_0}\cup R_{l,t_0})^c}|\zeta_{jkl}h_kp_l|\text{d}s\\
&\qquad +
\epsilon\int_{\Gamma^j\cap\{t_0<t<t_1\}\cap R_{k,t_0}}|\zeta_{jkl}h_kp_l|\text{d}s
+\epsilon\int_{\Gamma^j\cap\{t_0<t<t_1\}\cap R_{l,t_0}}|\zeta_{jkl}h_kp_l|
\text{d}s\\
\leq &CA_0A_1\epsilon^2(t_1-t_0)+CA_1\epsilon\int_{\Gamma^j\cap\{t_0<t<t_1\}\cap R_{k,t_0}}|h_k|\text{d}s+
C\epsilon\int_{\Gamma^j\cap\{t_0<t<t_1\}\cap R_{l,t_0}}|h_k|\text{d}s\\
\leq &CA_0A_1\epsilon^2(t_1-t_0)+C\epsilon\int_{\Gamma^j\cap\{t_0<t<t_1\}}|h_k|\text{d}s\leq C\epsilon.
\end{split}
\]
In particular, when $k\neq l$, $k\neq j$ and $l=i$, we have $|\zeta_{jki}|\leq C\epsilon$ and
\[
\begin{split}
&\epsilon\int_{\Gamma^j\cap\{t_0<t<t_1\}}|\zeta_{jkl}h_kp_i|\text{d}s
=\epsilon\int_{\Gamma^j\cap\{t_0<t<t_1\}\cap (R_{k,t_0}\cup R_{i,t_0})^c}|\zeta_{jki}h_kp_i|\text{d}s\\
&\qquad +\epsilon\int_{\Gamma^j\cap\{t_0<t<t_1\}\cap R_{k,t_0}}|\zeta_{jki}h_kp_i|\text{d}s
\quad+\epsilon\int_{\Gamma^j\cap\{t_0<t<t_1\}\cap R_{i,t_0}}|\zeta_{jki}h_kp_i|
\text{d}s\\
\leq &CA_0A_1\epsilon^3(t_1-t_0)+CA_1\epsilon^2\int_{\Gamma^j\cap\{t_0<t<t_1\}\cap R_{k,t_0}}|h_k|\text{d}s\\
\leq &CA_0A_1\epsilon^3(t_1-t_0)+C\epsilon^2\int_{\Gamma^j\cap\{t_0<t<t_1\}}|h_k|\text{d}s\leq C\epsilon^2.
\end{split}
\]
\end{itemize}
Combining with all the estimates in Cases $1$-$3.3$, we can confirm the claim \eqref{Eq:2.89}.

On the other hand, under the transformation $w(u)=\epsilon D_{\epsilon}\omega$, the $i$-th equation of \eqref{eq:2.88}
can be written as
\begin{equation}\label{Eq:2.40}
l_{ii}(\epsilon D_{\epsilon}\omega)(\partial_t\omega_i+\lambda_i\partial_x\omega_i)+\epsilon\sum_{j\neq i}l_{ij}(\epsilon D_{\epsilon}\omega)(\partial_t\omega_j+\lambda_i\partial_x\omega_j)=0,
\end{equation}
where $l_{ii}(\epsilon D_{\epsilon}\omega)\neq 0$. Then the equation \eqref{Eq:2.40} is reduced into
\[
\frac{\text{d}\omega_i}{\text{d}_it}:=\partial_{t}\omega_i+\lambda_i\partial_x\omega_i=\epsilon\sum_{j\neq i}p_{ij}(\epsilon D_{\epsilon}\omega)\frac{\text{d}\omega_j}{\text{d}_i t}.
\]
Integrating this along $x=a(t)$ in the interval $[t_0, t]$ ($t\le t_1$) yields
\[
\begin{split}
\omega_i(a(t), t)=&\omega_i(a_0, t_0)+\epsilon\sum_{j\neq i}p_{ij}(\epsilon D_{\epsilon}\omega)\omega_j(a(t), t)\\
&-\epsilon^3\sum_{j, k\neq i}\int^{t}_{t_0}\partial_{\omega_k}p_{ij}(\epsilon D_{\epsilon}\omega)(p_{ik}(\epsilon D_{\epsilon}\omega)+1)\Big(\partial_t\omega_k+\lambda_i\partial_x\omega_k\Big)\omega_j(a(s), s)\text{d}s.
\end{split}
\]
Due to $p_{ij}(0)\Big|_{j\neq i}=0$, then for $t_0\le t\le t_1$,
$\omega_i(a(t), t)=O(1)$.

Furthermore, it follows from $\eqref{Y-3}_3$ and $\eqref{Y-3}_2$  that on $x=a(t)$ with $t_0\le t\le t_1$ and for $j=i+1, \cdots, n$,
we have
\[
\begin{split}
&\partial_t h_j=O(1)\epsilon, \;\; \partial^2_{t}h_j=O(1)\epsilon, \;\; \partial^3_{t}h_j=O(1)\epsilon, \;\;\tilde{l}_j\partial^2_{t}\omega=O(1)\epsilon, \;\; \tilde{l}_j\partial^3_{t}\omega=O(1)\epsilon.
\end{split}
\]

Similarly, one can show that on $x=b(t)$ with $t_0\le t\le t_1$, the following estimates hold
for  $ j=1, \cdots, i-1$,
\[
\epsilon\omega_j(b(t),t)=O(1),\;\; h_j(b(t),t)=O(1)(\epsilon), \;\;  \omega_i(b(t), t)=O(1),
\;\;(\partial_{t}+\lambda_i\partial_x)\omega_i(b(t), t)=O(1)(\epsilon)
\]
and
\[
\begin{split}
&\partial_t h_j=O(1)\epsilon, \;\; \partial^2_{t}h_j=O(1)\epsilon, \;\; \partial^3_{t}h_j=O(1)\epsilon,\;\;\tilde{l}_j\partial^2_{t}\omega=O(1)\epsilon, \;\; \tilde{l}_j\partial^3_{t}\omega=O(1)\epsilon.
\end{split}
\]

Returning to the coordinate $(y,t)$, we next study the solvability of the following initial-boundary value problem
for the blowup system \eqref{Y-3} in time interval $[t_0, t_1]$
\begin{equation}\label{eq:2.98}
\begin{cases}
\eqref{Y-3}, \qquad\qquad\qquad\qquad\qquad\quad\qquad\quad \text{ in } D,\\[3pt]
K=1, \; \omega_j=\omega^0_j(y), \;h_j=h^0_j(y), \qquad\qquad\;\;\text{for }t=t_0,\; j=1,\cdots, n, \\[3pt]
\epsilon\omega_j=\epsilon\omega^*_j=O(1),\; h_j=h^*_j(t)=O(1)\epsilon, \quad\text{ on } y=a_0,\;  j=i+1, \cdots, n, \\[3pt]
\epsilon\omega_j=\epsilon\omega^*_j=O(1),\; h_j=h^*_j(t)=O(1)\epsilon, \quad\text{ on } y=b_0,\; j=1, \cdots, i-1,
\end{cases}
\end{equation}
where $\p_y^k\omega^0_j(y)=O(1)$ and $\p_y^kh^0_j(y)=O(1)\epsilon$ for $j\not=i$ and $k\ge1$,
$\omega^0_i(y)=O(1)$ and $h^0_i(y)=O(1)$, $\p_t^k\omega^*_j(t)|_{\text{$y=a_0$ or $y=b_0$}}=O(1)$ ($j\not=i$)
for $k\ge1$. Moreover, the compatibility conditions of all orders hold on the corners $(a_0, t_0)$
and $(b_0, t_0)$ for the initial-boundary values of \eqref{eq:2.98}.

\subsection{Solvability of the blowup system  \eqref{eq:2.1} and proof of Theorem \ref{T:2.1}}

In this subsection,  we show the existence of smooth solution $(\omega, K, h)$ to problem \eqref{eq:2.98} for  $t\in [t_0, t_1]$
and complete the proof of Theorem \ref{T:2.1}.
Note that due to the degeneracy of $K$ near the blowup time $T_{\ve}$,
we can sometimes think $y$ and $t$ as the new ``time variable" and ``space variable", respectively.

First of all, we start to construct the first approximate solution $(\varphi^{(0)}, K^{(0)}, \omega^{(0)})$ of \eqref{eq:2.2} such that the nondegenerate condition holds at some point.

Let $\tau:=\epsilon t$.
Then it follows from $\eqref{eq:2.2}$ that the corresponding solution for $\epsilon=0$ is
\begin{equation}\label{YH-3}
\begin{split}
&\text{$\bar\omega_i(x, \tau)=w^{i}_0(y)$ with $x=\bar\varphi(y, \tau)=y+\partial_{w_i}\lambda_i(0)w^{i}_0(y)\tau$},
\quad \text{$\bar\omega_j(y, \tau)=0$ for $j\not=i$}, \\
&\bar K(y, \tau)=1+\partial_{w_i}\lambda_i(0)\Big(w^{i}_0(y)\Big)'\tau.\\
\end{split}
\end{equation}

Choose a cut-off function $\chi(s)\in C^{\infty}(\mathbb{R})$ such that
\[
\chi(s)=1 \;\text{ for } s\leq \frac{1}{2};\quad \chi(s)=0 \;\text{ for }s\geq \f34.
\]

Since there exists a local smooth solution $(\varphi^{\epsilon}, K^{\epsilon}, \omega^{\epsilon})$ to the blowup system \eqref{eq:2.2} for
$t_0\leq t<T_\epsilon$, we then glue the local smooth solution $(\varphi^{\epsilon}, K^{\epsilon}, \omega^{\epsilon})$  and
\eqref{YH-3} to get the first approximate solution $(\varphi^{(0)}, K^{(0)}, \omega^{(0)})$ as
\begin{equation}\label{YHC-3}
\begin{cases}
\varphi^{(0)}(y, t)=\chi(\frac{t}{T_\epsilon})\varphi^{\epsilon}(y, t)+(1-\chi(\frac{t}{T_\epsilon}))\bar{\varphi}(y, t),\\[3pt]
K^{(0)}(y, t)=\chi(\frac{t}{T_\epsilon})K^{\epsilon}(y, t)+(1-\chi(\frac{t}{T_\epsilon}))\bar{K}(y, t),\\[5pt]
\omega^{(0)}(y, t)=\chi(\frac{t}{T_\epsilon})\omega^{\epsilon}(y, t)+(1-\chi(\frac{t}{T_\epsilon}))\bar{\omega}(y, t).
\end{cases}
\end{equation}

\vspace{0.2cm}
Let $\omega^{(m+1)}$, $h^{(m+1)}$ and $K^{(m+1)}$ for $m\in\Bbb N$ satisfy the following linearized system of \eqref{Y-3}
\begin{align}
&\partial_t K^{(m+1)}=\epsilon L^{(1)}(\epsilon D_{\epsilon}\omega^{(m)})\tilde{h}^{(m)}K^{(m)}+\epsilon\nabla\lambda_i\cdot r_i(\epsilon D_\epsilon\omega^{(m)})h^{(m)}_i, \label{eq:2.12}\\[3pt]
&\partial_th^{(m+1)}_i=\epsilon(\tilde{h}^{(m)})^{\top}Q^{(1)}(\epsilon D_{\epsilon}\omega^{(m)})\tilde{h}^{(m)}K^{(m)}
+\epsilon L^{(2)}(\epsilon D_{\epsilon}\omega^{(m)}) \tilde{h}^{(m)} h^{(m)}_i,\label{eq:2.13}\\[3pt]
&\big(K^{(m)}\partial_t+(\lambda_j-\lambda_i)(\epsilon D_\epsilon\omega^{(m)})\partial_y\big)h^{(m+1)}_j+\epsilon K^{(m)}\Big((\tilde{h}^{(m)})^{\top}Q^{(2)}(\epsilon D_\epsilon \omega^{(m)})\tilde{h}^{(m+1)}\notag+L^{(3)}(\epsilon D_{\epsilon}\omega^{(m)})\tilde{h}^{(m)}h^{(m+1)}_j\Big)\\
&\quad+\epsilon h^{(m)}_i\Big(r^{\top}_i(\epsilon D_\epsilon \omega^{(m)})Q^{(3)}(\epsilon D_\epsilon \omega^{(m)})\tilde{h}^{(m+1)}
+\nabla\lambda_i\cdot r_i(\epsilon D_{\epsilon}\omega^{(m)})h^{(m+1)}_j\Big)=0,\label{eq:2.14}\\[3pt]
&\tilde{l}_i(\epsilon D_{\epsilon}\omega^{(m)})\partial_t\omega^{(m+1)}=0,\label{eq:2.15}\\[3pt]
&\tilde{l}_j(\epsilon D_{\epsilon}\omega^{(m)})\Big(K^{(m)}\partial_t
+(\lambda_j-\lambda_i)(\epsilon D_{\epsilon}\omega^{(m)})\partial_y\Big)
\omega^{(m+1)}=0, \quad j\neq i\label{eq:2.16}
\end{align}
with the initial-boundary values
\begin{equation*}
\begin{cases}
\epsilon\omega^{(m+1)}_j=\epsilon\omega_j^*(t),\; h^{(m+1)}_j=h_j^*(t), \qquad\qquad\quad\quad \text{ on } y=a_0,\; j=i+1, \cdots, n,\\[3pt]
\epsilon\omega^{(m+1)}_j=\epsilon\omega_j^*(t),\; h^{(m+1)}_j=h_j^*(t), \qquad\qquad\quad\quad\text{ on } y=b_0,\;  j=1, \cdots, i-1,\\[3pt]
K^{(m+1)}=1, \; \omega_j^{(m+1)}=\omega^0_j(y), \;h^{(m+1)}_j=h^0_j(y), \quad\quad\text{for }\; t=t_0,\; j=1,\cdots, n.
\end{cases}
\end{equation*}
It is worth mentioning  that  the iterative scheme \eqref{eq:2.14} is delicately chosen. If $h^{(m+1)}$ is replaced
by  $h^{(m)}$ in the second and third terms of \eqref{eq:2.14}, then it is  difficult for us to directly get the uniform boundedness
of  $h_j^{(m+1)}$ ($j\not=i$) due to the appearance of $O(|h^{(m)}|^2)$.

\vspace{0.1cm}
Next, we establish the boundedness and convergence of $(K^{(m+1)}, h^{(m+1)}, \omega^{(m+1)})$. In this process,
we have to pay  special attentions whether  the small factor $\ve$ appears in each related term or not.
The proof is divided into  the following five steps.

\vspace{0.3cm}
\underline{\bf Step 1. Estimates of $K^{(m+1)}$, $h^{(m+1)}_i, \tilde{h}^{(m+1)}$ \text{ and } $\omega^{(m+1)}$}

\begin{lemma}\label{L:2.9}
It holds that
\begin{equation}\label{eq:2.17*}
|K^{(m)}|\leq \mathcal{K}_0, \;\; |h^{(m)}_i|\leq \mathcal{H}_i, \;\; |\tilde{h}^{(m)}|\leq \tilde{\mathcal{H}}=C\epsilon, \;\; ,\text{$|\epsilon\omega^{(m)}_j|\leq \mathcal{W}$ for $j\not=i$}, \;\; |\omega^{(m)}_i|\leq \mathcal{W},
\end{equation}
where $\mathcal{K}_0$, $\mathcal{H}_i$, $\tilde{\mathcal{H}}$ and $\mathcal{W}$ are some positive constants to be
determined later (see \eqref{eq:2.52-Y1} below).
\end{lemma}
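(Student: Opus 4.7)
The plan is to establish \eqref{eq:2.17*} by induction on $m$. For the base case $m=0$, the explicit construction \eqref{YHC-3} together with the local smooth existence of $(\varphi^{\epsilon},K^{\epsilon},\omega^{\epsilon})$ on $[t_0,T_\epsilon)$ and the leading-order profile \eqref{YH-3} immediately yield the claimed bounds, provided the constants $K$, $H_i$, $\tilde C$ and $W$ are fixed large enough relative to the $L^\infty$-norms of the initial and boundary data prescribed in \eqref{eq:2.98}.

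For the inductive step, I would first handle $K^{(m+1)}$ and $h_i^{(m+1)}$. Equations \eqref{eq:2.12}--\eqref{eq:2.13} are ODEs in $t$ whose right-hand sides, under the inductive hypothesis on $\tilde h^{(m)}$, $h_i^{(m)}$ and $K^{(m)}$, are of order $O(\epsilon)$. Direct integration from $t_0$ gives $|K^{(m+1)}-1|$ and $|h_i^{(m+1)}-h_i^0|\leq C\epsilon(t-t_0)$. Since $t-t_0\leq T_\epsilon+1=O(\epsilon^{-1})$ by \eqref{Y-2}, these deviations are $O(1)$, and this fixes admissible values of $K$ and $H_i$.

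Next, I would treat $\tilde h^{(m+1)}$ by using \eqref{eq:2.14} as a transport equation. Strict hyperbolicity provides $|\lambda_j-\lambda_i|\geq c_0>0$, whereas $K^{(m)}$ may become small near $T_\epsilon$; so, following the remark opening this subsection, I would integrate along the integral curves of $\mathrm{d}t/\mathrm{d}y=K^{(m)}/(\lambda_j-\lambda_i)$, regarding $y$ as the evolutionary variable. The sign of $\lambda_j-\lambda_i$ then dictates which lateral boundary serves as inflow, matching exactly the prescription in \eqref{eq:2.98}: $y=a_0$ for $j>i$ and $y=b_0$ for $j<i$, with data of size $O(\epsilon)$, while the initial data on $\{t=t_0\}$ is also $O(\epsilon)$. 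Thanks to the delicate placement of $h^{(m+1)}$ (rather than $h^{(m)}$) in the coupling terms of \eqref{eq:2.14}, the source splits into an $O(\epsilon)$ linear piece in $h_j^{(m+1)}$ plus a genuinely quadratic remainder of size $O(\epsilon^2)$ coming from $(\tilde h^{(m)})^\top Q^{(2)}\tilde h^{(m+1)}$ and $h_i^{(m)}\cdot r_i^{\top}Q^{(3)}\tilde h^{(m+1)}$. A Gronwall estimate along each characteristic, whose length is $O(\epsilon^{-1})$, then yields $|h_j^{(m+1)}|\leq\tilde C\epsilon$ for $j\neq i$.

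Finally, the transport equations \eqref{eq:2.15}--\eqref{eq:2.16} for $\omega^{(m+1)}$ are effectively diagonalised by the eigenvector basis of Lemma~\ref{Y-4}: $\omega_i^{(m+1)}$ is transported along the $i$-characteristics with $O(\epsilon)$ corrections, while each $\omega_j^{(m+1)}$ $(j\neq i)$ is transported along the $j$-characteristics. Integration along these curves, using the data prescribed on $\{t=t_0\}$ and on $\{y=a_0\}$ or $\{y=b_0\}$, yields the bounds on $|\omega_i^{(m+1)}|$ and on $|\epsilon\omega_j^{(m+1)}|$ for $j\neq i$. The principal obstacle throughout is the tension between the long time horizon $[t_0,T_\epsilon+1]$ of length $O(\epsilon^{-1})$ and the $\epsilon$-small coefficients appearing in the sources: if $h^{(m+1)}$ were replaced by $h^{(m)}$ in \eqref{eq:2.14}, the quadratic $O(|h^{(m)}|^2)$ term would accumulate irreducibly over the long interval and the inductive bound could not be re-derived. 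This is precisely why Gronwall, rather than naive time-integration, is the right device, and why the iterative scheme is designed as stated.
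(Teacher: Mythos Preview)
Your approach is essentially that of the paper: induct on $m$, integrate \eqref{eq:2.12}--\eqref{eq:2.13} in $t$ over $[t_0,t_1]$, apply the characteristics method to \eqref{eq:2.14} using $y$ as the propagation variable, and diagonalise \eqref{eq:2.15}--\eqref{eq:2.16} through the projections $p_j^{(m+1)}:=\tilde l_j(\epsilon D_\epsilon\omega^{(m)})\omega^{(m+1)}$ (this is exactly the paper's \eqref{eq:2.23}--\eqref{eq:2.24}). Two small corrections are worth recording. First, once you parametrise by $y$, each characteristic traverses an interval of $y$-length $b_0-a_0=O(1)$, not $O(\epsilon^{-1})$; the bound then closes by direct absorption, as in the paper's \eqref{Eq:2.55}, rather than by a Gronwall exponential over a long time. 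Second, your size count is off for the coupling term $\epsilon\,h_i^{(m)}\,r_i^{\top}Q^{(3)}\tilde h^{(m+1)}$: since $h_i^{(m)}=O(1)$ (not $O(\epsilon)$), this is $O(\epsilon)\,|\tilde h^{(m+1)}|$, not $O(\epsilon^2)$, and it is precisely the $h_i^{(m)}$ terms that produce the dominant coefficient $\epsilon C_6(\epsilon W)H_i$ in \eqref{Eq:2.55}; the argument still closes because the $y$-integral has length $O(1)$. The one substantive step the paper carries out that your sketch passes over is the verification, via the refined lateral-boundary estimate $|\partial_t K^{(m)}(a_0,t)|\le C\epsilon^2$ in \eqref{eq:2.104*} (note the gain of one extra power of $\epsilon$, coming from the special structure of the boundary data established in \S 2.2), that $K^{(m)}>\tfrac12$ on both walls $y=a_0$ and $y=b_0$ for all $t\in[t_0,t_1]$; this is what pins down the inflow geometry of the mixed problem \eqref{eq:2.98} and ensures the characteristics launch from exactly the data surfaces listed there.
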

\begin{proof}
The proof will be carried out by continuous induction. At first, \eqref{eq:2.17*} holds true for $m=0$
by \eqref{YHC-3}. Assume that \eqref{eq:2.17*} holds for $m$,  it is required to
show the validity for $m+1$.

\vspace{0.1cm}
Integrating \eqref{eq:2.12}-\eqref{eq:2.13} for $t\in [t_0, t_1]$
yields  that
\begin{equation}\label{eq:2.17}
|K^{(m+1)}|\leq 1+ C_1(\epsilon \mathcal{W})\tilde{\mathcal{H}}\mathcal{K}_0+C_2(\epsilon \mathcal{W})\mathcal{H}_i,
\end{equation}
\begin{equation}\label{eq:2.18}
|h^{(m+1)}_i|\leq \mathcal{H}_{i0}+C_3(\epsilon \mathcal{W})\tilde{\mathcal{H}}^2\mathcal{K}_0+C_4(\epsilon \mathcal{W})\tilde{\mathcal{H}}\mathcal{H}_i,
\end{equation}
where $\mathcal{H}_{i0}:=\displaystyle\max_{y\in [a_0, b_0]}|h^{0}_i(y)|$, and from now on, $C_k(\epsilon\mathcal{W})$ denotes a
smooth function depending on $\epsilon\mathcal{W}$.

\vspace{0.2cm}
Before evaluating $h^{(m+1)}_j(j\neq i)$, we need to figure out the trend of the first order operator
$K^{(m)}\partial_t+(\lambda_j-\lambda_i)(\epsilon D_\epsilon\omega^{(m)})\p_y$ on the boundaries $y=a_0$ and $y=b_0$,
especially, check the signs of $K^{(m)}$ on the boundaries. From \eqref{YD-1}, one has
\begin{equation}\label{Eq:2.43*}
\begin{split}
\partial_t K^{(m)}(y, t)=&\epsilon^2\sum_{j\neq i}\partial_{w_j}\lambda_i(\epsilon D_{\epsilon}\omega^{(m-1)})\partial_y\omega^{(m-1)}_j
+\epsilon\partial_{w_i}\lambda_i(\epsilon D_{\epsilon}\omega^{(m-1)})\partial_y\omega^{(m-1)}_i.
\end{split}
\end{equation}
Note that
\[
\epsilon\omega^{(m-1)}_j(a_0, t)=O(1),\quad h^{(m-1)}_j(a_0, t)=O(1)\epsilon\quad \text{ for }\;i+1\leq j\leq n,
\]
and
\[
\epsilon\omega^{(m-1)}_j(b_0, t)=O(1), \quad h^{(m-1)}_j(b_0, t)=O(1)\epsilon \quad \text{ for } 1\leq j\leq i-1.
\]
Along with \eqref{eq:2.4} and \eqref{Eq:2.40}, we have
\begin{equation*}
\begin{split}
&\partial_t \omega^{(m-1)}_j(a_0, t)=O(1), \;\;\;\partial_y \omega^{(m-1)}_j(a_0, t)=O(1)\;\text{ for } j\neq i,\\[3pt]
&\partial_t \omega^{(m-1)}_i(a_0, t)=O(1)\epsilon,\;\;\; \partial_y\omega_i^{(m-1)}(a_0, t)=O(1)\epsilon.
\end{split}
\end{equation*}
Thus
\begin{equation}\label{eq:2.104*}
|\partial_tK^{(m)}(a_0, t)|\leq C\epsilon^2,
\end{equation}
which yields that for $t\in [t_0, t_1]$ and small $\epsilon>0$,
\[
K^{(m)}(a_0, t)\geq 1-C\epsilon^2(t_1-t_0)>\frac{1}{2}.
\]
Similarly,
\[
K^{(m)}(b_0, t)\geq 1-C\epsilon^2(t_1-t_0)>\frac{1}{2}.
\]
Meanwhile, for $t\in [t_0, t_1]$ and small $\epsilon>0$,
\begin{equation*}
K^{(m)}(a_0, t)<\frac{3}{2}, \quad K^{(m)}(b_0, t)<\frac{3}{2}.
\end{equation*}

Therefore, we can apply the characteristics method to \eqref{eq:2.14}, and derive that
\begin{equation}\label{Eq:2.55}
\begin{split}
\sum_{j\neq i}|h^{(m+1)}_j(y, t)|\leq& \tilde{\mathcal{H}}_0+\Big(\epsilon C_5(\epsilon \mathcal{W})\tilde{\mathcal{H}}
+\epsilon C_6(\epsilon \mathcal{W})\mathcal{H}_i\Big)\sum_{k\neq i}|h^{(m+1)}_k|,
\qquad j\neq i,
\end{split}
\end{equation}
where
\[
\tilde{\mathcal{H}}_0=\max\Big(\max_{y\in [a_0, b_0]}\sum_{j\neq i}|h^{0}_j(y)|, \max_{t\in [t_0, t_1]}\sum^n_{j=i+1}|h_j^{(m+1)}(a_0, t)|, \max_{t\in [t_0, t_1]}\sum^{i-1}_{j=1}|h_j^{(m+1)}(b_0, t)|\Big).
\]
Then it yields that for sufficiently small $\epsilon>0$,
$\displaystyle\sum_{j\neq i}|h^{(m+1)}_j|\leq C\tilde{\mathcal{H}}_0\leq C\epsilon$.

On the other hand, if we set
\[
p_j^{(m+1)}=\tilde{l}_j(\epsilon D_{\epsilon}\omega^{(m)})\omega^{(m+1)}, j=1, \cdots, n,\ \text{ and }\;
p^{(m+1)}=(p^{(m+1)}_1, \cdots,  p^{(m+1)}_n)^{\top},
\]
then
\[
\omega^{(m+1)}=\sum^n_{k=1}p^{(m+1)}_k\tilde{r}_k(\epsilon D_\epsilon \omega^{(m)}).
\]
Therefore, it follows from \eqref{eq:2.15} and \eqref{eq:2.16} that
\begin{align}
&\partial_t p^{(m+1)}_i+\epsilon (\tilde{h}^{(m)})^{\top}M_1(\epsilon D_\epsilon\omega^{(m)})p^{(m+1)}=0,
\label{eq:2.23}\\[3pt]
&\Big(K^{(m)}\partial_t+(\lambda_j-\lambda_i)(\epsilon D_\epsilon\omega^{(m)})\partial_y\Big)p^{(m+1)}_j+
\epsilon K^{(m)}(\tilde{h}^{(m)})^{\top}M_2(\epsilon D_\epsilon \omega^{(m)})p^{(m+1)}\notag \\
&\qquad\quad+\epsilon h^{(m)}_i(r^{(m)}_i)^{\top}
M_3(\epsilon D_\epsilon\omega^{(m)})p^{(m+1)}=0,\qquad\quad j\neq i,\label{eq:2.24}
\end{align}
where and below $M_k(\epsilon D_\epsilon \omega^{(m)})$ stands for the $n\times n$ function matrix
depending on $\epsilon D_{\epsilon}\omega^{(m)}$.

Integrating \eqref{eq:2.23} with respect to $t$  on the interval $[t_0, t]$ yields
\begin{equation}\label{eq:2.25}
|p^{(m+1)}_i|\leq |p^{(m+1)}_i(y, t_0)|+C_7(\epsilon \mathcal{W})\tilde{\mathcal{H}}|p^{(m+1)}|.
\end{equation}

Along with \eqref{eq:2.25}, we apply the characteristics method to \eqref{eq:2.24}, and obtain that
\[
\max_{y\in [a_0,b_0]}|p^{(m+1)}|\leq \mathcal{P}_0+
\epsilon\Big(C_7(\epsilon \mathcal{W})+C_8(\epsilon\mathcal{W})\tilde{\mathcal{H}}\mathcal{K}_0+
C_9(\epsilon\mathcal{W})\mathcal{H}_i\Big)\max_{y\in [a_0,b_0]}|p^{(m+1)}|,
\]
where
\[
\mathcal{P}_0:=\displaystyle\max\Big(\max_{y\in [a_0, b_0]}\sum^n_{j=1}|p^{(m+1)}_j(y, t_0)|,
\max_{t\in [t_0, t_1]}\sum^n_{j=i+1}|p^{(m+1)}_j(a_0, t)|, \max_{t\in [t_0, t_1]}\sum^{i-1}_{j=1}|p^{(m+1)}_j(b_0, t)|\Big).
\]

This derives
\begin{equation}\label{eq:2.28}
|p^{(m+1)}|\leq C\mathcal{P}_0.
\end{equation}

Let $\mathcal{W}=C\mathcal{P}_0$, then it satisfies that in $D$,
\[
\epsilon|\omega^{(m+1)}_j(y, t)|\leq \mathcal{W}\quad\text{for $j\not=i$}, \quad |\omega^{(m+1)}_i(y, t)|\leq \mathcal{W}
\]
and then $|v^{(m+1)}(y, t)|=|\epsilon D_\epsilon\omega^{(m+1)}|\leq \epsilon\mathcal{W}$.
Based on \eqref{eq:2.17}, \eqref{eq:2.18}, and \eqref{eq:2.28}, when $\epsilon>0$
is small, one can choose
\begin{equation}\label{eq:2.52-Y1}
\mathcal{K}_0=1+C\mathcal{H}_{i0},\;\; \mathcal{H}_i=C\mathcal{H}_{i0}, \;\;\tilde{\mathcal{H}}=C\epsilon, \;\; \mathcal{W}=C\mathcal{P}_0,
\end{equation}
such that \eqref{eq:2.17*} still holds for $m+1$, where $C>1$ is a positive constant independent of $\epsilon$.

\end{proof}

\vspace{0.2cm}
\underline{\bf Step 2. Estimates of $\nabla K^{(m+1)}, \nabla h^{(m+1)}_i, \nabla \tilde{h}^{(m+1)}$ \text{ and } $\nabla \omega^{(m+1)}$}

\vspace{0.2cm}
Let
\begin{equation}\label{eq:2.30*}
\bar{h}^{(m+1)}_i=\tilde{l}_i(\epsilon D_{\epsilon}\omega^{(m)})\partial_y\omega^{(m+1)}, \quad \bar{h}^{(m+1)}_j
=\tilde{l}_j(\epsilon D_\epsilon \omega^{(m)})\partial_t\omega^{(m+1)} \;\; \text{for $j\neq i$}.
\end{equation}
In addition, one has
\begin{equation}\label{eq:2.54-Y2}
\begin{split}
\partial_y\omega^{(m+1)}(y, t)=&-\sum_{j\neq i}\frac{K^{(m)}\bar{h}^{(m+1)}_j}
{(\lambda_j-\lambda_i)(\epsilon D_{\epsilon}\omega^{(m)})}\tilde{r}_j(\epsilon D_\epsilon \omega^{(m)})
+\bar{h}^{(m+1)}_i\tilde{r}_i(\epsilon D_\epsilon \omega^{(m)}), \\
\partial_t\omega^{(m+1)}(y, t)=&\sum_{j\neq i}\bar{h}^{(m+1)}_j\tilde{r}_j(\epsilon D_{\epsilon}\omega^{(m)}).
\end{split}
\end{equation}

When $t=t_0$, we have
\[
\begin{split}
&\partial_t K^{(m+1)}=O(1)\epsilon, \;\; \partial_y K^{(m+1)}=0,\;\partial_y h_i^{(m+1)}=O(1), \;\;\partial_t h^{(m+1)}_i=O(1)\epsilon, \;\;\bar{h}^{(m+1)}_i=O(1),\\
&\bar{h}^{(m+1)}_j=O(1)\epsilon,\;\;\partial_t h^{(m+1)}_j=O(1)\epsilon,\;\; \partial_y h^{(m+1)}_j
=O(1)\epsilon, \quad j\neq i.
\end{split}
\]

\begin{lemma}\label{L:2.10}
The following estimates hold
\begin{equation}\label{eq:2.30}
|\nabla K^{(m)}|\leq \mathcal{K}_1,\;|\nabla h^{(m)}|\leq \mathcal{H}^0_1,\; |\nabla \omega^{(m)}|\leq \mathcal{W}_1.
\end{equation}
 In particular, $|\partial_t K^{(m)}|\leq C\epsilon$,  $|\partial_t h^{(m)}_i|\leq C\epsilon$, $|\nabla h^{(m)}_i|\leq \mathcal{H}^i_1,\; |\nabla\tilde{h}^{(m)}|\leq\tilde{\mathcal{H}}_1\leq C\epsilon$, where $\mathcal{K}_1$, $\mathcal{H}^0_1$, $\mathcal{W}_1$, $\mathcal{H}^i_1$
and $\tilde{\mathcal{H}}_1$ are some positive constants
to be determined later.
\end{lemma}

\begin{proof}

We will show that \eqref{eq:2.30} still holds for $m+1$. Taking the first order derivative of \eqref{eq:2.12} yields
\begin{equation}\label{eq:2.31}
\begin{split}
\partial_t\nabla K^{(m+1)}=&\epsilon \nabla\Big(L^{(1)}(\epsilon D_\epsilon\omega^{(m)})
\tilde{h}^{(m)}\Big)K^{(m)}+\epsilon L^{(1)}(\epsilon D_{\epsilon}\omega^{(m)})\tilde{h}^{(m)}
\nabla K^{(m)}\\
&+\epsilon^2 \nabla_w(\nabla\lambda_i\cdot r_i(\epsilon D_\epsilon \omega^{(m)}))D_{\epsilon}\nabla\omega^{(m)}h^{(m)}_i
+\epsilon\nabla\lambda_i\cdot r_i(\epsilon D_{\epsilon}\omega^{(m)})\nabla h^{(m)}_i.
\end{split}
\end{equation}
Integrating \eqref{eq:2.31} with respect to $t$ on the interval $[t_0, t_1]$ yields
\begin{equation}\label{eq:2.32*}
\begin{split}
|\nabla K^{(m+1)}|\leq& \mathcal{K}^0_1+\epsilon C_{10}(\epsilon\mathcal{W})\tilde{\mathcal{H}}\mathcal{W}_1+C_1(\epsilon \mathcal{W})(\tilde{\mathcal{H}}_1+\tilde{\mathcal{H}}\mathcal{K}_1)+C_{11}(\epsilon \mathcal{W})\epsilon \mathcal{W}_1+ C_2(\epsilon\mathcal{W})\mathcal{H}^i_1\\
\leq & \mathcal{K}^0_1+C\mathcal{H}^0_1+C\epsilon(\mathcal{K}_1+\mathcal{W}_1),
\end{split}
\end{equation}
where $\mathcal{K}^0_1=\displaystyle\max_{y\in[a_0, b_0]}|\nabla K^{(m+1)}(y, t_0)|$. In particular,
\[
|\partial_t K^{(m+1)}(y, t)|\leq C\epsilon,
\]
which will play a crucial role in establishing the uniform boundedness of the higher order derivatives of
$(K^{(m+1)}, h^{(m+1)}, \omega^{(m+1)})$ for all $m$.

Similarly, it follows from \eqref{eq:2.13} that
\[
\begin{split}
\partial_t \partial_y h^{(m+1)}_i=&\epsilon \partial_y(\tilde{h}^{(m)}Q^{(1)})\tilde{h}^{(m)}K^{(m)}
+\epsilon (\tilde{h}^{(m)})^{\top}Q^{(1)}\partial_y\tilde{h}^{(m)}K^{(m)}
+\epsilon (\tilde{h}^{(m)})^{\top}Q^{(1)}\tilde{h}^{(m)}\partial_y K^{(m)}\\
+&\epsilon^2 (\nabla L^{(2)}D_\epsilon \partial_y\omega^{(m)})^{\top}\tilde{h}^{(m)}h^{(m)}_i+\epsilon L^{(2)}(\epsilon D_{\epsilon}\omega^{(m)})\partial_y\tilde{h}^{(m)}h^{(m)}_i+\epsilon
L^{(2)}(\epsilon D_{\epsilon}\omega^{(m)}) \tilde{h}^{(m)} \partial_y h^{(m)}_i.
\end{split}
\]
Let $\mathcal{H}^{i0}_1=\max_{y\in [a_0, b_0]}|\nabla h^{(m+1)}_i(y, t_0)|$, one then has
\begin{equation}\label{eq:2.33}
\begin{split}
|\partial_yh^{(m+1)}_i|\leq & \mathcal{H}^{i0}_1+C_3(\epsilon \mathcal{W})\tilde{\mathcal{H}}\tilde{\mathcal{H}}_1
+\epsilon C_{12}(\epsilon \mathcal{W})\mathcal{W}_1\tilde{\mathcal{H}}^2+C_3(\epsilon\mathcal{W})\tilde{\mathcal{H}}^2\mathcal{K}_1\\
&+\epsilon\tilde{\mathcal{H}}C_{13}(\epsilon \mathcal{W})\mathcal{W}_1+C_4(\epsilon \mathcal{W})(\mathcal{H}_i\tilde{\mathcal{H}}_1+\tilde{\mathcal{H}}\mathcal{H}^i_1)\\
\leq &\mathcal{H}^{i0}_1+C(\tilde{\mathcal{H}}_1+\tilde{\mathcal{H}}\mathcal{H}^i_1).
\end{split}
\end{equation}

In addition, it follows from \eqref{eq:2.13} that
\begin{equation}\label{Eq:2.59*}
|\partial_t h^{(m+1)}_i(y, t)|\leq |\partial_t h^{(m+1)}_i(y, t_0)|+C\tilde{\mathcal{H}}_1\leq C\epsilon.
\end{equation}

In the rest, we establish the uniform boundedness of $\nabla \omega^{(m)}$.
To this end, we have to estimate $\bar{h}^{(m+1)}$ in terms of \eqref{eq:2.54-Y2}.
From the expression \eqref{eq:2.30*} of $\bar{h}_i^{(m+1)}$, one can calculate directly that
\begin{equation}\label{eq:2.34}
\begin{split}
\partial_t\bar{h}^{(m+1)}_i
=&\epsilon(\nabla\tilde{l}_i(\epsilon D_\epsilon\omega^{(m)}) D_{\epsilon}\sum_{j\neq i}h^{(m)}_j\tilde{r}^{(m)}_j)^{\top}\Big(-\sum_{l\neq i}\frac{K^{(m)}\bar{h}^{(m+1)}_l}
{(\lambda_l-\lambda_i)^{(m)}}\tilde{r}^{(m)}_l+\bar{h}^{(m+1)}_i\tilde{r}^{(m)}_i\Big)\\
&-\epsilon\Big(
\nabla\tilde{l}_i(\epsilon D_{\epsilon}\omega^{(m)})D_\epsilon(-\sum_{j\neq i}\frac{ K^{(m)}h^{(m)}_j}{(\lambda_j-\lambda_i)^{(m)}}\tilde{r}^{(m)}_j+h^{(m)}_i\tilde{r}^{(m)}_i)
\Big)^{\top}(\sum_{l\neq i}\bar{h}^{(m+1)}_l\tilde{r}_l^{(m)})\\
=&\epsilon K^{(m)}(\tilde{h}^{(m)})^{\top}Q^{(4)}(\epsilon D_{\epsilon}\omega^{(m)})\tilde{\bar{h}}^{(m+1)}+\epsilon (\tilde{h}^{(m)})^{\top}M_4(\epsilon D_{\epsilon}\omega^{(m)})r^{(m)}_i\bar{h}^{(m+1)}_i\\
&\quad+\epsilon (r^{(m)}_i)^{\top}M_5(\epsilon D_\epsilon\omega^{(m)})\tilde{\bar{h}}^{(m+1)}h^{(m)}_i,
\end{split}
\end{equation}
where $\tilde{\bar{h}}^{(m+1)}$ corresponds to the vector $(\bar{h}_1^{(m+1)}, \cdots, \hat{\bar{h}}_i^{(m+1)}, \cdots, \bar{h}_n^{(m+1)})^{\top}$
with 0 replacing $\bar{h}_i^{(m+1)}$ in ${\bar{h}}^{(m+1)}$.

\vspace{0.1cm}
Integrating both sides of equation \eqref{eq:2.34} with respect to $t$ yields
\begin{equation}\label{eq:2.35}
\begin{split}
\max_{y\in [a_0, b_0] \atop t\in[t_0, t_1]}|\bar{h}^{(m+1)}_i|\leq& \bar{\mathcal{H}}_{i0}+
C_{14}(\epsilon\mathcal{W})\mathcal{H}\max_{y\in [a_0, b_0] \atop t\in[t_0, t_1]}|\tilde{\bar{h}}^{(m+1)}|
+C_{15}(\epsilon\mathcal{W})\tilde{\mathcal{H}}\max_{y\in [a_0, b_0] \atop t\in[t_0, t_1]}|\bar{h}^{(m+1)}_i|\\
\lesssim &\bar{\mathcal{H}}_{i0}+C\mathcal{H}\max_{y\in [a_0, b_0] \atop t\in[t_0, t_1]}|\tilde{\bar{h}}^{(m+1)}|
+C\epsilon\max_{y\in [a_0, b_0]\atop t\in [t_0, t_1]}|\bar{h}^{(m+1)}_i|
\end{split}
\end{equation}
with $\bar{\mathcal{H}}_{i0}=\max_{y\in [a_0, b_0]}|\bar{h}^{(m+1)}_i(y, t_0)|$, here and below $A\lesssim B$
stands for $A\leq CB$ with $C$ being a generic positive constant independent of $\epsilon$.

\vspace{0.1cm}
We now estimate $\bar{h}^{(m+1)}_j=\tilde{l}_j(\epsilon D_{\epsilon}\omega^{(m)})\partial_t\omega^{(m+1)}$ for
$j\neq i$. Differentiating \eqref{eq:2.16} with respect to $t$, one can obtain that
\[
\begin{split}
&(K^{(m)}\partial_t+(\lambda_j-\lambda_i)(\epsilon D_\epsilon\omega^{(m)})\partial_y)\bar{h}^{(m+1)}_j
+\epsilon(\lambda_j-\lambda_i)(\epsilon D_\epsilon\omega^{(m)})\Big((\nabla\tilde{l}_j(\epsilon D_\epsilon\omega^{(m)})D_{\epsilon}\partial_t\omega^{(m)})^{\top}\partial_y \omega^{(m+1)}
\\&-(\nabla \tilde{l}_j(\epsilon D_\epsilon\omega^{(m)}) D_{\epsilon}\partial_y\omega^{(m)})^{\top}\partial_t\omega^{(m+1)}\Big)
+\partial_t K^{(m)}\bar{h}^{(m+1)}_j+\epsilon\nabla(\lambda_j-\lambda_i)D_\epsilon\partial_t
\omega^{(m)}\tilde{l}^{(m)}_j\partial_y\omega^{(m+1)}=0.
\end{split}
\]
This can be rewritten as
\begin{equation}\label{eq:2.36}
\begin{split}
&\Big(K^{(m)}\partial_t+(\lambda_j-\lambda_i)(\epsilon D_\epsilon\omega^{(m)})\partial_y\Big)\bar{h}^{(m+1)}_j+
\epsilon K^{(m)}\Big((\tilde{h}^{(m)})^{\top}Q_5(\epsilon D_\epsilon \omega^{(m)})\tilde{\bar{h}}^{(m+1)}+L^{(3)}(\epsilon D_{\epsilon}\omega^{(m)})\tilde{h}^{(m)}\bar{h}^{(m+1)}_j\Big)\\[3pt]
&\;\;+\epsilon(\tilde{h}^{(m)})^{\top}M_6(\epsilon D_\epsilon\omega^{(m)})\bar{h}^{(m+1)}_i r^{(m)}_i
+\epsilon(r^{(m)}_i)^{\top}M_7(\epsilon D_\epsilon\omega^{(m)})\tilde{\bar{h}}^{(m+1)}h^{(m)}_i+\partial_t K^{(m)}\bar{h}^{(m+1)}_j=0.
\end{split}
\end{equation}
It follows from the characteristics method that
\begin{equation}\label{Eq:2.38}
\begin{split}
\sum_{j\neq i}|\bar{h}^{(m+1)}_j|&\lesssim \tilde{\bar{\mathcal{H}}}^{0}_1+C\epsilon\max_{s\in [t_0, t]}|\tilde{\bar{h}}^{(m+1)}|+ C\epsilon^2\max_{s\in [t_0, t]}|\bar{h}^{(m+1)}_i|,
\end{split}
\end{equation}
where
\[
\tilde{\bar{\mathcal{H}}}^{0}_1:=\max\Big(\max_{y\in[a_0, b_0]}\sum_{j\neq i}|\bar{h}^{(m+1)}_j(y, t_0)|, \max_{t\in [t_0, t_1]}\sum^n_{j=i+1}|\bar{h}^{(m+1)}_j(a_0, t)|, \max_{t\in [t_0, t_1]}\sum^{i-1}_{j=1}|\bar{h}^{(m+1)}_j(b_0, t)|\Big).
\]
Together with \eqref{eq:2.35}, we arrive at
\begin{equation}\label{eq:2.40}
\max_{y\in [a_0, b_0]\atop s\in [t_0, t]}|\tilde{\bar{h}}^{(m+1)}(y, t)|\leq  C \tilde{\bar{\mathcal{H}}}^{0}_1 \leq C \epsilon\tilde{\mathcal{W}}^0_1,\quad
\max_{y\in [a_0, b_0]\atop s\in [t_0, t]}|\bar{h}^{(m+1)}_i(y, t)|\leq C\bar{\mathcal{H}}_{i0},
\end{equation}
where
\[
\tilde{\mathcal{W}}^0_1:=\max\Big(\max_{y\in[a_0, b_0]}\sum_{j\neq i}|\nabla \omega^{(m+1)}_j(y, t_0)|, \max_{t\in [t_0, t_1]}\sum^n_{j=i+1}|\nabla\omega^{(m+1)}_j(a_0, t)|, \max_{t\in [t_0, t_1]}\sum^{i-1}_{j=1}|\nabla\omega^{(m+1)}_j(b_0, t)|\Big).
\]

In the following, we treat $|\nabla h^{(m+1)}_j|$ for $j\neq i$.
Differentiating \eqref{eq:2.14} with respect to $t$
and subsequently taking  direct computations, one has
\[
\begin{split}
&|\Big(K^{(m)}\partial_t+(\lambda_j-\lambda_i)(\epsilon D_{\epsilon}\omega^{(m)})\partial_y\Big)\partial_t h^{(m+1)}_j
+\partial_t K^{(m)}\partial_t h^{(m+1)}_j+\epsilon\nabla(\lambda_j-\lambda_i)D_{\epsilon}\partial_t\omega^{(m)}\partial_y h^{(m+1)}_j|\\
\leq &C\epsilon^3+C\epsilon^2\tilde{\mathcal{H}}_1+C\epsilon\sum_{j\neq i}|\partial_t h^{(m+1)}_j|.
\end{split}
\]
In addition, from \eqref{eq:2.14}, it yields that
\[
|\partial_y h^{(m+1)}_j|\leq C|\partial_t h^{(m+1)}_j|+C\epsilon^2.
\]
Together with the same arguments as in  Step $1$, we arrive at
\[
\max_{y\in [a_0, b_0]\atop t\in [t_0, t_1]}\sum_{j\neq i}|\partial_t h^{(m+1)}_j|\lesssim \tilde{\mathcal{H}}^0_1
+C\epsilon^2 \tilde{\mathcal{H}}_1
+C\epsilon\max_{y\in [a_0, b_0]}\sum_{j\neq i}|\partial_t  h^{(m+1)}_j|,
\]
where $\tilde{\mathcal{H}}^0_1=\max\Big(\displaystyle\max_{y\in[a_0, b_0]}\sum_{j\neq i}|\partial_t h^{(m+1)}_j(y, t_0)|, C\epsilon\Big)$.
Then this means
\begin{equation}\label{eq:2.41}
|\partial_t h^{(m+1)}_j|\leq C\tilde{\mathcal{H}}^{0}_1, \quad |\partial_y h^{(m+1)}_j|\leq C\tilde{\mathcal{H}}^0_1.
\end{equation}

Thus, for small $\epsilon>0$, by choosing
\[
\mathcal{K}_1=\mathcal{K}^0_1+C\mathcal{H}^0_1, \;\; \mathcal{H}^0_1=C(\mathcal{H}^{i0}_1+\tilde{\mathcal{H}}^{0}_1), \;\;\tilde{\mathcal{H}}_1= C\tilde{\mathcal{H}}^0_1,\;\;\mathcal{W}_1=C\big(\bar{\mathcal{H}}_{i0}+\tilde{\mathcal{W}}^0_1\big)
\]
and $\mathcal{H}^{i}_1=C\mathcal{H}^{i0}_1$, the estimates \eqref{eq:2.32*}-\eqref{Eq:2.59*}, \eqref{eq:2.35}, \eqref{eq:2.40}
and \eqref{eq:2.41} hold for $m+1$.

\end{proof}

\vspace{0.08cm}
\underline{\textbf{Step $3$. Estimates of $\nabla^2 K^{(m+1)}$, $\nabla^2 h^{(m+1)}$, and $\nabla^2 \omega^{(m+1)}$}}
\vspace{0.3cm}

For $k=1, \cdots, n$, set
\[
\begin{split}
&\bar{q}^{(m+1)}_k:=\tilde{l}_k(\epsilon D_{\epsilon}\omega^{(m)})\partial^2_t\omega^{(m+1)},
\bar{z}^{(m+1)}_k:=\tilde{l}_k(\epsilon D_{\epsilon}\omega^{(m)})\partial^2_{ty}\omega^{(m+1)}, E^{(m+1)}_k:=\tilde{l}_k(\epsilon D_{\epsilon}\omega^{(m)})\partial^2_{y}\omega^{(m+1)}.
\end{split}
\]
Based on \eqref{eq:2.15} and direct calculations, one has
\[
\bar{q}^{(m+1)}_i=-\epsilon(\nabla\tilde{l}_i D_{\epsilon}\partial_t\omega^{(m)})^{\top}\partial_t\omega^{(m+1)}
\]
and
\[
\partial^2_t\omega^{(m+1)}=\sum_{j\neq i}\bar{q}^{(m+1)}_j\tilde{r}_j(\epsilon D_{\epsilon}\omega^{(m)})
-\epsilon(\nabla\tilde{l}_i D_{\epsilon}\partial_t\omega^{(m)})^{\top}\partial_t\omega^{(m+1)}\tilde{r}_i(\epsilon D_{\epsilon}\omega^{(m)}).
\]
Similarly,
\begin{align*}
\partial^2_{ty}\omega^{(m+1)}=&\sum_{j\neq i}\bar{z}^{(m+1)}_j\tilde{r}_j(\epsilon D_{\epsilon}\omega^{(m)})-\epsilon(\nabla \tilde{l}_i D_{\epsilon}\partial_y\omega^{(m)})^{\top}\partial_t\omega^{(m+1)}\tilde{r}_i(\epsilon D_{\epsilon}\omega^{(m)}),\\
\partial^2_{y}\omega^{(m+1)}=&\sum_{j=1}^n E^{(m+1)}_j\tilde{r}_j(\epsilon D_{\epsilon}
\omega^{(m)}).
\end{align*}
Then we can estimate $\nabla^2 K^{(m)}$, $\nabla^2 h^{(m)}$ and $\nabla^2 \omega^{(m)}$ as follows.
\begin{lemma}\label{L:2.11}
The following estimates hold
\begin{equation}\label{eq:2.44}
|\nabla^2 K^{(m)}|\leq \mathcal{K}_2,\; |\nabla^2 h^{(m)}|\leq \mathcal{H}^0_2, \;|\nabla^2 \omega^{(m)}|\leq \mathcal{W}_2,
\end{equation}
where $|\partial^2_t K^{(m)}|\leq C\epsilon$, $|\partial^2_t h^{(m)}|\leq C\epsilon$, $|\nabla^2 h^{(m)}_i|\leq \mathcal{H}^i_2$, $|\nabla^2 \tilde{h}^{(m)}|\leq \tilde{\mathcal{H}}_2\leq C\epsilon$,  $\mathcal{K}_2$, $\mathcal{H}^0_2$, $\mathcal{W}_2$,   $\mathcal{H}^i_2$ and $\tilde{\mathcal{H}}_2$ are some positive constants.
\end{lemma}
\begin{proof}
We will show that \eqref{eq:2.44} still holds for $m+1$.
At first, we estimate $\nabla^2 K^{(m+1)}$. In fact, it suffices only to treat $\partial^2_t K^{(m+1)}$ since
the other second order derivatives of $K^{(m+1)}$ can be proved analogously.

Taking the first and second order derivatives of \eqref{eq:2.12} with respect to $t$, respectively, one has
\begin{equation}\label{Eq:2.45}
\begin{split}
\partial^2_t K^{(m+1)}=&\epsilon^2(\nabla L^{(1)}(\epsilon D_{\epsilon}\omega^{(m)})
D_{\epsilon}\partial_t\omega^{(m)})^{\top}\tilde{h}^{(m)}K^{(m)}+\epsilon L^{(1)}\partial_t \tilde{h}^{(m)}K^{(m)}+\epsilon L^{(1)}\tilde{h}^{(m)}\partial_t K^{(m)}\\
&+\epsilon^2\nabla(\nabla\lambda_i\cdot r_i^{(m)})D_{\epsilon}\partial_t\omega^{(n)}h_i^{(m)}
+\epsilon\nabla\lambda_i\cdot r_i(\epsilon D_\epsilon\omega^{(m)})\partial_t h^{(m)}_i
\end{split}
\end{equation}
and
\begin{equation}\label{Eq:2.46}
\begin{split}
|\partial^3_t K^{(m+1)}|\leq & C \epsilon |\partial^2_{t} h^{(m)}|+C\epsilon^2\Big(|\partial^2_t K^{(m)}|+|\partial^2_t \omega^{(m)}|\Big).
\end{split}
\end{equation}
Similarly, we have
\begin{equation}\label{Eq:2.71*}
|\partial^3_{tty} K^{(m+1)}|\leq C\epsilon|\partial^2_{ty} h^{(m)}|+C\epsilon^2\Big(|\partial^2_{ty}K^{(m)}|+|\partial^2_{ty}\omega^{(m)}|\Big)+C\epsilon^2
\end{equation}
and
\begin{equation}\label{Eq:2.72*}
|\partial^3_{tyy} K^{(m+1)}|\leq C\epsilon |\partial^2_{y} h^{(m)}|+C\epsilon^2\Big(|\partial^2_{y}K^{(m)}|+|\partial^2_{y}\omega^{(m)}|\Big)+C\epsilon^2.
\end{equation}
Integrating \eqref{Eq:2.46}-\eqref{Eq:2.72*} with respect to $t$ yields
\[
\max_{y\in [a_0, b_0]\atop s\in [t_0, t_1]}|\nabla^2 K^{(m+1)}|\leq \mathcal{K}^0_2+C\mathcal{H}^0_2, \quad \text{ with }\mathcal{K}^0_2:=\max_{y\in [a_0, b_0]}|\nabla^2 K^{(m+1)}(y, t_0)|.
\]
In particular, it holds that
\[
|\partial^2_t K^{(m+1)}(y, t)|\leq C\epsilon.
\]

Next, we derive the boundedness of $\nabla^2 h^{(m+1)}$.
Differentiating \eqref{eq:2.13} with respect to $y$ twice, then $\partial^2_{y}h^{(m+1)}_i$ satisfies
\begin{equation}\label{eq:2.48*}
\begin{split}
|\partial_t\partial^2_y h^{(m+1)}_i|\leq& C\epsilon^2 |\partial^2_{y} h^{(m)}|+C\epsilon^3|\partial^2_{y}K^{(m)}|+C\epsilon^3|\partial^2_{y}\omega^{(m)}|+C\epsilon|\partial^2_y\tilde{h}^{(m)}|+C\epsilon^2.
\end{split}
\end{equation}
Then integrating  \eqref{eq:2.48*} with respect to $t$ in the interval $[t_0, t]$ yields
\[
|\partial^2_{y}h^{(m+1)}_i|\leq |\partial^2_y h^{(m+1)}_{i}(y, t_0)|+C|\partial^2_y\tilde{h}^{(m)}|+C\epsilon |\partial^2_y h^{(m)}|.
\]
Similarly, one has
\[
\begin{split}
|\partial^3_t h^{(m+1)}_i|&\leq C\epsilon |\partial^2_{t}\tilde{h}^{(m)}|+C\epsilon^2 |\partial^2_{t} h^{(m)}|+C\epsilon^3+C\epsilon^3|\partial^2_{t}K^{(m)}|+C\epsilon^3|\partial^2_{t}\omega^{(m)}|,\\
|\partial^3_{tty} h^{(m+1)}_i|&\leq C\epsilon |\partial^2_{ty}\tilde{h}^{(m)}|+C\epsilon^2 |\partial^2_{ty} h^{(m)}|+C\epsilon^2+C\epsilon^3|\partial^2_{ty}K^{(m)}|+C\epsilon^3|\partial^2_{ty}\omega^{(m)}|.
\end{split}
\]
Therefore, it holds that
\[
\max_{y\in [a_0, b_0]\atop t\in [t_0, t_1]}|\nabla^2 h^{(m+1)}_i|\leq \mathcal{H}^{i0}_2+C\tilde{\mathcal{H}}_2\;\;
\text{ with }\mathcal{H}^{i0}_2=\max_{y\in [a_0, b_0]}|\nabla^2 h^{(m+1)}_i(y, t_0)|.
\]
In particular,
\[
|\partial^2_{t} h^{(m+1)}_i|\leq |\partial^2_{t}h^{(m+1)}_i(y, t_0)|+C|\partial^2_{t} \tilde{h}^{(m)}|\leq C\epsilon.
\]

In addition, it follows from direct but tedious computation that
\begin{equation}\label{eq:2.50*}
\begin{split}
&\big|\Big(K^{(m)}\partial_t+(\lambda_j-\lambda_i)(\epsilon D_\epsilon\omega^{(m)})\partial_y\Big)\partial^2_{t}h^{(m+1)}_j+2
\partial_t K^{(m)}\partial^2_{t}h^{(m+1)}_j\\
+&\epsilon h^{(m)}_i\left((r_i(\epsilon D_\epsilon\omega^{(m)}))^{\top}Q^{(3)}\partial^2_t\tilde{h}^{(m+1)}
+\nabla\lambda_i\cdot r_i(\epsilon D_{\epsilon}\omega^{(m)})\partial^2_t h^{(m+1)}_j\right)\big|\\
\leq& C\epsilon^2+C\epsilon^2|\partial^2_{t}h^{(m)}|+C\epsilon^2 |\partial^2_{t}\tilde{h}^{(m+1)}|
+C\epsilon^2|\partial^2_{t}\omega^{(m)}|,
\end{split}
\end{equation}
where we have used the fact that
\begin{equation}\label{Eq:2.88*}
|\partial^2_t K^{(m)}|\leq C\epsilon, \;\;|\partial^2_{yt}h^{(m+1)}_j|\leq C|\partial^2_t h^{(m+1)}_j|+C\epsilon^2.
\end{equation}

This derives that
\[
\sum_{j\neq i}|\partial^2_{t} h^{(m+1)}_j|\leq \tilde{\mathcal{H}}^0_2 +C\epsilon^2+C\epsilon^2(\mathcal{W}_2+\mathcal{H}^0_2)
+C\epsilon|\partial^2_{t}\tilde{h}^{(m+1)}|
\]
and
\[
\max_{y\in[a_0, b_0]\atop t\in [t_0, t_1]}\sum_{j\neq i}|\partial^2_{t} h^{(m+1)}_j|\leq C\tilde{\mathcal{H}}^0_2\;\; \text{ with }\tilde{\mathcal{H}}^0_2=\max\Big(\displaystyle\max_{y\in[a_0, b_0]}\sum_{j\neq i}|\partial^2_t h^{(m+1)}_j(y, t_0)|, C\epsilon\Big).
\]
In particular, $|\partial^2_t h^{(m+1)}_j|\leq C\epsilon$. Differentiating \eqref{eq:2.14} with respect to $y$
and taking direct estimates yield that
\[
|\partial^2_{y}h^{(m+1)}_j|\leq C|\partial^2_{yt}h^{(m+1)}_j|+C\epsilon,
\]
along with \eqref{Eq:2.88*}, this implies
\[
\max_{y\in [a_0, b_0]\atop t\in [t_0, t_1]}\sum_{j\neq i}|\partial^2_{yt}h^{(m+1)}_j|\leq C\tilde{\mathcal{H}}^0_2\leq C\epsilon,
\quad \max_{y\in [a_0, b_0\atop t\in [t_0, t_1]}\sum_{j\neq i}|\partial^2_{y}h^{(m+1)}_j|\leq C\tilde{\mathcal{H}}^0_2\leq C\epsilon, \quad
j\neq i.
\]

In the following, we estimate $\partial^2_{t}\omega^{(m+1)}_j (j\neq i)$. Differentiating \eqref{eq:2.16} with respect to $t$ twice
and taking direct but tedious computations, one has
\[
\begin{split}
&\big|\Big(K^{(m)}\partial_t+(\lambda_j-\lambda_i)(\epsilon D_\epsilon\omega^{(m)})\partial_y\Big)\bar{q}^{(m+1)}_j+2\partial_t K^{(m)}\bar{q}^{(m+1)}_j\big|\\
\leq &C\epsilon^2\max_{y\in [a_0, b_0]}\sum_{j\neq i}\Big(|\bar{q}^{(m+1)}_j|+|\bar{z}^{(m+1)}_j|\Big)+C\epsilon^2\sum_{j\neq i}|\bar{q}^{(m)}_j|+C\epsilon|\partial^2_t K^{(m)}|
\end{split}
\]
and
$$|\bar{z}^{(m+1)}_j|\leq C|\bar{q}^{(m+1)}_j|+C\epsilon^2.$$
It follows from  the characteristics method that
\begin{equation}\label{eq:2.49*}
\sum_{j\neq i}|\bar{q}^{(m+1)}_j|\leq \tilde{\mathcal{Q}}_2+C\epsilon \sum_{j\neq i}|\bar{q}^{(m+1)}_j|
+C\epsilon^2\max_{y\in [a_0, b_0] \atop t\in [t_0, t_1]}\sum_{j\neq i}|\bar{q}^{(m)}_j(y, t)|+C\epsilon^2,
\end{equation}
where
\[
\tilde{\mathcal{Q}}_2:=\max\Big(\max_{y\in[a_0, b_0]}\sum_{j\neq i}|\bar{q}^{(m+1)}_j(\cdot, t_0)|, C\epsilon\Big).
\]

\vspace{0.1cm}
Differentiating \eqref{eq:2.16} with respect to $y$ yields that
\[
\begin{split}
(\lambda_j-\lambda_i)(\epsilon D_\epsilon\omega^{(m)})E^{(m+1)}_j=&-K^{(m)}\bar{z}^{(m+1)}_j-\partial_y K^{(m)}\bar{h}^{(m+1)}_j
-\epsilon\nabla(\lambda_j-\lambda_i)D_\epsilon\partial_y\omega^{(m)}\tilde{l}_j(\epsilon D_\epsilon\omega^{(m)})\\
&-\epsilon(\nabla\tilde{l}_j D_\epsilon\partial_y\omega^{(m)})(K^{(m)}\partial_t\omega^{(m+1)}
+(\lambda_j-\lambda_i)(\epsilon D_\epsilon\omega^{(m)})
\partial_y\omega^{(m+1)}),
\end{split}
\]
which implies that $|E^{(m+1)}_j|\leq C|\bar{z}^{(m+1)}_j|+C\epsilon$.

Thus
\[
\sum_{j\neq i}|\bar{q}^{(m+1)}_j|\leq C\tilde{\mathcal{Q}}_2\leq C\epsilon\tilde{\mathcal{W}}_2,\;\; \tilde{\mathcal{W}}_2:=\max_{y\in[a_0, b_0]}\sum_{j\neq i}|\partial^2_t\omega^{(m+1)}_j(y, t_0)|,
\]
and
\begin{equation}\label{eq:2.51}
\sum_{j\neq i}|\bar{z}^{(m+1)}_j|\leq C\epsilon\tilde{\mathcal{W}}_2,
\quad \sum_{j\neq i}|E^{(m+1)}_j|\leq C\epsilon\tilde{\mathcal{W}}_2.
\end{equation}

Next, we establish the estimates of $E^{(m+1)}_i, \bar{q}^{(m+1)}_i$ and $\bar{z}^{(m+1)}_i$.
By \eqref{eq:2.15}, one has
\begin{equation}\label{Eq:2.57}
\begin{split}
|\partial_t E^{(m+1)}_i|=&\big|\epsilon(\nabla\tilde{l}_iD_{\epsilon}\partial_t\omega^{(m)})^{\top}\sum_{j=1}^n E^{(m+1)}_j\tilde{r}^{(m)}_j-\partial^2_{y}\tilde{l}_i(\epsilon D_{\epsilon}\omega^{(m)})\sum_{k\neq i}\bar{h}^{(m+1)}_k\tilde{r}_k^{(m)}\\
&-2\epsilon(\nabla\tilde{l}_i D_{\epsilon}\partial_y\omega^{(m)})^{\top}\Big(\sum_{k\neq i}\bar{z}^{(m+1)}_k\tilde{r}^{(m)}_k
-\epsilon(\nabla \tilde{l}_i D_{\epsilon}\partial_y\omega^{(m)})^{\top}\partial_t\omega^{(m+1)}\tilde{r}^{(m)}_i(\epsilon D_{\epsilon}\omega^{(m)})\Big)\big|\\
\lesssim& \epsilon^2\sum_{k=1}^{n}|E^{(m+1)}_k|+C\epsilon\sum_{k\neq i}|\bar{z}^{(m+1)}_k|+C\epsilon^2 |E^{(m)}_i|.
\end{split}
\end{equation}
Then
\begin{equation}\label{Eq:2.93}
|E^{(m+1)}_i|\lesssim |E^{(m+1)}_{i}(y, t_0)|+C\epsilon\sum^n_{k=1}|E^{(m+1)}_k|
+C\sum_{k\neq i}|\bar{z}^{(m+1)}_k|+C\epsilon |E^{(m)}_i|.
\end{equation}
In addition,
\begin{align}
\big|\partial_t\bar{q}^{(m+1)}_i\big|=&\big|\partial^2_t\tilde{l}_i(\epsilon D_{\epsilon}\omega^{(m)})\partial_t\omega^{(m+1)}
+\epsilon(\nabla\tilde{l}_iD_{\epsilon}\partial_t\omega^{(m)})^{\top}\partial^2_t\omega^{(m+1)}\big|
\lesssim \epsilon^2\sum^n_{j=1}|\bar{q}^{(m+1)}_j|+\epsilon^2 \mathcal{W}_2,\label{Eq:2.58}\\
\big|\partial_t\bar{z}^{(m+1)}_i\big|=&\big|\partial^2_{yt}\tilde{l}_i(\epsilon D_{\epsilon}\omega^{(m)})\partial_t\omega^{(m+1)}
+\epsilon(\nabla\tilde{l}_iD_{\epsilon}\partial_y\omega^{(m)})^{\top}\partial^2_{t}\omega^{(m+1)}\big|
\lesssim \epsilon\sum^n_{j=1}|\bar{q}^{(m+1)}_j|+\epsilon^2\mathcal{W}_2.\label{Eq:2.59}
\end{align}
Collecting \eqref{Eq:2.93}, \eqref{Eq:2.58} and \eqref{Eq:2.59} yields
\begin{equation*}
|\bar{q}^{(m+1)}_i|+|\bar{z}^{(m+1)}_i|+|E^{(m+1)}_i|\leq C\epsilon \tilde{\mathcal{Q}}_2+
|\bar{q}^{(m+1)}_i(y, t_0)|+|\bar{z}^{(m+1)}_i(y, t_0)|+|E^{(m+1)}_i(y, t_0)|.
\end{equation*}
This means
\[
\max_{y\in [a_0, b_0]\atop t\in [t_0, t_1]}\sum^n_{j=1}\Big(|\bar{q}^{(m+1)}_j|+|\bar{z}^{(m+1)}_j|+|E^{(m+1)}_j|\Big)\leq C\epsilon\tilde{\mathcal{W}}_2+C\mathcal{W}^{i0}_2,
\]
where $\mathcal{W}^{i0}_2=\displaystyle\max_{y\in [a_0, b_0]}(|\bar{q}^{(m+1)}_i(y, t_0)|
+|\bar{z}^{(m+1)}_i(y, t_0)|+|E^{(m+1)}_i(y, t_0)|)$.

In conclusion, when $\epsilon>0$  is small, we can choose
\[
\mathcal{K}_2=\mathcal{K}^0_2+C\mathcal{H}^0_2, \;\;\mathcal{H}^0_2=C(\mathcal{H}^{i0}_2+\tilde{\mathcal{H}}_2),\; \mathcal{W}_2=C(\mathcal{W}^{0i}_2+\tilde{\mathcal{W}}_2),\; \mathcal{H}^i_2=C\mathcal{H}^{i0}_2,
\]
with $\tilde{\mathcal{H}}_2=C\tilde{\mathcal{H}}^0_2\leq C\epsilon$ such that \eqref{eq:2.44} holds for $m+1$.

Therefore, we complete the proof of this lemma.
\end{proof}

\vspace{0.2cm}
\underline{\textbf{Step $4$. The boundedness of $\nabla^3 K^{(m+1)}$, $\nabla^3 h^{(m+1)}$ and
$\nabla^3\omega^{(m+1)}$}}

\vspace{0.2cm}

\begin{lemma}\label{L:2.12}
It holds that
\begin{equation}\label{eq:2.65**}
|\nabla^3 K^{(m)}|\leq \mathcal{K}_3,\; |\nabla^3 h^{(m)}|\leq \mathcal{H}^0_3, \; |\nabla^3 \omega^{(m)}|\leq \mathcal{W}_3.
\end{equation}
In particular, $|\partial^3_t K^{(m)}|\leq C\epsilon$, $|\partial^3_t h^{(m)}|\leq C\epsilon$, $|\nabla^3 h^{(m)}_i|\leq \mathcal{H}^i_3$,  $|\nabla^3 \tilde{h}^{(m)}|\leq \tilde{\mathcal{H}}_3\leq C\epsilon$,  $\mathcal{K}_3$, $\mathcal{H}^0_3$, $\mathcal{H}^i_3$, $\tilde{\mathcal{H}}_3$ and $\mathcal{W}_3$ are
positive constants determined later.
\end{lemma}

\begin{proof}
We need to show that \eqref{eq:2.65**} still holds valid for $m+1$. First of all,
we establish the boundedness of $\partial^3_yK^{(m+1)}$, $\partial^3_{yyt}K^{(m+1)}$, $\partial^3_{ytt} K^{(m+1)}$
and $\partial^3_t K^{(m+1)}$.

It follows from \eqref{eq:2.12} and direct computations that
\begin{equation}\label{eq:2.66}
|\partial_t\partial^3_y K^{(m+1)}|\leq C\epsilon |\partial^3_y h^{(m)}|
+C\epsilon^2(|\partial^3_y\omega^{(m)}|+|\partial^3_y K^{(m)}|)+C\epsilon^2.
\end{equation}
Similarly, one has
\[
\begin{split}
&|\partial^4_{t}K^{(m+1)}|\leq C\epsilon |\partial^3_{t}h^{(m)}|+C\epsilon^2 (|\partial^3_{t}\omega^{(m)}|+|\partial^3_{t}K^{(m)}|)+C\epsilon^3,\\[5pt]
&|\partial_t\partial^3_{yyt} K^{(m+1)}|\leq C\epsilon |\partial^3_{yyt}h^{(m)}|+C\epsilon^2(|\partial^3_{yyt}\omega^{(m)}|+|\partial^3_{yyt}K^{(m)}|)+C\epsilon^2,\\[5pt]
&|\partial_t\partial^3_{tty} K^{(m+1)}|\leq C\epsilon |\partial^3_{tty}h^{(m)}|+C\epsilon^2(|\partial^3_{tty}\omega^{(m)}|+|\partial^3_{tty}K^{(m)}|)+C\epsilon^2.
\end{split}
\]
In conclusion, we have
\begin{equation*}
\max_{y\in [a_0, b_0]\atop t\in [t_0, t_1]}|\nabla^3 K^{(m+1)}|\leq \mathcal{K}^0_3+C \mathcal{H}^0_3,\; \;\;\text{ with }\; \mathcal{K}^0_3:=\max_{y\in [a_0, b_0]}|\nabla^3 K^{(m+1)}(y, t_0)|.
\end{equation*}
In particular, it holds
\begin{equation}
|\partial^3_{t} K^{(m+1)}|\leq |\partial^3_{t} K^{(m+1)}(y, t_0)|+C|\partial^3_{t} h^{(m)}|\leq C\epsilon.
\end{equation}

Note that
\begin{equation}\label{eq:2.65}
\begin{split}
|\partial_t\partial^3_y h^{(m+1)}_i|\leq C\epsilon|\partial^3_{y}\tilde{h}^{(m)}|+C\epsilon^2|\partial^3_{y} h^{(m)}|+C\epsilon^2.
\end{split}
\end{equation}
Then this derives
\[
|\partial^3_y h^{(m+1)}_i|\leq |\partial^3_{y}h^{(m+1)}_i(y, t_0)|+C|\partial^3_{y}\tilde{h}^{(m)}|.
\]
Analogously,
\[
\begin{split}
&|\partial^3_{t} h^{(m+1)}_i|\leq |\partial^3_{t}h^{(m+1)}_i(y, t_0)|+C|\partial^3_{t}\tilde{h}^{(m)}|,\\[5pt]
&|\partial^3_{tty} h^{(m+1)}_i|\leq |\partial^3_{tty}h^{(m+1)}_i(y, t_0)|+C|\partial^3_{tty}\tilde{h}^{(m)}|,\\[5pt]
&|\partial^3_{tyy} h^{(m+1)}_i|\leq |\partial^3_{tyy}h^{(m+1)}_i(y, t_0)|+C|\partial^3_{tyy}\tilde{h}^{(m)}|.
\end{split}
\]
Therefore, it holds that
\begin{equation*}
\max_{y\in [a_0, b_0], t\in [t_0, t_1]}|\nabla^3 h^{(m+1)}_i(y, t)|\leq \mathcal{H}^{i0}_3+C\tilde{\mathcal{H}}_3\;\;\text{ with } \mathcal{H}^{i0}_3=\max_{y\in [a_0, b_0]}|\nabla^3 h^{(m+1)}_i(y, t_0)|.
\end{equation*}

In the following, we derive the estimate of  $\nabla^3 h^{(m+1)}_j$ for $j\neq i$. It only suffices to
derive the boundedness of $\partial^3_t h^{(m+1)}_j$. Analogous to \eqref{eq:2.50*}, we have
\[
\begin{split}
&\Big|\big(K^{(m)}\partial_t+(\lambda_j-\lambda_i)(\epsilon D_\epsilon\omega^{(m)})\partial_y\big)\partial^3_th^{(m+1)}_j+3\partial_tK^{(m)}\partial^3_th^{(m+1)}_j\Big|\\
\leq &C\epsilon^2+C\epsilon|\partial^3_t K^{(m)}|+C\epsilon|\partial^3_t\tilde{h}^{(m+1)}|+C\epsilon^2|\partial^3_t h^{(m)}|+C\epsilon^2|\partial^3_t\omega^{(m)}|,
\end{split}
\]
where the fact $|\partial^3_{ytt}h^{(m+1)}_j|\leq C|\partial^3_t h^{(m+1)}_j|+C\epsilon^2$ is used. Denote
\[
\tilde{\mathcal{H}}^0_3=\max\Big(\max_{y\in[a_0, b_0]}\sum_{j\neq i}|\partial^3_t h^{(m+1)}_j(y, t_0)|, C\epsilon\Big),
\]
then
\[
\begin{split}
\sum_{j\neq i}|\partial^3_t h^{(m+1)}_j|\leq \tilde{\mathcal{H}}^0_3
+C\epsilon\sum_{j\neq i}|\partial^3_t h^{(m+1)}_j|+C\epsilon^2(\mathcal{H}^0_3+\mathcal{W}_3)+
C\epsilon^2.
\end{split}
\]
This derives
\[
\max_{y\in [a_0, b_0], \atop t\in [t_0, t_1]}\sum_{j\neq i}|\partial^3_t h^{(m+1)}_j|\leq C\tilde{\mathcal{H}}^0_3,
\quad \max_{y\in [a_0, b_0], \atop t\in [t_0, t_1]}\sum_{j\neq i}|\partial^3_{ytt} h^{(m+1)}_j|\leq C\tilde{\mathcal{H}}^0_3.
\]
On the other hand, based on \eqref{eq:2.14}, we can deduce that
\[
|\partial^3_{yyt} h^{(m+1)}_j|\leq C|\partial^3_{tty}h^{(m+1)}_j|+C\epsilon, \quad
|\partial^3_{y} h^{(m+1)}_j|\leq C|\partial^3_{yyt}h^{(m+1)}_j|+C\epsilon.
\]
Therefore
\begin{equation*}
\max_{y\in [a_0, b_0], t\in [t_0, t_1]}|\nabla^3 h^{(m+1)}_j|\leq C\tilde{\mathcal{H}}^0_3.
\end{equation*}

Next, we deal with $\nabla^3\omega^{(m+1)}_j$ for $j\neq i$.
For convenience,  some notations are introduced
\[
\begin{split}
&F^{(m+1)}_j:=\tilde{l}_j(\epsilon D_{\epsilon}\omega^{(m)})\partial^3_y\omega^{(m+1)},\;\;
G^{(m+1)}_j:=\tilde{l}_j(\epsilon D_{\epsilon}\omega^{(m)})\partial^3_{tyy}\omega^{(m+1)},\\
&J^{(m+1)}_j:=\tilde{l}_j(\epsilon D_{\epsilon}\omega^{(m)})\partial^3_{ytt}\omega^{(m+1)},\;\;
L^{(m+1)}_j:=\tilde{l}_j(\epsilon D_{\epsilon}\omega^{(m)})\partial^3_{t}\omega^{(m+1)}.
\end{split}
\]

Differentiating \eqref{eq:2.16} with respect to $t$ three times and taking direct computations, it holds that for $j\neq i$,
\begin{equation}\label{eq:2.59}
\begin{split}
&(\lambda_j-\lambda_i)J^{(m+1)}_j+K^{(m)}L^{(m+1)}_j+\partial^2_tK^{(m)}\bar{h}^{(m+1)}_j+2\epsilon
\nabla(\lambda_j-\lambda_i)D_\epsilon\partial_t\omega^{(m)}\bar{z}^{(m+1)}_j+2\partial_tK^{(m)}\bar{q}^{(m+1)}_j\\
&+\partial^2_t\tilde{l}_j(K^{(m)}\partial_t\omega^{(m+1)}+(\lambda_j-\lambda_i)^{(m)}\partial_y\omega^{(m+1)})
+\partial^2_t(\lambda_j-\lambda_i)^{(m)}\tilde{l}_j(\epsilon D_\epsilon\omega^{(m)})\partial_y\omega^{(m+1)}\\
&+2\epsilon(\nabla\tilde{l}_jD_{\epsilon}\partial_t\omega^{(m)})^{\top}
\partial_t\Big(K^{(m)}\partial_t\omega^{(m+1)}+(\lambda_j-\lambda_i)(\epsilon D_\epsilon\omega^{(m)})\partial_y\omega^{(m+1)}\Big)=0,
\end{split}
\end{equation}
and
\begin{equation}\label{eq:2.74}
\begin{split}
&|\Big(K^{(m)}\partial_t+(\lambda_j-\lambda_i)(\epsilon D_{\epsilon}\omega^{(m)})\partial_y\Big)L^{(m+1)}_j+3\partial_tK^{(m)}L^{(m+1)}_j|\\
\leq& C\epsilon^2 \mathcal{W}_3+C\epsilon^2+C\epsilon^2\sum_{j\neq i}(|L^{(m+1)}_j|+|J^{(m+1)}_j|).
\end{split}
\end{equation}
From \eqref{eq:2.59}, one can get
\begin{equation*}
|J^{(m+1)}_j|\leq C|L^{(m+1)}_j|+C\epsilon^2, \text{ for } j\neq i.
\end{equation*}
Let
\[
\tilde{\mathcal{Q}}_3:=\max\Big(\max_{y\in[a_0, b_0]}\sum_{j\neq i}|L^{(m+1)}_j(y, t_0)|, C\epsilon\Big).
\]
Then it follows from \eqref{eq:2.74} that
\begin{equation*}
\sum_{j\neq i}|L^{(m+1)}_j|\leq \tilde{\mathcal{Q}}_3+C\epsilon\sum_{j\neq i}|L^{(m+1)}_j|
+C\epsilon^2\mathcal{W}_3+C\epsilon^2,
\end{equation*}
and further
\begin{equation*}
\sum_{j\neq i}|L^{(m+1)}_j|\leq C\tilde{\mathcal{Q}}_3\leq C\epsilon\tilde{\mathcal{W}}_3,\quad \sum_{j\neq i}|J^{(m+1)}_j|\leq C\epsilon\tilde{\mathcal{W}}_3,
\end{equation*}
where $\tilde{\mathcal{W}}_3:=\displaystyle\max_{y\in[a_0, b_0]}\sum_{j\neq i}|\partial^3_t\omega^{(m+1)}_j(y, t_0)|$.

In addition, in terms of \eqref{eq:2.16}, one has
$|G^{(m+1)}_j|\leq C|J^{(m+1)}_j|+C\epsilon$.
Analogously, it is derived from \eqref{eq:2.16} that
$|F^{(m+1)}_j|\leq C|G^{(m+1)}_j|+C\epsilon$.
Therefore,
\begin{equation}\label{Eq:2.113}
\max_{y\in [a_0, b_0],\atop t\in [t_0, t_1]}\sum_{j\neq i}\Big(|L^{(m+1)}_j|+|J^{(m+1)}_j|+|F^{(m+1)}_j|+|G^{(m+1)}_j|\Big)\leq C\epsilon\tilde{\mathcal{W}}_3.
\end{equation}
Meanwhile, we can derive the estimate of $F^{(m+1)}_i$ from \eqref{eq:2.15} that
\begin{equation}\label{eq:2.75}
\begin{split}
|\partial_t F^{(m+1)}_i|=&|\epsilon(\nabla\tilde{l}_i^{(m)}D_{\epsilon}\partial_t\omega^{(m)})^{\top}\partial^3_y\omega^{(m+1)}-
\partial^3_{y}\tilde{l}_i^{(m)}
\partial_t\omega^{(m+1)}-3\partial^2_y\tilde{l}^{(m)}_i\partial^2_{yt}\omega^{(m+1)}
\\
&\quad-3\epsilon(\nabla\tilde{l}_i D_\epsilon\partial_y\omega^{(m)})^{\top}\partial^3_{yyt}\omega^{(m+1)}|\\
\leq& C\epsilon^2\sum_{k=1}^n|F^{(m+1)}_k|+C\epsilon\sum_{k=1}^n|G^{(m+1)}_k|+C\epsilon^2\mathcal{W}_3+C\epsilon^2.
\end{split}
\end{equation}
Based on \eqref{eq:2.15}, we can directly obtain that
\begin{equation}\label{eq:2.81}
\begin{split}
|\partial_t G^{(m+1)}_i|
\leq& C\epsilon\sum^n_{k=1}|J^{(m+1)}_k|+C\epsilon^2\mathcal{W}_3+C\epsilon^2, \\
|\partial_t J^{(m+1)}_i|
\leq& C\epsilon^2\sum^n_{k=1}|J^{(m+1)}_k|+C\epsilon\sum^n_{k=1}|L^{(m+1)}_k|+C\epsilon^2\mathcal{W}_3,\\
|\partial_t L^{(m+1)}_i|
\leq& C\epsilon^2\sum_{k=1}^n|L^{(m+1)}_k|+C\epsilon^2\mathcal{W}_3+C\epsilon^2.
\end{split}
\end{equation}
Thus, it follows from \eqref{eq:2.81} that
\begin{equation}\label{eq:2.85}
\begin{split}
&|F^{(m+1)}_i|+|G^{(m+1)}_i|+|L^{(m+1)}_i|+|J^{(m+1)}_i|\\
\leq& C\Big(|F^{(m+1)}_{i}(y, t_0)|+|G^{(m+1)}_{i}(y, t_0)|+|L^{(m+1)}_{i}(y, t_0)|+|J^{(m+1)}_{i}(y, t_0)|\Big)+C\epsilon
\tilde{\mathcal{W}}_3.
\end{split}
\end{equation}
Collecting \eqref{Eq:2.113} and \eqref{eq:2.85} yields
\begin{equation}\label{Eq:2.118}
\max_{y\in [a_0, b_0]\atop t\in [t_0, t_1]}\sum^n_{k=1}\Big(|F^{(m+1)}_k|+|G^{(m+1)}_k|+|L^{(m+1)}_k|+|J^{(m+1)}_k|\Big)\leq C\epsilon \tilde{\mathcal{W}}_3+\mathcal{W}^{i0}_3,
\end{equation}
where
\[
\mathcal{W}^{i0}_3:=\max_{y\in[a_0, b_0]}\Big(|F^{(m+1)}_{i}(y, t_0)|+|G^{(m+1)}_{i}(y, t_0)|+|L^{(m+1)}_{i}(y, t_0)|+|J^{(m+1)}_{i}(y, t_0)|\Big).
\]

In conclusion, for small $\epsilon>0$, we can choose
\[
\mathcal{K}_3=\mathcal{K}^0_3+C\mathcal{H}^0_3, \;\; \mathcal{H}^0_3=C(\mathcal{H}^{i0}_3+\tilde{\mathcal{H}}^0_3), \;\; \mathcal{W}_3=
C(\tilde{\mathcal{W}}_3+\mathcal{W}^{i0}_3),\; \mathcal{H}^i_3=\mathcal{H}^{i0}_3+C\tilde{\mathcal{H}}_3,
\]
with $\tilde{\mathcal{H}}_3=C\tilde{\mathcal{H}}^0_3$ such that the estimates in \eqref{eq:2.65**} hold for $m+1$.
\end{proof}

\vspace{0.2cm}
\underline{\textbf{Step $5$. The convergence of the approximate solutions}}

\vspace{0.2cm}
By the uniform boundedness of the approximate solutions $(K^{(m)},  h^{(m)}_i, \tilde{h}^{(m)}, \omega^{(m)})$
established in Step 1-Step 4,  we start to show the uniform convergence of  $(K^{(m)},  h^{(m)}_i, \tilde{h}^{(m)}, \omega^{(m)})$
in $D$. In this case, if we set $(K,  h_i, \tilde{h}, \omega)=\ds\lim_{m\to\infty}(K^{(m)},  h^{(m)}_i, \tilde{h}^{(m)},
\omega^{(m)})$ in $C^2(\bar D)$, then  $(K,  h_i, \tilde{h}, \omega)$ is a classical solution
to problem \eqref{eq:2.98}.

At first, by an analogous argument in Step 1, one can obtain that there exists a uniform constant $C>0$ such that
for $(y,t)\in D$ and all $m\in\Bbb N^+$
\begin{equation}\label{YH-4}
|K^{(m)}-K^{(0)}|\le C\ve.
\end{equation}

Note that due to $T_{\ve}+1-t_0\sim \f{1}{\ve}$, we then have from \eqref{eq:2.12}-\eqref{eq:2.13}
that by the direct integrals on the time $t$,
\begin{equation}\label{YH-5}
|K^{(m+1)}-K^{(m)}|\le \bar M_0|h^{(m)}-h^{(m-1)}|+\text{``contractible terms"}
\end{equation}
and
\begin{equation}\label{YH-6}
|h^{(m+1)}-h^{(m)}|\le \bar M_0|h^{(m)}-h^{(m-1)}|+\text{``contractible terms"},
\end{equation}
where $\bar{M}_0$ is a positive constant independent of $\epsilon$.
This will arise the  difficulty for us to show the Cauchy sequence property of
$(K^{(m)},  h^{(m)}_i, \tilde{h}^{(m)}, \omega^{(m)})$
in $D$ (since it is unknown whether the constant $\bar M_0<1$ in \eqref{YH-5}-\eqref{YH-6} holds or not).
In order to overcome this difficulty, our strategy is to
divide  the time interval $[t_0, T_\epsilon+1]$ into $N$ subintervals as
\begin{equation*}
\begin{split}
&I_1=[t_0, t_0+\frac{T_\epsilon+1-t_0}{N}],\; I_2=[t_0+\frac{T_\epsilon+1-t_0}{N}, t_0+\frac{2(T_\epsilon+1-t_0)}{N}],\;\cdots,\\ &I_N=[t_0+\frac{N-1}{N}(T_\epsilon+1-t_0), T_\epsilon+1],
\end{split}
\end{equation*}
where $N<\f{T_{\ve}+1-t_0}{2}$ is a suitably large integer independent of $\ve$, and prove that  $(K^{(m)}, h^{(m)}_i, \tilde{h}^{(m)},  \omega^{(m)})$
is a Cauchy sequence in any subinterval $I_k$ ($1\le k\le N$) by utilizing the length $|I_k|\sim \f{1}{N\ve}$
and replacing $\bar M_0$ in \eqref{YH-5}-\eqref{YH-6} by the constant $\f{\bar M_0}{N}$ ($\f{\bar M_0}{N}<1$ holds due to
the largeness of $N$).

\vspace{0.2cm}
For $t\in I_1$, set
\[
\mathcal{K}=K^{(m+1)}-K^{(m)},\;\mathcal{I}=h^{(m+1)}_i-h^{(m)}_i,\; \mathcal{J}=h^{(m+1)}_j-h^{(m)}_j,\;\tilde{\mathcal{J}}=\sum_{j\neq i}(h^{(m+1)}_j-h^{(m)}_j).
\]
Then it is derived from \eqref{eq:2.12} that
\[
\begin{split}
\begin{cases}
\partial_t\mathcal{K}=\epsilon L^{(1)}(\epsilon D_\epsilon\omega^{(m)})\tilde{h}^{(m)}K^{(m)}-\epsilon L^{(1)}(\epsilon D_\epsilon\omega^{(m-1)})\tilde{h}^{(m-1)}K^{(m-1)}+\epsilon \nabla\lambda_i\cdot r_i(\epsilon D_\epsilon\omega^{(m)})h^{(m)}_i\\[3pt]
\quad\qquad-\epsilon \nabla\lambda_i\cdot r_i(\epsilon D_\epsilon\omega^{(m-1)})h^{(m-1)}_i,\\[3pt]
\mathcal{K}(y, t_0)=0.
\end{cases}
\end{split}
\]
Integrating with respect to $t$ in the interval $I_1$ yields
\begin{equation}\label{YY:2.99}
|\mathcal{K}|\leq C\epsilon|K^{(m)}-K^{(m-1)}|+C\epsilon^2\sum_{j\neq i}|\omega^{(m)}_j-\omega^{(m-1)}_j|
+C\epsilon|\omega^{(m)}_i-\omega^{(m-1)}_i|+
\frac{C}{N}|h^{(m)}-h^{(m-1)}|.
\end{equation}
Similarly, $\mathcal{I}$ satisfies that
\[
\begin{split}
\begin{cases}
\partial_t\mathcal{I}=\epsilon (\tilde{h}^{(m)})^{\top}Q^{(1)}(\epsilon D_\epsilon\omega^{(m)})\tilde{h}^{(m)}K^{(m)}+\epsilon L^{(2)}(\epsilon D_\epsilon\omega^{(m)})\tilde{h}^{(m)}h^{(m)}_i\\[3pt]
\qquad\quad\;-\epsilon (\tilde{h}^{(m-1)})^{\top}Q^{(1)}(\epsilon D_\epsilon\omega^{(m-1)})\tilde{h}^{(m-1)}K^{(m-1)}
-\epsilon L^{(2)}(\epsilon D_\epsilon\omega^{(m-1)})\tilde{h}^{(m-1)}h^{(m-1)}_i,\\[3pt]
\mathcal{I}(y,  t_0)=0.
\end{cases}
\end{split}
\]
Thus, we have
\begin{equation}\label{Y:2.98}
\begin{split}
|\mathcal{I}|\leq& \frac{C}{N}|\tilde{h}^{(m)}-\tilde{h}^{(m-1)}|+C\epsilon|h^{(m)}_i-h^{(m-1)}_i|+C\epsilon^2|K^{(m)}-K^{(m-1)}|\\[3pt]
&+C\epsilon^3\sum_{j\neq i}|\omega^{(m)}_j-\omega^{(m-1)}_j|+C\epsilon^2|\omega^{(m)}_i-\omega^{(m-1)}_i|.
\end{split}
\end{equation}
In addition, it follows from  \eqref{eq:2.14} and direct computation that
\begin{equation}\label{Y:2.99}
\begin{split}
&|\Big(K^{(m)}\partial_t+(\lambda_j-\lambda_i)(\epsilon D_\epsilon\omega^{(m)})\partial_y\Big)\mathcal{J}|\\
\leq& C\epsilon |\tilde{\mathcal{J}}|+C\epsilon|K^{(m)}-K^{(m-1)}|+C\epsilon^3\sum_{j\neq i}|\omega^{(m)}_j
-\omega^{(m-1)}_j|\\
&\;+C\epsilon^2|\omega^{(m)}_i-\omega^{(m-1)}_i|
+C\epsilon^2|\tilde{h}^{(m)}-\tilde{h}^{(m-1)}|+C\epsilon^2|h^{(m)}_i-h^{(m-1)}_i|.
\end{split}
\end{equation}
Together with $K^{(m)}|_{t=t_0}>0$ and the initial-boundary conditions for $t\in I_1$
\[
\begin{cases}
\mathcal{J}(a_0, t)=0, \quad j=i+1, \cdots, n,\\[3pt]
\mathcal{J}(y,  t_0)=0,
\end{cases}
\]
or
\[
\begin{cases}
\mathcal{J}(b_0, t)=0, \quad j=1, \cdots, i-1,\\[3pt]
\mathcal{J}(y,  t_0)=0,
\end{cases}
\]
we have from \eqref{Y:2.99} and the characteristics method that
\begin{equation}\label{Y:2.100}
\begin{split}
|\mathcal{\tilde{J}}|
\leq& C\epsilon|K^{(m)}-K^{(m-1)}|+C\epsilon^3\sum_{j\neq i}|\omega^{(m)}_j
-\omega^{(m-1)}_j|+C\epsilon^2|\omega^{(m)}_i-\omega^{(m-1)}_i|+C\epsilon^2|h^{(m)}-h^{(m-1)}|.
\end{split}
\end{equation}
\vspace{0.1cm}
Next, we show that $\omega^{(m)}$ is a Cauchy sequence in $I_1$, which is equivalent to prove
the Cauchy sequence property of $p^{(m)}$. Denote
\[
\mathcal{P}_i=p^{(m+1)}_i-p^{(m)}_i,\; \mathcal{P}_j=p^{(m+1)}_j-p^{(m)}_j, \; \mathcal{P}=p^{(m+1)}-p^{(m)}.
\]
From \eqref{eq:2.23}, one has
\[
\begin{cases}
\partial_t\mathcal{P}_i+\epsilon(\tilde{h}^{(m)})^{\top}M_1(\epsilon D_\epsilon\omega^{(m)})\mathcal{P}+
\epsilon((\tilde{h}^{(m)})^{\top}-(\tilde{h}^{(m-1)})^{\top})M_1(\epsilon D_\epsilon\omega^{(m)})p^{(m)}\\
\quad\;\;\;+\epsilon(\tilde{h}^{(m-1)})^{\top}(M_1(\epsilon D_\epsilon\omega^{(m)})
-M_1(\epsilon D_\epsilon\omega^{(m-1)}))p^{(m)}=0,\\[5pt]
\mathcal{P}_i(y, t_0)=0.
\end{cases}
\]
Then this yields that in $I_1$,
\begin{equation}\label{Y:2.101}
|\mathcal{P}_i|\leq \frac{C}{N}|\tilde{h}^{(m)}-\tilde{h}^{(m-1)}|+C\epsilon|\mathcal{P}|
+C\epsilon^3\sum_{j\neq i}|\omega^{(m)}_j-\omega^{(m-1)}_j|+C\epsilon^2|\omega^{(m)}_i-\omega^{(m-1)}_i|.
\end{equation}
On the other hand, by \eqref{eq:2.24},  we have
\begin{equation}
\begin{split}
&|\Big(K^{(m)}\partial_t+(\lambda_j-\lambda_i)(\epsilon D_\epsilon\omega^{(m)})\partial_y\Big)\mathcal{P}_j|
\leq C\epsilon|\mathcal{P}|+C\epsilon|K^{(m)}-K^{(m-1)}|+C\epsilon^3\sum_{j\neq i}|\omega^{(m)}_j-\omega^{(m-1)}_j|\\
&\qquad +C\epsilon^2|\omega^{(m)}_i-\omega^{(m-1)}_i|
+C\epsilon|\tilde{h}^{(m)}-\tilde{h}^{(m-1)}|+C\epsilon|h^{(m)}_i-h^{(m-1)}_i|.
\end{split}
\end{equation}
Analogous to the estimate of $\mathcal{J}$, one can obtain
\begin{equation}\label{Y:2.103}
\begin{split}
\sum_{j\neq i}|\mathcal{P}_j|\leq &C\epsilon|K^{(m)}-K^{(m-1)}|+C\epsilon|h^{(m)}-h^{(m-1)}|+C\epsilon|\mathcal{P}|\\
&+C\epsilon^3\sum_{j\neq i}|\omega^{(m)}_j-\omega^{(m-1)}_j|+
C\epsilon^2|\omega^{(m)}_i-\omega^{(m-1)}_i|.
\end{split}
\end{equation}
Due to $\omega^{(m+1)}=\displaystyle\sum^n_{j=1}p^{(m+1)}_j\tilde{r}_j(\epsilon D_\epsilon\omega^{(m)})$, then for $k\neq i$,
\begin{equation}\label{Y:2.107}
\begin{split}
\epsilon|\omega^{(m+1)}_k-\omega^{(m)}_k|
\leq &C\sum_{j\neq i}|\mathcal{P}_j|+C\epsilon^2\sum_{j\neq i}|\omega^{(m)}_j-\omega^{(m-1)}_j|
+C\epsilon|\omega^{(m)}_i-\omega^{(m-1)}_i|\\
\leq &C\epsilon|K^{(m)}-K^{(m-1)}|+C\epsilon|h^{(m)}-h^{(m-1)}|+C\epsilon|\mathcal{P}|\\
&+C\epsilon^2\sum_{j\neq i}|\omega^{(m)}_j-\omega^{(m-1)}_j|+
C\epsilon|\omega^{(m)}_i-\omega^{(m-1)}_i|
\end{split}
\end{equation}
and
\begin{equation}\label{Y:2.108}
\begin{split}
|\omega^{(m+1)}_i-\omega^{(m)}_i|\leq & |\mathcal{P}|+C\epsilon^2\sum_{j\neq i}|\omega^{(m)}_j-\omega^{(m-1)}_j|+\epsilon|\omega^{(m)}_i-\omega^{(m-1)}_i|\\
\leq &C\epsilon|K^{(m)}-K^{(m-1)}|+\frac{C}{N}|\tilde{h}^{(m)}-\tilde{h}^{(m-1)}|+C\epsilon|h^{(m)}_i-h^{(m-1)}_i|+C\epsilon|\mathcal{P}|\\
&+C\epsilon^2\sum_{j\neq i}|\omega^{(m)}_j-\omega^{(m-1)}_j|+
C\epsilon|\omega^{(m)}_i-\omega^{(m-1)}_i|.
\end{split}
\end{equation}
Collecting \eqref{YY:2.99}, \eqref{Y:2.98}, \eqref{Y:2.100}, \eqref{Y:2.101} and \eqref{Y:2.103}--\eqref{Y:2.108} yields
\begin{equation}\label{Y:2.2.105}
\begin{split}
&|\mathcal{K}|+|\mathcal{I}|+|\mathcal{\tilde{J}}|+|\mathcal{P}|+\epsilon\sum_{j\neq i}|\omega^{(m+1)}_j-\omega^{(m)}_j|+|\omega^{(m+1)}_i-\omega^{(m)}_i|\\
\leq &C\epsilon|K^{(m)}-K^{(m-1)}|+C(\epsilon+\frac{1}{N})|\tilde{h}^{(m)}-\tilde{h}^{(m-1)}|+C(\epsilon+\frac{1}{N})|h^{(m)}_i-
h^{(m-1)}_i|\\
&+C\epsilon|\mathcal{P}|+C\epsilon^2\sum_{j\neq i}|\omega^{(m)}_j-\omega^{(m-1)}_j|+C\epsilon|\omega^{(m)}_i-\omega^{(m-1)}_i|.
\end{split}
\end{equation}
Thus, provided that $\epsilon$ is small and $N$ is suitably large
such that $C(\epsilon+\frac{1}{N})<1$, $(K^{(m)}, h^{(m)}_i, \tilde{h}^{(m)},  \omega^{(m)})$
is a Cauchy sequence in  $I_1$. By the analogous idea, when
$(K^{(m)}, h^{(m)}_i, \tilde{h}^{(m)},  \omega^{(m)})$ is shown to be
a Cauchy sequence in  $I_k$ for $2\le k\le N-1$, we next show that $(K^{(m)}, h^{(m)}_i, \tilde{h}^{(m)},  \omega^{(m)})$
is a Cauchy sequence in  $I_N$.

\vspace{0.3cm}
By \eqref{YH-4} and the expression \eqref{YH-3}, it easy to know that
$K^{(m)}|_{t=t_0+\frac{N-1}{N}(T_\epsilon+1-t_0)}>0$ holds.
As before, for $t\in I_N$, set
\[
\mathcal{K}=K^{(m+1)}-K^{(m)},\;\mathcal{I}=h^{(m+1)}_i-h^{(m)}_i,\; \mathcal{J}=h^{(m+1)}_j-h^{(m)}_j,\;\tilde{\mathcal{J}}=\sum_{j\neq i}(h^{(m+1)}_j-h^{(m)}_j)
\]
and
\[
\mathcal{P}_i=p^{(m+1)}_i-p^{(m)}_i,\; \mathcal{P}_j=p^{(m+1)}_j-p^{(m)}_j, \; \mathcal{P}=p^{(m+1)}-p^{(m)}.
\]
Then we have that for  $t\in I_N$,
\begin{equation}\label{Y:2.97}
\begin{split}
|\mathcal{K}|\leq &|(K^{(m+1)}-K^{(m)})\Big(y, t_0+\frac{N-1}{N}(T_\epsilon+1-t_0)\Big)|+
\frac{C}{N}|h^{(m)}-h^{(m-1)}|\\
&+C\epsilon|K^{(m)}-K^{(m-1)}|+C\epsilon^2\sum_{j\neq i}|\omega^{(m)}_j-\omega^{(m-1)}_j|+C\epsilon|\omega^{(m)}_i-\omega^{(m-1)}_i|,
\end{split}
\end{equation}
where $K^{(m)}\Big(y, t_0+\frac{N-1}{N}(T_\epsilon+1-t_0)\Big)$ has been shown to be a Cauchy sequence.

Similarly, one has that for  $t\in I_N$,
\begin{equation}
\begin{split}
|\mathcal{I}|\leq& |(h^{(m+1)}_i-h^{(m)}_i)(y, t_0+\frac{N-1}{N}(T_\epsilon+1-t_0))|+\frac{C}{N}|\tilde{h}^{(m)}-\tilde{h}^{(m-1)}|+C\epsilon|h^{(m)}_i-h^{(m-1)}_i|\\[3pt]
&+C\epsilon^2|K^{(m)}-K^{(m-1)}|+C\epsilon^3\sum_{j\neq i}|\omega^{(m)}_j-\omega^{(m-1)}_j|+C\epsilon^2|\omega^{(m)}_i-\omega^{(m-1)}_i|,\\[3pt]
\end{split}
\end{equation}
where $h^{(m)}_i(y, t_0+\frac{N-1}{N}(T_\epsilon+1-t_0))$ is a Cauchy sequence.
Analogously, we arrive at
\begin{equation}\label{Y:2.101*}
\begin{split}
|\tilde{\mathcal{J}}|\leq& C\epsilon |\tilde{\mathcal{J}}|+C\epsilon|K^{(m)}-K^{(m-1)}|+C\epsilon^3\sum_{j\neq i}|\omega^{(m)}_j
-\omega^{(m-1)}_j|+C\epsilon^2|\omega^{(m)}_i-\omega^{(m-1)}_i|\\
&+C\epsilon^2|\tilde{h}^{(m)}-\tilde{h}^{(m-1)}|+C\epsilon^2|h^{(m)}_i-h^{(m-1)}_i|
\end{split}
\end{equation}
and
\begin{align}
|\mathcal{P}_i|\leq &|\mathcal{P}_i(y, t_0+\frac{N-1}{N}(T_\epsilon+1-t_0))|+\frac{C}{N}|\tilde{h}^{(m)}-\tilde{h}^{(m-1)}|
+C\epsilon|\mathcal{P}|
+C\epsilon^3\sum_{j\neq i}|\omega^{(m)}_j-\omega^{(m-1)}_j| \notag \\
&+C\epsilon^2|\omega^{(m)}_i-\omega^{(m-1)}_i| \notag \\
\leq&\frac{C}{N}|\tilde{h}^{(m)}-\tilde{h}^{(m-1)}|+C\epsilon|\mathcal{P}|
+C\epsilon^3\sum_{j\neq i}|\omega^{(m)}_j-\omega^{(m-1)}_j|+C\epsilon^2|\omega^{(m)}_i-\omega^{(m-1)}_i|,
\end{align}
\begin{align}
\sum_{j\neq i}|\mathcal{P}_j|\leq& C\epsilon|\mathcal{P}|+C\epsilon|K^{(m)}-K^{(m-1)}|+C\epsilon^3\sum_{j\neq i}|\omega^{(m)}_j-\omega^{(m-1)}_j|+C\epsilon^2|\omega^{(m)}_i-\omega^{(m-1)}_i| \label{Y:2.105} \\ \notag
&+C\epsilon|\tilde{h}^{(m)}-\tilde{h}^{(m-1)}|+C\epsilon|h^{(m)}_i-h^{(m-1)}_i|.
\end{align}
Thus, along with \eqref{Y:2.97}--\eqref{Y:2.105}, we can also get the same estimate \eqref{Y:2.2.105} for
$t\in I_N$.
Therefore, $(K,  h_i, \tilde{h}, \omega)=\ds\lim_{m\to\infty}(K^{(m)},  h^{(m)}_i, \tilde{h}^{(m)},
\omega^{(m)})$ holds in $C(\bar D)$. Together with the uniform boundedness of $\nabla^l K^{(m+1)}$, $\nabla^l h^{(m+1)}$ and
$\nabla^l\omega^{(m+1)}$  ($1\le l\le 3$) in domain $D$ and interpolation,
one easily knows $(K,  h_i, \tilde{h}, \omega)=\ds\lim_{m\to\infty}(K^{(m)},  h^{(m)}_i, \tilde{h}^{(m)},
\omega^{(m)})$ in $C^2(\bar D)$ and further $(K,  h_i, \tilde{h}, \omega)\in C^3(\bar D)$
can be derived.
Hence, the proof of Theorem \ref{T:2.1} is completed.

\subsection{Precise descriptions on the $i-$shock formation}

At first, we illustrate that near the blowup point $(x_\epsilon, T_\epsilon)$ of \eqref{eq:1.1}, the envelope of
the $i$-th characteristics family forms a cusp curve.

\begin{theorem}\label{T:2.3}
Under the assumptions \eqref{eq:1.2}, \eqref{eq:1.3} and \eqref{Eq:2.1},
there exists a unique point $(y_\epsilon, T_{\epsilon})$ for  the blowup system \eqref{eq:2.2}
such that
\begin{equation}\label{eq:2.100}
\partial_y\varphi(y_{\epsilon}, T_{\epsilon})=0,\;\;\partial^2_y\varphi(y_{\epsilon}, T_{\epsilon})=0,\;\;
\partial^3_y\varphi(y_{\epsilon}, T_{\epsilon})>0,\;\;\partial^2_{yt}\varphi(y_{\epsilon}, T_{\epsilon})<0.
\end{equation}

\end{theorem}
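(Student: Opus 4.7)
The plan is to reduce the problem to a perturbation analysis around the explicit leading-order profile $(\bar\varphi,\bar K)$ constructed in \eqref{YH-3}, and then to apply the implicit function theorem. First I would compute the derivatives of the zero-th order solution at the candidate blowup point. With the rescaled time $\tau=\epsilon t$ and $\bar K(y,\tau)=1+\mathcal{H}_i(y)\tau$ where $\mathcal{H}_i(y)=\partial_{u_i}\lambda_i(0)(w_0^i)'(y)$, the system $\bar K=0,\ \partial_y\bar K=0$ reads $1+\mathcal{H}_i(y_0)\tau_0=0$ and $\mathcal{H}_i'(y_0)\tau_0=0$. The generic nondegenerate condition \eqref{Eq:2.1} picks out the unique solution $y_0=x_0,\ \tau_0=-1/N_i>0$, and the Jacobian of $(y,\tau)\mapsto(\bar K,\partial_y\bar K)$ at $(x_0,\tau_0)$ has determinant
\[
\det\begin{pmatrix}\mathcal{H}_i'(x_0)\tau_0 & \mathcal{H}_i(x_0)\\ \mathcal{H}_i''(x_0)\tau_0 & \mathcal{H}_i'(x_0)\end{pmatrix}=-N_i\mathcal{H}_i''(x_0)\tau_0=\mathcal{H}_i''(x_0)>0,
\]
so the linearization is an isomorphism at the leading order.

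Second, I would transfer this picture to the full solution $(\varphi,K)$ of \eqref{eq:2.98}. Theorem \ref{T:2.1} together with the iterative construction in \S2.3 produces uniform $C^3$ bounds on $(\varphi,K)$ in $D$, and the sharpened bound $\partial_t K=O(\epsilon)$ (cf. \eqref{eq:2.104*} and Lemma \ref{L:2.10}) shows that, written as functions of $(y,\tau)=(y,\epsilon t)$, one has
\[
K(y,\tau/\epsilon)=\bar K(y,\tau)+O(\epsilon),\qquad \partial_y^k K(y,\tau/\epsilon)=\partial_y^k\bar K(y,\tau)+O(\epsilon),\quad k=1,2,
\]
uniformly on the compact $(y,\tau)$-region corresponding to $D$. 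Consequently the smooth map $F_\epsilon(y,\tau):=(K(y,\tau/\epsilon),\,\partial_y K(y,\tau/\epsilon))$ is a small $C^1$-perturbation of $F_0=(\bar K,\partial_y\bar K)$.

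Third, I would apply the implicit function theorem to $F_\epsilon$ near $(x_0,-1/N_i)$. Since $F_0$ has a nondegenerate zero there by the Jacobian computation above, for every sufficiently small $\epsilon>0$ there exists a unique nearby $(y_\epsilon,\tau_\epsilon)$ with $F_\epsilon(y_\epsilon,\tau_\epsilon)=0$; setting $T_\epsilon:=\tau_\epsilon/\epsilon$ gives the unique blowup point of \eqref{eq:2.2}. Using $\partial_y\varphi=K$, $\partial_y^2\varphi=\partial_y K$, $\partial_y^3\varphi=\partial_y^2 K$ and $\partial_{yt}^2\varphi=\partial_t K$ together with the $C^2$-closeness just established, the strict leading-order inequalities $\partial_y^2\bar K(x_0,\tau_0)=\mathcal{H}_i''(x_0)\tau_0>0$ and $\partial_t\bar K(x_0,\tau_0)=\epsilon\mathcal{H}_i(x_0)=\epsilon N_i<0$ transfer to \eqref{eq:2.100} at $(y_\epsilon,T_\epsilon)$. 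The cusp type of the envelope $\{\partial_y\varphi=0,\ t\ge T_\epsilon\}$ is then a direct consequence of \eqref{eq:2.100}: solving $\partial_y\varphi(y,t)=0$ by the Weierstrass preparation theorem in a neighborhood of $(y_\epsilon,T_\epsilon)$ yields $t-T_\epsilon=c_0(y-y_\epsilon)^2(1+O(y-y_\epsilon))$ with $c_0=-\partial_y^3\varphi/(2\partial_{yt}^2\varphi)>0$, which produces two $C^1$-branches meeting tangentially at $(y_\epsilon,T_\epsilon)$ with common tangent $t=T_\epsilon$.

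The main obstacle is the compatibility of the $O(\epsilon)$-perturbative closeness with the long time interval $[t_0,T_\epsilon+1]$ of length $\sim 1/\epsilon$: one must verify that after rescaling to $\tau=\epsilon t$ the maps $F_\epsilon$ remain $C^1$-close to $F_0$ on a \emph{fixed}, $\epsilon$-independent neighborhood of $(x_0,-1/N_i)$. This is precisely what the uniform boundedness of the iterates in Lemmas \ref{L:2.9}--\ref{L:2.11} and the Cauchy-sequence argument of Step $5$ guarantee; combined with the explicit formula for $\bar K$ from \eqref{YH-3} and the bound $\partial_t(K-\bar K)=O(\epsilon)$ on $D$, the implicit function theorem applies uniformly in $\epsilon$ small.
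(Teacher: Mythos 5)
Your proposal is correct and follows essentially the same route as the paper: reduce to the explicit $\epsilon=0$ profile $\bar K(y,\tau)=1+\mathcal{H}_i(y)\tau$ in the rescaled time $\tau=\epsilon t$, use the generic nondegenerate condition \eqref{Eq:2.1} to identify the nondegenerate zero of $(\bar K,\partial_y\bar K)$ at $(x_0,-1/N_i)$, and apply the implicit function theorem to the perturbed map to locate $(y_\epsilon,\tau_\epsilon)$ and transfer the strict sign conditions in \eqref{eq:2.100}. Your explicit Jacobian computation and the discussion of uniform $C^1$-closeness on a fixed $(y,\tau)$-neighborhood make precise what the paper leaves implicit, but the argument is the same.
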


\begin{proof}
Let
\[
w=\epsilon D_\epsilon\omega, \; \tau=\epsilon t\;\;\text{and $w_0(x)=D_\epsilon\omega_0(x)$}.
\]
In addition, without loss of generality, $\lambda_i(0)=0$ is assumed
(otherwise, one can apply the translation $(t, x)\mapsto (t, x+\lambda_i(0)t)$ to achieve this).
Note that $\eqref{eq:2.88}_1$ can be reduced into
\[
\partial_{\tau}\omega+\epsilon^{-1}D^{-1}_\epsilon AD_{\epsilon}\partial_x\omega=0.
\]
This yields that for $\epsilon\to 0$,
\begin{equation}\label{eq:2.89}
\partial_{\tau}\omega_i+\partial_{w_i}\lambda_i(0)\omega_{i}\partial_x\omega_i=0.
\end{equation}
By $\eqref{eq:2.2}_2$, one has that for $\epsilon=0$,
\[
\omega_i(x, \tau)=w^{i}_0(y), \quad x=\varphi(y, \tau)=y+\partial_{w_i}\lambda_i(0)w^{i}_0(y)\tau.
\]
Then for $\epsilon=0$,
\[
K=\partial_y\varphi(y, \tau)=1+\partial_{w_i}\lambda_i(0)\Big(w^{i}_0(y)\Big)'\tau.
\]
Note that for $\tau>0$
\[
\partial_\tau K(x_0, \tau)|_{\epsilon=0}<0,\;
\partial_y^2 K(x_0, \tau)|_{\epsilon=0}>0.
\]
On the other hand, for $\tau_0=\Big(\max(-\partial_{w_i}\lambda_i(0)(w^{i}_0(y))')\Big)^{-1}$,
$$K(x_0,\tau_0)|_{\epsilon=0}=0,\quad \partial_yK(x_0,\tau_0)|_{\epsilon=0}=0.$$
Therefore, from the implicit function theorem, there exists a unique point $p(\epsilon)=(y_\epsilon, \tau_{\epsilon})$
such that
\[
K(p(\epsilon))=0, \; \partial_y K(p(\epsilon))=0, \;\partial_\tau K(p(\epsilon))<0, \; \partial^2_{y} K(p(\epsilon))>0,\quad
\ds\lim_{\epsilon\to 0}(y_\epsilon,\tau_{\epsilon})=(y_0,\tau_0).
\]
This implies that for $T_\epsilon=\f{\tau_{\epsilon}}{\epsilon}$,
\[
\partial_{y}\varphi|_{(y_\epsilon, T_\epsilon)}=0, \;\; \partial^2_{y}\varphi|_{(y_\epsilon, T_\epsilon)}=0, \;\; \partial^2_{yt}\varphi|_{(y_\epsilon, T_\epsilon)}<0,\;\;
\partial^3_{y}\varphi|_{( y_\epsilon, T_\epsilon)}>0.
\]
Thus, the desired results in \eqref{eq:2.100} are obtained.
\end{proof}
\begin{remark}\label{Yin-1}
From Theorem \ref{T:2.3}, it is known that $(x_{\ve},T_{\ve})=(\vp(y_{\ve}, T_{\ve}),T_{\ve})$
is the unique blowup point of  \eqref{eq:1.1}, and the envelope of
the $i$-th characteristics family forms a cusp curve. This phenomenon is analogous to that in 1-D Burgers equation
(see \cite{CZ} and \cite{YinZ}).
\end{remark}

Finally, we state a more precise conclusion than Theorem \ref{DY-1}.
\begin{theorem}\label{T:2.14}
There admits a weak entropy solution to problem \eqref{eq:1.1} including an $i-$shock curve
$x=\phi(t)\in C^1[T_{\epsilon}, T_{\epsilon}+\delta_0]$ starting from the blowup point $(x_{\epsilon}, T_{\epsilon})$,
where $\delta_0>0$ is some fixed small constant.
Moreover, close to the point $(x_{\epsilon}, T_\epsilon)$, it holds that for the solution $w$ of \eqref{eq:2.88},
\[
\begin{split}
\phi(t)=&x_{\epsilon}+\lambda_i(w(x_{\epsilon}, T_{\epsilon}))(t-T_\epsilon)+ O(1)(t-T_\epsilon)^2,\\
w_i(x, t)=&w_i(x_{\epsilon}, T_\epsilon)+ O(1)\Big((t-T_\epsilon)^3+\big(x-x_\epsilon-\lambda_i(w(x_{\epsilon}, T_{\epsilon}))(t-T_\epsilon)\big)^2\Big)^{\frac16},\\
w_j(x, t)=&w_j( x_{\epsilon}, T_\epsilon)+ O(1)\Big((t-T_\epsilon)^3+\big(x-x_\epsilon-\lambda_i(w(x_{\epsilon}, T_{\epsilon}))(t-T_\epsilon)\big)^2\Big)^{\frac13}, \quad j\neq i.
\end{split}
\]

\end{theorem}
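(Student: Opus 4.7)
The plan is to construct the weak entropy shock by coupling the smooth extension of the blowup system $(\varphi, v)$ across $T_\epsilon$ given by Theorem~\ref{T:2.1} with the cusp information at the blowup point from Theorem~\ref{T:2.3}, and then to solve the free boundary problem on the two sides of the shock by a Picard-type iteration. From Theorem~\ref{T:2.3}, because $\partial_y\varphi(y_\epsilon,T_\epsilon)=\partial_y^2\varphi(y_\epsilon,T_\epsilon)=0$, $\partial_y^3\varphi(y_\epsilon,T_\epsilon)>0$ and $\partial_{yt}^2\varphi(y_\epsilon,T_\epsilon)<0$, for $t$ slightly larger than $T_\epsilon$ the equation $x=\varphi(y,t)$ has precisely three roots $y_-(x,t)<y_0(x,t)<y_+(x,t)$ inside a small neighborhood of the cusp, and each of the outer branches $w^\pm(x,t):=v(y_\pm(x,t),t)$ defines a smooth solution of \eqref{eq:2.88} there. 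The idea is to pick the shock curve $x=\phi(t)$ so that gluing $w^-$ for $x<\phi(t)$ with $w^+$ for $x>\phi(t)$ yields a weak entropy solution.

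As in \S 4, I would first flatten the unknown shock by the change of variable $(x,t)\mapsto(x-\phi(t),t)$ and cast the Rankine--Hugoniot and Lax conditions as a closure relation
\[
\phi'(t)\,[w]=[f(w)],\qquad \lambda_i(w^+)<\phi'(t)<\lambda_i(w^-),
\]
across $x=\phi(t)$ with $\phi(T_\epsilon)=x_\epsilon$. An iterative scheme is then designed in the spirit of \cite{CXY,Yin}: given $(\phi^{(k)},w^{(k),\pm})$, recover $y^{(k+1)}_\pm$ from $x=\varphi(y,t)$ via the cusp asymptotics, choose $w^{(k+1),\pm}=v(y^{(k+1)}_\pm(x,t),t)$, and update $\phi^{(k+1)}$ from the Rankine--Hugoniot relation. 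The initial iterate is taken from the extended blowup solution at $t=T_\epsilon$. Since $\partial_y^3\varphi(y_\epsilon,T_\epsilon)>0$ and $\lambda_i$ is genuinely nonlinear, the Lax entropy inequalities are automatically satisfied for small $t-T_\epsilon$.

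The heart of the argument, and the main obstacle, is to derive uniform bounds and contraction for this scheme in spite of the degeneracy $K=\partial_y\varphi\to 0$ at the cusp point and the vanishing of the jump $[w_i]$ as $t\to T_\epsilon^+$. The idea is to work in weighted norms whose weights encode the cusp scaling $|y-y_\epsilon|\lesssim \bigl((t-T_\epsilon)^3+(x-x_\epsilon-\lambda_i(w(x_\epsilon,T_\epsilon))(t-T_\epsilon))^2\bigr)^{1/6}$, splitting the time interval $[T_\epsilon,T_\epsilon+1]$ into finitely many short pieces of length $\sim \tfrac{1}{N\epsilon}$, exactly as in Step~5 of \S 2.3, and absorbing the loss in the Rankine--Hugoniot closure using the cancellation between $w^+$ and $w^-$ guaranteed by the fact that both branches originate from the same smooth $v(y,t)$. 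This gives uniform boundedness of $(\phi^{(k)},w^{(k),\pm})$ in \S 4 and convergence in \S 5, yielding a weak entropy solution with shock curve $\phi\in C^1[T_\epsilon,T_\epsilon+1]$.

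Finally, for the asymptotic expansion near $(x_\epsilon,T_\epsilon)$, set $s=t-T_\epsilon$ and $X=x-x_\epsilon-\lambda_i(w(x_\epsilon,T_\epsilon))s$. Taylor expanding $\varphi$ around $(y_\epsilon,T_\epsilon)$ using Theorem~\ref{T:2.3} gives
\[
X=\tfrac{1}{6}\partial_y^3\varphi(y_\epsilon,T_\epsilon)(y-y_\epsilon)^3+\partial_{yt}^2\varphi(y_\epsilon,T_\epsilon)(y-y_\epsilon)s+O\bigl(|y-y_\epsilon|^4+s^2|y-y_\epsilon|\bigr),
\]
so $|y-y_\epsilon|=O\bigl((s^3+X^2)^{1/6}\bigr)$ on both branches. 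Since $v_i$ is smooth in $(y,t)$ with $\partial_y v_i(y_\epsilon,T_\epsilon)\neq 0$, this directly yields the $\tfrac{1}{6}$-bound for $w_i$; for $j\neq i$, the improvement to the $\tfrac{1}{3}$-exponent comes from the fact, deducible from the special form produced by Lemma~\ref{Y-4} together with Remark~\ref{RK-1} (whereby $\partial_y v_j$ vanishes to first order in $y-y_\epsilon$ at the cusp), so that $w_j-w_j(x_\epsilon,T_\epsilon)=O(|y-y_\epsilon|^2)=O\bigl((s^3+X^2)^{1/3}\bigr)$. The quadratic expansion of $\phi(t)$ follows by integrating $\phi'(t)=\lambda_i(w(x_\epsilon,T_\epsilon))+O(s)$ obtained from the Rankine--Hugoniot relation and the just-derived estimates on $w^\pm$.
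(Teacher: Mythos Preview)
Your plan captures the broad outline (cusp analysis, flatten the shock, iterate, pass to the limit), and the asymptotic computation at the end is essentially right. But two steps in the middle are genuine gaps.

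\textbf{The iteration scheme is mis-specified.} Defining $w^{(k+1),\pm}=v(y_\pm^{(k+1)}(x,t),t)$ for \emph{all} components makes both one-sided states exact solutions coming from the two branches of $\varphi^{-1}$. For an $n\times n$ system the Rankine--Hugoniot relations are $n$ scalar constraints, while the only remaining freedom is the single scalar $\phi'(t)$; the scheme is over-determined and cannot close. What the paper actually does is split the components by the sign of $\lambda_j-\sigma$: the ``incoming'' components $w_{j,+}^{m+1}$ for $j\le i$ and $w_{j,-}^{m+1}$ for $j\ge i$ are solved as pure initial-value transport problems (these reduce to the branch values only at step $m=0$), while the ``outgoing'' components $w_{j,-}^{m+1}$ for $j<i$ and $w_{j,+}^{m+1}$ for $j>i$ are solved as initial--boundary value problems with boundary data on $z=0$ supplied by the RH conditions. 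The key algebraic fact that makes those boundary values controllable is the cubic relation $[w_j]=\mathcal{F}(\cdot)\,[w_i]^3$ for $j\neq i$, derived from the RH system; this is what converts the $(t-T_\epsilon)^{1/2}$ size of $[w_i]$ into the $(t-T_\epsilon)^{3/2}$ size of $[w_j]$ and feeds back harmlessly into the iteration.

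\textbf{The contraction mechanism is not time-splitting.} The trick from Step~5 of \S2.3 cuts an interval of length $\sim 1/\epsilon$ into $N$ pieces of length $\sim 1/(N\epsilon)$; here the interval $[T_\epsilon,T_\epsilon+1]$ has length $1$, so that device is irrelevant. The actual obstruction to contraction is that $\partial_z w_{i,\pm}^m$ blows up like $\big((t-T_\epsilon)^3+z^2\big)^{-1/3}$ and is not integrable in $t$ along the $i$-characteristics. The paper handles this by a careful decomposition of $(\lambda_i(w^{m-1})-\lambda_i(w^m)+\sigma^m-\sigma^{m-1})\,\partial_z w_{i}^m$ so that the worst piece is exactly $|\partial_z\lambda_i(w^{m-1})|\cdot|w_i^m-w_i^{m-1}|$, together with the cusp-geometric estimate
\[
\int_{T_\epsilon}^{t}\big|\partial_z\lambda_i(w^m_+)\big|\,ds\;\le\;\ln\tfrac{3}{2}+C_M\epsilon\sqrt{t-T_\epsilon},
\]
proved from the explicit cubic unfolding $\varphi=h^3-A(t)h+B(t)$. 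The resulting contraction constant is $2\ln\tfrac{3}{2}+o(1)<1$, which is where the scheme actually closes. Your weighted-norm and time-splitting idea does not produce this constant and would not close the loop for the $i$-component.

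One smaller point: the vanishing $\partial_y v_j(y_\epsilon,T_\epsilon)=0$ for $j\neq i$ does not follow from Lemma~\ref{Y-4} and Remark~\ref{RK-1} alone (those give smallness $O(\epsilon^2)$, not exact vanishing). It comes from evaluating the $j\neq i$ equations of the blowup system \eqref{eq:2.1} at the cusp, where $\partial_y\varphi=0$ kills the $\partial_t v$ term and forces $\partial_y v_j=0$ by nondegeneracy of $(l_{kj})_{k,j\neq i}$; the paper in fact also gets $\partial_y^2 v_j(y_\epsilon,T_\epsilon)=0$ the same way.
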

The proof of Theorem \ref{T:2.14} will be given in Sections 3-5 below.

\section{Analysis on the  pre-shock wave near the blowup point}

In this section, we investigate some properties of the solution $w$ to problem \eqref{eq:2.88}
and construct the first approximation to the resulting shock wave  of \eqref{eq:2.88} from the blowup point $(x_{\ve}, T_{\ve})$.
As illustrated in Remark \ref{Yin-1}, $(x_{\ve}, T_{\ve})=(\vp(y_{\epsilon}, T_{\epsilon}), T_{\ve})$
is just the unique blowup point at time $T_{\ve}$ for problem \eqref{eq:1.1} under the
assumptions \eqref{eq:1.2}, \eqref{eq:1.3} and \eqref{Eq:2.1}, moreover, \eqref{eq:2.100} holds.
In terms of  the unfolding theorem (see Theorem $2.1$ in \cite{Sch}),
there exist smooth functions $h(y, t), A(t)$ and $B(t)$ such that
\begin{equation}\label{eq:3.2}
\varphi(y, t)=h^3(y, t)-A(t)h(y, t)+B(t),
\end{equation}
where $\partial_y h(y_{\epsilon}, T_{\epsilon})>0$,  $A'(T_{\epsilon})>0$, and
\begin{equation}\label{Cao-1}
h(y_\epsilon, T_\epsilon)=A(T_\epsilon)=0,
\;\; \varphi(y_\epsilon, T_\epsilon)=B(T_\epsilon).
\end{equation}
Let
\[
\Sigma=\Big\{(y, t): \partial_y \varphi(y,t)=0, T_{\epsilon}\le t\le T_{\epsilon}+1\Big\}.
\]
Note that on $\Sigma$, one has
\begin{equation}\label{eq:3.5}
\partial^2_{yt}\varphi(y, t)\partial_y t+\partial^2_{y}\varphi(y, t)=0.
\end{equation}
Together with \eqref{eq:2.100}, this yields
\begin{equation}\label{Cao-3}
\partial_y t(y_{\epsilon}, T_{\epsilon})=0.
\end{equation}
Due to $\partial^2_{yt}\varphi(y_{\epsilon}, T_{\epsilon})<0$, then it follows from the implicit function theorem
that there exists a unique $C^2$ function $t=t(y)$ in the neighbourhood of $(y_{\epsilon}, T_{\epsilon})$
satisfying
\[
\partial_y\varphi(y, t(y))=0.
\]
Differentiating \eqref{eq:3.5} with respect to $y$ yields that
\[
2\partial^3_{yyt} \varphi \partial_y t
+\partial^3_{ytt}\varphi(\partial_y t)^2+
\partial_{yt}^2\varphi \partial_y^2 t+
\partial_y^3\varphi=0.
\]
Together with \eqref{eq:2.100}, we have $\partial_y^2 t(y_{\epsilon}, T_{\epsilon})>0$,
which means that $t=t(y)$ achieves the minimum value at the point $(y_{\epsilon}, T_{\epsilon})$.

Due to $\partial_y h(y_{\epsilon}, T_{\epsilon})>0$, then there exist two smooth functions $y=\eta^{\epsilon}_{\pm}(t)$ such that
\begin{equation}\label{Cao-2}
h(\eta^{\epsilon}_{\pm}(t), t)=\pm\sqrt{\frac{A(t)}{3}}, \quad y_{\epsilon}=\eta^{\epsilon}_{\pm}(T_{\epsilon}).
\end{equation}
In the following, we study the properties of  $x_{\pm}(t)=x(\eta_{\pm}(t), t)$ close to
$(x_{\ve}, T_{\ve})$ (see Figure 5). For simplicity, without loss of generality,  set
\begin{equation}\label{Cao-4}
\partial_y h(y_{\epsilon}, T_{\epsilon})=A'(T_{\epsilon})=1.
\end{equation}
It follows from the Taylor expansion formula, \eqref{Cao-1}, \eqref{Cao-3} and \eqref{Cao-4} that
\[
\begin{split}
h(\eta^{\epsilon}_{\pm}(t), t)=&h(y_{\epsilon}, T_{\epsilon})+(\partial_t h\partial_y t+\partial_y h)(y_{\epsilon}, T_{\epsilon})(\eta^{\epsilon}_{\pm}(t)-y_{\epsilon})+O(1)(\eta^{\epsilon}_{\pm}(t)-y_{\epsilon})^2\\
=&(\eta^{\epsilon}_{\pm}(t)-y_{\epsilon})+O(1)(\eta^{\epsilon}_{\pm}(t)-y_{\epsilon})^2.
\end{split}
\]
In addition, one has from \eqref{Cao-2},  \eqref{Cao-1} and \eqref{Cao-4} that
\[
h(\eta^{\epsilon}_{\pm}(t), t)=\pm\sqrt{\frac{t-T_{\epsilon}}{3}}+O(1)(t-T_{\epsilon}).
\]
On the other hand, differentiating \eqref{eq:3.2} with respect to $t$ yields that
\begin{equation*}\label{eq:3.4}
\partial_t \varphi=(3h^2(y, t)-A(t))\partial_t h(y, t)-A'(t)h+B'(t),
\end{equation*}
which means
\begin{equation}\label{eq:3.4}
\partial_t\varphi(y_{\epsilon}, T_{\epsilon})=B'(T_{\epsilon}).
\end{equation}
Then it holds
\begin{equation}\label{eq:3.6}
\begin{split}
&\eta^{\epsilon}_{\pm}(t)-y_{\epsilon}=\pm\sqrt{\frac{t-T_{\epsilon}}{3}}+O(1)(t-T_{\epsilon}),\\
&x_{\pm}(t)=h^3(\eta_{\pm}(t), t)-A(t)h(\eta_{\pm}(t), t)+B(t)\\
&\qquad=\mp\frac{2}{9}\sqrt{3}(t-T_{\epsilon})^{\frac32}+\varphi(y_{\epsilon}, T_{\epsilon})
+\partial_t \varphi(y_{\epsilon}, T_{\epsilon})(t-T_{\epsilon})+O(1)(t-T_{\epsilon})^2,
\end{split}
\end{equation}
where
\[
\begin{split}
A(t)&=(t-T_{\epsilon})+O(1)(t-T_{\epsilon})^2,\\
B(t)&=\varphi(y_{\epsilon}, T_{\epsilon})+\partial_t \varphi(y_{\epsilon}, T_{\epsilon})(t-T_{\epsilon})+O(1)(t-T_{\epsilon})^2.
\end{split}
\]
Therefore,
\[
x_{\pm}(t)-\varphi(y_{\epsilon}, T_{\epsilon})
-\partial_t\varphi(y_{\epsilon}, T_{\epsilon})(t-T_{\epsilon})\sim\mp\frac{2}{9}\sqrt{3}(t-T_{\epsilon})^{\frac32}.
\]
Here and below, for functions $f$ and $g$, $f\sim g$ represents $C_1|g|\leq |f|\leq C_2|g|$
for some positive constants $C_1$ and $C_2$ independent of $\epsilon$.

\begin{center}
\begin{tikzpicture}[scale=0.9]
\draw [thick][->] (-5, 0)--(5, 0);
\draw[red] (-4, 0)--(1.5, 5.35);
\draw [red](-2.8, 0)--(0.35, 4.1);
\draw [red](2, 0)--(-0.7, 4.5);
\draw [red](4, 0)--(-1.5, 5.18);
\draw [green][ultra thick](0,2.3)to [out=90, in=-30](-1.5, 5);
\draw [green][ultra thick](0,2.3)to [out=90, in=195](1.5, 5);
\draw[dashed](-5, 2.3)--(5,2.3);
\draw[dashed](-5, 3.81)--(5,3.81);
\draw (-0.06, 3.81)--(0.2, 0);
\node at (5.1,0) {$x$};
\node at (0.2, -0.3) {$y^{\epsilon}_0$};
\node at (-2.2, 4.8) {$x=x_{+}^{\epsilon}(t)$};
\node at (2.2, 4.8) {$x=x_{-}^{\epsilon}(t)$};
\node at (-4.8, 2.6) {$T_\epsilon$};
\node at (-4, -0.3) {$y^{\epsilon}_{-}(x, t)$};
\node at (4, -0.3) {$y^{\epsilon}_{+}(x, t)$};
\node at (-2.8, -0.3) {$\eta^{\epsilon}_{-}(t)$};
\node at (2.2, -0.3) {$\eta^{\epsilon}_{+}(t)$};
\node at (-4.8, 0.3) {$t$};
\node [below] at (0.5, -0.8){Figure 5. Real roots $y^{\epsilon}_{\pm}(x, t)$ and $y_0^{\ve}$ of the equation $x=\varphi(y, t)$};
\end{tikzpicture}
\end{center}

Next, we derive some properties on the real roots of the equation $x=\vp(y,t)$ with respect to $y$ (see Figure $5$).
\begin{lemma}
\begin{itemize}
For $t\in (T_{\epsilon}, T_{\epsilon}+1]$, it holds that
\begin{itemize}
\item[$(1)$] for $x\in(x^{\epsilon}_+(t),x^{\epsilon}_{-}(t))$, there exist three real roots $y_{-}^{\epsilon}(x, t)<y^{\epsilon}_0<y^{\epsilon}_+(x, t)$ to $x=\varphi(y, t)$.
\item[$(2)$] for  $x\geq x^{\epsilon}_{-}(t)$, there exists a unique real root $y^{\epsilon}_+(x, t)$ to $x=\varphi(y, t)$.
\item[$(3)$] for $x\leq x^{\epsilon}_+(t)$, there exists a unique real root $y^{\epsilon}_{-}(x, t)$ to $x=\varphi(y, t)$.
\end{itemize}
\end{itemize}
\end{lemma}
\begin{proof}
Let
\[
F(h)=h^3(y, t)-A(t)h(y, t)+B(t)-x.
\]
Then $F'(h)=3h^2-A(t)$, and $F(h)$ achieves its local maximum value at $h=-\sqrt{\frac{A(t)}{3}}$. Moreover,
\[
F(-\sqrt{\frac{A(t)}{3}})=-\frac23A(t)h+B(t)-x=\frac{2}{9}\sqrt{3}A(t)^{\frac32}+B(t)-x.
\]
Meanwhile, $F(h)$ also obtains its local minimum value at $h=\sqrt{\frac{A(t)}{3}}$, and
\[
F(\sqrt{\frac{A(t)}{3}})=-\frac23A(t)h+B(t)-x=-\frac{2}{9}\sqrt{3}A(t)^{\frac32}+B(t)-x.
\]
When $x^{\epsilon}_{+}(t)<x<x^{\epsilon}_{-}(t)$, we derive from $\eqref{eq:3.6}_2$ that
\[
-\frac29\sqrt{3}A(t)^{\frac32}=B(t)-x^{\epsilon}_{-}(t)<B(t)-x<B(t)-x^{\epsilon}_+(t)=\frac29\sqrt{3}A(t)^{\frac32},
\]
which implies $F(-\sqrt{\frac{A(t)}{3}})>0$ and $F(\sqrt{\frac{A(t)}{3}})<0$.
Therefore, there exist three real roots $y_{-}^{\epsilon}(x, t)<y^{\epsilon}_0<y^{\epsilon}_+(x, t)$ to the equation $x=\varphi(y, t)$.

When $x\geq x^{\epsilon}_{-}(t)$, $F(-\sqrt{\frac{A(t)}{3}})\leq 0$ holds.
Then there exists a unique solution $y^{\epsilon}_+(x, t)$  to $x=\varphi(y, t)$. Similarly,
when $x\leq x^{\epsilon}_{-}(t)$, one can have $F(\sqrt{\frac{A(t)}{3}})\geq 0$ and there is a
unique solution $y^{\epsilon}_{-}(x, t)$ to $x=\varphi(y, t)$.

\end{proof}

\subsection{The behavior of $y^{\epsilon}_{\pm}(x, t)$ in cusp domain}

In this subsection, we will describe the behavior of $y^{\epsilon}_{\pm}(x, t)$, which
is crucial to construct the first approximation of shock solution.

Denote
\[
\begin{split}
\Omega_+=&\{(x, t)\in \Omega: x>x^{\epsilon}_+(t),\; T_{\epsilon}<t\leq T_{\epsilon}+1\},\\
 \Omega_{-}=&\{(x, t)\in \Omega: x<x^{\epsilon}_{-}(t),\; T_{\epsilon}<t\leq T_{\epsilon}+1\},\\
 \Omega_0=&\{(x, t)\in \Omega: x^{\epsilon}_{+}(t)<x<x^{\epsilon}_{-}(t),\; T_{\epsilon}<t\leq T_{\epsilon}+1\}.
 \end{split}
 \]
In the cusp domain $\Omega_0$, each characteristics can be well-defined through starting from $(y^{\epsilon}_{\pm}(x, t), t)$,
respectively, see Figure $6$ below.

The $i-$th eigenvalue of $n\times n$ matrix $\Big(\int^1_0(\partial_{u_k}f_l)\Big(\theta u(v( y^{\epsilon}_+(x, t), t))+
 (1-\theta)u(v(y^{\epsilon}_{-}(x, t), t))\Big)d\theta\Big)_{k,l=1}^n$ is denoted by
 $\lambda_i\Big(\int^1_0(\partial_{u_k}f_l)\Big(\theta u(v(y^{\epsilon}_+(x, t), t))+
 (1-\theta)u(v(y^{\epsilon}_{-}(x, t), t))\Big)d\theta\Big)$, where $v(y,t)$ is the smooth solution of \eqref{eq:2.1}.
Let $x=\phi^0(t)\in C^{\infty}(T_{\epsilon}, T_{\epsilon}+1]$ satisfy
 \begin{equation}\label{YHC-7}
 \begin{cases}
 \begin{split}
 \frac{d\phi^0(t)}{dt}=&\lambda_i\Big(\int^1_0(\partial_{u_k}f_l)\Big(\theta u(v( y^{\epsilon}_+(\phi^0(t), t), t))+
 (1-\theta)u(v(y^{\epsilon}_{-}(\phi^0(t), t), t))\Big)d\theta\Big),\\
 \phi^0(T_{\epsilon})=&x_{\epsilon},
 \end{split}
 \end{cases}
 \end{equation}
 where $x^{\epsilon}_+(t)<\phi^0(t)<x^{\epsilon}_{-}(t)$, and
 \[
 \phi^0(t)=x_{\epsilon}+\lambda_i(w(x_{\epsilon}, T_{\epsilon}))(t-T_{\epsilon})+O(1)((t-T_{\epsilon})^2),
 \quad t\in [T_{\epsilon}, T_{\epsilon}+1].
 \]
$x=\phi^0(t)$ is called the pre-shock wave of \eqref{eq:1.1}, whose picture is roughly drawn in Figure $7$.
\begin{center}
\begin{tikzpicture}[scale=0.9]
\draw [thick][->] (-5, 0)--(5, 0);
\draw[red] (-4, 0)--(1.3, 5.2);
\draw [red](-3, 0)--(1, 5);
\draw [red](-2, 0)--(0.4, 4.2);
\draw [red](-1, 0)--(0.1, 3.3);
\draw [red](1, 0)--(-0.1, 3.3);
\draw [red](2, 0)--(-0.7, 4.5);
\draw [red](3, 0)--(-1.2, 5);
\draw [red](4, 0)--(-1.9, 5.5);
\draw [green][ultra thick](0,2.3)to [out=90, in=-30](-1.5, 5);
\draw [green][ultra thick](0,2.3)to [out=90, in=195](1.5, 5);
\draw (0, 0)--(0, 2.3);
\draw[dashed](-5, 2.3)--(5,2.3);
\node at (5.1,0) {$x$};
\node at (-2.3, 4.8) {$x=x_{+}^{\epsilon}(t)$};
\node at (2.3, 4.8) {$x=x_{-}^{\epsilon}(t)$};
\node at (-4.5, 2.5) {$t=T_\epsilon$};
\node at (-1.8, -0.2) {$y_{-}^{\epsilon}(x, t)$};
\node at (2.2, -0.2) {$y_{+}^{\epsilon}(x, t)$};
\node at (-4.5, 0.2) {$t_0$};
\node at (-1.2, 3.9){$\Omega_{-}$};
\node at (1.2, 3.9){$\Omega_+$};
\node at (0.1, 5){$\Omega_0$};
\node [below] at (0.5, -0.8){Figure $6$. Cusp domain $\Omega_0$};
\end{tikzpicture}
\end{center}

 \begin{center}
\begin{tikzpicture}[scale=0.9]
\draw [thick][->] (-5, 0)--(5, 0);
\draw (-4, 0)--(0, 4);
\draw (-3, 0)--(0, 3.8);
\draw (-2, 0)--(0, 3.5);
\draw (-1, 0)--(0, 3);
\draw (1, 0)--(0, 3);
\draw (2, 0)--(0, 3.5);
\draw (3, 0)--(0, 3.8);
\draw (4, 0)--(0, 4);
\draw [red][ultra thick](0, 2.3)--(0, 3)--(0, 3.5)--(0, 3.8)--(0, 4);
\draw [green][ultra thick](0,2.3)to [out=90, in=-30](-1.5, 5);
\draw [green][ultra thick](0,2.3)to [out=90, in=195](1.5, 5);
\draw[red][ultra thick](0, 4) to [out=90, in=-60](-0.3, 5);
\draw (0, 0)--(0, 2.3);
\draw[dashed](-5, 2.3)--(5,2.3);
\node at (5.1,0) {$x$};
\node at (-2.4, 4.8) {$x=x_{+}^{\epsilon}(t)$};
\node at (2.3, 4.8) {$x=x_{-}^{\epsilon}(t)$};
\node at (-4.5, 2.5) {$t=T_\epsilon$};
\node at (-1.8, -0.2) {$y_{-}^{\epsilon}(x, t)$};
\node at (2.2, -0.2) {$y_{+}^{\epsilon}(x, t)$};
\node at (-4.5, 0.2) {$t_0$};
\node at (0.1, 5.4){ Pre-shock curve $x=\phi^0(t)$};
\node [below] at (0.6, -0.8){Figure $7$. Pre-shock curve $x=\phi^0(t)$ in cusp domain};
\end{tikzpicture}
\end{center}

In the following, we derive some important properties of $y^{\epsilon}_{\pm}(x, t)\in C^{\infty}(\Omega_{\pm})$.
\begin{lemma}\label{l:3.2}
 It holds that in $\Omega_{\pm}$,
 \[
 \begin{split}
 &|y^{\epsilon}_{\pm}(x, t)-y_{\epsilon}|\leq Cd^{\frac16}_{\epsilon}, \quad |\partial_x y^{\epsilon}_{\pm}(x, t)|\leq Cd^{-\frac13}_{\epsilon},
 |\partial_l y^{\epsilon}_{\pm}(x, t)|\leq Cd^{-\frac16}_{\epsilon},\\
 &|\partial^2_x y^{\epsilon}_{\pm}(x, t)|\leq Cd^{-\frac56}_{\epsilon},
 |\partial^2_{xt} y^{\epsilon}_{\pm}(x, t)|\leq Cd^{-\frac56}_{\epsilon},\quad |\partial^2_{t} y^{\epsilon}_{\pm}(x, t)|\leq Cd^{-\frac56}_{\epsilon},
 \end{split}
 \]
 where $d_{\epsilon}=(t-T_{\epsilon})^3+\Big(x-x_{\epsilon}-\lambda_i(w(x_{\epsilon}, T_{\epsilon}))(t-T_{\epsilon})\Big)^2$, and $l$ stands for the tangent direction of the $i$-th characteristics passing through the point $(x_{\epsilon}, T_{\epsilon})$.
\end{lemma}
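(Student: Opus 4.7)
The strategy is to exploit the cusp normal form $\varphi(y,t)=h^3(y,t)-A(t)h(y,t)+B(t)$ derived above and reduce every estimate to the algebra of a depressed cubic. Setting $H(x,t):=h(y^\epsilon_{\pm}(x,t),t)$, the defining relation $x=\varphi(y^\epsilon_{\pm},t)$ becomes
\[
H^3-A(t)H=\xi,\qquad \xi:=x-B(t).
\]
From the earlier expansions $A(t)=(t-T_\epsilon)+O((t-T_\epsilon)^2)$, $B(t)=\varphi(y_\epsilon,T_\epsilon)+\lambda_i(w(x_\epsilon,T_\epsilon))(t-T_\epsilon)+O((t-T_\epsilon)^2)$ and $\partial_y h(y_\epsilon,T_\epsilon)=1$, one gets the equivalences $d_\epsilon\sim A^3+\xi^2$ and $y^\epsilon_{\pm}-y_\epsilon=H+O(H^2)+O((t-T_\epsilon)H)$. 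Hence it suffices to estimate $H$ and the Jacobian $\partial_y\varphi=(3H^2-A)\partial_y h$ with $\partial_y h\sim 1$ near $(y_\epsilon,T_\epsilon)$.

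For the pointwise bound, from $|H|^3\leq|\xi|+A|H|$ a dichotomy on whether $|H|^2\geq 2A$ or not yields $|H|\leq\max\{\sqrt{2A},(2|\xi|)^{1/3}\}\lesssim A^{1/2}+|\xi|^{1/3}\lesssim(A^3+\xi^2)^{1/6}\sim d_\epsilon^{1/6}$, giving $|y^\epsilon_{\pm}-y_\epsilon|\lesssim d_\epsilon^{1/6}$. The main obstacle is the matching lower bound $3H^2-A\gtrsim d_\epsilon^{1/3}$. In $\Omega_{\pm}$ (outside the cusp) the relevant branch satisfies $H^2\geq A/3$, and a case split is decisive: when $H^2\gg A$ we have $|\xi|\sim|H|^3$ and $3H^2-A\sim H^2\sim|\xi|^{2/3}$; when $H^2\sim A$ we have $|\xi|\sim A^{3/2}$ and $3H^2-A\sim A$. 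Combining, $3H^2-A\sim A+|\xi|^{2/3}\sim(A^3+\xi^2)^{1/3}\sim d_\epsilon^{1/3}$; equivalently this follows from the algebraic identity $Y^2(Y-3A)=27\xi^2-4A^3$ with $Y:=3H^2-A$, which is exactly the discriminant relation for the cubic $h^3-Ah-\xi$. Implicit differentiation of $x=\varphi(y^\epsilon_{\pm},t)$ then gives $\partial_x y^\epsilon_{\pm}=1/\partial_y\varphi$ and the desired $|\partial_x y^\epsilon_{\pm}|\lesssim d_\epsilon^{-1/3}$.

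The tangential bound $|\partial_l y^\epsilon_{\pm}|\lesssim d_\epsilon^{-1/6}$ needs a further cancellation: differentiating $x=\varphi(y^\epsilon_{\pm},t)$ along $l=(\lambda_i^0,1)/\sqrt{1+(\lambda_i^0)^2}$, with $\lambda_i^0=\lambda_i(w(x_\epsilon,T_\epsilon))$, and using the characteristic identity $\partial_t\varphi=\lambda_i(v)$ yields
\[
\bigl(\sqrt{1+(\lambda_i^0)^2}\bigr)\,\partial_y\varphi\cdot\partial_l y^\epsilon_{\pm}=\lambda_i^0-\lambda_i\bigl(v(y^\epsilon_{\pm},t)\bigr).
\]
By the uniform smoothness of $v$ on $D$ up to $T_\epsilon+1$ furnished by Theorem \ref{T:2.1}, the right-hand side is $O(|y^\epsilon_{\pm}-y_\epsilon|+|t-T_\epsilon|)=O(d_\epsilon^{1/6})$, so dividing by $|\partial_y\varphi|\gtrsim d_\epsilon^{1/3}$ produces the claimed $d_\epsilon^{-1/6}$. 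The second-order bounds $|\partial^2_x y^\epsilon_{\pm}|$, $|\partial^2_{xt}y^\epsilon_{\pm}|$, $|\partial^2_t y^\epsilon_{\pm}|\lesssim d_\epsilon^{-5/6}$ then come from differentiating these relations once more and using $\partial^2_y\varphi=6H(\partial_y h)^2+(3H^2-A)\partial^2_y h\sim d_\epsilon^{1/6}$ together with the already-established first-order bounds; the exponent $-5/6=1/6-1$ reflects the dominant term $-(\partial^2_y\varphi)(\partial_x y^\epsilon_{\pm})^2/\partial_y\varphi$. The delicate point throughout is to keep track of which directional derivatives genuinely benefit from the $d_\epsilon^{1/6}$ cancellation along $l$ and to verify that the higher-order corrections to $A(t)$ and $\xi$ preserve the leading-order equivalence $d_\epsilon\sim A^3+\xi^2$ uniformly as $(x,t)\to(x_\epsilon,T_\epsilon)$ inside $\Omega_{\pm}$.
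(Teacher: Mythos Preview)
Your plan follows the paper's proof: both reduce to the depressed cubic $H^3-AH=\xi$ via the unfolding $\varphi=h^3-Ah+B$, establish $d_\epsilon\sim A^3+\xi^2$ and $|H|\lesssim d_\epsilon^{1/6}$, prove the key lower bound $3H^2-A\gtrsim d_\epsilon^{1/3}$, and then differentiate implicitly, with the $\partial_l$ gain coming from $\partial_t\varphi=\lambda_i(v)$. The only notable variation is your discriminant identity $Y^2(Y-3A)=27\xi^2-4A^3$ for the lower bound on $3H^2-A$; the paper instead observes from Cardano that the outer branch satisfies $h^2>A$, giving $3h^2-A=3|h|^{-1}|\xi|+2A\gtrsim d_\epsilon^{-1/6}|\xi|+A\gtrsim d_\epsilon^{1/3}$, and for the second-order bounds it tracks the intermediate quantities $\partial_t h+\partial_t y^\epsilon_+\,\partial_y h$ and $\partial_{yt}h+\partial_t y^\epsilon_+\,\partial_y^2 h$ explicitly rather than packaging them as you do---but this is bookkeeping, not a different idea. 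One small correction: your parenthetical ``(outside the cusp)'' is inaccurate, since $\Omega_\pm$ include the cusp interior $\Omega_0$; your substantive claim $H^2\ge A/3$ for the outer branches is nonetheless correct there.
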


\begin{proof}
It only suffices to prove the desired results for $y^{\epsilon}_+(x, t)$. By direct calculations, one has
\[
A(t)^3+(B(t)-\varphi(y, t))^2\sim d_{\epsilon}.
\]
In addition, we have
\begin{equation}\label{d1}
\begin{split}
h(y^{\epsilon}_+(x, t), t)=&\Big(-\frac{B(t)-\varphi(y^\epsilon_+(x, t), t)}{2}+\sqrt{\frac{1}{4}(B(t)-\varphi(y^\epsilon_+(x, t), t))^2-\frac{1}{27}A(t)^3}\Big)^{\frac{1}{3}}\\
&+\Big(-\frac{B(t)-\varphi(y^\epsilon_+(x, t), t)}{2}-\sqrt{\frac{1}{4}(B(t)-\varphi(y^\epsilon_+(x, t), t))^2
-\frac{1}{27}A(t)^3}\Big)^{\frac{1}{3}}
\end{split}
\end{equation}
and
\begin{equation}\label{dy1}
\begin{split}
&|B(t)-\varphi(y^\epsilon_+(x, t), t)|>\frac{2\sqrt{3}}{9}A(t)^{\frac{3}{2}},\\
&|\Big(-\frac{B(t)-\varphi(y^\epsilon_+(x, t), t)}{2}+\sqrt{\frac{1}{4}(B(t)-\varphi(y^\epsilon_+(x, t), t))^2-\frac{1}{27}A(t)^3}\Big)^{\frac{2}{3}}+\frac{1}{3}A(t)\\
&\Big(-\frac{B(t)-\varphi(y^\epsilon_+(x, t), t)}{2}-\sqrt{\frac{1}{4}(B(t)-\varphi(y^\epsilon_+(x, t), t))^2-\frac{1}{27}A(t)^3}\Big)^{\frac{2}{3}}|\leq Cd^{\frac13}_\epsilon.
\end{split}
\end{equation}
Therefore
\begin{equation}\label{eq:3.8*}
C_1d^{\frac16}_{\epsilon}\leq |h(y^{\epsilon}_+(x, t), t)|\leq C_2 d^{\frac16}_{\epsilon},
\end{equation}
which implies $h(y^{\epsilon}_+(x, t), t)\sim d^{\frac16}_{\epsilon}$
with $C_1$ and $C_2$ are positive constants independent of $\epsilon$.

Note that
\[
\begin{split}
&\varphi(y, t)-\varphi(y_\epsilon, T_{\epsilon})-\partial_t\varphi(y_\epsilon, T_\epsilon)(t-T_\epsilon)\\
=&\frac{1}{2}\partial^2_{ty}\varphi(y_\epsilon, T_\epsilon)(y-y_\epsilon)(t-T_\epsilon)+\frac16\partial^3_y\varphi(y_\epsilon,
T_{\epsilon})(y-y_\epsilon)^3+O(1)\Big((t-T_\epsilon)^2+(y-y_\epsilon)^2(t-T_\epsilon)\Big),
\end{split}
\]
then it follows from the definition of $d_{\epsilon}$ and $\eqref{dy1}_1$ that
\[
(y-y_\epsilon)^3-(y-y_\epsilon)(t-T_\epsilon)\sim d^{\frac12}_\epsilon\sim(t-T_\epsilon)^{\frac32}.
\]
Similarly to the estimate of \eqref{eq:3.8*}, we can prove $|y^{\epsilon}_+(x, t)-y_\epsilon|\leq Cd^{\frac16}_{\epsilon}$.

From \eqref{eq:3.2}, it holds
\begin{equation}\label{eq:3.8}
h^3\big(y^{\epsilon}_+(x, t), t\big)-A(t)h(y^{\epsilon}_+(x, t), t)+B(t)=x=\varphi(y^{\epsilon}_+(x, t), t).
\end{equation}
Differentiating \eqref{eq:3.8} with respect to $x$ yields
\begin{equation}\label{eq:3.9}
\partial_x y^{\epsilon}_+(x, t)=\frac{1}{\partial_y h(y^{\epsilon}_+(x, t), t)\Big(3h^2( y^{\epsilon}_+(x, t), t)-A(t)\Big)}.
\end{equation}
On the other hand, based on \eqref{d1}, $\eqref{dy1}_1$ and \eqref{eq:3.8}, one has
\begin{equation}\label{eq:3.10}
\begin{split}
|3h^2(y^{\epsilon}_+(x, t), t)-A(t)|=&3|h^2(y^{\epsilon}_+(x, t), t)-A(t)|+2A(t)\\
=&|h^{-1}(y^{\epsilon}_+(x, t), t)||\varphi(y^\epsilon_+(x, t), t)-B(t)|+2A(t)\\
\geq&Cd^{-\frac{1}{6}}_\epsilon((\varphi(y^\epsilon_+(x, t), t)-B(t))^2+A(t)^3)^{\frac{1}{2}}=Cd^{\frac{1}{3}}_\epsilon,
\end{split}
\end{equation}
where we have used the fact derived from the formula \eqref{d1} that  $h(y^{\epsilon}_+(x, t), t)$ has
the same sign with $\varphi(y^\epsilon_+(x, t), t)-B(t)$. Then it follows from \eqref{eq:3.8} that
\[
h^2(y^{\epsilon}_+(x, t), t)-A(t)=h^{-1}(y^{\epsilon}_+(x, t), t)(\varphi(y^\epsilon_+(x, t), t)-B(t))>0.
\]
Together with \eqref{eq:3.9}, this yields
\begin{equation}\label{DY-2}
|\partial_x y^{\epsilon}_+(x, t)|\leq Cd^{-\frac13}_{\epsilon}.
\end{equation}
In addition, one has from \eqref{eq:3.8} that
\begin{equation}\label{eq:3.11*}
\partial_t y_+^{\epsilon}=-\frac{(3h^2-A(t))\partial_t h-A'(t)h+B'(t)}{\partial_y h(y^{\epsilon}_+(x, t), t)(3h^2-A(t))}=-\frac{\partial_t\varphi(y^{\epsilon}_+(x, t), t)}{\partial_y h(y^{\epsilon}_+(x, t), t)(3h^2-A(t))}.
\end{equation}
This implies $|\partial_t y_+^{\epsilon}|\leq Cd^{-\frac{1}{3}}_{\epsilon}$ and
\[
|\partial_l y^{\epsilon}_+(x, t)|=\big|\partial_t y^{\epsilon}_+(x, t)+\lambda_i(w(y^{\epsilon}_+(x, t), t))\partial_x y^{\epsilon}_+(x, t)\big|
=\big|\frac{\partial_t\varphi(y^{\epsilon}_+(x, t), t)-\partial_t\varphi(y_\epsilon, T_\epsilon)}{\partial_y h(y^{\epsilon}_+(x, t), t)(3h^2-A)}\big|\leq Cd^{-\frac16}_{\epsilon},
\]
where
\[
\begin{split}
&|\partial_t\varphi(y^{\epsilon}_+(x, t), t)-\partial_t\varphi(y_\epsilon, T_\epsilon)|\\
=&\big|\partial^2_{ty}\varphi(y_\epsilon, T_{\epsilon})(y^{\epsilon}_+-y_{\epsilon})
+\partial^2_{t}\varphi(y_{\epsilon}, T_\epsilon)(t-T_{\epsilon})+O(1)\Big((y^{\epsilon}_{+}-y_{\epsilon})^2+(t-T_{\epsilon})^2\Big)\big|
\leq  Cd^{\frac{1}{6}}_{\epsilon}.
\end{split}
\]


Next, we treat $\partial^2_x y^{\epsilon}_+(x, t)$, $\partial^2_{xt} y^{\epsilon}_+(x, t)$ and $\partial^2_{t} y^{\epsilon}_+(x, t)$.
It follows from \eqref{eq:3.9} and direct computation that
\[
\begin{split}
|\partial^2_{x}y^{\epsilon}_+(x, t)|=&\big|\Big(\frac{\partial^2_{y}h(y^{\epsilon}_+(x, t), t)}{(\partial_y h)^2(y^{\epsilon}_+(x, t), t)(3h^2-A(t))}+\frac{6h(y^{\epsilon}_+(x, t), t)}
{(3h^2-A(t))^2}\Big)\partial_x y^{\epsilon}_+(x, t)\big|\\
\leq& C(d^{-\frac16}_{\epsilon}d^{-\frac13}_{\epsilon}+d^{\frac16}_{\epsilon}d^{-\frac{2}{3}}_{\epsilon})d^{-\frac13}_{\epsilon}\leq Cd^{-\frac56}_{\epsilon},
\end{split}
\]
where we have used the facts that
\[
\begin{split}
|h(y^{\epsilon}_+(x, t), t)|\le &Cd^{\frac16}_{\epsilon},\;\partial_y h(y_\epsilon, T_\epsilon)=1,\;
 |3h^2-A(t)|\geq Cd^{\frac13}_{\epsilon},\;|\partial_x y^{\epsilon}_+|\leq Cd^{-\frac{1}{3}}_{\epsilon},\\
|\partial^2_{y}\varphi(y^{\epsilon}_+(x, t), t)|=&\big|\partial^2_{y}\varphi(y_\epsilon, T_\epsilon)+\partial^3_{y}\varphi(y_{\epsilon}, T_{\epsilon})(y^{\epsilon}_+(x, t)-y_{\epsilon})+\partial^3_{tyy}\varphi(y_{\epsilon}, T_{\epsilon})(t-T_{\epsilon})\\
&+O(1)\Big((t-T_{\epsilon})^2+(y^{\epsilon}_{+}-y_{\epsilon})^2\Big)\big|\leq Cd^{\frac16}_{\epsilon},\\
|\partial^2_{y}h(y^{\epsilon}_+(x, t), t)|=&\big|\frac{1}{3h^2(y^{\epsilon}_+(x, t), t)-A(t)}\Big(\partial^2_{y}\varphi (y^{\epsilon}_+(x, t), t)-6h(\partial_y h)^2\Big)\big|\leq Cd^{-\frac{1}{6}}_{\epsilon}.
\end{split}
\]
Taking the first order derivatives of \eqref{eq:3.2} with respect to $t$ and $y$ respectively, we arrive at
\begin{align}
\partial_t h(y, t)=&\frac{\partial_t \varphi+A'(t)h-B'(t)}{3h^2-A(t)},\label{eq:3.16}\\
\partial_y h(y, t)=&\frac{\partial_y \varphi}{3h^2-A(t)}.\label{eq:3.17}
\end{align}
From \eqref{eq:3.16}--\eqref{eq:3.17}, it holds that
\begin{equation}\label{eq:3.24}
\partial_t h+\partial_t y^{\epsilon}_+\partial_y h=\frac{1}{3h^2-A(t)}(\partial_t\varphi-B'(t)
+\partial_y\varphi\partial_t y^{\epsilon}_++A'(t)h),
\end{equation}
where $A'(t)=1+O(1)(t-T_{\epsilon})$ by $A'(T_{\epsilon})=1$. Using Taylor expansion formula, one has
\[
\begin{split}
\partial_t\varphi(y^{\epsilon}_+, t)=&\partial_t\varphi(y_{\epsilon}, T_{\epsilon})+\partial^2_{ty}\varphi(y_{\epsilon}, T_{\epsilon})(y^{\epsilon}_+-y_{\epsilon})
+\partial^2_{t}\varphi(y_{\epsilon}, T_{\epsilon})(t-T_{\epsilon})+O(1)\Big((t-T_\epsilon)^2+(y^{\epsilon}_+-y_{\epsilon})^2\Big),\\
B'(t)=&\partial_t\varphi(y_{\epsilon}, T_\epsilon)+B''(T_\epsilon)(t-T_{\epsilon})+O(1)(t-T_{\epsilon})^2.
\end{split}
\]
This yields
\begin{equation}\label{eq:3.21}
\begin{split}
|\partial_t\varphi(y^{\epsilon}_+, t)-B'(t)|=&\big|\partial^2_{ty}\varphi(y_{\epsilon}, T_{\epsilon})(y^{\epsilon}_+-y_{\epsilon})
+\Big(\partial^2_{t}\varphi(y_{\epsilon}, T_{\epsilon})-B''(T_{\epsilon})\Big)(t-T_{\epsilon})\\
&+O(1)\Big((t-T_{\epsilon})^2+(y^{\epsilon}_+-y_{\epsilon})^2\Big)\big|\leq Cd^{\frac16}_{\epsilon}.
\end{split}
\end{equation}
Along with $\eqref{eq:3.8*}$, \eqref{eq:3.11*}, \eqref{eq:3.21} and
\begin{equation}\label{eq:3.21*}
\begin{split}
|\partial_y\varphi(y^{\epsilon}_+, t)|=&\big|\partial^2_{ty}\varphi(y_{\epsilon}, T_{\epsilon})
(t-T_{\epsilon})
+\frac12\partial^3_{y}\varphi(y_{\epsilon}, T_{\epsilon})
(y^{\epsilon}_+-y_{\epsilon})^2\\
&+O(1)\Big(
(t-T_{\epsilon})^2+(t-T_{\epsilon})(y^{\epsilon}_+-y_{\epsilon})+(y^{\epsilon}_+-y_{\epsilon})^3\Big)\big|
\leq Cd^{\frac13}_{\epsilon},
\end{split}
\end{equation}
 we can derive from \eqref{eq:3.24}  that
\begin{equation}\label{eq:3.25}
|\partial_t h+\partial_t y^{\epsilon}_+\partial_y h|\leq Cd^{-\frac13}_{\epsilon}.
\end{equation}

Differentiating \eqref{eq:3.17} with respect to $t$ and $y$ respectively yields
\begin{equation}\label{eq:3.19}
\partial^2_{yt}h+\partial_t y^{\epsilon}_+(x, t)\partial^2_{y}h=\frac{
\partial^2_{yt}\varphi+\partial^2_{y}\varphi\partial_t y^{\epsilon}_+}{3h^2-A(t)}
-\frac{6h\partial_y \varphi(\partial_t h+\partial_t y^{\epsilon}_+\partial_y h)}{(3h^2-A(t))^2}+\frac{A'(t)\partial_y\varphi}
{(3h^2-A(t))^2},
\end{equation}
where $|\partial_t y^{\epsilon}_+(t,x)|\leq Cd^{-\frac13}_{\epsilon}$, and
\begin{equation*}
\begin{split}
\quad\;\;|\partial^2_{yt}\varphi(y^{\epsilon}_+, t)|=&\big|\partial^2_{yt}\varphi(y_{\epsilon}, T_{\epsilon})
+\partial^3_{yty}\varphi(y_{\epsilon}, T_{\epsilon})
(y^{\epsilon}_{+}-y_{\epsilon})+\partial^3_{ytt}\varphi(y_{\epsilon}, T_{\epsilon})(t-T_\epsilon)\\
&+O(1)\Big(
(t-T_{\epsilon})^2+(y^{\epsilon}_+-y_{\epsilon})^2\Big)\big|
\leq C,\\
\quad\;\;|\partial^2_{y}\varphi(y^{\epsilon}_+, t)|=&\big|\partial^2_{y}\varphi(y_{\epsilon}, T_{\epsilon})
+\partial^3_{y}\varphi(y_{\epsilon}, T_{\epsilon})
(y^{\epsilon}_{+}-y_{\epsilon})+\partial^3_{yyt}\varphi(y_{\epsilon}, T_{\epsilon})(t-T_\epsilon)\\
&+O(1)\Big(
(t-T_{\epsilon})^2+(y^{\epsilon}_+-y_{\epsilon})^2\Big)\big|
\leq Cd^{\frac16}_{\epsilon},\\
\big|\partial^2_{yt}\varphi(y^{\epsilon}_+, t)+&\partial_t y^{\epsilon}_{+}
\partial^2_{y}\varphi(y^{\epsilon}_+, t)\big|\leq C d^{-\frac16}_{\epsilon}.
\end{split}
\end{equation*}
Then we obtain
\begin{equation}\label{eq:3.20*}
|\partial^2_{ty}h+\partial_t y^{\epsilon}_+(x, t)\partial^2_{y}h|\leq Cd^{-\frac12}_{\epsilon}.
\end{equation}
In addition, differentiating \eqref{eq:3.9} and \eqref{eq:3.11*} with respect to $t$ respectively yields
\begin{equation}\label{eq:3.17*}
\begin{split}
\partial^2_{xt}y^{\epsilon}_+=&-\frac{\partial^2_{yt}h+\partial^2_{y}h \partial_t y^{\epsilon}_+}{(\partial_y h)^2(y^{\epsilon}_+(x, t), t)(3h^2-A(t))}-
\frac{6h(\partial_t h+\partial_y h\partial_t y^{\epsilon}_+)-A'(t)}{\partial_y h(y^{\epsilon}_+(x, t), t)(3h^2-A(t))^2},\\
\partial^2_{t}y^{\epsilon}_+=&-\frac{\partial^2_{t}\varphi+\partial^2_{ty}\varphi\partial_t y^{\epsilon}_+}{\partial_y h(y^{\epsilon}_+(x, t), t)(3h^2-A(t))}+
\frac{\partial_t \varphi(\partial^2_{yt}h+\partial^2_{y}h\partial_t y^{\epsilon}_+)}
{(\partial_y h)^2(y^{\epsilon}_+(x, t), t)(3h^2-A(t))}\\
&+\frac{6\partial_t\varphi h(\partial_t h+\partial_y h\partial_t y^{\epsilon}_+)-\partial_t\varphi A'(t)}{\partial_y h(y^{\epsilon}_+(x, t), t)(3h^2-A)^2}.\\
\end{split}
\end{equation}
Due to $|3h^2-A(t)|\geq Cd^{\frac13}_{\epsilon}$,
$|\partial^2_{t}\varphi(y^{\epsilon}_{+}, t)+\partial_t y^{\epsilon}_{+}\partial^2_{ty}\varphi(y^{\epsilon}_{+}, t)|
\leq Cd^{-\frac{1}{3}}_{\epsilon}$, and
\[
\begin{split}
|\partial^2_{t}\varphi(y^{\epsilon}_+, t)|=&\big|\partial^2_{t}\varphi(y_{\epsilon}, T_{\epsilon})+\partial^3_{t}\varphi(y_{\epsilon}, T_{\epsilon})(t-T_\epsilon)+\partial^3_{tty}\varphi(y_{\epsilon}, T_{\epsilon})
(y^{\epsilon}_+-y_{\epsilon})\\&+O(1)\Big(
(t-T_{\epsilon})^2+(y^{\epsilon}_+-y_{\epsilon})^2\Big)\big|
\leq C,
\end{split}
\]
then together with \eqref{eq:3.8*},  \eqref{eq:3.25} and \eqref{eq:3.20*},  we deduce from \eqref{eq:3.17*} that
\[
|\partial^2_{xt}y^{\epsilon}_+|\leq C d^{-\frac56}_{\epsilon},\quad |\partial^2_{t}y^{\epsilon}_+|\leq C d^{-\frac56}_{\epsilon}.
\]
\end{proof}

\subsection{Estimates on the  pre-shock wave}

For $t\in [T_{\ve}, T_{\ve}+1]$ and $\Omega=\{(x,t): x_{\ve}-\lambda^*(T_{\ve}+1-t)\le x\le x_{\ve}+\lambda^*(T_{\ve}+1-t)\}$
with $\lambda^*=2\displaystyle\max_{1\le k\le n}\{|\lambda_k(0)|\}$, define $w_{\pm}^0(x,t)=v(y^{\epsilon}_{\pm}(x, t),t)$
in $\Omega_{\pm}$ respectively and
\[
w^0(x, t)=
\begin{cases}
w_{-}^0(x, t), \quad x<\phi^0(t),\\[3pt]
w_{+}^0(x, t), \quad x>\phi^0(t),
\end{cases}
\]
where the pre-shock curve $\Gamma$: $x=\phi^0(t)$ has been defined in \eqref{YHC-7}, and $w^0_{-}(x, t)$, $w^0_+(x, t)$
represent the corresponding left and right states of the pre-shock wave. $(w^0(x, t), \phi^0(t))$ will be taken
as the first approximation of shock solutions.
Next, we derive some basic properties of $w^0(x, t)=(w^0_1, \cdot\cdot\cdot, w^0_n)(x,t)$.
\begin{lemma}\label{l:3.3}
 In the domain $\Omega\setminus\Gamma$,
 \begin{itemize}
\item[1.] $w^0_i(x, t)$ fulfills the estimates:
\begin{equation}\label{eq:3.12}
\begin{cases}
|w^0_i(x, t)-w^0_i(x_{\epsilon}, T_{\epsilon})|\leq C\epsilon d^{\frac16}_{\epsilon},\\[5pt]
|\partial_lw^0_i(x, t)|\leq C\epsilon d^{-\frac16}_{\epsilon},\quad
|\partial_xw^0_i(x, t)|\leq C\epsilon d^{-\frac13}_{\epsilon},\\[5pt]
|\partial_x^2w^0_i(x, t)|\leq C\epsilon d^{-\frac56}_{\epsilon}.
\end{cases}
\end{equation}

\item[2.] For $j\neq i$, $w^0_j(x, t)$ satisfies the estimates:
\begin{equation}\label{eq:3.13}
\begin{cases}
|w^0_j(x, t)-w^0_j(x_{\epsilon}, T_{\epsilon})|\leq C\epsilon d^{\frac13}_{\epsilon},\\[5pt]
|\partial_tw^0_j(x, t)|\leq C\epsilon,\;\; |\partial_xw^0_j(x, t)|\leq C\epsilon,\\[5pt]
|\partial_x^2w^0_j(x, t)|\leq C\epsilon d^{-\frac12}_{\epsilon},\;
|\partial_t^2w^0_j(x, t)|\leq C\epsilon d^{-\frac12}_{\epsilon},\;
|\partial_{tx}^2w^0_j(x, t)|\leq C\epsilon d^{-\frac12}_{\epsilon}.
\end{cases}
\end{equation}
\end{itemize}
\end{lemma}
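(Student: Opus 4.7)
The plan is to exploit the defining relation $w^0(x,t)=v(y^\epsilon_\pm(x,t),t)=\epsilon D_\epsilon\omega(y^\epsilon_\pm(x,t),t)$ on $\Omega_\pm$, where $y=y^\epsilon_\pm(x,t)$ inverts $x=\varphi(y,t)$. Every estimate in the lemma then reduces, via the chain rule, to combining the smoothness bounds on $\omega$ in $(y,t)$ supplied by Theorem \ref{T:2.1} together with the singular behavior of $y^\epsilon_\pm$ near the cusp point supplied by Lemma \ref{l:3.2}.

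For the $i$-th component the argument is direct. Since $w^0_i=\epsilon\,\omega_i(y^\epsilon_\pm(x,t),t)$ with $\omega_i$ and all its $(y,t)$-derivatives of size $O(1)$, a Taylor expansion around $(y_\epsilon,T_\epsilon)$ gives $|w^0_i-w^0_i(x_\epsilon,T_\epsilon)|\lesssim\epsilon(|y^\epsilon_\pm-y_\epsilon|+|t-T_\epsilon|)\lesssim\epsilon d_\epsilon^{1/6}$. Writing $\partial_x w^0_i=\epsilon(\partial_y\omega_i)(\partial_x y^\epsilon_\pm)$ and $\partial_x^2 w^0_i=\epsilon(\partial_y^2\omega_i)(\partial_x y^\epsilon_\pm)^2+\epsilon(\partial_y\omega_i)(\partial_x^2 y^\epsilon_\pm)$ and inserting $|\partial_x y^\epsilon_\pm|\lesssim d_\epsilon^{-1/3}$, $|\partial_x^2 y^\epsilon_\pm|\lesssim d_\epsilon^{-5/6}$ yields the first and second order $x$-derivative bounds. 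For the tangential derivative $\partial_l=\partial_t+\lambda_i(w(x_\epsilon,T_\epsilon))\partial_x$ one uses $\partial_l w^0_i=\epsilon(\partial_y\omega_i)(\partial_l y^\epsilon_\pm)+\epsilon\,\partial_t\omega_i$ together with $|\partial_l y^\epsilon_\pm|\lesssim d_\epsilon^{-1/6}$ and the fact that $\tilde l_i\partial_t\omega=0$ in \eqref{eq:2.2} forces the $i$-th entry of $\partial_t\omega$ to be $O(\epsilon)$, so the second contribution is harmless.

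The case $j\neq i$ is more delicate. Naively chaining $|\partial_y v_j|\leq C\epsilon$ against $|\partial_x y^\epsilon_\pm|\lesssim d_\epsilon^{-1/3}$ only produces a bound of size $\epsilon d_\epsilon^{-1/3}$, which diverges at the cusp, whereas the lemma demands $|\partial_x w^0_j|\leq C\epsilon$. The improvement is obtained by using the explicit representation of $\partial_y\omega$ in \eqref{eq:2.4}: because $\tilde r_i$ is parallel to $e_i$ by property~(3) of Lemma \ref{Y-4}, the term $h_i\tilde r_i$ contributes nothing to the $j$-th component, so
\begin{equation*}
\partial_y\omega_j=-\sum_{k\neq i}\frac{K\,h_k}{\lambda_k-\lambda_i}\,(\tilde r_k)_j,\qquad j\neq i.
\end{equation*}
Together with the uniform bounds $|h_k|\lesssim\epsilon$ from Lemmas \ref{L:2.9}--\ref{L:2.12}, this gives the refined estimate $|\partial_y v_j|=\epsilon^2|\partial_y\omega_j|\lesssim\epsilon K\lesssim\epsilon\,d_\epsilon^{1/3}$; one factor of $K$ cancels the singularity in $\partial_x y^\epsilon_\pm=K^{-1}$, yielding $|\partial_x w^0_j|\lesssim\epsilon$, and analogously for $\partial_t w^0_j$. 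The variation estimate $|w^0_j-w^0_j(x_\epsilon,T_\epsilon)|\lesssim\epsilon\, d_\epsilon^{1/3}$ follows by integrating these gradients along a convenient path in $\Omega_\pm$ joining $(x,t)$ to $(x_\epsilon,T_\epsilon)$, whose length one controls using the geometric description of the cusp from Section~\ref{} above.

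For the second-order derivatives in the $j\neq i$ case one differentiates the identity for $\partial_y\omega_j$ once more. The new terms contain $\partial_y K=\partial_y^2\varphi$ and $\partial_y h_k$; near the cusp Theorem \ref{T:2.1} and the unfolding \eqref{eq:3.2} give $|\partial_y^2\varphi|\lesssim d_\epsilon^{1/6}$ and $|\partial_y h_k|\lesssim\epsilon$, hence $|\partial_y^2\omega_j|\lesssim d_\epsilon^{1/6}$ and $|\partial_y^2 v_j|\lesssim \epsilon^2 d_\epsilon^{1/6}$. Inserting this into $\partial_x^2 w^0_j=\partial_y^2 v_j\cdot(\partial_x y^\epsilon_\pm)^2+\partial_y v_j\cdot\partial_x^2 y^\epsilon_\pm$ and balancing the two singularities $d_\epsilon^{-2/3}$ and $d_\epsilon^{1/3}\cdot d_\epsilon^{-5/6}=d_\epsilon^{-1/2}$ produces the $d_\epsilon^{-1/2}$ growth claimed in \eqref{eq:3.13}. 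The mixed and pure time second derivatives follow the same pattern after an analogous chain-rule expansion. The main obstacle throughout is this bookkeeping of how each $K$-factor produced by the structure of $\partial_y\omega$ must be matched against the $K^{-1}$-factors in derivatives of $y^\epsilon_\pm$; once this cancellation is organised systematically, all six estimates in \eqref{eq:3.13} and the four in \eqref{eq:3.12} fall out of the same calculation.
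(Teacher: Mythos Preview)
Your approach is correct and takes a genuinely different route from the paper for the $j\neq i$ estimates. The paper argues from the third equation of the blowup system \eqref{eq:2.1} that $\partial_y v_j(y_\epsilon,T_\epsilon)=0$ and, after one more $y$-differentiation, $\partial_y^2 v_j(y_\epsilon,T_\epsilon)=0$; it then Taylor-expands $v_j$ and $\partial_y v_j$ about $(y_\epsilon,T_\epsilon)$ to obtain $|\partial_y v_j|\lesssim\epsilon d_\epsilon^{1/3}$ and $|\partial_y^2 v_j|\lesssim\epsilon d_\epsilon^{1/6}$, and combines these with the bounds of Lemma~\ref{l:3.2}. You instead read the factor of $K$ directly off the representation \eqref{eq:2.4} for $\partial_y\omega_j$ and cancel it exactly against $\partial_x y^\epsilon_\pm=K^{-1}$. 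Your method is more algebraic and arguably cleaner, since it never needs the pointwise vanishing statements; the paper's method is more self-contained, since it avoids tracking the $D_\epsilon$-scaling of $\tilde r_k$ and stays in the $v$-variables.

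Two small points. First, in your second-derivative paragraph the phrase ``balancing the two singularities $d_\epsilon^{-2/3}$ and $d_\epsilon^{-1/2}$'' is miswritten: after multiplying $(\partial_x y^\epsilon_\pm)^2\lesssim d_\epsilon^{-2/3}$ by your own bound $|\partial_y^2 v_j|\lesssim\epsilon^2 d_\epsilon^{1/6}$ the first term is already $\epsilon^2 d_\epsilon^{-1/2}$, matching the second; as written it looks as though the worse power $d_\epsilon^{-2/3}$ survives. Second, for the zeroth-order estimate $|w^0_j-w^0_j(x_\epsilon,T_\epsilon)|\lesssim\epsilon d_\epsilon^{1/3}$ the paper does a Taylor expansion in $(y,t)$ around $(y_\epsilon,T_\epsilon)$, which is immediate once $\partial_y v_j=\partial_y^2 v_j=0$ there. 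Your path-integration sketch works too, but you should say explicitly that any path in $\bar\Omega_\pm$ from $(x,t)$ to $(x_\epsilon,T_\epsilon)$ has length $O(|t-T_\epsilon|+|x-x_\epsilon|)\lesssim d_\epsilon^{1/3}$ (using $|x-x_\epsilon|\le|x-x_\epsilon-\lambda_i(t-T_\epsilon)|+C(t-T_\epsilon)\le d_\epsilon^{1/2}+Cd_\epsilon^{1/3}$ for small $d_\epsilon$), and that such a path can be chosen to remain in $\bar\Omega_\pm$.
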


\begin{proof} It suffices to show the results in domain $\Omega_+$. Thanks to Theorem \ref{T:2.1}, one knows that
$$|\partial^{\alpha}_{t, y}v_k(y, t)|\leq C_{\alpha}\epsilon, \quad |\alpha|\geq 0, k=1,..., n.$$
Together with Lemma \ref{l:3.2}, we can obtain
\[
\begin{split}
&|w^0_i(x, t)-w^0_i(x_{\epsilon}, T_{\epsilon})|=|v_i(y^{\epsilon}_+(x, t), t)-v_i(y_{\epsilon}, T_{\epsilon})|\\
=&|\partial_tv_i(y_{\epsilon}, T_{\epsilon})(t-T_{\epsilon})+\partial_yv_i(y_{\epsilon}, T_{\epsilon})(y^{\epsilon}_+(x, t)-y_{\epsilon})+O(1)\epsilon\Big((t-T_{\epsilon})^2+(y^{\epsilon}_+(x, t)-y_{\epsilon})^2\Big)|
\leq C\epsilon d^{\frac16}_{\epsilon},\\[3pt]
&|\partial_lw^0_i(x, t)|=|\partial_tv_i(y^{\epsilon}_+(x, t), t)+\partial_l y^{\epsilon}_+(x, t)\partial_yv_i(y^{\epsilon}_+(x, t), t)|
\leq C\epsilon+C\epsilon d^{-\frac16}_{\epsilon}\leq C\epsilon d^{-\frac16}_{\epsilon},\\
&|\partial_xw^0_i(x, t)|=|\partial_yv_i(y^{\epsilon}_+(x, t), t)\partial_x y^{\epsilon}_+(x, t)|\leq C\epsilon d^{-\frac13}_{\epsilon},\\
&|\partial^2_xw^0_i(x, t)|=|\partial^2_yv_i(y^{\epsilon}_+(x, t), t)\big(\partial_x y^{\epsilon}_+(x, t)\big)^2+
\partial_yv_i(y^{\epsilon}_+(x, t), t)\partial^2_x y^{\epsilon}_+(x, t)|\leq C\epsilon d^{-\frac56}_{\epsilon}.
\end{split}
\]

In the rest, we estimate $w_j$ for $j\neq i$.  Since the $i$-th
right eigenvector of $A(w)$ is $r_i(w)=(0, 0, \cdots, 1, 0, \cdots,0)^{\top}$ and $l_j(w)\cdot r_i(w)=0$
holds, then for small $|w|$,
$$l_{ji}(w)=0 \;\; \text{for $j\neq i$}.$$

Note that from the third equations of the blowup system \eqref{eq:2.1}, one has that for $k\neq i$,
 \begin{equation}\label{eq:3.14}
 \begin{split}
 \sum_{j\neq i}l_{kj}(v)\Big(\partial_tv_j(y, t)\partial_y\varphi(y, t)+(\lambda_k-\lambda_i)(v(y, t))\partial_y
 v_j(y, t)\Big)=0.
 \end{split}
 \end{equation}
Due to
 \[
 \det\left(
 \begin{array}{ccccccc}
 l_{11}&l_{12}&\cdots&l_{1(i-1)}&l_{1(i+1)}&\cdots&l_{1n}\\
 \vdots\\
 l_{j1}&l_{j2}&\cdots&l_{j(i-1)}&l_{j(i+1)}&\cdots&l_{jn}\\
 \vdots\\
 l_{n1}&l_{n2}&\cdots&l_{n(i-1)}&l_{n(i+1)}&\cdots&l_{nn}\\
 \end{array}
 \right)\neq 0
 \]
and $\partial_y\varphi(y_{\epsilon}, T_{\epsilon})=0$, then for small $|w|$, we have
\[
\partial_yv_j(y_{\epsilon}, T_{\epsilon})=0, \quad j\neq i.
\]
Taking the first order derivative on the equations in \eqref{eq:3.14}  with respect to $y$ yields
\[
\begin{split}
&\sum_{j\neq i}\Big(\partial_y l_{kj}(v)(\partial_tv_j\partial_y\varphi+(\lambda_k-\lambda_i)(v)\partial_yv_j)
+l_{kj}(v)\Big(\partial^2_{ty}v_j\partial_y\varphi
+\partial_tv_j\partial^2_y\varphi\\
&\quad+\sum^n_{l=1}\partial_{v_l}(\lambda_k-\lambda_i)(v)\partial_yv_l\partial_yv_j+
(\lambda_k-\lambda_i)(v)\partial^2_yv_j\Big)\Big)=0,\quad k\neq i.
\end{split}
\]
Because of
\[
\partial_y\varphi(y_{\epsilon}, T_{\epsilon})=\partial^2_y\varphi(y_{\epsilon}, T_{\epsilon})=\partial_yv_j(y_{\epsilon}, T_{\epsilon})=0,
\]
then $\partial^2_y v_j(y_{\epsilon}, T_{\epsilon})=0$ for $j\not=i$.
It follows from Taylor expansion formula and Lemma \ref{l:3.2} that  for $j\not=i$,
\[
\begin{split}
&|w^0_j(x, t)-w^0_j(x_{\epsilon}, T_{\epsilon})|\\
=&\big|\partial_tv_j(y_\epsilon, T_{\epsilon})(t-T_{\epsilon})+\partial_yv_j(y_\epsilon, T_{\epsilon})(y^{\epsilon}_{+}-y_{\epsilon})
+\partial^2_{y}v_j(y_{\epsilon}, T_{\epsilon})(y^{\epsilon}_{+}-y_{\epsilon})^2\\
&+O(1)\epsilon\Big((t-T_{\epsilon})^2+(t-T_{\epsilon})(y^{\epsilon}_{+}-y_{\epsilon})
+(y^{\epsilon}_{+}-y_{\epsilon})^3\Big)\big|
\leq C\epsilon d^{\frac13}_{\epsilon},\\[3pt]
&|\partial_x w^0_j(x, t)|
=|\partial_yv_j(y^{\epsilon}_+(x, t), t)\partial_x y^{\epsilon}_+|
\leq C\epsilon,\\
&|\partial_t w^0_j(x, t)|
\leq
|\partial_tv_j(y^{\epsilon}_+(x, t), t)|+|\partial_t y^{\epsilon}_+||\partial_yv_j(y^{\epsilon}_+(x, t), t)|
\leq C\epsilon,\\
&|\partial^2_x w^0_j(x, t)|=|\partial^2_yv_j(y^{\epsilon}_+(x, t), t)(\partial_x y^{\epsilon}_+)^2+
\partial_yv_j(y^{\epsilon}_+(x, t), t)\partial^2_x y^{\epsilon}_+(x, t)|\leq C\epsilon d^{-\frac12}_{\epsilon},\\
&|\partial^2_t w^0_j(x, t)|=|\partial^2_tv_j(y^{\epsilon}_+(x, t), t)
+2\partial^2_{ty}v_j(y^{\epsilon}_+(x, t), t)\partial_t y^{\epsilon}_+(x, t)+\partial^2_{y}v_j(y^{\epsilon}_+(x, t), t)(\partial_t  y^{\epsilon}_+)^2\\
&\qquad\quad\quad\quad+\partial_yv_j(y^{\epsilon}_+(x, t), t)\partial^2_{t} y^{\epsilon}_+|
\leq C\epsilon+C\epsilon d^{-\frac13}_{\epsilon}+C\epsilon d^{-\frac12}_{\epsilon}\leq C\epsilon
d^{-\frac12}_{\epsilon},\\
&|\partial^2_{tx} w^0_j(x, t)|\\
=&|\partial^2_{ty}v_j(y^{\epsilon}_+(x, t), t)\partial_x y^{\epsilon}_{+}+\partial^2_{y}v_j(y^{\epsilon}_+(x, t), t)
\partial_t y^{\epsilon}_+(x, t)\partial_x y^{\epsilon}_+(x, t)+\partial_{y}v_j(y^{\epsilon}_+(x, t), t)\partial^2_{tx}  y^{\epsilon}_+|\\
\leq &C\epsilon d^{-\frac13}_{\epsilon}+C\epsilon d^{-\frac12}_{\epsilon}\leq C\epsilon
d^{-\frac12}_{\epsilon},
\end{split}
\]
where we have used the facts that
\[
\begin{split}
&|\partial^3_{y}v_j(y_\epsilon, T_{\epsilon})|\leq C\epsilon, \quad |\partial^3_{tyy}v_j(y_{\epsilon}, T_{\epsilon})|\leq C\epsilon,\\
&|\partial_yv_j(y^{\epsilon}_+(x, t), t)|
=\big|\partial_yv_j(y_\epsilon, T_{\epsilon})+\partial^2_{ty}v_j(y_{\epsilon},
T_{\epsilon})(t-T_{\epsilon})+\partial^2_{y}v_j(y_{\epsilon},
T_{\epsilon})(y^{\epsilon}_{+}-y_{\epsilon})\\
&\quad+\partial^3_{y}v_j(y_{\epsilon},
T_{\epsilon})(y^{\epsilon}_{+}-y_{\epsilon})^2+O(1)\epsilon\Big((t-T_{\epsilon})^2+(t-T_{\epsilon})(y^{\epsilon}_{+}-y_{\epsilon})
+(y^{\epsilon}_{+}-y_{\epsilon})^3\Big)\big|
\leq C\epsilon d^{\frac13}_{\epsilon},\\
&|\partial^2_{y}v_j(y^{\epsilon}_+(x, t), t)|=\big|\partial^2_{y}v_j(y_\epsilon, T_\epsilon)+\partial^3_{y}v_j(y_\epsilon, T_{\epsilon})(y-y_\epsilon)+\partial^3_{tyy}v_j(y_{\epsilon}, T_{\epsilon})(t-T_{\epsilon})\\
&+O(1)\Big((t-T_\epsilon)^2+(y-y_\epsilon)(t-T_{\epsilon})\Big)\big|\leq C\epsilon d^{\frac16}_{\epsilon},\\
&|\partial_t y^{\epsilon}_+(x, t)|\leq Cd^{-\frac13}_{\epsilon}, \;\; |\partial^2_{t} y^{\epsilon}_+(x, t)|\leq Cd^{-\frac{5}{6}}_{\epsilon},
\;\; |\partial^2_{xt} y^{\epsilon}_+(x, t)|\leq Cd^{-\frac{5}{6}}_{\epsilon}.
\end{split}
\]
Therefore, we have finished the proof of this lemma.
\end{proof}

Next, we estimate the jump of the pre-shock wave. Let the jump of $w^0(x, t)$ across the pre-shock curve $x=\phi^0(t)$ be
\[
[w^0]=w^0(\phi^0(t)+0, t)-w^0(\phi^0(t)-0, t).
\]
\begin{lemma}\label{l:3.4}
The following estimates hold
\[
|[w^0_i]|\leq {\bar C}_0\epsilon(t-T_{\epsilon})^{\frac12}, \quad |[w^0_j]|\leq {\bar C}_0\epsilon(t-T_{\epsilon})^{\frac32} \quad
\text{for $j\neq i$.}
\]
\end{lemma}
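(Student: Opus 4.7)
The plan is to bound the two components of $[w^0]$ separately, using the explicit representation $w^{0}_{\pm}(x,t)=v(y^{\epsilon}_{\pm}(x,t),t)$, the scaling of $d_{\epsilon}$ along the pre-shock curve $\Gamma$, and a Taylor expansion of $v_{j}$ at the blowup point that exploits the vanishing derivative identities for $j\neq i$.

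First I would pin down the size of $d_{\epsilon}$ along $\Gamma$. Since $\phi^{0}(T_{\epsilon})=x_{\epsilon}$, and since $\partial_{t}\varphi(y_{\epsilon},T_{\epsilon})=\lambda_{i}(v(y_{\epsilon},T_{\epsilon}))=\lambda_{i}(w(x_{\epsilon},T_{\epsilon}))$ agrees with $(\phi^{0})'(T_{\epsilon})$ by \eqref{YHC-7}, one has $\phi^{0}(t)-x_{\epsilon}-\lambda_{i}(w(x_{\epsilon},T_{\epsilon}))(t-T_{\epsilon})=O((t-T_{\epsilon})^{2})$. Consequently, on $\Gamma$ the spatial part of $d_{\epsilon}$ is $O((t-T_{\epsilon})^{4})$, so $d_{\epsilon}\sim(t-T_{\epsilon})^{3}$ and $d^{1/6}_{\epsilon}\sim(t-T_{\epsilon})^{1/2}$. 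Inserting this into the bound $|w^{0}_{i}-w^{0}_{i}(x_{\epsilon},T_{\epsilon})|\leq C\epsilon d^{1/6}_{\epsilon}$ from Lemma \ref{l:3.3} and applying the triangle inequality to both one-sided limits yields immediately $|[w^{0}_{i}]|\leq C_{0}\epsilon(t-T_{\epsilon})^{1/2}$.

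For $j\neq i$, the naive bound $|[w^{0}_{j}]|\leq 2C\epsilon d^{1/3}_{\epsilon}\sim C\epsilon(t-T_{\epsilon})$ from Lemma \ref{l:3.3} is insufficient, so I would expand
\[
[w^{0}_{j}]=v_{j}(y_{+},t)-v_{j}(y_{-},t),\qquad y_{\pm}:=y^{\epsilon}_{\pm}(\phi^{0}(t),t),
\]
to third order around $(y_{\epsilon},T_{\epsilon})$. Two inputs are required. The first is the structural vanishing $\partial_{y}v_{j}(y_{\epsilon},T_{\epsilon})=0$ and $\partial^{2}_{yy}v_{j}(y_{\epsilon},T_{\epsilon})=0$ for $j\neq i$, established inside Lemma \ref{l:3.3} by differentiating the third block of \eqref{eq:2.1} once and twice in $y$ at the blowup point and using the invertibility of the reduced matrix $(l_{kj})_{k\neq i,\,j\neq i}$ together with $\partial_{y}\varphi(y_{\epsilon},T_{\epsilon})=\partial^{2}_{y}\varphi(y_{\epsilon},T_{\epsilon})=0$. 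The second input is the scaling of the two outer roots: setting $h_{\pm}=h(y_{\pm},t)$, which are two of the three real roots of the cubic $h^{3}-A(t)h+(B(t)-\phi^{0}(t))=0$ with $A(t)\sim(t-T_{\epsilon})$ and $B(t)-\phi^{0}(t)=O((t-T_{\epsilon})^{2})$ by the previous paragraph, one finds $h_{\pm}=\pm\sqrt{A(t)/3}+O(t-T_{\epsilon})$, whence
\[
y_{+}-y_{-}=O((t-T_{\epsilon})^{1/2}),\qquad y_{+}+y_{-}-2y_{\epsilon}=O(t-T_{\epsilon}).
\]
With these in hand, in the Taylor difference $v_{j}(y_{+},t)-v_{j}(y_{-},t)$, the linear-in-$(y-y_{\epsilon})$ and quadratic-in-$(y-y_{\epsilon})$ terms at $t=T_{\epsilon}$ vanish by the first input, while the surviving contributions $\tfrac{1}{6}\partial^{3}_{y}v_{j}\bigl[(y_{+}-y_{\epsilon})^{3}-(y_{-}-y_{\epsilon})^{3}\bigr]$ and $\partial^{2}_{ty}v_{j}(y_{+}-y_{-})(t-T_{\epsilon})$ are each of size $\epsilon(t-T_{\epsilon})^{3/2}$ via the factorization $a^{3}-b^{3}=(a-b)(a^{2}+ab+b^{2})$ and the second input; the fourth-order Taylor remainder is $O(\epsilon(t-T_{\epsilon})^{2})$, which is smaller. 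Summing produces the desired $|[w^{0}_{j}]|\leq C_{0}\epsilon(t-T_{\epsilon})^{3/2}$.

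The main obstacle is precisely this sharper estimate for $j\neq i$: the gain from $(t-T_{\epsilon})$ to $(t-T_{\epsilon})^{3/2}$ depends crucially on both the quadratic vanishing $\partial^{2}_{yy}v_{j}(y_{\epsilon},T_{\epsilon})=0$ and on the fact that the symmetric leading pattern $h_{\pm}=\pm\sqrt{A(t)/3}$ is perturbed only at order $O(t-T_{\epsilon})$. The latter in turn uses $B(t)-\phi^{0}(t)=O((t-T_{\epsilon})^{2})$, i.e.\ the first-order matching between the pre-shock speed $(\phi^{0})'(T_{\epsilon})$ and the characteristic speed $\partial_{t}\varphi(y_{\epsilon},T_{\epsilon})$ that is built into the Rankine--Hugoniot-type definition \eqref{YHC-7}.
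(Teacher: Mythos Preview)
Your proposal is correct and follows essentially the same line as the paper: establish $d_\epsilon\sim(t-T_\epsilon)^3$ along $\Gamma$, use Lemma~\ref{l:3.3} and the triangle inequality for $[w^0_i]$, and for $j\neq i$ Taylor-expand $v_j$ at $(y_\epsilon,T_\epsilon)$ exploiting $\partial_y v_j=\partial^2_y v_j=0$ there so that only the $O(\epsilon(t-T_\epsilon)^{3/2})$ remainder survives in the difference. One harmless slip: the outer roots of $h^3-A(t)h+(B(t)-\phi^0(t))=0$ lie near $\pm\sqrt{A(t)}$, not $\pm\sqrt{A(t)/3}$ (the latter are the critical points $3h^2-A=0$), but this does not affect the order estimates you extract.
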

\begin{proof}
By $\phi^0(t)-x_\epsilon-\lambda_i(w(x_{\epsilon}, T_{\epsilon}))(t-T_{\epsilon})=O(1)\Big((t-T_{\epsilon})^2\Big)$,
one has
\begin{equation}\label{Eq:3.25*}
d_{\epsilon}=(t-T_{\epsilon})^3+\Big(\phi^0(t)-x_{\epsilon}
-\lambda_i(w(x_{\epsilon}, T_{\epsilon}))(t-T_{\epsilon})\Big)^2\sim (t-T_{\epsilon})^3.
\end{equation}
Based on Lemma \ref{l:3.3}, we have
\[
\begin{split}
\left|[w^0_i]\right|\leq& \left|w^0_i(\phi^0(t)+0, t)-w^0_i(x_{\epsilon}, T_{\epsilon})\right|+\left|w^0_i(x_{\epsilon}, T_{\epsilon})
-w^0_i(\phi^0(t)-0, t)\right|\\
\leq &C\epsilon d^{\frac16}_{\epsilon}\leq {\bar C}_0\epsilon(t-T_{\epsilon})^{\frac12}.
\end{split}
\]
On the other hand, in $\Omega_{\pm}$ it holds that
\[
\begin{split}
w^0_j(x, t)-w^0_j(x_{\epsilon}, T_{\epsilon})=&\partial_tv_j(y_{\epsilon}, T_{\epsilon})(t-T_{\epsilon})+\partial_yv_j(y_{\epsilon}, T_{\epsilon})(y^{\epsilon}_+-y_{\epsilon})\\
&+O(1)\Big(\epsilon(t-T_{\epsilon})^2+\epsilon(t-T_{\epsilon})(y^{\epsilon}_{+}-y_{\epsilon})
+\epsilon(y^{\epsilon}_+-y_{\epsilon})^3\Big).
\end{split}
\]
Thus
\[
\begin{split}
|[w^0_j]|=&\left|w^0_j(\phi^0(t)+0, t)-w^0_j(x_{\epsilon}, T_{\epsilon})-(w^0_j(\phi^0(t)-0, t)-w^0_j(x_{\epsilon}, T_{\epsilon}))\right|\\
\leq& C\epsilon d^{\frac12}_{\epsilon}\leq {\bar C}_0\epsilon(t-T_{\epsilon})^{\frac32}, \quad j\neq i.
\end{split}
\]
\end{proof}

\section{Approximate shock solutions}

In this section, as in \cite{CXY},  we will take an analogous iterative scheme to construct the shock solution of \eqref{eq:1.1}.
For the general conservation law \eqref{eq:1.1},
the following Rankine-Hugoniot conditions across the shock curve $x=\phi(t)$ hold
\begin{equation}\label{eq:3.27}
\sigma [u_1]=[f_1(u)], \;\sigma[u_2]=[f_2(u)], \cdots,\; \sigma[u_n]=[f_n(u)],
\end{equation}
where $\sigma=\phi'(t)$ denotes the shock speed, and $[u]=u(\phi(t)+0)-u(\phi(t)-0)$. The corresponding entropy conditions
on the $i-$shock are given by
\begin{equation}\label{eq:3.28}
\lambda_{i-1}(w_{-}(t))<\sigma<\lambda_i(w_{-}(t)), \quad \lambda_i(w_+(t))<\sigma<\lambda_{i+1}(w_+(t)),
\end{equation}
where $w_{\pm}(t)=w_{\pm}(\phi(t)\pm, t)$, and $w_{\pm}(x,t)$ are the solutions of \eqref{eq:2.88}
on the left and right side of $x=\phi(t)$, respectively.

\subsection{Reformulated problem}

In order to avoid the difficulty caused by the movement of the shock curve,
it is natural to introduce such a coordinate transformation to fix the shock by
\[
\begin{cases}
t=t,\\[3pt]
z=x-\phi(t).
\end{cases}
\]

Under the new coordinate $(z, t)$, the blowup point becomes $(0, T_{\epsilon})$. By
 multiplying the equation in $\eqref{eq:2.88}$ by $l_j(w)$ for $j=1, 2, \cdots, n$, the resulting
 system is given as
 \begin{equation}\label{eq:3.16*}
 l_j(w)\cdot\left(
 \begin{array}{c}
 \partial_t w_1+(\lambda_j(w)-\sigma(t))\partial_zw_1\\[3pt]
 \partial_t w_2+(\lambda_j(w)-\sigma(t))\partial_zw_2\\[3pt]
 \vdots\\
 \partial_t w_n+(\lambda_j(w)-\sigma(t))\partial_zw_n
 \end{array}
 \right)=0.
 \end{equation}
Divided by $l_{ii}(w)\neq 0$ $(i=1, 2, \cdots, n)$, \eqref{eq:3.16*} can be transformed into
\begin{equation}\label{eq:3.29}
\begin{cases}
\partial_tw_{j}+(\lambda_j(w)-\sigma(t))\partial_zw_{j}+\displaystyle\sum_{k\neq i, j}p_{jk}(w)
\Big(\partial_tw_{k}+(\lambda_j(w)-\sigma(t))\partial_zw_{k}\Big)=0, \quad  j\neq i,\\[6pt]
\partial_tw_{i}+(\lambda_i(w)-\sigma(t))\partial_zw_{i}+\displaystyle\sum_{k\neq i}p_{ik}(w)
\Big(\partial_tw_{k}+(\lambda_i(w)-\sigma(t))\partial_zw_{k}\Big)=0,\\
w_j(z, t)|_{t=T_{\epsilon}}=w^0_j(z+x_\epsilon, T_{\epsilon}), \quad j=1, 2, \cdots, n,
\end{cases}
\end{equation}
where the coefficients $p_{jk}(w)|_{j=1, \cdots, n}$ are smooth functions of $w$, and
$p_{jk}(0)=0$.

Let
\[
\begin{split}
\tilde{\Omega}_{+}=&\Big\{(z, t): 0<z\leq \lambda^*(T_{\epsilon}
+1-t),\; T_\epsilon\leq t\leq T_{\epsilon}+1\Big\},\\
\tilde{\Omega}_{-}=&\Big\{(z, t):  -\lambda^*(T_{\epsilon}
+1-t)<z\leq 0,\; T_\epsilon\leq t\leq T_{\epsilon}+1\Big\}.
\end{split}
\]

We will construct the shock solutions to problem \eqref{eq:3.29} in the domain $\tilde{\Omega}_{-}\bigcup\tilde{\Omega}_+$ by the approximate procedure.
Problem \eqref{eq:3.29} can be reformulated by
\begin{equation}\label{eq:3.30}
\begin{cases}
\partial_tw_{j, \pm}+(\lambda_j(w_{\pm})-\sigma(t))\partial_zw_{j, \pm}+\displaystyle\sum_{k\neq i, j}p_{jk}(w_{\pm})
\Big(\partial_tw_{k, \pm}+(\lambda_j(w_{\pm})-\sigma(t))\partial_zw_{k, \pm}\Big)=0, \quad  j\neq i,\\[6pt]
\partial_tw_{i, \pm}+(\lambda_i(w_{\pm})-\sigma(t))\partial_zw_{i, \pm}+\displaystyle\sum_{k\neq i}p_{ik}(w_{\pm})
\Big(\partial_tw_{k, \pm}+(\lambda_i(w_{\pm})-\sigma(t))\partial_zw_{k, \pm}\Big)=0,\\
\sigma(t)=\lambda_i\Big(\int^1_0(\partial_{u_k}f_l)(\theta u(w_+(0+, t))+(1-\theta)u(w_{-}(0-, t)))\text{d}\theta\Big),\\
w_{j, \pm}(z, t)|_{t=T_{\epsilon}}=w^0_{j, \pm}(z+x_\epsilon, T_{\epsilon}), \quad  j=1, 2, \cdots, n,\\[3pt]
w_{j, -}(z, t)|_{z=0}=w_{j, -}(0-, t), \qquad  j=1, \cdots, i-1,\\
w_{j, +}(z, t)|_{z=0}=w_{j, +}(0+, t), \qquad j=i+1, \cdots, n,
\end{cases}
\end{equation}
where $w_{j, \pm}$ is defined in $\tilde{\Omega}_{\pm}$, the boundary values $w_{j, -}(0-, t)|_{j=1, \cdots, i-1}$  and $w_{j, +}(0+, t)|_{j=i+1, \cdots, n}$ satisfy the Rankine-Hugoniot conditions \eqref{eq:3.27}.

\subsection{Iteration schemes for resulting initial-value and initial-boundary value problems}

According to the entropy conditions \eqref{eq:3.28}, problem \eqref{eq:3.29} can be decomposed into an
initial-value problem and an initial-boundary value problem, which are coupled by the
Rankine-Hugoniot conditions \eqref{eq:3.27}. To solve them, we take the following  iterative schemes
 \begin{equation}\label{eq:3.31}
\begin{cases}
\partial_tw^{m+1}_{j, +}+(\lambda_j(w^m_{+})-\sigma^m(t))\partial_zw^{m+1}_{j, +}
+\displaystyle\sum_{k\neq i, j}p_{jk}(w^m_{+})
\Big(\partial_tw^m_{k, +}+(\lambda_j(w^m_{+})-\sigma^m(t))\partial_zw^m_{k, +}\Big)=0, \\
\qquad\qquad\qquad\qquad\qquad\qquad\qquad\qquad\qquad\qquad\qquad\qquad\qquad\;\text{for }\;  1\le j\le i-1,\\
\partial_tw^{m+1}_{i, \pm}+(\lambda_i(w^m_{\pm})-\sigma^m(t))\partial_zw^{m+1}_{i, \pm}+\displaystyle\sum_{k\neq i}p_{ik}(w^m_{\pm})
\Big(\partial_tw^m_{k, \pm}+(\lambda_i(w^m_{\pm})-\sigma^m(t))\partial_zw^m_{k, \pm}\Big)=0,\\
\partial_tw^{m+1}_{j, -}+(\lambda_j(w^m_{-})-\sigma^m(t))\partial_zw^{m+1}_{j, -}
+\displaystyle\sum_{k\neq i, j}p_{jk}(w^m_{-})
\Big(\partial_tw^m_{k, -}+(\lambda_j(w^m_{-})-\sigma^m(t))\partial_zw^m_{k, -}\Big)=0, \\
\qquad\qquad\qquad\qquad\qquad\qquad\qquad\qquad\qquad\qquad\qquad\qquad\qquad\;\text{for }\;   i+1\le j\le n,\\
w^{m+1}_{j, +}(z, t)|_{t=T_{\epsilon}}=w^0_{j, +}(z+x_\epsilon, T_{\epsilon}), \quad j=1, 2, \cdots, i-1,\\[3pt]
w^{m+1}_{i, \pm}(z, t)|_{t=T_\epsilon}=w^0_{i, \pm}(z+x_\epsilon, T_\epsilon),\\[3pt]
w^{m+1}_{j,-}(z, t)|_{t=T_\epsilon}=w^0_{j, -}(z+x_{\epsilon}, T_{\epsilon}), \quad j=i+1, \cdots, n
\end{cases}
\end{equation}
and
\begin{equation}\label{eq:3.32}
\begin{cases}
\partial_tw^{m+1}_{j, -}+(\lambda_j(w^m_{-})-\sigma^m(t))\partial_zw^{m+1}_{j, -}
+\displaystyle\sum_{k\neq i, j}p_{jk}(w^m_{-})
\Big(\partial_tw^m_{k, -}+(\lambda_j(w^m_{-})-\sigma^m(t))\partial_zw^m_{k, -}\Big)=0, \\
\qquad\qquad\qquad\qquad\qquad\qquad\qquad\qquad\qquad\qquad\qquad\qquad\qquad\;\text{for }\;  1\le j\le i-1,\\
\partial_tw^{m+1}_{j, +}+(\lambda_j(w^m_{+})-\sigma^m(t))\partial_zw^{m+1}_{j, +}
+\displaystyle\sum_{k\neq i, j}p_{jk}(w^m_{+})
\Big(\partial_tw^m_{k, +}+(\lambda_j(w^m_{+})-\sigma^m(t))\partial_zw^m_{k, +}\Big)=0,\\
\qquad\qquad\qquad\qquad\qquad\qquad\qquad\qquad\qquad\qquad\qquad\qquad\qquad\;\text{for }\;   i+1\le j\le n,\\
w^{m+1}_{j, -}(z, t)|_{z=0}=w^{m+1}_{j,-}(0-, t), \quad w^{m+1}_{j, -}(z, t)|_{t=T_\epsilon}=w^0_{j,-}(z+x_\epsilon, T_\epsilon),\;\;  j=1, \cdots, i-1,\\[3pt]
w^{m+1}_{j, +}(z, t)|_{z=0}=w^{m+1}_{j,+}(0+, t), \quad w^{m+1}_{j, +}(z, t)|_{t=T_\epsilon}=w^0_{j,+}(T_\epsilon,z+x_\epsilon),\;\; j=i+1, \cdots, n,\\[5pt]
\sigma^m(t)=\lambda_i\Big(\int^1_0(\partial_{u_k}f_l)(\theta u(w^{m}_+(0+, t))+(1-\theta)u(w^{m}_{-}( 0-, t)))\text{d}\theta\Big),\\
\end{cases}
\end{equation}
where $w^{m+1}_{j, -}(0-, t)|_{j=1, \cdots, i-1}$ and $w^{m+1}_{j, +}(0+, t)|_{j=i+1, \cdots, n}$ are determined by the
approximate Rankine-Hugoniot conditions
\[
\sigma^m[u_j(w^{m+1})]=[f_j(u(w^{m+1}))].
\]

In the sequel, we establish some uniform estimates on the approximate solutions $w^m_{\pm}(z, t)$ and $\sigma^m(t)$,
which will be used in the proof of  the convergence of the solutions.

\begin{lemma}\label{l:3.5}
For sufficiently small $\epsilon>0$, there exists a constant $M>{\bar C}_0$ independent of $\epsilon$,
where ${\bar C}_0>0$ is the positive constant given in Lemma \ref{l:3.4}, such that for all $m$, one has
that in $\tilde{\Omega}_{-}$ or $\tilde{\Omega}_+$,
\begin{align}
&w^m_{\pm}\in C^1(\tilde{\Omega}_{\pm}\setminus(0, T_{\epsilon})),\label{eq:3.33}\\
&\Big|w^m_{i, \pm}-w^0_{i, \pm}\Big|\leq M\epsilon(t-T_{\epsilon}),\label{eq:3.34}\\
&\Big|\partial_t\Big(w^m_{i, \pm}-w^0_{i, \pm}\Big)\Big|\leq M\epsilon \Big((t-T_{\epsilon})^3+z^2\Big)^{-\frac16},\label{eq:3.35}\\
&\Big|\partial_z\Big(w^m_{i, \pm}-w^0_{i, \pm}\Big)\Big|\leq M\epsilon \Big((t-T_{\epsilon})^3+z^2\Big)^{-\frac16}, \label{eq:3.36}\\
&\Big|w^m_{j, \pm}-w^0_{j, \pm}\Big|\leq M\epsilon(t-T_{\epsilon})^{\frac32},\qquad j\neq i,\label{eq:3.37}\\
&\Big|\partial_t\Big(w^m_{j, \pm}-w^0_{j, \pm}\Big)\Big|\leq M\epsilon (t-T_{\epsilon})^{\frac12},\quad  j\neq i,\label{eq:3.38}\\
&\Big|\partial_z\Big(w^m_{j, \pm}-w^0_{j, \pm}\Big)\Big|\leq M\epsilon (t-T_{\epsilon})^{\frac12}, \quad
j\neq i.\label{eq:3.39}
\end{align}

\end{lemma}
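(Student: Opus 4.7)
The plan is to establish \eqref{eq:3.33}--\eqref{eq:3.39} by induction on $m$, choosing $M$ sufficiently large (and independent of $\epsilon$ and $m$) to dominate $C_0$ from Lemma \ref{l:3.4} as well as all absolute constants appearing in Lemma \ref{l:3.3}. The base step $m=0$ reduces to bounds on $w^0$ itself: in the new coordinate $z=x-\phi^0(t)$, the singular weight satisfies $d_\epsilon \sim (t-T_\epsilon)^3+z^2$ near $(0,T_\epsilon)$, as was already noted in \eqref{Eq:3.25*} on the pre-shock curve and remains valid in a full neighbourhood since $\phi^0(t)$ differs from $x_\epsilon+\lambda_i(w(x_\epsilon,T_\epsilon))(t-T_\epsilon)$ only by $O((t-T_\epsilon)^2)$. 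Then \eqref{eq:3.34}--\eqref{eq:3.36} and \eqref{eq:3.37}--\eqref{eq:3.39} follow verbatim from Lemmas \ref{l:3.3}--\ref{l:3.4}.

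For the inductive step, suppose the estimates hold for $m$; I treat \eqref{eq:3.31} first. These are transport equations along the $m$-th characteristic fields $x=X^{m,\pm}_{i,j}(t;\xi)$ emanating from $t=T_\epsilon$, and by the inductive hypothesis these fields are Lipschitz perturbations of the characteristics of $w^0_\pm$, so they foliate $\tilde\Omega_\pm$ and meet the initial line without crossing $z=0$. Integrating along them, the difference $w^{m+1}_{j,\pm}-w^0_{j,\pm}$ is expressed as (i) the mismatch in initial data, which vanishes since $w^{m+1}_{j,\pm}|_{t=T_\epsilon}=w^0_{j,\pm}$, minus (ii) the discrepancy between the transport operator of $w^0$ and that driven by $w^m$, plus (iii) the source terms $\sum_{k\neq i,j}p_{jk}(w^m_\pm)(\partial_t+(\lambda_j-\sigma^m)\partial_z)w^m_{k,\pm}$. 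Writing $w^m=w^0+(w^m-w^0)$ and using $p_{jk}(0)=0$, every source carries an extra factor of $\epsilon$; combined with Lemma \ref{l:3.3} for the $w^0$ part and the induction hypothesis for the deviation, one obtains for the singular $i$-component a bound of the form $M\epsilon\,\bigl((t-T_\epsilon)^3+z^2\bigr)^{1/3}+C\epsilon(t-T_\epsilon)^{1/3}$-type contributions that can be absorbed into $M\epsilon(t-T_\epsilon)$ provided $\epsilon$ is small; for the smoother components $j\neq i$ the gain is an entire power of $(t-T_\epsilon)^{1/2}$, which closes \eqref{eq:3.37}--\eqref{eq:3.39}. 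Differentiating the integrated formula in $z$ and $t$ and running the same bookkeeping yields the derivative estimates.

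For \eqref{eq:3.32} the characteristic speeds $\lambda_j-\sigma^m$ have the correct sign by the Lax condition \eqref{eq:3.28}, so the relevant $j$-characteristics on each side flow into $\tilde\Omega_\pm$ from $z=0$ or from $t=T_\epsilon$; the same transport argument applies, but now one also needs the boundary values $w^{m+1}_{j,-}(0-,t)$ (for $j<i$) and $w^{m+1}_{j,+}(0+,t)$ (for $j>i$), which are defined implicitly by the approximate Rankine-Hugoniot system $\sigma^m[u_j(w^{m+1})]=[f_j(u(w^{m+1}))]$. Linearising this system around $w=0$, the leading block is invertible precisely because the $i$-field is separated by the form of Lemma \ref{Y-4}, so one can solve for the unknown boundary components in terms of $[w^{m+1}_i]$ and the components already controlled on the opposite side; the remainder is at least quadratic in $w^{m+1}$ and hence carries additional powers of $\epsilon(t-T_\epsilon)^{1/2}$ by induction. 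Lemma \ref{l:3.4} then gives $|[w^{m+1}_i]|\lesssim\epsilon(t-T_\epsilon)^{1/2}$, whence $|[w^{m+1}_j]|\lesssim\epsilon(t-T_\epsilon)^{3/2}$ for $j\neq i$, and the boundary derivatives obey compatible rates.

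The main obstacle is to close the induction without losing the precise power $(t-T_\epsilon)^3+z^2$ when passing through the Rankine-Hugoniot system and through the source integrals for the singular $i$-component: naive estimates produce a logarithmic loss, and one must exploit both the vanishing $p_{jk}(0)=0$ from Lemma \ref{Y-4} and the fact that the singularity is purely in the $i$-direction, so that all mixed terms pick up at least one smooth factor worth $\epsilon(t-T_\epsilon)^{1/2}$ to convert a divergent integrand into a convergent one. This, together with the smallness of $\epsilon$ used to absorb the accumulated constant into $M$, is what allows the same $M$ to propagate from step $m$ to step $m+1$.
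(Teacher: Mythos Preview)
Your overall induction scheme matches the paper's: characteristics, split into the pure initial-value part \eqref{eq:3.31} and the initial-boundary part \eqref{eq:3.32}, close with $\epsilon$-smallness. But two specific mechanisms are missing, and without them the derivative estimate for the $i$-component cannot close.

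First, in the equation for $r=\partial_z(w^{m+1}_{i,+}-w^0_{i,+})$ the transport operator carries a zero-order coefficient $\partial_z\lambda_i(w^m_+)$ which, by genuine nonlinearity, behaves like $\partial_{w_i}\lambda_i\cdot\partial_z w^m_{i,+}\sim\epsilon\big((t-T_\epsilon)^3+z^2\big)^{-1/3}$. This is \emph{not} a mixed term and does not pick up a factor $p_{jk}(0)=0$; it is the singular $i$-direction itself. The paper controls its integral along the $i$-characteristic by a separate Lebaud-type estimate,
\[
\int_{T_\epsilon}^t\big|\partial_z\lambda_i(w^m_+)\big|\,ds\;\le\;\ln\tfrac32+C_M\epsilon\sqrt{t-T_\epsilon},
\]
proved from the cusp structure of $\varphi$ (the Appendix); Gronwall then gives only a bounded multiplicative constant. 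Second, source terms like $(\lambda_i(w^0_+)-\lambda_i(w^m_+)+\sigma^m-\sigma^0)\,\partial_z^2 w^0_{i,+}$ have singularity $\epsilon^2(t-T_\epsilon)\,d_\epsilon^{-5/6}$, and to integrate them you need to know that the weight does not collapse along the characteristic: the paper proves and uses
\[
(s-T_\epsilon)^3+(\xi^{m+1}_i)^2\;\ge\;C_M\big((t-T_\epsilon)^3+z^2\big),\qquad T_\epsilon\le s\le t.
\]
Your description (``$p_{jk}(0)=0$ and mixed terms pick up a smooth factor'') covers the cross terms but not these two points, so the sketch has a genuine gap at exactly the place you flag as the main obstacle.

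A smaller but non-negligible point: for the boundary values in \eqref{eq:3.32} the paper does more than linearize. It Taylor-expands the Rankine--Hugoniot relations at $w_-(0-,t)$ and at $w_+(0+,t)$ and averages, which kills the quadratic terms and yields the \emph{cubic} relation $[w^{m+1}_j]=\mathcal{F}\,[w^{m+1}_i]^3$ for $j\neq i$. With $|[w^{m+1}_i]|\lesssim\epsilon(t-T_\epsilon)^{1/2}$ this gives $|[w^{m+1}_j]|\lesssim\epsilon^3(t-T_\epsilon)^{3/2}$, which is what makes the boundary contribution absorbable by $M\epsilon(t-T_\epsilon)^{3/2}$ for small $\epsilon$. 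A merely quadratic remainder would only give $\epsilon^2(t-T_\epsilon)$, which does not match the target rate.
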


\subsection{The proof of Lemma $4.1$}

In this subsection, we will show the proof of  Lemma \ref{l:3.5}.

\begin{proof}
We apply the induction method to prove  Lemma \ref{l:3.5}. It is obvious that \eqref{eq:3.33}--\eqref{eq:3.39} are valid for $m=0$.
Suppose that these estimates  hold for $m$, then we need to establish the desired results for $m+1$. This
procedure is divided into the following six steps.

\vspace{0.3cm}
\underline{\textbf{Step 1. Estimate of $\sigma^m(t)$}}

From the expression of $\sigma^m(t)$, one has that for $t\in [T_{\ve}, T_{\ve}+1]$,
\begin{equation}\label{eq:3.40}
\begin{split}
|\sigma^m(t)-\sigma^0(t)|
\leq &C(|w^m_+-w^0_+|+|w^m_{-}-w^0_{-}|)\leq C_M \epsilon(t-T_{\epsilon}),
\end{split}
\end{equation}
where and below $C_M>0$ is a generic constant depending only on $M$.

\vskip 0.2 true cm
\underline{\textbf{Step 2. Estimates of
$w^{m+1}_{i, \pm}(z, t)$, $w^{m+1}_{j, +}(z, t)
|_{1\leq j\leq i-1}$ and $w^{m+1}_{j, -}(z, t)|_{i+1\leq j\leq n}$}}

\vspace{0.2cm}
Let $\displaystyle r(z, t)=w^{m+1}_{i, +}-w^0_{i, +}$, then $r(z, t)$ satisfies
\begin{equation}\label{eq:3.41}
\begin{cases}
\partial_t r+\Big(\lambda_i(w^m_+)-\sigma^m(t)\Big)\partial_z r=\Big(\lambda_i(w^0_+)-\lambda_i(w^m_+)
+\sigma^m(t)-\sigma^0(t)\Big)\partial_zw^0_{i, +}
-\displaystyle\sum_{k\neq i}\Big\{p_{ik}(w^m_{+})\times\\
\quad \Big(\partial_t(w^m_{k, +}-w^0_{k, +})
+(\lambda_i(w^m_+)-\sigma^m(t))\partial_z(w^m_{k, +}-w^0_{k, +})
-(\lambda_i(w^0_+)-\lambda_i(w^m_+)+\sigma^m-\sigma^0)\partial_zw^0_{k, +}\Big)\Big\}\\
\quad -\displaystyle\sum_{k\neq i}\Big(p_{ik}(w^m_+)-p_{ik}(w^0_+)\Big)\Big(\partial_tw^0_{k, +}+
(\lambda_i(w^0_{+})-\sigma^0(t))\partial_zw^0_{k, +}\Big),\\
r(z, T_{\epsilon})=0.
\end{cases}
\end{equation}

From Lemma \ref{l:3.3} and the inductive hypothesis, we can obtain that
\[
\begin{split}
&|\Big(\lambda_i(w^0_+)-\lambda_i(w^m_+)
+\sigma^m(t)-\sigma^0(t)\Big)\partial_zw^0_{i, +}|\\
\leq &C_M\epsilon^2(t-T_{\epsilon})
\Big((t-T_{\epsilon})^3+(x-x_{\epsilon}-
\lambda_i(w(x_{\epsilon}, T_{\epsilon}))(t-T_{\epsilon}))^2\Big)^{-\frac13}\leq C_M\epsilon^2.\\
\end{split}
\]
In addition,
\[
\begin{split}
&-\sum_{k\neq i}p_{ik}(w^m_{+})\Big(\partial_t(w^m_{k, +}-w^0_{k, +})
+(\lambda_i(w^m_+)-\sigma^m(t))\partial_z(w^m_{k, +}-w^0_{k, +})\Big)\\
=&-\sum_{k\neq i}\Big(\partial_t+(\lambda_i(w^m_+)-\sigma^m(t))\partial_z\Big)\Big(p_{ik}(w^m_{+})(w^m_{k, +}-w^0_{k, +})\Big)\\
\end{split}
\]

\[
\begin{split}
&\quad +\sum_{k\neq i}\sum^n_{j=1}\Big(\partial_{w_j}p_{ik}(w^m_+)\big(\partial_t
w^m_{j, +}+(\lambda_i(w^m_+)-\sigma^m(t))\partial_zw^m_{j, +}\big)\Big)(w^m_{k, +}-w^0_{k, +})\\
\end{split}
\]
and
\[
\begin{split}
&|\partial_tw^m_{j, +}+(\lambda_i(w^m_+)-\sigma^m(t))\partial_zw^m_{j, +}|\\
=&|\partial_t(w^m_{j, +}-w^0_{j, +})+\Big(\lambda_i(w^m_{+})-\lambda_i(w^0_+)-\sigma^m(t)+\sigma^0(t)\Big)
\partial_z(w^m_{j, +}-w^0_{j, +})\\
&+\partial_tw^0_{j, +}+(\lambda_i(w^0_+)-\sigma^0(t))\partial_zw^0_{j,+}+\Big(\lambda_i(w^m_{+})-\lambda_i(w^0_+)
-\sigma^m(t)+\sigma^0(t)\Big)
\partial_zw^0_{j, +}\\
&+(\lambda_i(w^0_+)-\sigma^0(t))\partial_z(w^m_{j, +}-w^0_{j, +})|\\
\leq &\begin{cases}
M\epsilon(t-T_\epsilon)^{\frac{1}{2}}+C_M\epsilon^2(t-T_\epsilon)^{\frac32}
+C_1\epsilon+C_M\epsilon^2((t-T_\epsilon)^3+z^2)^{\frac{1}{6}}, \qquad\;\; j\neq i,\\[3pt]
C_M\epsilon((t-T_\epsilon)^3+z^2)^{-\frac{1}{6}}
+C_M\epsilon^2(t-T_\epsilon)((t-T_\epsilon)^3+z^2)^{-\frac{1}{3}}+C_M\epsilon^2, \quad\;\; j=i,
\end{cases}
\end{split}
\]
where we have used the fact that
\[
\begin{split}
|\partial_tw^0_{i, +}(z, t)|=&\big|(\partial_t+\lambda_i(w^0_{+})\partial_x)w^0_{i, +}(z, t)+\Big(\sigma^m(t)-\lambda_i(w^0_{+})\Big)\partial_xw^0_{i, +}(z, t)\big|\\
\leq&C\epsilon\Big((t-T_{\epsilon})^3+(x-x_{\epsilon}-
\lambda_i(w(x_{\epsilon},T_{\epsilon}))(t-T_{\epsilon}))^2\Big)^{-\frac16}\\
&+C\epsilon^2(t-T_{\epsilon})^{\frac12}
\Big((t-T_{\epsilon})^3+(x-x_{\epsilon}-
\lambda_i(w(x_{\epsilon}, T_{\epsilon}))(t-T_{\epsilon}))^2\Big)^{-\frac13}\\
\leq&C_M\epsilon((t-T_{\epsilon})^3+z^2)^{-\frac16}
\end{split}
\]
and
\[
\begin{split}
|\lambda_i(w^0_+)-\sigma^0(t)|
\leq& C\sum^n_{l=1}\Big(|w^0_{l, +}(z, t)-w^0_{l, +}(0+, t)|+|w^0_{l, +}(z, t)-w^0_{l, -}(0-, t)|\Big)\\
\leq& C\sum_{l=1}^n\Big(|\partial_zw^0_{l, +}||z|+|w^0_{l, +}(0+, t)-w^0_{l, +}(0-, t)|\Big)\\
\leq &C_M\epsilon\Big((t-T_{\epsilon})^3+z^2\Big)^{\frac16}.
\end{split}
\]
Then
\[
|\partial_tw^m_{j, +}+(\lambda_i(w^m_+)-\sigma^m(t))\partial_zw^m_{j, +}|\leq
\begin{cases}
C_M\epsilon, \qquad\quad\qquad\qquad\quad  j\neq i,\\
C_M\epsilon\Big((t-T_{\epsilon})^3+z^2\Big)^{-\frac16}, \quad\; j=i
\end{cases}
\]
and
\[
\begin{split}
&\big|\sum_{k\neq i}p_{ik}(w^m_+)\Big(\lambda_i(w^0_+)-\lambda_i(w^m_+)+\sigma^m(t)-\sigma^0(t)\Big)\partial_zw^0_{k, +}\\
&-\sum_{k\neq i}\Big(p_{ik}(w^m_{+})-p_{ik}(w^0_+)\Big)\Big(\partial_tw^0_{k, +}+(\lambda_i(w^0_+)-\sigma^0(t))
\partial_zw^0_{k, +}\Big)\big|\\
\leq & C_M\epsilon(t-T_{\epsilon})\Big(\epsilon+\epsilon^2((t-T_{\epsilon})^3+z^2)^{\frac16}\Big)\leq C_M\epsilon^2(t-T_{\epsilon}).
\end{split}
\]
Integrating \eqref{eq:3.41} along the characteristics yields that for small $\ve>0$,
\[
\begin{split}
|r(z, t)|\leq&\sum_{k\neq i}\Big|p_{ik}(w^m_{+})(w^m_{k, +}-w^0_{k, +})\Big|
+C_M\epsilon^2\int^{t}_{T_{\epsilon}}(1+s-T_{\epsilon})\text{d}s\\
\leq& C_M\epsilon^2(t-T_{\epsilon})\le M\epsilon(t-T_{\epsilon}),
\end{split}
\]
where we have used the fact that  $p_{ik}(0)=0$ $(k\neq i)$. Similarly, one can prove the estimate
\eqref{eq:3.34} for $|w^{m+1}_{i, -}-w^0_{i, -}|$ by continuous induction.

\vspace{0.2cm}
Let $\displaystyle \mu_j(z, t)=w^{m+1}_{j, +}-w^0_{j, +}$ with $j=1, \cdots, i-1$, then $\mu_j(z, t)$ satisfies
\begin{equation}\label{eq:3.42}
\begin{cases}
\partial_t \mu_j+\Big(\lambda_j(w^m_+)-\sigma^m(t)\Big)\partial_z \mu_j=\Big(\lambda_j(w^0_+)-\lambda_j(w^m_+)
+\sigma^m(t)-\sigma^0(t)\Big)\partial_zw^0_{j, +}
-\displaystyle\sum_{k\neq i, j }\Big\{p_{jk}(w^m_{+})\times\\
\quad \Big(\partial_t(w^m_{k, +}-w^0_{k, +})
+(\lambda_j(w^m_+)-\sigma^m(t))\partial_z(w^m_{k, +}-w^0_{k, +})
-(\lambda_j(w^0_+)-\lambda_j(w^m_+)+\sigma^m-\sigma^0)\partial_zw^0_{k, +}\Big)\Big\}\\[5pt]
\quad -\displaystyle\sum_{k\neq i, j}\Big(p_{jk}(w^m_+)-p_{jk}(w^0_+)\Big)\Big(\partial_tw^0_{k, +}+
(\lambda_j(w^0_{+})-\sigma^0(t))\partial_zw^0_{k, +}\Big),\\[3pt]
\mu_j(z, T_{\epsilon})=0.
\end{cases}
\end{equation}
It follows from direct computation as in the treatment of \eqref{eq:3.41} that
\[
|\mu_j(z, t)|=|w^{m+1}_{j, +}-w^0_{j, +}|\leq \sum_{k\neq i, j}|p_{jk}(w^m_{+})||w^m_{k, +}-w^0_{k, +}|+
C_M\epsilon^2\int^t_{T_\epsilon}(\sqrt{s-T_\epsilon}+s-T_\epsilon)\text{d}s,
\]
which implies for small $\ve>0$,
\[
|\mu_j(z, t)|=|w^{m+1}_{j, +}-w^0_{j, +}|\leq C_M\epsilon^2 (t-T_{\epsilon})^{\frac32}
\leq M\epsilon (t-T_{\epsilon})^{\frac32}, \qquad j=1, \cdots, i-1.
\]
Similarly, we can also obtain that for small $\ve>0$,
\[
|w^{m+1}_{j, -}-w^0_{j, -}|\leq M\epsilon (t-T_{\epsilon})^{\frac32}, \qquad j=i+1, \cdots, n.
\]

\underline{\textbf{Step 3. Estimates of
$w^{m+1}_{j, -}(z, t)
|_{1\leq j\leq i-1}$, $w^{m+1}_{j, +}(z, t)|_{i+1\leq j\leq n}$}}

\vspace{0.3cm}
It suffices to establish the estimate of $w^{m+1}_{j, -}(z, t)
|_{1\leq j\leq i-1}$. For convenience, we still denote by
\[
\mu_j(z, t)=w^{m+1}_{j, -}(z, t)-w^{0}_{j, -}(z, t).
\]
Then one can formulate the problem of $\mu_j(z, t)$ by
\begin{equation}\label{eq:3.43}
\begin{cases}
\partial_t \mu_j+\Big(\lambda_j(w^m_{-})-\sigma^m(t)\Big)\partial_z \mu_j=\Big(\lambda_j(w^0_{-})-\lambda_j(w^m_{-})
+\sigma^m(t)-\sigma^0(t)\Big)\partial_zw^0_{j, -}
-\displaystyle\sum_{k\neq i, j }\Big\{p_{jk}(w^m_{-})\times\\
\quad\Big(\partial_t(w^m_{k, -}-w^0_{k, -})
+(\lambda_j(w^m_{-})-\sigma^m(t))\partial_z(w^m_{k, -}-w^0_{k, -})
-(\lambda_j(w^0_{-})-\lambda_j(w^m_{-})+\sigma^m-\sigma^0)\partial_zw^0_{k, -}\Big)\Big\}\\[5pt]
\quad-\displaystyle\sum_{k\neq i, j}\Big(p_{jk}(w^m_{-})-p_{jk}(w^0_{-})\Big)\Big(\partial_tw^0_{k, -}+
(\lambda_i(w^0_{-})-\sigma^0(t))\partial_zw^0_{k, -}\Big),\\[3pt]
\mu_j(z, T_{\epsilon})=0,\\
\mu_j(z, t)|_{z=0}=w^{m+1}_{j, -}(0-, t)-w^0_{j, -}(0-, t).
\end{cases}
\end{equation}

 Let $\xi=\xi(z, t; s)$ be the backward characteristics of \eqref{eq:3.43} through the point $(z, t)$ in the
 domain $\tilde{\Omega}_{-}$. If the characteristics $\xi=\xi(z, t; s)$ intersects with $z$-axis before $t$-axis,
 then we can obtain
 \[
 |\mu_j(z, t)|\leq C_M\epsilon^2(t-T_{\epsilon})^{\frac32}.
 \]
Otherwise, if $\xi=\xi(z, t; s)$ intersects with $t$-axis at the point $(0, s)$, and $s>T_{\epsilon}$, then
\begin{equation}\label{eq:3.43*}
|\mu_j(z, t)|\leq |w^{m+1}_{j, -}(0-, s)-w^0_{j, -}(0-, s)|+C_M\epsilon^2
(t-T_{\epsilon})^{\frac32}.
\end{equation}

Next, we estimate the term $|w^{m+1}_{j, -}(0-, s)-w^0_{j, -}(0-, s)|$ on the right hand side of \eqref{eq:3.43*}.
Firstly, we claim that
 \begin{equation}\label{eq:3.44*}
 [w^{m+1}_j]=\mathcal{F}_j(w_{1,+}(0+,s), \cdots, w_{i, +}(0+,s), w_{i, -}(0-,s), \cdots, w_{n, -}(0-,s))
 [w^{m+1}_i]^3, \quad j\neq i,
 \end{equation}
where $\mathcal{F}_j$ is smooth on its arguments.

In fact, it follows from \eqref{eq:3.27} that
 \begin{equation}\label{eq:3.44}
 \Big((\partial_{u_k}f_l)(w_{-})
 -\sigma\mathbb{I}_n\Big)(\p_wu)|_{w=w_{-}(0-, t)}
 \left(
 \begin{matrix}
[w_1]\\
[w_2]\\
\vdots\\
[w_n]
\end{matrix}
\right)=
\left(
 \begin{array}{c}
\displaystyle\sum^n_{i, j=1} Q^1_{ij}(w_{-}(0-, t), w_+(0+, t))[w_i][w_j]\\
\displaystyle\sum^n_{i, j=1} Q^2_{ij}(w_{-}(0-, t), w_+(0+, t))[w_i][w_j]\\
\vdots\\
\displaystyle\sum^n_{i, j=1} Q^n_{ij}(w_{-}(0-, t), w_+(0+, t))[w_i][w_j]
 \end{array}
\right).
 \end{equation}
Multiplying \eqref{eq:3.44} by
$(\p_wu)^{-1}|_{w=w_{-}(0-, t)}$ yields
\[
\begin{split}
&\left(
\begin{matrix}
a_{11}(w_{-})-\sigma&a_{12}(w_{-})&\cdots&0&\cdots&a_{1n}(w_{-})\\
\vdots\\
a_{i1}(w_{-})&a_{i2}(w_{-})&\cdots&\lambda_i(w_{-})-\sigma&\cdots&a_{in}(w_{-})\\
\vdots\\
a_{n1}(w_{-})&a_{n2}(w_{-})&\cdots&0&\cdots&a_{nn}(w_{-})-\sigma
\end{matrix}
\right)
\left(
\begin{matrix}
[w_1]\\
\vdots\\
[w_i]\\
\vdots\\
[w_n]
\end{matrix}
\right)\\
=&\left(
 \begin{array}{c}
\sum^n_{i, j=1} \bar{Q}^1_{ij}(w_{-}(0-, t), w_+(0+, t))[w_i][w_j]\\[5pt]
\sum^n_{i, j=1} \bar{Q}^2_{ij}(w_{-}(0-, t), w_+(0+, t))[w_i][w_j]\\[3pt]
\vdots\\
\sum^n_{i, j=1} \bar{Q}^n_{ij}(w_{-}(0-, t), w_+(0+, t))[w_i][w_j]
 \end{array}
\right),
\end{split}
\]
where $Q^l_{ij}(w_{-}(0-, t), w_{+}(0+, t))$ and  $\bar{Q}^l_{ij}(w_{-}(0-, t), w_{+}(0+, t))$ are smooth functions.
Thus, we obtain
\begin{equation}\label{eq:3.45}
[w_l]=\sum^n_{i, j=1}\tilde{Q}_{ij}(w_{-}(0-, t))[w_i][w_j]+\sum^n_{i, j, k=1}
Q^l_{ijk}(w_{-}(0-, t), w_+(0+, t))[w_i][w_j][w_k], \quad l\neq i,
\end{equation}
where $Q^l_{ijk}$ are smooth functions. Similarly, one can use the Taylor's formula to
Rankine-Hugoniot conditions \eqref{eq:3.27} at $w=
w_+(0+, t)$,  and get
\begin{equation}\label{eq:3.46}
[w_l]=-\sum^n_{i, j=1}\tilde{Q}_{ij}(w_+(0+, t))[w_i][w_j]+\sum^n_{i, j, k=1}
Q^l_{ijk}(w_{+}(0+, t), w_{-}(0-, t))[w_i][w_j][w_k].
\end{equation}
Summing \eqref{eq:3.45} and \eqref{eq:3.46} together yields that
\begin{equation}\label{eq:3.47}
[w_l]=\sum^n_{i, j, k=1}\tilde{Q}^l_{ijk}(w_{-}(0-, t), w_{+}(0+, t))
[w_i][w_j][w_k],
\end{equation}
where $\tilde{Q}^l_{ijk}$ are smooth. Let
\begin{equation}\label{eq:3.48}
[w_j]=\zeta_j[w_i]^3,\quad j\neq i.
\end{equation}
Substituting \eqref{eq:3.48} into \eqref{eq:3.47},
we obtain from the implicit function theorem that for $j\neq i$,
\[
\zeta_j=\mathcal{F}_j\Big(w_{1,+}(0+, t),\cdots, w_{i, +}(0+, t), w_{i, -}(0-, t), \cdots, w_{n, -}(0-, t)\Big),
\]
where $\mathcal{F}_j$ are smooth. By $w_{j,-}=w_{j, +}-[w_j]$ with $1\leq j\leq i-1$
and $w_{j, +}=w_{j, -}+[w_j]$ with $i+1\leq j\leq n$,
the claim \eqref{eq:3.44*} is shown.

On the other hand, one has
\begin{equation}\label{eq:4.26}
\begin{split}
&|w^{m+1}_{j, -}(0-, s)-w^0_{j, -}(0-, s)|\leq |w^{m+1}_{j, -}(0-, s)-w^{m+1}_{j, +}(0+, s)|+|w^{m+1}_{j, +}(0+, s)
-w^0_{j, +}(0+, s)|\\
&\qquad\qquad\qquad\qquad\qquad\quad\;+|w^0_{j, +}(0+, s)-w^0_{j, -}(0-, s)|\\
&\qquad\qquad\qquad\qquad\qquad=|[w^{m+1}_j]|+|w^{m+1}_{j, +}(0+, s)-w^0_{j, +}(0+, s)|+|[w^0_j]|,\\[3pt]
&\qquad\quad\qquad\qquad|[w^{m+1}_i]|\leq |w^{m+1}_{i, +}(0+, s)-w^0_{i, +}(0+, s)|+|w^0_{i, +}( 0+, s)-w^0_{i, -}(0-, s)|\\
&\qquad\quad\qquad\qquad\qquad\qquad+|w^0_{i, -}(0-, s)-w^{m+1}_{i, -}(0-, s)|\\
&\qquad\quad\qquad\qquad\qquad\quad\leq C_M\epsilon (t-T_{\epsilon})^{\frac12}.
\end{split}
\end{equation}
It follows from \eqref{eq:3.43*}, \eqref{eq:3.44*}, \eqref{eq:4.26} and Step $2$ that for small $\epsilon>0$ and $M>{\bar C}_0$,
\[
\begin{split}
&|w^{m+1}_{j, -}(0-, s)-w^0_{j, -}(0-, s)|\leq C_M\epsilon^3(t-T_{\epsilon})^{\frac32}+{\bar C}_0\epsilon(t-T_{\epsilon})^{\frac32},\\
&\Big|\mu_j(z, t)\Big|\leq  C_M\epsilon^2(t-T_{\epsilon})^{\frac32}+{\bar C}_0\epsilon(t-T_{\epsilon})^{\frac32}\leq M\epsilon (t-T_{\epsilon})^{\frac32}.
\end{split}
\]

Therefore, \eqref{eq:3.37} holds true for $w^{m+1}_{j,-}(z, t)|_{1\leq j\leq i-1}$, and the estimate for $w^{m+1}_{j,+}(z, t)|_{i+1\leq j\leq n}$ can be obtained similarly.

\vspace{0.1cm}
\underline{\textbf{Step 4. Estimates of
$\p_{t,z}(w^{m+1}_{i, \pm}-w^0_{i, \pm})$}}

\vspace{0.1cm}
For convenience, we still denote $r(z, t)=
\partial_z(w^{m+1}_{i, +}-w^0_{i, +})$ without confusions. Then
$r(z, t)$ satisfies
\begin{equation}\label{eq:3.52}
\begin{cases}
\quad\partial_t r+\Big(\lambda_i(w^m_+)-\sigma^m(t)\Big)\partial_z r+\partial_z\lambda_i(w^m_+)r\\
=\displaystyle\sum^n_{j=1}\Big(\partial_{w_j}\lambda_i(w^0_{+})\partial_zw^0_{j, +}-\partial_{w_j}\lambda_i(w^m_+)\partial_zw^m_{j,+}\Big)
\partial_zw^0_{i,+}
-\Big(\lambda_i(w^0_+)-\lambda_i(w^m_+)+\sigma^m-\sigma^0\Big)\partial^2_zw^0_{i, +}\\
\quad-\displaystyle\sum_{k\neq i}p_{ik}(w^m_{+})\cdot\Big\{\Big(\partial^2_{tz}(w^m_{k, +}-w^0_{k, +})
+(\lambda_i(w^m_+)-\sigma^m(t))\partial^2_z(w^m_{k, +}-w^0_{k, +})
-(\lambda_i(w^0_+)-\lambda_i(w^m_+)\\
\quad+\sigma^m-\sigma^0)\partial^2_zw^0_{k, +}\Big)\Big\}-\displaystyle\sum_{k\neq i}\sum_{j=1}^n p_{ik}(w^m_+)\Big(\partial_{w_j}\lambda_i(w^m_+)\partial_zw^m_{j, +}\partial_zw^m_{k, +}
-\partial_{w_j}\lambda_i(w^0_+)\partial_zw^0_{j, +}\partial_zw^0_{k, +}\Big)\\
\quad-\displaystyle\sum_{k\neq i}\Big(p_{ik}(w^m_+)-p_{ik}(w^0_+)\Big)\Big(\partial^2_{tz}w^0_{k, +}+
(\lambda_i(w^0_{+})-\sigma^0(t))\partial^2_zw^0_{k, +}+\sum^n_{j=1}
\partial_{w_j}\lambda_i(w^0_+)\partial_zw^0_{j, +}\partial_zw^0_{k, +}\Big)\\[3pt]
\quad\displaystyle-\sum_{k\neq i}\sum_{j=1}^n\Big(\partial_{w_j}p_{ik}(w^m_+)\partial_zw^m_{j, +}
-\partial_{w_j}p_{ik}(w^0_+)\partial_zw^0_{j,+}\Big)\Big(\partial_tw^0_{k, +}+(\lambda_i(w^0_+)-\sigma^0)\partial_z
w^0_{k, +}\Big)\\
\quad\displaystyle-\sum_{k\neq i}\sum_{j=1}^n\partial_{w_j}p_{ik}(w^m_+)\partial_zw^m_{j, +}\Big(\partial_t(w^m_{k, +}-w^0_{k, +})
+(\lambda_i(w^m_+)-\sigma^m)\partial_z(w^m_{k, +}-w^0_{k, +})\\
\quad-(\lambda_i(w^0_+)-\lambda_i(w^m_+)+\sigma^m-\sigma^0)\partial_zw^0_{k, +}\Big),\\
\quad r(z, T_{\epsilon})=0.
\end{cases}
\end{equation}

Note that
\begin{equation}\label{Y-005}
\begin{split}
&\big|\displaystyle\sum^n_{j=1}\Big(\partial_{w_j}\lambda_i(w^0_{+})\partial_zw^0_{j, +}-\partial_{w_j}\lambda_i(w^m_+)\partial_zw^m_{j,+}\Big)
\partial_zw^0_{i,+}\big|\\
=&\big|\sum_{j=1}^n\sum^n_{k=1}\partial^2_{w_jw_k}\lambda_i(\theta w^0_{+}
+(1-\theta)w^m_{+})\Big(w^0_{k, +}-w^m_{k, +}\Big)\partial_zw_{j, +}^0
\partial_zw_{i, +}^0-\sum^n_{j=1}\partial_{w_j}\lambda_i(w^m_{+})\partial_z
(w^m_{j, +}-w^0_{j, +})\partial_zw^0_{i, +}\big|\\
\leq &C_M\epsilon^2\Big((t-T_{\epsilon})^3+z^2\Big)^{-\frac12} \qquad\quad \text{for }0<\theta<1,\\
&\big|\Big(\lambda_i(w^0_+)-\lambda_i(w^m_+)+\sigma^m(t)-\sigma^0(t)\Big)\partial^2_zw^0_{i, +}\big|\leq C(|w^m_+-w^0_+|+|w^m_{-}-w^0_{-}|)|\partial^2_zw^0_{i, +}|\\
\leq &C_M\epsilon^2(t-T_{\epsilon})\Big((t-T_{\epsilon})^3+z^2\Big)^{-\frac56}.\\[5pt]
\end{split}
\end{equation}
In addition, we have
\begin{equation}\label{Y-006}
\begin{split}
&-\sum_{k\neq i}p_{ik}(w^m_+)\Big(\partial^2_{tz}(w^m_{k, +}-w^0_{k, +})
+(\lambda_i(w^m_+)-\sigma^m(t))\partial^2_z(w^m_{k, +}-w^0_{k, +})\Big)\\
=&-\sum_{k\neq i}\Big(\partial_t+(\lambda_i(w^m_{+})-\sigma^m(t))\partial_z\Big)\Big(p_{ik}(w^m_{+})\partial_z(w^m_{k, +}
-w^0_{k, +})\Big)+\sum_{k\neq i}\sum^n_{j=1}
\partial_{w_j}p_{ik}(w^m_+)\Big(\partial_tw^m_{j, +}\\
&\quad+(\lambda_i(w^m_{+})-\sigma^m(t))\partial_zw^m_{j, +}\Big)
\partial_z(w^m_{k, +}-w^0_{k, +})\\
\end{split}
\end{equation}
and
\begin{equation}\label{Y-007}
\begin{split}
&\big|\sum_{k\neq i}p_{ik}(w^m_+)(\lambda_i(w^0_+)-\lambda_i(w^m_+)+\sigma^m-\sigma^0)\partial^2_zw^0_{k, +}\big|
\leq C_M \epsilon^2(t-T_{\epsilon})
((t-T_{\epsilon})^3+z^2)^{-\frac12},\\
&\big|\sum_{k\neq i}\sum_{j=1}^n p_{ik}(w^m_+)\Big(\partial_{w_j}\lambda_i(w^m_+)\partial_zw^m_{j, +}\partial_zw^m_{k, +}
-\partial_{w_j}\lambda_i(w^0_+)\partial_zw^0_{j, +}\partial_zw^0_{k, +}\Big)\big|\\
\leq &\big|\sum_{k\neq i}\sum_{j=1}^n p_{ik}(w^m_{+})\Big\{\partial_{w_j}\lambda_i(w^m_+)\partial_z(w^m_{j, +}
-w^0_{j, +})\partial_z(w^m_{k, +}-w^0_{k, +})+\partial_{w_j}\lambda_i(w^m_+)\partial_zw^0_{k, +}\partial_z(w^m_{j, +}
-w^0_{j, +})\\
&+\partial_zw^0_{j, +}\partial_{w_j}\lambda_i(w^m_+)\partial_z(w^m_{k, +}-w^0_{k, +})
+\Big(\partial_{w_j}\lambda_i(w^m_+)-\partial_{w_j}\lambda_i(w^0_+)\Big)\partial_zw^0_{j, +}\partial_zw^0_{k, +}\Big\}\big|\\
\leq & C_M\epsilon^2\Big((t-T_{\epsilon})^3+z^2\Big)^{-\frac16},\\[3pt]
&\big|\sum_{k\neq i}\Big(p_{ik}(w^m_+)-p_{ik}(w^0_+)\Big)\Big(\partial^2_{tz}w^0_{k, +}+
(\lambda_i(w^0_{+})-\sigma^0(t))\partial^2_zw^0_{k, +}+\sum^n_{j=1}
\partial_{w_j}\lambda_i(w^0_+)\partial_zw^0_{j, +}\partial_zw^0_{k, +}\Big)\big|\\[3pt]
\leq&|w^m_{+}-w^0_{+}|\Big(C_M\epsilon\Big((t-T_{\epsilon})^3+z^2\Big)^{-\frac12}
+C_M\epsilon^2\Big((t-T_{\epsilon})^3+z^2\Big)^{-\frac13}
+C_M\epsilon \sum^n_{j=1}\Big|\partial_{w_j}\lambda_i(w^0_+)\partial_zw^0_{j, +}\Big|\Big)\\
\leq &C_M\epsilon^2(t-T_\epsilon)((t-T_{\epsilon})^3+z^2)^{-\frac12}+C_M\epsilon^3,
\end{split}
\end{equation}
where we have used the fact that
\[
|\partial_{z}w^0_{i, +}|\leq C\epsilon\Big((t-T_\epsilon)^3+z^2\Big)^{-\frac{1}{3}}, \;\;|\lambda_i(w^0_+)-\sigma^0(t)|\leq C\epsilon\Big((t-T_\epsilon)^3+z^2\Big)^{\frac16},
\] and for $k\neq i$,
\[
|\partial_z w^0_{k, +}|\leq C\epsilon,\; |\partial^2_{tz}w^0_{k, +}|\leq C\epsilon\Big((t-T_\epsilon)^3+z^2\Big)^{-\frac{1}{2}},\;
|\partial^2_{z}w^0_{k, +}|\leq C\epsilon\Big((t-T_\epsilon)^3+z^2\Big)^{-\frac{1}{2}}.
\]
On the other hand, one can assert
\begin{equation}\label{Y-6}
\begin{split}
&\displaystyle\big|\sum_{k\neq i}\sum_{j=1}^n\Big(\partial_{w_j}p_{ik}(w^m_+)\partial_zw^m_{j, +}
-\partial_{w_j}p_{ik}(w^0_+)\partial_zw^0_{j,+}\Big)\Big(\partial_tw^0_{k, +}+(\lambda_i(w^0_+)-\sigma^0(t))\partial_z
w^0_{k, +}\Big)\big|\\
&\leq C_M\epsilon^2\Big((t-T_{\epsilon})^3+z^2\Big)^{-\frac16}.
\end{split}
\end{equation}
Indeed, for $j\neq i$,
\[
\begin{split}
&|\partial_{w_j}p_{ik}(w^m_+)\partial_zw^m_{j, +}
-\partial_{w_j}p_{ik}(w^0_+)\partial_zw^0_{j,+}|\\
=&|\partial_{w_j}p_{ik}(w^m_+)\partial_z(w^m_{j, +}-w^0_{j, +})
+\big(\partial_{w_j}p_{ik}(w^m_+)-\partial_{w_j}p_{ik}(w^0_+)\big)\partial_zw^0_{j, +}|\\
\leq &C_M\epsilon (t-T_\epsilon)^{\frac{1}{2}}+C_M\epsilon^2(t-T_\epsilon)\leq C_M\epsilon(t-T_\epsilon)^{\frac{1}{2}},
\end{split}
\]
for $j=i$,
\[
\begin{split}
&|\partial_{w_j}p_{ik}(w^m_+)\partial_zw^m_{j, +}
-\partial_{w_j}p_{ik}(w^0_+)\partial_zw^0_{j,+}|\\
\leq &C\epsilon \Big((t-T_\epsilon)^3+z^2\Big)^{-\frac{1}{6}}+C_M\epsilon^2(t-T_\epsilon)\Big((t-T_\epsilon)^3+z^2\Big)^{-\frac{1}{3}}
\leq  C_M\epsilon \Big((t-T_\epsilon)^3+z^2\Big)^{-\frac{1}{6}},
\end{split}
\]
meanwhile,
\[
|\partial_tw^0_{k, +}+(\lambda_i(w^0_{+})-\sigma^0(t))\partial_zw^0_{k, +}|
\leq C_M\epsilon+C_M\epsilon^2((t-T_\epsilon)^3+z^2)^{\frac{1}{6}}\leq C_M\epsilon, \quad \text{for } k\neq i.
\]
Collecting these estimates yields \eqref{Y-6}.

In addition,
\begin{equation}\label{Y-008}
\begin{split}
&\displaystyle\big|\sum_{k\neq i}\sum_{j=1}^n\partial_{w_j}p_{ik}(w^m_+)\partial_zw^m_{j, +}\Big(\partial_t(w^m_{k, +}
-w^0_{k, +})
+(\lambda_i(w^m_+)-\sigma^m)\partial_z(w^m_{k, +}-w^0_{k, +})\\
&\qquad-(\lambda_i(w^0_+)-\lambda_i(w^m_+)+\sigma^m-\sigma^0)\partial_zw^0_{k, +}\Big)\big|\\
&\leq C_M\epsilon^2(t-T_{\epsilon})^{\frac12}\Big((t-T_{\epsilon})^3+z^2\Big)^{-\frac13},
\end{split}
\end{equation}
where the following facts are used
\[
\begin{split}
|\partial_zw^m_{j, +}|\le&|\partial_z(w^m_{j, +}-w^0_{j, +})|+|\partial_zw^0_{j, +}|
\leq\begin{cases}
M\epsilon(t-T_\epsilon)^{\frac12}+C\epsilon\leq C_M\epsilon, \qquad j\neq i,\\
C_M\epsilon
((t-T_\epsilon)^3+z^2)^{-\frac{1}{3}}, \qquad\quad\;\; j=i,
\end{cases}
\end{split}
\]
\[
|(\lambda_i(w^0_+)-\lambda_i(w^m_+)+\sigma^m(t)-\sigma^0(t))\partial_zw^0_{k, +}|\leq C_M\epsilon^2(t-T_\epsilon)
\quad \text{for }k\neq i,
\]
and
\[
\begin{split}
|\lambda_i(w^m_+)-\sigma^m(t)|
\leq &|\lambda_i(w^m_+)-\lambda_i(w^0_+)|+|\sigma^m(t)-\sigma^0(t)|+|\lambda_i(w^0_+)
-\sigma^0(t)|\\
\leq & C_M\epsilon((t-T_\epsilon)^3+z^2)^{\frac{1}{6}}.
\end{split}
\]

Let $\xi^{m+1}_i=\xi^{m+1}_i(z, t; s)$ be the backward characteristics of \eqref{eq:3.52} through the point $(z, t)$, namely,
\[
\begin{cases}
\frac{\text{d}\xi^{m+1}_i}{\text{d}s}=\lambda_i(w^m_+(\xi^{m+1}_i, s))-\sigma^m(s), \qquad T_{\epsilon}\leq s\leq t,\\[5pt]
\xi^{m+1}_i|_{s=t}=z.
\end{cases}
\]

Due to the genuinely nonlinear condition of \eqref{eq:2.88} with respect to $\lambda_i$, one can assume that for small $|w|$,
\[
\partial_{w_i}\lambda_i(w)>\frac{1}{2}\partial_{w_i}\lambda_i(0)>0.
\]

Motivated by the conclusions of  Lemmas $11.1$ and Lemma $7.4$ in \cite{Lebaud} for the $2\times 2$ $p-$system
with one constant Riemann invariant before blowup time,
we can show that there exists a
constant $C_M>0$ independent of $m$ and $\epsilon$ such that
\begin{equation}\label{eq:3.53}
(s-T_{\epsilon})^3+(\xi^{m+1}_i)^2\geq C_M((t-T_{\epsilon})^3+z^2)
\end{equation}
and
\begin{equation}\label{eq:3.54}
\int^t_{T_{\epsilon}}|\partial_z\lambda_i(w^m_+)(\xi^{m+1}_i, s)|\text{d}s\leq ln\frac32+C_M\sqrt{t-T_{\epsilon}},
\end{equation}
see the Appendix for details.

Integrating both sides of \eqref{eq:3.52} along the characteristics, and combining with \eqref{Y-005}-\eqref{eq:3.54},  we
arrive at
\[
\begin{split}
|r(z, t)|\leq&\sum_{k\neq i}|p_{ik}(w^m_+)\partial_z(w^m_{k, +}-w^0_{k, +})|+\int^t_{T_\epsilon}|\partial_z\lambda_i(w^m_+)(\xi^{m+1}_i, s)||r(\xi^{m+1}_i, s)|\text{d}s\\
+&C_M\epsilon^2\int^t_{T_\epsilon}\Big((s-T_{\epsilon})((t-T_\epsilon)^3+z^2)^{-\frac56}
+(s-T_{\epsilon})^{\frac12}((t-T_{\epsilon})^3+z^2)^{-\frac13}+((t-T_\epsilon)^3+z^2)^{-\frac12}\Big)\text{d}s\\
\leq &C_M \epsilon^2((t-T_\epsilon)^3+z^2)^{-\frac16}+\int^t_{T_\epsilon}|\partial_z\lambda_i(w^m_+)(\xi^{m+1}_i, s)||r(\xi^{n+1}_i, s)|\text{d}s.
\end{split}
\]
Together with the Gronwall's inequality, this yields
\[
|r(z, t)|\leq C_M\epsilon^2((t-T_\epsilon)^3+z^2)^{-\frac16}.
\]
On the other hand, it follows from \eqref{eq:3.41} and the inductive hypothesis that for small $\ve>0$,
\[
\begin{split}
|\partial_t(w^{m+1}_{i, +}-w^0_{i, +})|
\leq  C_M\epsilon^2((t-T_\epsilon)^3+z^2)^{-\frac16}\leq M\epsilon((t-T_\epsilon)^3+z^2)^{-\frac16}.
\end{split}
\]
Therefore, \eqref{eq:3.35} is obtained.

\vspace{0.3cm}
\underline{\textbf{Step 5. Estimates of
$\p_{t,z}(w^{m+1}_{j, +}-w^0_{j, +}) (1\leq j\leq i-1)$ and $\p_{t,z}(w^{m+1}_{j, -}-w^0_{j, -})(i+1\leq j\leq n)$}}

\vspace{0.2cm}
For convenience, we still denote $\mu_j(z, t)=
\partial_z(w^{m+1}_{j, +}-w^0_{j, +})(1\leq j\leq i-1)$ without confusions. Then
$\mu_j(z, t)$ satisfies
\begin{equation}\label{eq:3.55}
\begin{cases}
\;\;\;\partial_t \mu_j+\Big(\lambda_j(w^m_+)-\sigma^m(t)\Big)\partial_z \mu_j+\partial_z\lambda_j(w^m_+)\mu_j\\
=\displaystyle\sum^n_{l=1}\Big(\partial_{w_l}\lambda_j(w^0_{+})\partial_zw^0_{l, +}-\partial_{w_l}\lambda_j(w^m_+)\partial_zw^m_{l,+}\Big)
\partial_zw^0_{j,+}
+\Big(\lambda_j(w^0_+)-\lambda_j(w^m_+)+\sigma^m-\sigma^0\Big)\partial^2_zw^0_{j, +}\\[3pt]
\;\;\;-\displaystyle\sum_{k\neq i, j}p_{jk}(w^m_{+})\cdot
\Big\{\Big(\partial^2_{tz}(w^m_{k, +}-w^0_{k, +})
+(\lambda_j(w^m_+)-\sigma^m(t))\partial^2_z(w^m_{k, +}-w^0_{k, +})
-(\lambda_j(w^0_+)-\lambda_j(w^m_+)\\[5pt]
\;\;\;+\sigma^m-\sigma^0)\partial^2_zw^0_{k, +}\Big)\Big\}-\displaystyle\sum_{k\neq i, j}\sum_{l=1}^n p_{jk}(w^m_+)\Big(\partial_{w_l}\lambda_j(w^m_+)\partial_zw^m_{l, +}\partial_zw^m_{k, +}
-\partial_{w_l}\lambda_j(w^0_+)\partial_zw^0_{l, +}\partial_zw^0_{k, +}\Big)\\
\;\;\;\displaystyle-\sum_{k\neq i, j}\sum_{l=1}^n\partial_{w_l}p_{jk}(w^m_+)\partial_zw^m_{l, +}\Big(\partial_t(w^m_{k, +}-w^0_{k, +})
\;\;\;+(\lambda_j(w^m_+)-\sigma^m)\partial_z(w^m_{k, +}-w^0_{k, +})\\
\qquad-(\lambda_j(w^0_+)-\lambda_j(w^m_+)+\sigma^m-\sigma^0)\partial_zw^0_{k, +}\Big)\\
\;\;\;-\displaystyle\sum_{k\neq i, j}\Big(p_{jk}(w^m_+)-p_{jk}(w^0_+)\Big)\Big(\partial^2_{tz}w^0_{k, +}+
(\lambda_j(w^0_{+})-\sigma^0(t))\partial^2_zw^0_{k, +}+\sum^n_{l=1}
\partial_{w_l}\lambda_j(w^0_+)\partial_zw^0_{l, +}\partial_zw^0_{k, +}\Big)\\[3pt]
\;\;\;\displaystyle-\sum_{k\neq i, j}\sum_{l=1}^n\Big(\partial_{w_l}p_{jk}(w^m_+)\partial_zw^m_{l, +}
-\partial_{w_l}p_{jk}(w^0_+)\partial_zw^0_{l,+}\Big)\Big(\partial_tw^0_{k, +}+(\lambda_j(w^0_+)-\sigma^0)\partial_z
w^0_{k, +}\Big),\\
\;\;\;\mu_j(z, T_{\epsilon})=0.
\end{cases}
\end{equation}

Let $\xi^{m+1}_j=\xi^{m+1}_j(z, t; s)$ be the backward $j$-th characteristics of the system \eqref{eq:3.55} through the point
$(z, t)$, satisfying
\[
\begin{cases}
\frac{\text{d}\xi^{m+1}_j}{\text{d}s}=\lambda_j(w^m_+)(\xi^{m+1}_j, s)-\sigma^m(s), \qquad T_{\epsilon}\leq s\leq t,\\[5pt]
\xi^{m+1}_j|_{s=t}=z.
\end{cases}
\]
Owing to the entropy conditions \eqref{eq:3.28} and the strictly hyperbolic condition \eqref{eq:1.3}, one has
\[
\xi^{m+1}_j=z+\int^t_s(\sigma^m(s)-\lambda_j(w^m_+)(\xi^{m+1}_j,s))\text{d}s
\geq z+\frac{1}{2}|\lambda_j(0)-\lambda_i(0)|(t-s).
\]
This yields
\begin{equation}\label{eq:3.56}
(\xi^{m+1}_j)^2\geq z^2+\frac{1}{4}|\lambda_j(0)-\lambda_i(0)|^2(t-s)^2.
\end{equation}

As shown in Step 4, by integrating the equation \eqref{eq:3.55} along the characteristics and making the related
estimates for the terms on the right hand side of \eqref{eq:3.55}, we arrive at
\[
\begin{split}
&|\mu_j(z, t)|\leq \sum_{k\neq i, j}|p_{jk}(w^m_+)\partial_z(w^m_{k, +}-w^0_{k, +})|+C_M\epsilon\int^t_{T_\epsilon}|\mu_j(\xi^{m+1}_j, s)|\Big((s-T_\epsilon)^3+(\xi^{m+1}_j)^2\Big)^{-\frac13}\text{d}s\\
&+C_M\epsilon^2\int^s_{T_\epsilon}\Big\{\frac{s-T_{\epsilon}}{\Big((s-T_\epsilon)^3+
(\xi^{m+1}_j)^2\Big)^{\frac12}}+\frac{(s-T_{\epsilon})^{\frac12}}{\Big((s-T_\epsilon)^3+
(\xi^{m+1}_j)^2\Big)^{\frac13}}+\frac{1}{\Big((s-T_\epsilon)^3+
(\xi^{m+1}_j)^2\Big)^{\frac16}}\Big\}\text{d}s\\
\leq &C_M\epsilon^2(t-T_\epsilon)^{\frac12}+C_M\epsilon\int^t_{T_\epsilon}|\mu_j( \xi^{m+1}_j, s)|(t-s)^{-\frac23}\text{d}s.
\end{split}
\]
Together with Gronwall's inequality, this yields that for small $\ve>0$,
\[
|\partial_z(w^{m+1}_{j, +}-w^0_{j, +})|\leq C_M\epsilon^2(t-T_\epsilon)^{\frac12}\le M\epsilon(t-T_\epsilon)^{\frac12}.
\]
Therefore, for $1\leq j\leq i-1$, it follows from \eqref{eq:3.41} and direct computation that for small $\ve>0$,
\[
\begin{split}
|\partial_t(w^{m+1}_{j, +}-w^0_{j, +})|
\leq C_M\epsilon^2(t-T_{\epsilon})^{\frac12}\le M\epsilon(t-T_{\epsilon})^{\frac12},
\end{split}
\]
where we have used the facts of $p_{jk}(0)|_{k\neq i, j}=0$, $|\partial_z(w^{m+1}_{j, +}-w^0_{j, +})|
\leq C_M\epsilon^2(t-T_{\epsilon})^{\frac12}$ and
\[
\begin{split}
&|(\lambda_j(w^m_+)-\sigma^m(t))\partial_z(w^{m+1}_{j, +}-w^0_{j, +})|\leq
C_M\epsilon^2(t-T_{\epsilon})^{\frac12},\\[3pt]
&|(\lambda_j(w^0_+)-\lambda_j(w^m_+)+\sigma^m(t)-\sigma^0(t))\partial_zw^0_{j, +}|\leq C_M\epsilon^2(t-T_{\epsilon}),\\[3pt]
&\big|\sum_{k\neq i, j}p_{jk}(w^m_+)\Big(\partial_t(w^m_{k, +}-w^0_{k, +})
+(\lambda_j(w^m_+)-\sigma^m(t))\partial_z(w^m_{k, +}-w^0_{k, +})\Big)\big|
\leq C_M\epsilon^2(t-T_\epsilon)^{\frac12},\\
&\big|\sum_{k\neq i, j}p_{jk}(w^m_+)(\lambda_j(w^0_+)-\lambda_j(w^m_+)
+\sigma^m(t)-\sigma^0(t))\partial_zw^0_{k, +}\big|\leq C_M\epsilon^2(t-T_\epsilon),\\
&\big|\displaystyle\sum_{k\neq i, j}\Big(p_{jk}(w^m_+)-p_{jk}(w^0_+)\Big)\Big(\partial_tw^0_{k, +}+
(\lambda_j(w^0_{+})-\sigma^0(t))\partial_zw^0_{k, +}\Big)\big|\leq C_M\epsilon^2(t-T_\epsilon).
\end{split}
\]

\vspace{0.1cm}
\underline{\textbf{Step 6. Estimates of
$\p_{t,z}(w^{m+1}_{j, -}-w^0_{j, -})(1\leq j\leq i-1)$ and $\p_{t,z}(w^{m+1}_{j, +}-w^0_{j, +})(i+1\leq j\leq n)$}}

\vspace{0.1cm}

It suffices to estimate $\partial_t(w^{m+1}_{j, -}-w^0_{j, -})$ and
$\partial_z(w^{m+1}_{j, -}-w^0_{j, -})$ for $j=1, \cdots, i-1$. Let
\[
\mu_j(z, t)=\partial_t(w^{m+1}_{j, -}-w^0_{j, -}), \quad j=1, \cdots, i-1.
\]
Then
\begin{equation}\label{eq:3.57}
\begin{cases}
\;\;\;\partial_t \mu_j+(\lambda_j(w^m_{-})-\sigma^m(t))\partial_z \mu_j+\partial_t(\lambda_j(w^m_{-})
-\sigma^m(t))\partial_z(w^{m+1}_{j, -}-w^0_{j, -})\\[3pt]
=\partial_t(\lambda_j(w^0_{-})-\lambda_j(w^m_{-})+\sigma^m(t)-\sigma^0(t))\partial_z
w^0_{j, -}+(\lambda_j(w^0_{-})-\lambda_j(w^0_{-})+\sigma^m(t)-\sigma^0(t))
\partial^2_{tz}w^0_{j, -}\\[3pt]
\quad -\displaystyle\sum_{k\neq i, j}p_{jk}(w^m_{-})\Big(\partial^2_t(w^m_{k, -}-w^0_{k, -})
+(\lambda_j(w^m_{-})-\sigma^m(t))\partial^2_{zt}(w^m_{k, -}-w^0_{k, -})\\
\quad -(\lambda_j
(w^0_{-})-\lambda_j(w^m_{-})+\sigma^m(t)-\sigma^0(t))\partial^2_{zt}w^0_{k, -}\Big)\\[3pt]
\quad -\displaystyle\sum_{k\neq i, j}p_{jk}(w^m_{-})\Big(\partial_t(\lambda_j(w^m_{-})-\sigma^m(t))
\partial_z(w^m_{k, -}-w^0_{k, -})-\partial_t(\lambda_j(w^0_{-})-\lambda_j(w^m_{-})+\sigma^m-\sigma^0)
\partial_zw^0_{k, -}\Big)\\[3pt]
\quad -\displaystyle\sum_{k\neq i, j}\sum_{l=1}^n\partial_{w_l}p_{jk}(w_{-}^m)
\partial_tw^m_{l, -}\Big(\partial_t(w^m_{k, -}-w^0_{k, -})+(\lambda_j(w^m_{-})-\sigma^m(t))\partial_z(w^m_{k, -}-w^0_{k, -})\\[3pt]
\quad -(\lambda_j(w^0_{-})-\lambda_j(w^m_{-})+\sigma^m-\sigma^0)\partial_zw^0_{k, -}\Big)-\displaystyle\sum_{k\neq i, j}(p_{jk}(w^m_{-})-p_{jk}(w^0_{-}))\Big(\partial^2_tw^0_{k, -}+(\lambda_j(w^0_{-})\\
\quad-\sigma^0(t))\partial^2_{tz}w^0_{k, -}+\partial_t(\lambda_j
(w^0_{-})-\sigma^0(t))\partial_zw^0_{k, -}\Big)\\[3pt]
\quad -\displaystyle\sum_{k\neq i, j}\sum^n_{l=1}
\Big(\partial_{w_l}p_{jk}(w^m_{-})\partial_tw^m_{l, -}-\partial_{w_l}p_{jk}
(w^0_{-})\partial_tw^0_{l, -}\Big)\Big(\partial_tw^0_{k, -}+(\lambda_j(w^0_{-})
-\sigma^0(t))\partial_zw^0_{k, -}\Big),\\
\;\;\;\mu_j(z, T_\epsilon)=0,\\
\;\;\;\mu_j(z, t)|_{z=0}=\partial_t(w^{m+1}_{j, -}-w^0_{j, -})(0-, t).
\end{cases}
\end{equation}
Note that
\[
|\lambda_j(w^m_{-})-\sigma^m(t)|\geq \frac{1}{2}|\lambda_j(0)-\lambda_i(0)|>0.
\]
Let $\xi^{m+1}_j=\xi^{m+1}_j(z, t; s)$ be the backward $j$-th characteristics of the system \eqref{eq:3.57}
through the point $(z, t)$. If $\xi^{m+1}_j$ intersects with $z$-axis before it meets the $t$-axis, then as shown in Steps 4-5,
by integrating the equation \eqref{eq:3.57} along the characteristics and taking the related estimates for the terms
on the right hand side of  \eqref{eq:3.57}, we have
\[
\begin{split}
|\mu_j|\leq& \sum_{k\neq i, j}|p_{jk}(w^m_{-})\partial_t(w^m_{k, -}-w^0_{k, -})|+
\frac{2}{|\lambda_j(0)-\lambda_i(0)|}\int^t_{T_\epsilon}
|\partial_t\lambda_j(w^m_{-})-\partial_t\sigma^m(t)||\mu_j(\xi^{m+1}_j, s)|\text{d}s\\
&+C_M\epsilon^2\int^t_{T_\epsilon}(1+\frac{s-T_{\epsilon}}{\sqrt{(s-T_\epsilon)^3+z^2}}+\frac{1}{\sqrt{s-T_\epsilon}})\text{d}s\\
\leq &C_M\epsilon^2(t-T_\epsilon)^{\frac12}+\frac{2}{|\lambda_j(0)-\lambda_i(0)|}\int^t_{T_\epsilon}
|\partial_t\lambda_j(w^m_{-})-\partial_t\sigma^m(t)||\mu_j(\xi^{m+1}_j, s)|\text{d}s.
\end{split}
\]
Since
\[
\begin{split}
&|\partial_t\lambda_j(w^m_{-})-\partial_t\sigma^m(t)|\\
=&\Big|\sum^n_{l=1}\partial_{w_l}\lambda_j(w^m_{-})\partial_t w^m_{l,-}-\sum^n_{l=1}
\partial_{w_l}\lambda_i(\theta w^m_+(0+, t)+(1-\theta)w^m_{-}(0-, t))\Big(\theta\partial_t w^m_{l, +}(0+, t)\\
&\quad+(1-\theta)\partial_t
w^m_{l, -}(0-,t)\Big)\Big|\\
\leq& C_M\epsilon\Big((s-T_\epsilon)^3+(\xi^{m+1}_j)^2\Big)^{-\frac13}\leq C_M\epsilon (t-s)^{-\frac23}
\quad \text{for } 0<\theta<1,
\end{split}
\]
where we have used \eqref{eq:3.56} and the fact that $|\partial_t w^m_{l, -}|
\leq C\epsilon \Big((s-T_\epsilon)^3+(\xi^{m+1}_j)^2\Big)^{-\frac13}$,
then from Gronwall's inequality, this yields that for small $\ve>0$,
\[
|\partial_t(w^{m+1}_{j, -}-w^0_{j, -})|\leq C_M\epsilon^2(t-T_\epsilon)^{\frac12}\le M\epsilon(t-T_\epsilon)^{\frac12}.
\]
If $\xi^{m+1}_j=\xi^{m+1}_j(z, t; s)$ intersects with $t$-axis at $(0, s)$ with $s>T_\epsilon$, then
one can get
\begin{equation}\label{Y-7}
\begin{split}
|\mu_j|\leq &\sum_{k\neq i, j}|p_{jk}(w^m_{-})\partial_t(w^m_{k, -}-w^0_{k, -})|
+|\partial_s(w^{m+1}_{j, -}-w^0_{j, -})(0-, s)|\\
&+\frac{2}{|\lambda_j(0)-\lambda_i(0)|}\int^t_{T_\epsilon}
|\partial_t\lambda_j(w^m_{-})-\partial_t\sigma^m(t)||\mu_j(\xi^{m+1}_j, s)|\text{d}s\\
&+C_M\epsilon^2\int^t_{T_\epsilon}(1+\frac{s-T_{\epsilon}}{\sqrt{(s-T_\epsilon)^3+z^2}}+\frac{1}{\sqrt{s-T_\epsilon}})\text{d}s\\
\leq &C_M\epsilon^2(t-T_\epsilon)^{\frac12}+|\partial_s(w^{m+1}_{j, -}-w^0_{j, -})(0-, s)|
\\&+\frac{2}{|\lambda_j(0)-\lambda_i(0)|}\int^t_{T_\epsilon}
|\partial_t\lambda_j(w^m_{-})-\partial_t\sigma^m(t)||\mu_j(\xi^{m+1}_j, s)|\text{d}s.
\end{split}
\end{equation}
Next, we deal with the term $\partial_s(w^{m+1}_{j, -}-w^0_{j, -})(0-, s)$ in \eqref{Y-7}. Note that for $1\leq j\leq i-1$,
\[
|\partial_s(w^{m+1}_{j, -}(0-, s)-w^0_{j, -}(0-, s))|\leq |\partial_s(w^{m+1}_{j, +}(0+, s)-w^0_{j, +}(0+, s))|+
|\partial_s[w^{m+1}_j-w^0_j]|.
\]
Due to \eqref{eq:3.44*}, we hae that for $j=1, \cdots, i-1$,
\[
\begin{split}
&|\partial_s[w^{m+1}_j-w^0_j]|\\
=&\big|\partial_s\Big(\Big(\mathcal{F}_j(w^{m+1}_{1,+}, \cdots,w^{m+1}_{i, +}, w^{m+1}_{i, -}, \cdots, w^{m+1}_{n, -})-\mathcal{F}_j(w^{0}_{1,+}, \cdots,w^{0}_{i, +}, w^{0}_{i, -}, \cdots, w^{0}_{n, -})\Big)[w^{m+1}_i]^3\Big)\\
&+\partial_s(\mathcal{F}_j(w^0_{1, +}, \cdots,w^0_{i, +}, w^0_{i, -},\cdots, w^0_{n, -})([w^{m+1}_i]^3-[w^0_i]^3))\big|\\
\leq &C_M\epsilon^3(s-T_\epsilon)^{\frac12}.
\end{split}
\]
This yields
\[
|\partial_s(w^{m+1}_{j, -}(0-, s)-w^0_{j, -}(0-, s))|\leq C_M\epsilon^2(s-T_\epsilon)^{\frac{1}{2}},
\]
Together with Gronwall's inequality, one obtains from \eqref{Y-7} that
\[
|\mu_j(z, t)|\leq C_M\epsilon^2(t-T_\epsilon)^{\frac{1}{2}}\leq M \epsilon(t-T_\epsilon)^{\frac{1}{2}}.
\]
On the other hand, it follows  from \eqref{eq:3.43} and direct computation that  for small $\ve>0$,
\[
\begin{split}
&|\partial_z(w^{m+1}_{j, -}-w^0_{j, -})|\leq C_M \epsilon^2(t-T_\epsilon)^{\frac12}\le M \epsilon(t-T_\epsilon)^{\frac12}, \quad  j=1, \cdots, i-1.
\end{split}
\]
In conclusion, we complete the proof of Lemma \ref{l:3.5} by continuous induction.
\end{proof}

\section{Convergence of the approximate  shock solutions and proofs of Theorem \ref{T:2.14} and Theorem \ref{DY-1}}

In the section,  based on the uniform estimates of the approximate shock solutions $w^m_{\pm}$  in $\tilde{\Omega}_{\pm}$
and shock speed $\sigma^m$ in $[T_\epsilon, T_\epsilon+1]$ in Section 4, we now derive the convergence of the approximate solutions
for $[T_{\ve}, T_\epsilon+\delta_0]$ with $\delta_0>0$ being small. Denote by $\Omega_{\pm,\delta_0}=\Omega_{\pm}\cap\{(x,t): x\in\Bbb R, T_{\ve}\le t\le T_\epsilon+\delta_0\}$.

\begin{lemma}\label{l:3.6}
For sufficiently small $\epsilon>0$, there exists a constant $C_M>0$ independent of $\epsilon_0$ and $m$ such that
when $\delta_0>0$ is small,
\begin{align}
&\|\sigma^m(t)-\sigma^{m-1}(t)\|_{L^{\infty}[T_\epsilon, T_{\epsilon}+\delta_0]}\leq C_M\sum_{j=1}^n\|w^m_{j, \pm}-w^{m-1}_{j, \pm}\|_{L^{\infty}(\tilde{\Omega}_{\pm,\delta_0})}, \label{eq:3.59} \\
&\|w^{m+1}_{i, \pm}-w^m_{i, \pm}\|_{L^{\infty}(\tilde{\Omega}_{\pm,\delta_0})}+C_M\sum_{j\neq i}\|w^{m+1}_{j, \pm}-
w^m_{j, \pm}\|_{L^{\infty}(\tilde{\Omega}_{\pm,\delta_0})} \notag\\
\leq &(1-\epsilon)\Big(\|w^{m}_{i, \pm}-w^{m-1}_{i, \pm}\|_{L^{\infty}(\tilde{\Omega}_{\pm,\delta_0})}+C_M\sum_{j\neq i}\|w^{m}_{j, \pm}-
w^{m-1}_{j, \pm}\|_{L^{\infty}(\tilde{\Omega}_{\pm,\delta_0})}\Big), \label{eq:3.60}
\end{align}
where $\|w^{m+1}_{j, \pm}-
w^m_{j, \pm}\|_{L^{\infty}(\tilde{\Omega}_{\pm,\delta_0})}=\|w^{m+1}_{j, +}-
w^m_{j, +}\|_{L^{\infty}(\tilde{\Omega}_{+,\delta_0})}+\|w^{m+1}_{j, -}-
w^m_{j, -}\|_{L^{\infty}(\tilde{\Omega}_{-,\delta_0})}$.
\end{lemma}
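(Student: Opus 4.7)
The plan is to argue by linearizing the iteration around successive approximations and tracking the propagation of differences along the appropriate families of characteristics, exactly as in the preceding uniform estimates but now in a difference form. The estimate \eqref{eq:3.59} for $\sigma^m$ will be the easy part: since $\sigma^m(t)$ is a smooth composition of $\lambda_i$, the averaged Jacobian $\partial_{u_k}f_l$, and the boundary traces $w^m_{\pm}(0\pm, t)$, expanding $\sigma^m-\sigma^{m-1}$ via the mean value theorem gives a bound by the sup of $|w^m_{\pm}-w^{m-1}_{\pm}|$ on $z=0$, with a constant depending only on $\max_m\|w^m_\pm\|_{L^\infty}$, which was fixed in Lemma \ref{l:3.5}.

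For \eqref{eq:3.60}, I introduce the differences $\Delta_j^{m+1}:=w^{m+1}_{j,\pm}-w^m_{j,\pm}$ and subtract the iteration schemes \eqref{eq:3.31}--\eqref{eq:3.32} at levels $m+1$ and $m$. This yields linear transport equations of the form
\[
\big(\partial_t + (\lambda_j(w^m_\pm)-\sigma^m)\partial_z\big)\Delta^{m+1}_{j,\pm}
= \mathcal{R}^m_{j,\pm},
\]
where $\mathcal{R}^m_{j,\pm}$ consists of three types of source terms: first, coefficient-variation terms of the form $(\lambda_j(w^m)-\lambda_j(w^{m-1})+\sigma^{m-1}-\sigma^m)\partial_z w^m_{j,\pm}$ and the analogous perturbations of $p_{jk}$, all of which carry the factor $\|\Delta^m\|_{L^\infty}$ multiplied by a derivative of $w^m_\pm$ that is uniformly bounded by $C\epsilon$ or by the weighted bound $C\epsilon((t-T_\epsilon)^3+z^2)^{-1/6}$ from Lemma \ref{l:3.5}; second, genuine difference terms $p_{jk}(w^m_\pm)\big(\partial_t\Delta^m_{k,\pm}+(\lambda_j-\sigma^m)\partial_z\Delta^m_{k,\pm}\big)$ with $k\ne i,j$ that come with the small factor $p_{jk}(0)=0$, hence are of size $O(\epsilon)$; and third, the coupling term in the $i$-th equation involving $p_{ik}\,\Delta^m_{k,\pm}$ with $k\ne i$, which is what forces the norm on the left-hand side of \eqref{eq:3.60} to weight the off-diagonal components by the constant $C_M$.

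I will then integrate along the characteristics as in Steps 2--3 of the proof of Lemma \ref{l:3.5}, splitting into the bad $i$-th component and the good components $j\ne i$. For the $i$-th component, the characteristic time in $[T_\epsilon,T_\epsilon+1]$ has length of order one, so the Gronwall factor from $\int \partial_z\lambda_i\,ds$ is controlled by the logarithmic bound already used in \eqref{eq:3.54}, and the pre-factor from the coefficient perturbation terms carries an extra $\epsilon$ coming from $|\lambda_i(w^m)-\lambda_i(w^{m-1})|\le C|\Delta^m|$ together with the $\epsilon$-smallness of $\partial_z w^m_{i,\pm}$ in integrated form. For $j\ne i$ in the initial-value regions, the gap $\lambda_j-\sigma^m$ stays away from zero (by strict hyperbolicity and \eqref{eq:3.28}), so the characteristics cross the initial line and no boundary data is needed; here one gets directly $\|\Delta^{m+1}_{j,\pm}\|_{L^\infty}\le C\epsilon\,\|\Delta^m\|_{L^\infty}$. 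For $j\ne i$ in the initial-boundary regions, the characteristics meet $z=0$, and the trace $\Delta^{m+1}_{j,\pm}(0\pm,t)$ must be estimated from the Rankine-Hugoniot relation. Here the key ingredient is the cubic representation \eqref{eq:3.44*} for $[w_j]$ in terms of $[w_i]$: differentiating this relation between levels $m+1$ and $m$ produces a bound
\[
\big|[w^{m+1}_j]-[w^m_j]\big|\lesssim [w_i]^2\big|[w^{m+1}_i]-[w^m_i]\big|+|[w^{m+1}_i]|^3\|\Delta^m\|_{L^\infty},
\]
which by Lemma \ref{l:3.4} yields a boundary trace of order $\epsilon^2(t-T_\epsilon)\,\|\Delta^m_{i,\pm}\|_{L^\infty}+O(\epsilon^3)\|\Delta^m\|_{L^\infty}$, hence of size $o(1)$ times $\|\Delta^m\|_{L^\infty}$.

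Combining these three pieces and choosing the weight $C_M$ large enough so that the coupling $p_{ik}\Delta^m_{k,\pm}$ entering the $i$-th equation is absorbed by the $C_M\sum_{j\ne i}\|\Delta^m_{j,\pm}\|_{L^\infty}$ term, while taking $\epsilon$ small enough so that all the $C\epsilon$ factors sum up below one, yields \eqref{eq:3.60} with contraction constant $(1-\epsilon)$. The main obstacle will be precisely this last balancing: the good components benefit from the strict-hyperbolicity gap and the cubic Rankine-Hugoniot smallness, but they feed back into the $i$-th equation through $p_{ik}$ with coefficient of order one, so the weighted norm and the correct order of estimating (first the $j\ne i$ components from the $i$-th difference, then the $i$-th difference from the off-diagonal ones) must be set up carefully to avoid a logarithmic blow-up or a $(1+C\epsilon)$ rather than $(1-\epsilon)$ constant.
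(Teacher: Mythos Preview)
Your overall strategy is right, but there is a genuine gap in the treatment of the $i$-th component, and it is exactly the place where the paper has to work hardest.

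You claim that in the $i$-equation the source ``carries an extra $\epsilon$ coming from $|\lambda_i(w^m)-\lambda_i(w^{m-1})|\le C|\Delta^m|$ together with the $\epsilon$-smallness of $\partial_z w^m_{i,\pm}$ in integrated form.'' This is false. From Lemma~\ref{l:3.5} one only has $|\partial_z w^m_{i,\pm}|\le C_M\epsilon\big((t-T_\epsilon)^3+z^2\big)^{-1/3}$, and along the $i$-th characteristic the integral of this quantity is $O(1)$, not $O(\epsilon)$: indeed \eqref{eq:3.54} gives $\int_{T_\epsilon}^{t}|\partial_z\lambda_i(w^m_+)|\,ds\le \ln\tfrac32+C_M\epsilon\sqrt{t-T_\epsilon}$, and since $\partial_{w_i}\lambda_i$ is bounded away from zero by genuine nonlinearity this means $\int|\partial_z w^m_{i,+}|\,ds$ is of order one. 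So the naive bound on the source $(\lambda_i(w^{m-1})-\lambda_i(w^m)+\sigma^m-\sigma^{m-1})\partial_z w^m_{i,+}$ integrated along the characteristic yields only $C\|\Delta^m\|$ with a constant $C$ that is \emph{not} small, and no contraction follows. Also note that the equation for $r=w^{m+1}_{i,+}-w^m_{i,+}$ has no zeroth-order term $\partial_z\lambda_i\cdot r$, so the Gronwall mechanism you invoke does not appear here; \eqref{eq:3.54} is not used as a Gronwall factor in this step.

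What the paper actually does is an algebraic decomposition of the dangerous source term. Writing $\lambda_i(w^{m-1}_+)-\lambda_i(w^m_+)=\sum_j\partial_{w_j}\lambda_i\,(w^{m-1}_{j,+}-w^m_{j,+})+\text{quadratic}$ and then replacing $\partial_{w_i}\lambda_i\,\partial_z w^m_{i,+}$ by $\partial_z(\lambda_i(w^{m-1}_+))$ modulo controllable remainders (this is the $I_1,\dots,I_7$ splitting), one isolates the principal part $|\partial_z\lambda_i(w^{m-1}_+)|\,|w^{m-1}_{i,+}-w^m_{i,+}|$, whose time integral is bounded by $\ln\tfrac32\cdot\|\Delta^m_i\|$. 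The $\sigma$-difference is handled by the expansion $\sigma^m=\tfrac12(\lambda_i(w^m_-)+\lambda_i(w^m_+))+O([w^m]^2)$ and a parallel splitting ($L_1,\dots,L_8$, $J_1,\dots,J_8$), producing an additional $\tfrac12\ln\tfrac32$ for each side. Summing the $+$ and $-$ contributions gives the contraction factor $2\ln\tfrac32\approx0.81<1$ for the $i$-component; the $j\ne i$ components then contribute only $O(\epsilon)$ as you described, and the weighted norm with weight $C_M+1$ closes. The point is that the contraction constant is \emph{not} $O(\epsilon)$ but the explicit number $2\ln\tfrac32$, and obtaining it requires the rewriting $\partial_{w_i}\lambda_i\,\partial_z w^m_i\leadsto\partial_z\lambda_i$ rather than a crude bound on $\partial_z w^m_i$.
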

\begin{remark}
Note that the number $1-\epsilon<1$. Then $w^m_{\pm}$  in $\tilde{\Omega}_{\pm,\delta_0}$
and $\sigma^m$ in $[T_\epsilon, T_\epsilon+\delta_0]$ are Cauchy sequences, respectively.
\end{remark}

\begin{proof}
From the expression of $\sigma(t)$ and Lemma \ref{l:3.5}, \eqref{eq:3.59} obviously holds.
In the sequel, we prove estimate \eqref{eq:3.60}. Let
\[
r(z, t)=w^{m+1}_{i, +}(z, t)-w^m_{i, +}(z, t).
\]
Then $r(z, t)$ satisfies
\begin{equation}\label{eq:3.61}
\begin{split}
\begin{cases}
\partial_t r+(\lambda_i(w^m_{+})-\sigma^m(t))\partial_z r=\Big(\lambda_i(w^{m-1}_{+})-\lambda_i(w^m_+)+\sigma^m(t)-\sigma^{m-1}(t)\Big)\partial_zw^m_{i, +}\\[5pt]
\quad \displaystyle-\sum_{k\neq i}p_{ik}(w^m_+)\Big\{\partial_t(w^m_{k, +}-w^{m-1}_{k, +})+(\lambda_i(w^m_{+})-\sigma^m(t))\partial_z(w^m_{k, +}-w^{m-1}_{k, +})\\
\quad+\Big(\lambda_i(w^m_+)-\lambda_i(w^{m-1}_+)-\sigma^m(t)+\sigma^{m-1}(t)\Big)\partial_zw^{m-1}_{k, +}\Big\}-
\displaystyle\sum_{k\neq i}\Big(p_{ik}(w^m_{+})-p_{ik}(w^{m-1}_+)\Big)\times\\
\quad\;\Big(\partial_tw^{m-1}_{k, +}+(\lambda_i(w^{m-1}_+)-\sigma^{m-1}(t))\partial_zw^{m-1}_{k, +}\Big),\\[5pt]
r(z, T_{\epsilon})=0.
\end{cases}
\end{split}
\end{equation}
Since the term  $\partial_z w^m_{i, +}$ is not integral along the characteristics of \eqref{eq:3.61}
by the estimate $|\partial_z w^m_{i, +}|\leq C_M\epsilon\Big((t-T_\epsilon)^3+z^2\Big)^{-\frac{1}{3}}$,
then we need to take more delicate analysis on the most singular part $\Big(\lambda_i(w^{m-1}_{+})-\lambda_i(w^m_+)+\sigma^m(t)-\sigma^{m-1}(t)\Big)\partial_zw^m_{i, +}$
(namely, the first term on the right hand side of \eqref{eq:3.61}),
and further make some appropriate decompositions or combinations of the related singular terms
to control their singularity orders of space-time near $(0,T_{\ve})$
so that the corresponding integrals along the characteristics are bounded.

It is observed that
\[
\partial_{w_i}\lambda_i(w^{m-1}_+)\partial_zw^{m-1}_{i, +}
=\partial_z(\lambda_i(w^{m-1}_+(z, t)))-\sum_{j\neq i}\partial_{w_j}\lambda_i(w^{m-1}_+)\partial_zw^{m-1}_{j, +},
\]
and for $j\neq i$,
\[
\partial_{w_j}\lambda_i(w^{m-1}_+)\partial_zw^{m-1}_{i, +}
=\frac{\partial_{w_j}\lambda_i(w^{m-1}_+)}{\partial_{w_i}\lambda_i(w^{m-1}_+)}
\Big(\partial_z(\lambda_i(w^{m-1}_+))-\sum_{k\neq i}\partial_{w_k}\lambda_i(w^{m-1}_+)\partial_zw^{m-1}_{k, +}\Big),
\]
here we have applied the genuinely nonlinear condition $\partial_{w_i}\lambda_i(w)>\frac{1}{2}\partial_{w_i}\lambda_i(0)>0$ for small $|w|$.

For the term $(\lambda_i(w^{m-1}_{+})-\lambda_i(w^m_+)\Big)\partial_zw^m_{i, +}$, we set
\begin{equation}\label{YH-15}
\Big(\lambda_i(w^{m-1}_{+})-\lambda_i(w^m_+)\Big)\partial_zw^m_{i, +}=\sum^7_{i=1} I_i,\\
\end{equation}
where
\[
\begin{split}
&I_1:=\sum^n_{j, k=1}\int^1_0\int^1_0(\partial^2_{w_jw_k}\lambda_i)\bigg(\theta_1(\theta w^{m-1}_{+}+(1-\theta)
w^m_+)+(1-\theta_1)w^m_+\bigg)\theta\text{d}\theta\text{d}\theta_1\\
&\qquad\quad\cdot(w^{m-1}_{j, +}-w^m_{j, +})
(w^{m-1}_{k, +}-w^{m}_{k, +})\partial_zw^m_{i, +},\\
&I_2:=\sum^n_{j=1}\Big((\partial_{w_j}\lambda_i)(w^m_+)-
(\partial_{w_j}\lambda_i)(w^{m-1}_+)\Big)\partial_zw^m_{i, +}(w^{m-1}_{j, +}-w^m_{j, +}),\\
&I_3:=\sum^n_{j=1}(\partial_{w_j}\lambda_i)(w^{m-1}_{+})\partial_z(w^m_{i, +}-w^{m-1}_{i, +})
(w^{m-1}_{j, +}-w^m_{j, +}),\\
&I_4:=\partial_z(\lambda_i(w^{m-1}_{+}))(w^{m-1}_{i, +}-w^m_{i, +}),\\
&I_5:=-\sum_{j\neq i}(\partial_{w_j}\lambda_i)(w^{m-1}_{+})\partial_zw^{m-1}_{j, +}(w^{m-1}_{i, +}-w^m_{i, +}),\\
&I_6:=\sum_{j\neq i}\frac{(\partial_{w_j}\lambda_i)(w^{m-1}_{+})}
{(\partial_{w_i}\lambda_i)(w^{m-1}_{+})}\partial_z\lambda_i(w^{m-1}_{+})\Big(
w^{m-1}_{j, +}-w^m_{j, +}\Big),\\
&I_7:=-\sum_{j\neq i, k\neq i}\frac{(\partial_{w_j}\lambda_i)(w^{m-1}_{+})}
{(\partial_{w_i}\lambda_i)(w^{m-1}_{+})}\partial_{w_k}\lambda_i(w^{m-1}_{+})\partial_zw^{m-1}_{k, +}\Big(
w^{m-1}_{j, +}-w^m_{j, +}\Big).\\
\end{split}
\]
Then based on the estimates in Section 4,  by the expressions of $I_1-I_7$, one has that
\begin{equation}\label{YH-7}
\begin{split}
&|(\lambda_i(w^{m-1}_{+})-\lambda_i(w^m_+))\partial_zw^m_{i, +}|\le \bigg(|\partial_z(\lambda_i(w^{m-1}_{+}))|
+\f{C_M\ve}{\sqrt{t-T_{\ve}}}\bigg)
|w_{i,+}^m-w_{i,+}^{m-1}|\\
&\qquad +C_M\Big(|\partial_z(\lambda_i(w^{m-1}_{+}))|
+\frac{\epsilon}{\sqrt{t-T_\epsilon}}\Big)\sum_{j\neq i}|w^m_{j, +}-w^{m-1}_{j, +}|.\\
\end{split}
\end{equation}
In addition, as shown in $(8.1.9)$ of Chapter VIII in  \cite{Dafermos}, we have that
\[
\begin{split}
\sigma^m(t)=&\frac{\lambda_i(w^m_{-}(0-,t))+\lambda_i(w^m_+(0+,t))}{2}+O(|w^m_{+}(0+,t)-w^m_{-}(0-,t)|^2)\\
=&\lambda_i(w^m_{-}(0-,t))+\frac12\sum^n_{k=1}
(\partial_{w_k}\lambda_i)(w^m_{-}(0-,t))[w^m_k]+O([w^m]^2)
\end{split}
\]
and
\[
\begin{split}
&(\sigma^m(t)-\sigma^{m-1}(t))\partial_zw^m_{i, +}\\
=&(\lambda_i(w^m_{-}(0-,t))-\lambda_i(w^{m-1}_{-}(0-,t)))\partial_zw^m_{i, +}
+\frac12\sum_{k=1}^n\Big(\partial_{w_k}\lambda_i(w^m_{-}(0-,t))[w^m_k]\\
&\quad-\partial_{w_k}\lambda_i(w^{m-1}_{-}(0-,t))[w^{m-1}_k]\Big)\partial_zw^m_{i, +}
+O(1)\Big([w^m]^2-[w^{m-1}]^2\Big)\partial_zw^m_{i, +},
\end{split}
\]
where the term $(\lambda_i(w^m_{-}(0-,t))-\lambda_i(w^{m-1}_{-}(0-,t)))\partial_zw^m_{i, +}$ can be treated analogously to $\Big(\lambda_i(w^{m}_{+})-\lambda_i(w^{m-1}_+)\Big)\partial_zw^m_{i, +}$ in \eqref{YH-15}. For example,
$$(\lambda_i(w^m_{-}(0-,t))-\lambda_i(w^{m-1}_{-}(0-,t)))\partial_zw^m_{i, +}=\sum^8_{j=1}J_j,$$
where
\[
\begin{split}
&J_1:=\sum^n_{j, k=1}\int^1_0\int^1_0\partial^2_{w_jw_k}\lambda_i\Big(\theta_1(\theta w^{m}_{-}(0-, t)+(1-\theta)
w^{m-1}_{-}(0-, t))+(1-\theta_1)w^{m-1}_{-}(0-, t)\Big)\theta\text{d}\theta\text{d}\theta_1\\
&\qquad\cdot(w^{m}_{j, -}(0-, t)-w^{m-1}_{j, -}(0-, t))
(w^{m}_{k, -}(0-, t)-w^{m-1}_{k, -}(0-, t))\partial_zw^m_{i, +},\\
&J_2:=\sum^n_{j=1}\Big(\partial_{w_j}\lambda_i(w^{m-1}_{-}(0-, t))-
\partial_{w_j}\lambda_i(w^{m-1}_+(0+, t))\Big)\Big(w^{m}_{j, -}(0-, t)-w^{m-1}_{j, -}(0-, t)\Big)\partial_zw^m_{i, +},\\
&J_3:=\sum^n_{j=1}\partial_{w_j}\lambda_i(w^{m-1}_{+}(0+, t))(w^{m}_{j, -}(0-, t)-w^{m-1}_{j, -}(0-, t))\partial_z(w^m_{i, +}-w^{m-1}_{i, +})
,\\
&J_4:=\partial_z(\lambda_i(w^{m-1}_{+}))(w^{m}_{i, -}(0-, t)-w^{m-1}_{i, -}(0-, t)),\\
&J_5:=-\sum_{j\neq i}\partial_{w_j}\lambda_i(w^{m-1}_{+})\partial_zw^{m-1}_{j, +}(w^{m}_{i, -}(0-, t)-w^{m-1}_{i, -}(0-, t)),\\
&J_6:=\sum_{j\neq i}\frac{\partial_{w_j}\lambda_i(w^{m-1}_{+})}
{\partial_{w_i}\lambda_i(w^{m-1}_{+})}\partial_z\lambda_i(w^{m-1}_{+})(
w^{m}_{j, -}(0-, t)-w^{m-1}_{j, -}(0-, t)),\\
&J_7:=-\sum_{j\neq i, k\neq i}\frac{\partial_{w_j}\lambda_i(w^{m-1}_{+})}
{\partial_{w_i}\lambda_i(w^{m-1}_{+})}\partial_{w_k}\lambda_i(w^{m-1}_{+})\partial_zw^{m-1}_{k, +}(
w^{m}_{j, -}(0-, t)-w^{m-1}_{j, -}(0-, t)),\\
&J_8:=\sum_{j=1}^n\Big(\partial_{w_j}\lambda_i(w^{m-1}_+(0+, t))-\partial_{w_j}\lambda_i(w^{m-1}_+(z, t))\Big)(w^m_{j, -}(0-, t)
-w^{m-1}_{j, -}(0-, t))\partial_z w^{m-1}_{i, +}.
\end{split}
\]
Analogously, we denote
\[
\begin{split}
&\frac{1}{2}\sum_{k=1}^n\Big(\partial_{w_k}\lambda_i(w^m_{-}(0-,t))[w^m_k]
-\partial_{w_k}\lambda_i(w^{m-1}_{-}(0-,t))[w^{m-1}_k]\Big)\partial_zw^m_{i, +}
+O(1)\Big([w^m]^2-[w^{m-1}]^2\Big)\partial_zw^m_{i, +}\\
&\quad =\sum^8_{j=1}L_j,\\
&(\sigma^m(t)-\sigma^{m-1}(t))\partial_zw^m_{i, +}=\sum^8_{j=1}(L_j+J_j),
\end{split}
\]
where
\[
\begin{split}
&L_1:=\frac12\sum_{k=1}^n\Big(\partial_{w_k}\lambda_i(w^m_{-}(0-, t))
-\partial_{w_k}\lambda_i(w^{m-1}_{-}(0-, t))\Big)[w^m_k-w^{m-1}_k]\partial_zw^m_{i, +}\\
&\qquad\;+O(1)\Big([w^m]^2-[w^{m-1}]^2\Big)\partial_zw^m_{i, +},\\
&L_2:=\frac{1}{2}\sum^n_{k=1}\partial_{w_k}\lambda_i(w^{m-1}_{-}(0-, t))
[w^m_k-w^{m-1}_k]\partial_z(w^m_{i, +}-w^{m-1}_{i, +}),\\
&L_3:=\frac12\sum^n_{k=1}\Big(\partial_{w_k}\lambda_i(w^m_{-}(0-, t))-\partial_{w_k}\lambda_i(w^{m-1}_{-}(0-, t))\Big)
[w^{m-1}_k]\partial_zw^m_{i, +}+\frac{1}{2}\sum^n_{k=1}\Big(\partial_{w_k}\lambda_i(w^{m-1}_{-}(0-, t))\\
&\qquad-\partial_{w_k}
\lambda_i(w^{m-1}_{+}(0+, t))\Big)[w^m_k-w^{m-1}_k]\partial_zw^{m-1}_{i, +},\\
&L_4:=\frac12\partial_z(\lambda_i(w^{m-1}_{+}))[w^m_i-w^{m-1}_i],\\
&L_5:=-\frac12\sum_{j\neq i}\partial_{w_j}\lambda_i
(w^{m-1}_{+})\partial_zw^{m-1}_{j, +}[w^m_i-w^{m-1}_i],\\
&L_6:=\frac12\sum_{j\neq i}\frac{\partial_{w_j}\lambda_i(w^{m-1}_{+})}{\partial_{w_i}\lambda_i(w^{m-1}_{+})}
\partial_z\lambda_i(w^{m-1}_{+})[w^m_j-w^{m-1}_j],\\
&L_7:=-\frac12\sum_{j\neq i, k\neq i}\frac{\partial_{w_j}\lambda_i(w^{m-1}_{+})}{\partial_{w_i}\lambda_i(w^{m-1}_{+})}
\partial_{w_k}\lambda_i(w^{m-1}_{+})\partial_zw^{m-1}_{k, +}[w^m_j-w^{m-1}_j],\\
&L_8:=\frac12\sum_{k=1}^n\Big(\partial_{w_k}\lambda_i(w^{m-1}_+(0+, t))-\partial_{w_k}\lambda_i(w^{m-1}_+(z, t))\Big)\partial_z w^{m-1}_{i, +}[w^m_k-w^{m-1}_k].
\end{split}
\]

Specially noting the good combination of $L_4+J_4$, it follows from direct computation that
\[
\begin{split}
|J_1+L_1|\leq&C_M\epsilon^2\sum^n_{k=1}(|[w^m_k-w^{m-1}_k]|+|w^m_{k, -}(0-, t)-w^{m-1}_{k, -}(0-, t)|)+\frac{C_M\epsilon^2}{\sqrt{t-T_\epsilon}}|[w^m-w^{m-1}]|\\
\leq&\frac{C_M\epsilon^2}{\sqrt{t-T_\epsilon}}\sum_{j=1}^n|w^m_{j, \pm}(0\pm, t)-w^{m-1}_{j, \pm}(0\pm, t)|,\\
|J_2+L_2|\leq &\sum_{j=1}^nC_M\epsilon^2(t-T_\epsilon)^{\frac12}\Big((t-T_\epsilon)^3+z^2)^{-\frac13}|w^{m-1}_{j, -}(0-, t)-w^m_{j, -}(0-, t)|
+\frac{C_M\epsilon}{\sqrt{t-T_\epsilon}}\sum^n_{k=1}|[w^m_k-w^{m-1}_k]|\\
\leq & \frac{C_M\epsilon}{\sqrt{t-T_\epsilon}}\sum^n_{j=1}|w^m_{j, \pm}(0\pm, t)-w^{m-1}_{j, \pm}(0\pm, t)|,\\
\end{split}
\]

\[
\begin{split}
|J_3+L_3|\leq &\frac{C_M\epsilon}{\sqrt{t-T_\epsilon}}\sum^n_{j=1}|w^m_{j, -}(0-, t)-w^{m-1}_{j, -}(0-, t)|
+\frac{C_M\epsilon^2}{\sqrt{t-T_\epsilon}}\sum_{k=1}^n|w^m_{k, +}(0+, t)-w^{m-1}_{k, +}(0+, t)|,\\
|J_4+L_4|\leq &\frac12|\partial_z(\lambda_i(w^{m-1}_+))|\Big(|w^{m}_{i, +}(0+, t)-w^{m-1}_{i, +}(0+, t)|+|w^{m}_{i, -}(0-, t)-w^{m-1}_{i, -}(0-, t)|\Big),\\
|J_5+L_5|
\leq& C_M\epsilon\Big(|w^{m}_{i, +}(0+, t)-w^{m-1}_{i, +}(0+, t)|+|w^{m}_{i, -}(0-, t)-w^{m-1}_{i, -}(0-, t)|\Big),\\
|J_6+L_6|
\leq& C_M|\partial_z(\lambda_i(w^{m-1}_{+}))|\sum_{j\neq i}\Big(|w^{m}_{j, +}(0+, t)-w^{m-1}_{j, +}(0+, t)|
+|w^{m}_{j, -}(0-, t)-w^{m-1}_{j, -}(0-, t)|\Big),\\[3pt]
|J_7+L_7|=& \frac12\sum_{j\neq i, k\neq i}|\frac{\partial_{w_j}\lambda_i(w^{m-1}_{+})}{\partial_{w_i}\lambda_i(w^{m-1}_{+})}
\partial_{w_k}\lambda_i(w^{m-1}_{+})\partial_zw^{m-1}_{k, +}([w^m_j-w^{m-1}_j]+w^m_{j, -}(0-, t)-w^{m-1}(0-, t))|\\[3pt]
\leq&  C_M\epsilon\sum_{j\neq i}\Big(|w^{m}_{j, +}(0+, t)-w^{m-1}_{j, +}(0+, t)|+|w^{m}_{j, -}(0-, t)-w^{m-1}_{j, -}(0-, t)|\Big),\\
|J_8+L_8|\leq&\frac{C_M\epsilon^2}{\sqrt{t-T_\epsilon}}\sum^n_{k=1}\sum^n_{j=1}|w^m_{j, \pm}(0\pm, t)-w^{m-1}_{j, \pm}(0\pm, t)|.
\end{split}
\]
Therefore, we have
\begin{equation}\label{YH-8}
\begin{split}
&|(\sigma^m(t)-\sigma^{m-1}(t))\partial_zw^m_{i, +}|\\
\le & \bigg(\f12|\partial_z(\lambda_i(w^{m-1}_{+}))|+\f{C_M\ve}{\sqrt{t-T_{\ve}}}\bigg)
|w_{i,\pm}^m-w_{i,\pm}^{m-1}|+C_M|\partial_z\lambda_i(w^{m-1}_+)|\sum_{j\neq i}|w^m_{j, \pm}-w^{m-1}_{j, \pm}|.
\end{split}
\end{equation}
By  the estimate \eqref{eq:3.54}, integrating $\eqref{eq:3.61}_1$ along the characteristics and noting $|[w^m_j-w^{m-1}_j]|\leq |w^m_{j, +}-w^{m-1}_{j, +}|+|w^m_{j, -}-w^{m-1}_{j, -}|$ yield
\begin{equation}\label{eq:3.62}
\begin{split}
&\|w^{m+1}_{i, +}-w^m_{i, +}\|_{L^{\infty}(\tilde{\Omega}_{+})}
\leq \Big(\ln\frac32+C_M\sqrt{t-T_\epsilon}\Big)\|w^m_{i, +}-w^{m-1}_{i, +}\|_{L^{\infty}(\tilde{\Omega}_{+})}\\
&\quad +\Big(\frac12\ln\frac32+C_M\sqrt{t-T_\epsilon}\Big)\|w^m_{i, \pm}-w^{m-1}_{i, \pm}\|_{L^{\infty}(\tilde{\Omega}_{\pm})}+C_M
\sum_{k\neq i}\|w^{m}_{k, \pm}-w^{m-1}_{k, \pm}\|_{L^{\infty}(\tilde{\Omega}_{\pm})}.
\end{split}
\end{equation}
Similarly, one has
\begin{equation}\label{eq:3.63}
\begin{split}
&\|w^{m+1}_{i, -}-w^m_{i, -}\|_{L^{\infty}(\tilde{\Omega}_{-})}
\leq \Big(\ln\frac32+C_M\sqrt{t-T_\epsilon}\Big)\|w^m_{i, -}-w^{m-1}_{i, -}\|_{L^{\infty}(\tilde{\Omega}_{-})}\\
&\qquad +\Big(\frac12\ln\frac32+C_M\sqrt{t-T_\epsilon}\Big)\|w^m_{i, \pm}-w^{m-1}_{i, \pm}\|_{L^{\infty}(\tilde{\Omega}_{\pm})}
+C_M
\sum_{k\neq i}\|w^{m}_{k, \pm}-w^{m-1}_{k, \pm}\|_{L^{\infty}(\tilde{\Omega}_{\pm})}.
\end{split}
\end{equation}
Summing up \eqref{eq:3.62} and \eqref{eq:3.63} derives
\begin{equation}\label{YH-10}
\begin{split}
&\|w^{m+1}_{i, \pm}-w^m_{i, \pm}\|_{L^{\infty}(\tilde{\Omega}_{\pm})}\\
\leq &\Big(2\ln\frac32+C_M\sqrt{t-T_\epsilon}\Big)\|w^m_{i, \pm}-w^{m-1}_{i, \pm}\|_{L^{\infty}(\tilde{\Omega}_{\pm})}
+C_M
\sum_{k\neq i}\|w^{m}_{k, \pm}-w^{m-1}_{k, \pm}\|_{L^{\infty}(\tilde{\Omega}_{\pm})}.
\end{split}
\end{equation}
Next, we estimate $w^{m+1}_{j, +}-w^m_{j, +}$, $1\leq j\leq i-1$. Let $\mu_j(z, t)=w^{m+1}_{j, +}-w^m_{j, +}$, then $\mu_j(z, t)$ satisfies
\begin{equation}\label{eq:3.65}
\begin{split}
\begin{cases}
\partial_t \mu_j+(\lambda_j(w^m_{+})-\sigma^m(t))\partial_z \mu_j=\Big(\lambda_j(w^{m-1}_{+})-\lambda_j(w^m_+)+\sigma^m(t)-\sigma^{m-1}(t)\Big)\partial_zw^m_{j, +}\\[3pt]
\;\;\displaystyle-\sum_{k\neq i, j}p_{jk}(w^m_+)\Big\{\partial_t(w^m_{k, +}-w^{m-1}_{k, +})+
(\lambda_j(w^m_{+})-\sigma^m(t))\partial_z(w^m_{k, +}-w^{m-1}_{k, +})+
\Big(\lambda_j(w^m_+)-\lambda_j(w^{m-1}_+)\\
\;\;-\sigma^m(t)+\sigma^{m-1}(t)\Big)\partial_zw^{m-1}_{k, +}\Big\}-
\displaystyle\sum_{k\neq i, j}\Big(p_{jk}(w^m_{+})-p_{jk}(w^{m-1}_+)\Big)
(\partial_tw^{m-1}_{k, +}+(\lambda_j(w^{m-1}_+)-\sigma^{m-1}(t))\partial_zw^{m-1}_{k, +}),\\[3pt]
\mu_j(z, T_{\epsilon})=0.
\end{cases}
\end{split}
\end{equation}
It follows from direct calculations that
\[
\begin{split}
&\big|\Big(\lambda_j(w^{m-1}_{+})-\lambda_j(w^m_{+})+\sigma^m(t)-\sigma^{m-1}(t)\Big)\partial_zw^m_{j, +}\big|\leq C_M\epsilon\Big(|w^m_{+}-w^{m-1}_{+}|+|w^m_{-}-w^{m-1}_{-}|\Big),\\
&\big|\sum_{k\neq i, j}p_{jk}(w^m_{+})\Big(\lambda_j(w^{m-1}_+)-\lambda_j(w^{m}_+)+\sigma^m(t)
-\sigma^{m-1}(t)\Big)\partial_zw^{m-1}_{k, +}\big|
\leq C_M\epsilon^2\Big(|w^m_{+}-w^{m-1}_{+}|+|w^m_{-}-w^{m-1}_{-}|\Big),\\
&\big|\sum_{k\neq i, j}\Big(p_{jk}(w^m_{+})-p_{jk}(w^{m-1}_+)\Big)
\Big(\partial_tw^{m-1}_{k, +}+(\lambda_j(w^{m-1}_+)-\sigma^{m-1}(t))\partial_zw^{m-1}_{k, +}\Big)\big|\leq C_M\epsilon
|w^m_{+}-w^{m-1}_{+}|,
\end{split}
\]
where we have used the fact of
\[
|\partial_tw^m_{l, +}+\Big(\lambda_j(w^m_+)-\sigma^m(t)\Big)\partial_zw^m_{l, +}|\leq
\begin{cases}
C_M\epsilon, \qquad\quad\qquad\qquad  \;\;\;l\neq i,\\[3pt]
C_M\epsilon\Big((t-T_{\epsilon})^3+z^2)^{-\frac13}, \quad l=i.
\end{cases}
\]
Note that
\[
\begin{split}
&-\sum_{k\neq i, j}p_{jk}(w^m_{+})\Big(\partial_t(w^m_{k, +}-w^{n-1}_{k, +})+
(\lambda_j(w^m_{+})-\sigma^m(t))\partial_z(w^m_{k, +}-w^{m-1}_{k, +})\Big)\\
=&-\sum_{k\neq i, j}\Big\{\partial_t(p_{jk}(w^m_{+})(w^m_{k, +}-w^{m-1}_{k, +}))+(\lambda_j(w^m_{+})-\sigma^m(t))
\partial_z\Big(p_{jk}(w^m_{+})(w^m_{k, +}-w^{m-1}_{k, +})\Big)\Big\}\\
&+\sum_{k\neq i, j}\sum^n_{l=1}\partial_{w_l}p_{jk}(w^m_{+})\Big(\partial_tw^m_{l, +}+
(\lambda_j(w^m_{+})-\sigma^m(t))\partial_zw^m_{l, +}\Big)(w^m_{k, +}-w^{m-1}_{k, +}).\\[3pt]
\end{split}
\]
Therefore, integrating $\eqref{eq:3.65}_1$ along the back $j$-th characteristics $\xi^{m+1}_j=\xi^{m+1}_j(z, t; s)$
of $\eqref{eq:3.65}_1$
through the point $(z, t)$ yields
\begin{equation}\label{Y-8}
\begin{split}
|\mu_j|\leq& C_M\epsilon(t-T_\epsilon)|w^m_{\pm}-w^{m-1}_{\pm}|+\sum_{k\neq i, j}|p_{jk}(w^m_{+})|
|w^m_{k, +}-w^{m-1}_{k, +}|\\
&+\sum_{k\neq i, j}\int^t_{T_\epsilon}\bigg(C_M\epsilon+C_M\epsilon\Big((s-T_\epsilon)^3+(\xi^{n+1}_j)^2\Big)^{-\frac13}\bigg)
|w^m_{k, +}-w^{m-1}_{k, +}|\text{d}s.
\end{split}
\end{equation}
In addition, by the estimate similar to \eqref{eq:3.56}, one has
\[
\int^t_{T_\epsilon}\frac{1}{\Big((s-T_\epsilon)^3+(\xi^{m+1}_j)^2\Big)^{\frac{1}{3}}}\text{d}s\leq C_M\int^t_{T_\epsilon}
\frac{1}{(t-s)^{\frac{2}{3}}}\text{d}s\leq C_M(t-T_\epsilon)^{\frac{1}{3}}.
\]
Then it follows from \eqref{Y-8} that for $1\leq j\leq i-1$,
\begin{equation}\label{eq:d-512}
\|w^{m+1}_{j, +}-w^m_{j, +}\|_{L^{\infty}(\tilde{\Omega}_+)}\leq C_M\epsilon\sum^n_{l=1}\|w^{m}_{l,\pm}
-w^{m-1}_{l, \pm}\|_{L^{\infty}(\tilde{\Omega}_{\pm})}.
\end{equation}

Analogously, we can also show that for $i+1\leq j\leq n$,
\begin{equation}\label{YH-11}
\begin{split}
\|w^{m+1}_{j, -}-w^m_{j, -}\|_{L^{\infty}(\tilde{\Omega}_{-})}\leq C_M\epsilon\sum^n_{l=1}\|w^{m}_{l,\pm}
-w^{m-1}_{l, \pm}\|_{L^{\infty}(\tilde{\Omega}_{\pm})}.
\end{split}
\end{equation}

Finally, we  estimate $(w^{m+1}_{j, -}-w^m_{j, -})|_{j=1,\cdots, i-1}$ and $(w^{m+1}_{j, +}-w^m_{j, +})|_{j=i+1,\cdots, n}$.
Let $\mu_j(z, t)=w^{m+1}_{j, -}-w^m_{j, -}$, then $\mu_j(z, t)$ satisfies that
\begin{equation}\label{eq:3.67}
\begin{split}
\begin{cases}
\partial_t \mu_j+(\lambda_j(w^m_{-})-\sigma^m(t))\partial_z \mu_j=\Big(\lambda_j(w^{m-1}_{-})-\lambda_j(w^m_{-})+\sigma^m(t)-\sigma^{m-1}(t)\Big)\partial_zw^m_{j, -}\\[5pt]
\quad\displaystyle-\sum_{k\neq i, j}p_{jk}(w^m_{-})\Big\{\partial_t(w^m_{k, -}-w^{m-1}_{k, -})+(\lambda_j(w^m_{-})-\sigma^m(t))\partial_z(w^m_{k, -}-w^{m-1}_{k, -})\\[5pt]
\qquad+\Big(\lambda_j(w^m_{-})-\lambda_j(w^{m-1}_{-})
-\sigma^m(t)+\sigma^{m-1}(t)\Big)\partial_zw^{m-1}_{k, -}\Big\}\\[6pt]
\quad-\displaystyle\sum_{k\neq i, j}\Big(p_{jk}(w^m_{-})-p_{jk}(w^{m-1}_{-})\Big)
(\partial_tw^{m-1}_{k, -}+(\lambda_j(w^{m-1}_{-})-\sigma^{m-1}(t))\partial_zw^{m-1}_{k, -}),\\[5pt]
\mu_j(z, T_{\epsilon})=0.
\end{cases}
\end{split}
\end{equation}
Suppose that the backward $j$-th characteristics $\xi^{m+1}_j=\xi^{m+1}_j(z, t, s)$ of
\eqref{eq:3.67} through the point $(z, t)$ intersects
with $z$-axis before meeting $t$-axis.
By integrating $\eqref{eq:3.67}_1$ along the characteristics and making direct computations, one has
\begin{equation*}
\begin{split}
|\mu_j|\leq& \sum_{k\neq i, j}|p_{jk}(w^m_{-})|w^m_{k, -}-w^{m-1}_{k, -}|+C_M\epsilon(t-T_\epsilon)
|w^m_{\pm}-w^{m-1}_{\pm}|+C_M\epsilon(t-T_{\epsilon})^{\frac13}\sum_{k\neq i, j}|w^m_{k, -}-w^{m-1}_{k, -}|\\
\leq&C_M\epsilon\sum^n_{l=1}|w^m_{l, \pm}-w^{m-1}_{l, \pm}|, \qquad j=1, \cdots, i-1.
\end{split}
\end{equation*}
Otherwise, if $\xi^{m+1}_j=\xi^{m+1}_j(z, t, s)$ intersects $t$-axis at the point $(0, s)$ with $s\geq T_{\epsilon}$, then
\[
|\mu_j(z, t)|\leq |w^{m+1}_{j, -}(0-, s)-w^m_{j, -}(0-, s)|+C_M\epsilon\sum^n_{l=1}\|w^m_{l,\pm}-w^{m-1}_{l, \pm}\|_{L^{\infty}
(\tilde{\Omega}_{\pm})}.
\]
Note that
\[
|w^{m+1}_{j, -}(0-, s)-w^m_{j, -}(0-, s)|\leq |w^{m+1}_{j, +}(0+, s)-w^m_{j, +}(0+, s)|+|[w^{m+1}_j-w^m_j]|
\]
and
\[
\begin{split}
&|[w^{m+1}_j-w^m_j]|\\
=&\Big|\mathcal{F}_j(w^{m+1}_{1, +}, \cdots, w^{m+1}_{i, +}, w^{m+1}_{i, -}, \cdots, w^{m+1}_{n, -})
[w^{m+1}_i]^3-\mathcal{F}_j(w^{m}_{1, +}, \cdots, w^{m}_{i, +}, w^{m}_{i, -}, \cdots, w^{m}_{n, -})
[w^{m}_i]^3\Big|\\
=&\Big|\mathcal{F}_j(w^{m+1}_{1, +}, \cdots, w^{m+1}_{n, -})[w^{m+1}_i-w^m_i]\Big([w^{m+1}_i]^2+[w^m_i][w^{m+1}_i]
+[w^m_i]^2\Big)\Big|\\
&+C_M|[w^m_i]^3|\Big(\sum_{1\leq j\leq i}|w^{m+1}_{j, +}-w^m_{j, +}|+\sum_{i\leq j\leq n}|w^{m+1}_{j, -}-w^m_{j, -}|\Big).
\end{split}
\]
In addition,
\[
\begin{split}
|[w^{m+1}_i-w^m_i]|\leq& |w^{m+1}_{i, +}(0+, t)-w^m_{i, +}(0+, t)|+|w^{m+1}_{i, -}(0-, t)-w^m_{i,-}(0-, t)|\\
 \leq& \Big(2\ln\frac32+C_M\sqrt{t-T_\epsilon}\Big)|w^m_{i, \pm}-w^{m-1}_{i, \pm}|+C_M\sum_{k\neq i}|w^m_{k, \pm}
-w^{m-1}_{k, \pm}|,\\[5pt]
|[w^m_i]|=&|w^m_{i, +}-w^m_{i, -}|\leq |w^m_{i, +}(0+, t)-w^0_{i, +}(0+, t)|+|w^0_{i, +}(0+, t)-w^0_{i, -}(0-, t)|\\
&+|w^0_{i, -}(0-, t)-
w^m_{i, -}(0-, t)|\leq C_M\epsilon(t-T_{\epsilon})^{\frac12},\\
\end{split}
\]
then
\[
\begin{split}
|[w^{m+1}_j-w^m_j]|\leq &C_M\epsilon^2(t-T_\epsilon)\Big(\big(2\ln\frac32
+C_M\sqrt{t-T_\epsilon}\big)|w^m_{i, \pm}-w^{m-1}_{i, \pm}|
+C_M\sum_{k\neq i}\Big|w^m_{k, \pm}-w^{m-1}_{k, \pm}\Big|\Big)\\
&+C_M\epsilon^3(t-T_\epsilon)^{\frac32}\sum^n_{l=1}
|w^m_{l, \pm}-w^{m-1}_{l, \pm}|.
\end{split}
\]
On the other hand, as shown in \eqref{eq:d-512}, one has
\[
\begin{split}
&|w^{m+1}_{j, +}(0+, s)-w^m_{j, +}(0+, s)|\leq C_M\epsilon\sum^n_{l=1}|w^m_{l, \pm}-w^{m-1}_{l, \pm}|, \quad 1\leq j\leq i-1.
\end{split}
\]
Hence, it holds that for $1\leq j\leq i-1$,
\begin{equation}\label{YH-14}
\begin{split}
|w^{m+1}_{j, -}(z, t)-w^m_{j, -}(z, t)|
\leq C_M\epsilon\sum^n_{l=1}\|w^m_{l, \pm}-w^{m-1}_{l, \pm}\|_{L^{\infty}(\tilde{\Omega}_{\pm})}.
\end{split}
\end{equation}
Similarly, one can treat the estimate of $w^{m+1}_{j, +}(z, t)-w^m_{j, +}(z, t)$ for $i+1\leq j\leq n$.

In conclusion, we obtain that
\[
\begin{split}
&\|w^{m+1}_{i, \pm}-w^m_{i, \pm}\|_{L^{\infty}(\tilde{\Omega}_{\pm})}
\leq
\Big(2\ln\frac32+C_M\sqrt{t-T_\epsilon}\Big)\|w^m_{i, \pm}-w^{m-1}_{i, \pm}\|_{L^{\infty}(\tilde{\Omega}_{\pm})}+C_M
\sum_{k\neq i}\|w^{m}_{k, \pm}-w^{m-1}_{k, \pm}\|_{L^{\infty}(\tilde{\Omega}_{\pm})},\\
&\sum_{j\neq i}\|w^{m+1}_{j, \pm}(z, t)-w^m_{j, \pm}(z, t)\|_{L^{\infty}(\tilde{\Omega}_{\pm})}\leq C_M\epsilon\sum^n_{k=1}\|w^m_{k, \pm}
-w^{m-1}_{k, \pm}\|_{L^{\infty}(\tilde{\Omega}_{\pm})}.
\end{split}
\]
If $\epsilon>0$ is small and $t-T_{\ve}\le\delta_0$ with $\delta_0$ being suitably small holds such that
\[
2\ln\frac32+C_M\sqrt{t-T_\epsilon}+C_M(C_M+1)\epsilon<1-\epsilon,\,\quad (C_M+1)^2\epsilon<1,
\]
then it holds that
\begin{equation}\label{eq:5.15d}
\begin{split}
&\|w^{m+1}_{i, \pm}-w^m_{i, \pm}\|_{L^{\infty}(\tilde{\Omega}_{\pm,\delta_0})}+\sum_{j\neq i}(C_M+1)\|w^{m+1}_{j, \pm}-w^m_{j, \pm}\|_{L^{\infty}(\tilde{\Omega}_{\pm,\delta_0})}\\
\leq&\Big(2\ln\frac32+C_M\sqrt{t-T_\epsilon}+C_M(C_M+1)\epsilon\Big)\|w^m_{i, \pm}-w^{m-1}_{i, \pm}\|_{L^{\infty}(\tilde{\Omega}_{\pm,\delta_0})}\\
&\qquad+\Big(C_M+C_M(C_M+1)\epsilon\Big)\sum_{k\neq i}\|w^m_{k, \pm}-w^{m-1}_{k, \pm}\|_{L^{\infty}(\tilde{\Omega}_{\pm,\delta_0})}\\
\leq&(1-\epsilon)\Big(\|w^m_{i, \pm}-w^{m-1}_{i, \pm}\|_{L^{\infty}(\tilde{\Omega}_{\pm,\delta_0})}
+\sum_{k\neq i}(C_M+1)\|w^m_{k, \pm}-w^{m-1}_{k, \pm}\|_{L^{\infty}(\tilde{\Omega}_{\pm,\delta_0})}\Big).
\end{split}
\end{equation}

Therefore, the proof of Lemma \ref{l:3.6} is completed.
\end{proof}

{\bf Proof of Theorem \ref{T:2.14}.} By Lemma \ref{l:3.6}, we know that there exist $\sigma(t)\in C[T_\epsilon, T_\epsilon+\delta_0]$ and
$w_{\pm}(z, t)\in C(\tilde \Omega_{\pm,\delta_0})$ such that $\sigma^m(t)$ converges to  $\sigma(t)$ uniformly in
$[T_\epsilon, T_\epsilon+\delta_0]$
and $w^m_{\pm}(z, t)$  converges to   $w_{\pm}(z, t)$ uniformly in $\tilde \Omega_{\pm,\delta_0}$, respectively.
In addition,
we can similarly show that $\p_{t,z} w^m_{\pm}(z, t)$  converges to   $\p_{t,z}  w_{\pm}(z, t)$ uniformly in any closed subset of $\tilde \Omega_{\pm,\delta_0}$.
By Lemma \ref{l:3.6} and Lemma \ref{l:3.5},  $\p_{t,z}  w^m_{\pm}(z, t)$ are equicontinuous on  $z$ for any fixed $t\in (T_{\ve}, T_{\ve}+\delta_0)$
in $\tilde \Omega_{\pm,\delta_0}$ respectively,
which means that $w_{\pm}(0\pm, t)$ exist for $t\in (T_{\ve}, T_{\ve}+\delta_0)$ and $(\phi(t), w_{\pm}(z, t))$
satisfies \eqref{eq:3.30}.
Therefore, Theorem \ref{T:2.14} is proved by Lemma \ref{l:3.3} and Lemma \ref{l:3.5}
as well as the entropy condition \eqref{eq:3.28}.

{\bf Proof of Theorem \ref{DY-1}.} By Theorem \ref{T:2.14} and Lemma \ref{Y-4}, the results in Theorem \ref{DY-1}
can be obtained directly.

\section{Applications of Theorem \ref{DY-1}}

In this section, some applications of Theorem \ref{DY-1} are given.
Firstly, let us consider the initial value problem of 2-D supersonic steady full compressible Euler equations
\begin{equation}\label{FFF-1}
\begin{cases}
\partial_1(\rho u_1)+\partial_2(\rho u_2)=0,\\[5pt]
\partial_1(\rho u_1^2+P)+\partial_2(\rho u_1u_2)=0,\\[5pt]
\partial_1(\rho u_1 u_2)+\partial_2(\rho u_2^2+P)=0,\\[5pt]
\partial_1((\rho e+\f12 \rho |u|^2+P)u_1)+
\partial_2((\rho e+\f12 \rho |u|^2+P)u_2)=0,\\[5pt]
\rho(0, x_2)=\bar\rho+\ve \rho_0(x_2), u_1(0, x_2)=q_0+\ve u_1^0(x_2),
u_2(0, x_2)=\ve u_2^0(x_2),\\[5pt]
S(0,x_2)=\bar S+\ve S_0(x_2),
\end{cases}
\end{equation}
where $x=(x_1, x_2)\in\mathbb{R}^2$, $(\partial_{x_1}, \partial_{x_2})=(\partial_1, \partial_2)$,
$\ve>0$ is sufficiently small, $u=(u_1,u_2)^{\top}$,
$\rho$, $P$, $e$ and  $S$ are the velocity, density, pressure, internal energy and specific entropy, respectively.
The pressure function $P=P(\rho, S)$ and the
internal energy function $e=e(\rho,S)$ are smooth in their arguments, in particular, $\p_{\rho}P(\rho, S)>0$
and $\p_{S}e(\rho, S)>0$ for $\rho>0$. One sometimes writes the state equations as
$\rho=\rho(P,S)$ and $e=e(P,S)$.
In addition, $\bar\rho, q_0$ and $\bar S$ are constants with $q_0>\bar c=
c(\rho,S)|_{(\rho,S)=(\bar\rho,\bar S)}$ and $c(\rho,S)=\sqrt{\p_{\rho}P(\rho, S)}$,
and $(\rho_0(x_2), u_1^0(x_2), u_2^0(x_2), S_0(x_2))\in C_0^{\infty}(\Bbb R)$. Note that \eqref{FFF-1}
is symmetric hyperbolic with respect to the supersonic $x_1-$direction and the unknown functions
$(P,u_1,u_2,S)^T$ (see \cite{CF}). It follows from a direct computation that the system in \eqref{FFF-1} has four real eigenvalues
$$\lambda_1=\ds\f{u_1u_2-c(\rho,S)\sqrt{u_1^2+u_2^2-c^2(\rho,S)}}{u_1^2-c^2(\rho, S)}<\lambda_{2, 3}=\f{u_2}{u_1}
<\lambda_4=\ds\f{u_1u_2+c(\rho,S)\sqrt{u_1^2+u_2^2-c^2(\rho,S)}}{u_1^2-c^2(\rho, S)}$$
and is genuinely nonlinear with respect to $\lambda_1, \lambda_4$.

Secondly, let us consider the Cauchy problem of the 1-D MHD equations under Lagrangian coordinate
\begin{equation}\label{FFF-2}
\begin{cases}
\partial_tv-\partial_xu=0,\\[5pt]
\partial_t u+\partial_xP+\p_x(H_y^2+H_z^2)=0,\\[5pt]
\partial_tH_y+\ds\f{H_y}{v}\p_xu=0,\\[5pt]
\partial_tH_z+\ds\f{H_z}{v}\p_xu=0,\\[5pt]
\partial_tS=0,\\[5pt]
v(x, 0)=\bar v+\ve v_0(x), u(x, 0)=\ve u_0(x),
H_y(x, 0)=\ve H_y^0(x),\\[5pt]
H_z(x, 0)=\ve H_z^0(x), S(x, 0)=\bar S+\ve S_0(x),
\end{cases}
\end{equation}
where $v, u, H_y, H_z$ and $S$ stand for the specific volume, velocity, components of magnetic field in
$y-$direction and $z-$direction, and energy respectively. The equation of state is
$P=P(v,S)=Av^{-\gamma}e^{\f{S}{c_v}}$ with $A, c_v$ and $\gamma>1$ being positive constants. In addition, $\bar v>0$ and $\bar S$ are constants,
$(v_0(x), u_0(x), H_y^0(x),H_z^0(x),S_0(x))\in C_0^{\infty}(\Bbb R)$. Note that \eqref{FFF-2}
comes from the 1-D mode of MHD transverse flows in some process of geophysics or astrophysics (see \cite{Rammaha} or \cite{Dong}).
By direct computations, it is known that \eqref{FFF-2} has five real eigenvalues
$$\lambda_1=-\sqrt{-\p_{v}P+\f{2(H_y^2+H_z^2)}{v}}<\lambda_{2, 3, 4}=0<\lambda_5=\sqrt{-\p_{v}P+\f{2(H_y^2+H_z^2)}{v}}$$
and is genuinely nonlinear with respect to $\lambda_1, \lambda_5$.

Thirdly, under the planar symmetry, the elastic wave $u(x, t)=(u_1(x, t), u_2(x, t),u_3(x, t))^{\top}$
satisfies (see \cite{Ag} for the physical background)
\begin{equation}\label{FFF-3}
\begin{cases}
\partial_t^2u_1-c_1^2\partial_x^2u_1=\sigma_0\p_x((\p_xu_1)^2)
+\sigma_1\p_x((\p_xu_2)^2)+\sigma_1\p_x((\p_xu_3)^2),\\[5pt]
\partial_t^2u_2-c_2^2\partial_x^2u_2=2\sigma_1\p_x(\p_xu_1\p_xu_2),\\[5pt]
\partial_t^2u_3-c_2^2\partial_x^2u_3=2\sigma_1\p_x(\p_xu_1\p_xu_3),\\[5pt]
\end{cases}
\end{equation}
where $c_1>c_2>0$ and $\sigma_0\sigma_1\not=0$. Set $v=(v_1,v_2,v_3,v_4,v_5,v_6)^T=(\p_xu_1,\p_xu_2,\p_xu_3,\p_tu_1,\p_tu_2,\p_tu_3)^T$.
Then the system \eqref{FFF-3} can be rewritten by
\begin{align}\label{FFF-4}
\partial_tv+\p_xf(v)=0
\end{align}
with $f(v)=-(v_4, v_5, v_6, c_1^2v_1+\sigma_0v_1^2+\sigma_1v_2^2+\sigma_1v_3^2, c_2^2v_2+2\sigma_1v_1v_2, c_2^2v_3+2\sigma_1v_1v_3)^T$.
At this time, the corresponding $6\times6$ matrix $F(v)$ in \eqref{Y-1} is
\[
\left(
\begin{matrix}
0&0&0&-1&0&0\\[5pt]
0&0&0&0&-1&0\\[5pt]
0&0&0&0&0&-1\\[5pt]
-c_1^2-2\sigma_0v_1&-2\sigma_1v_2&-2\sigma_1v_3&0&0&0\\[5pt]
-2\sigma_1v_2&-c_2^2-2\sigma_1v_1&0&0&0&0\\[5pt]
-2\sigma_1v_3&0&-c_2^2-2\sigma_1v_1&0&0&0\\[5pt]
\end{matrix}
\right).
\]
When \eqref{FFF-4} is imposed the following initial data
\begin{align}\label{FFF-5}
v(x, 0)=(\ve v_1^0(x), q_1+\ve v_2^0(x), q_2+\ve v_3^0(x),\ve v_4^0(x),\ve v_5^0(x),\ve v_6^0(x))
\end{align}
with $(q_1, q_2)\not=0$, $q_1^2+q_2^2<\ds\f{c_1^2c_2^2}{4\sigma_1^2}$ and $(v_1^0(x), v_2^0(x), v_3^0(x), v_4^0(x), v_5^0(x), v_6^0(x))\in C_0^{\infty}(\Bbb R)$,
it is known that the $6\times 6$ matrix $F(v)|_{v=\bar v=(0,q_1,q_2,0,0,0)}$ has six distinct real eigenvalues
\begin{equation*}
\begin{split}
&\lambda_1=-\sqrt{\f{c_1^2+c_2^2+\sqrt{(c_2^2-c_1^2)^2+16\sigma_1^2(q_1^2+q_2^2)}}{2}}
<\lambda_2=-c_2\\
&<\lambda_3=-\sqrt{\f{c_1^2+c_2^2-\sqrt{(c_2^2-c_1^2)^2+16\sigma_1^2(q_1^2+q_2^2)}}{2}}\\
&<\lambda_4=-\lambda_3
<\lambda_5=-\lambda_2<\lambda_6=-\lambda_1,
\end{split}
\end{equation*}
and \eqref{FFF-4} is genuinely nonlinear with respect to all the eigenvalues $\lambda_i$ ($1\le i\le 6$)
for small perturbations of $\bar v$.

Lastly, the equations of 3-D ideal compressible magnetohydrodynamics (MHD) (see \cite{Ag} or \cite{YSL}) are
\begin{equation}\label{FFF-6-0}
\begin{cases}
\ds\p_t\rho+div(\rho u)=0,\\
\ds\p_t(\rho u)+div(\rho u\otimes u-H\otimes H)+\nabla(P+\f12|H|^2)=0,\\
\ds\p_tH-curl(u\times H)=0,\\
\ds div H=0,\\
\ds \p_t(\rho S)+div(\rho uS)=0,\\
\end{cases}
\end{equation}
where $(x,t)=(x_1,x_2,x_3,t)$, $\rho$ is the fluid density,
$u=(u_1,u_2,u_3)^{\top}$ is the fluid velocity, $H=(H_1,H_2,H_3)^{\top}$
is the magnetic field, $S$ is the entropy
and $P$ is the pressure satisfying the state equation $P=P(\rho,S)=A\rho^{\gamma}e^{\f{S}{c_v}}$
with $A, c_v$ and $\gamma>1$ being positive constants.
Let $(\rho, u, H, S)(x,t)=(\rho, u, H, S)(x_1,t)$ and $H_1=\bar H_1>0$
is a constant. Then \eqref{FFF-6-0} becomes the $7\times 7$ 1-D conservation law
\begin{equation}\label{FFF-6-10}
\begin{cases}
\ds\p_t\rho+\p_1(\rho u_1)=0,\\
\ds\p_t(\rho u_1)+\p_1(\rho u_1^2)+\p_1(P+\f12|H_2|^2+\f12|H_3|^2)=0,\\
\ds\p_t(\rho u_2)+\p_1(\rho u_1 u_2-\bar H_1H_2)=0,\\
\ds\p_t(\rho u_3)+\p_1(\rho u_1 u_3-\bar H_1H_3)=0,\\
\ds\p_tH_2+\p_1(u_1H_2-\bar H_1u_2)=0,\\
\ds\p_tH_3+\p_1(u_1H_3-\bar H_1u_3)=0,\\
\ds \p_t(\rho S)+\p_1(\rho u_1S)=0.\\
\end{cases}
\end{equation}
The initial data of \eqref{FFF-6-10} is imposed by
\begin{equation}\label{FFF-7}
\begin{split}
&u_1(x, 0)=\ve u_1^0(x), u_2(x, 0)=\ve u_2^0(x), u_3(x, 0)=\ve u_3^0(x), \rho(x, 0)=\bar\rho+\ve \rho_0(x),\\
&H_2(x, 0)=\bar H_2+\ve H_2^0(x), H_3(x, 0)=\bar H_3+\ve H_3^0(x), S(x, 0)=\bar S+\ve S_0(x)
\end{split}
\end{equation}
with the constants $\bar\rho>0$, $\bar H_2\bar H_3\not=0$ and $(u_1^0(x), u_2^0(x), u_3^0(x), \rho_0(x), H_2^0(x), H_3^0(x), S_0(x))\in C_0^{\infty}(\Bbb R)$. Set $v=(v_1,v_2,v_3,v_4,v_5,v_6,v_7)^T=(u_1,u_2,u_3,\rho,H_2,H_3,S)^T$. Then it follows from \eqref{FFF-6-10}
that
\begin{align}\label{FFF-8}
\partial_tv+A(v)\p_xv=0,
\end{align}
where $A(v)$ is a $7\times 7$ matrix, $A(v)|_{v=\bar v=(0,0,0,\bar\rho,\bar H_2,\bar H_3,\bar S)}$ has seven real distinct eigenvalues
\begin{equation*}
\begin{split}
&\ds\lambda_1=-\bigg\{\f{\mu_0}{2\bar\rho}({\bar H}_1^2+{\bar H}_2^2+{\bar H}_3^2)+\f{{\bar c}^2}{2}
+\f12\sqrt{(\f{\mu_0}{\bar\rho}({\bar H}_1^2+{\bar H}_2^2+{\bar H}_3^2)+{\bar c}^2)^2-\f{4\mu_0}{\bar\rho}{\bar H}_1^2{\bar c}^2}
\bigg\}^{\f12}\\
&<\lambda_2=-\sqrt{\f{\mu_0}{\bar\rho}}{\bar H}_1\\
&<\lambda_3=-\bigg\{\f{\mu_0}{2\bar\rho}({\bar H}_1^2+{\bar H}_2^2+{\bar H}_3^2)+\f{{\bar c}^2}{2}
-\f12\sqrt{(\f{\mu_0}{\bar\rho}({\bar H}_1^2+{\bar H}_2^2+{\bar H}_3^2)+{\bar c}^2)^2-\f{4\mu_0}{\bar\rho}{\bar H}_1^2{\bar c}^2}
\bigg\}^{\f12}\\
&<\lambda_4=0<\lambda_5=-\lambda_3<\lambda_6=-\lambda_2<\lambda_7=-\lambda_1
\end{split}
\end{equation*}
and \eqref{FFF-8} is genuinely nonlinear with respect to all the eigenvalues except $\lambda_4$
for small perturbations of $\bar v$.

Based on the analyses above, in terms of Theorem \ref{DY-1} and Remark \ref{CYY-5}, we can have the following conclusions.

\begin{theorem}\label{CYY-1}
Under the corresponding generic nondegenerate conditions \eqref{Eq:2.1}, around the resulting geometric blowup points,
problems \eqref{FFF-1}, \eqref{FFF-2},  \eqref{FFF-3} with \eqref{FFF-5} and \eqref{FFF-6-10} with \eqref{FFF-7}
admit  weak entropy solutions with 1-shock or 4-shock, 1-shock or 5-shock,
$i-$shock ($1\le i\le 6$) and $j-$shock ($1\le j\le 7$ but $j\not=4$), respectively.
Moreover, the analogous estimates in \eqref{CYY-2} and \eqref{CYY-3} hold.
\end{theorem}

\section{Appendix}

In this appendix, we prove the estimates \eqref{eq:3.53} and \eqref{eq:3.54}.
Note that for $T_{\epsilon}\leq s\leq t\le T_{\epsilon}+1$ and by the notation in \eqref{HC-1}, one has
\begin{equation}\label{YHC-01}
\begin{split}
&\xi^{m+1}_i(z, t; s)-z\\
=&\varphi(y, s)-\varphi(y, t)-(\phi^m(s)-\phi^m(t))\\
=&\partial_t\varphi(y_\epsilon, T_{\epsilon})(s-t)-\lambda_i(w^m(y_\epsilon, T_\epsilon))(s-t)+\partial^2_{ty}\varphi(y_\epsilon, T_\epsilon)(s-t)(y-y_\epsilon)\\
&+O(1)\Big((y-y_\epsilon)^2(s-t)+(s-T_\epsilon)^2-(t-T_\epsilon)^2\Big)\\
=&\partial^2_{ty}\varphi(y_\epsilon, T_\epsilon)(s-t)(y-y_\epsilon)+O(1)\Big((y-y_\epsilon)^2(s-t)
+(s-T_\epsilon)^2-(t-T_\epsilon)^2\Big),
\end{split}
\end{equation}
where $\xi^{m+1}_i(z, t; s)=\varphi(y, s)-\phi^m(s)$, $z=\varphi(y, t)-\phi^m(t)$,
$\partial_y\varphi(y_\epsilon, T_\epsilon)=\partial^2_y\varphi(y_\epsilon, T_\epsilon)=0$, $\phi^m(t)$ is the approximate shock wave curve, and
\[
\begin{split}
&y=y^{\epsilon}_+(x, t), \;\;|y^{\epsilon}_+(x, t)-y_{\epsilon}|\sim d^{\frac{1}{6}}_{\epsilon}\sim (t-T_{\epsilon})^{\frac{1}{2}},\\[3pt]
&\phi^m(t)=x_\epsilon+\lambda_i(w^m(x_\epsilon, T_\epsilon))(t-T_\epsilon)+O(1)(t-T_\epsilon)^2.
\end{split}
\]
It follows from the entropy condition \eqref{eq:3.28} that
\[
\frac{\text{d}\xi^{m+1}_i}{\text{d}s}=\lambda_i(w^m_+(\xi^{m+1}_i, s))-\sigma^m(s)<0,
\]
then for $T_{\epsilon}\leq s\leq t$, it holds that  $\xi^{m+1}_i(z, t; s)-z>0$,
along with $\partial^2_{ty}\varphi(y_\epsilon, T_\epsilon)<0$, one has
 \[
\quad  y^\epsilon_+(x, t)-y_\epsilon=k_0(t-T_\epsilon)^{\frac12}+O(1)(t-T_\epsilon),
 \]
 for some positive constant $k_0$.

In addition,
\[
\begin{split}
\xi^{m+1}_i(z, t; s)-z\geq & -\partial^2_{ty}\varphi(y_\epsilon, T_\epsilon)(t-s)\sqrt{t-T_\epsilon}-C(t-s)(t-T_\epsilon)\\
\geq & -\frac{1}{2}\partial^2_{ty}\varphi(y_\epsilon, T_\epsilon)(t-s)\sqrt{t-T_\epsilon}.
\end{split}
\]
Meanwhile, we  can analogously obtain
\begin{equation}\label{YCH-02}
\begin{split}
&\xi^{m+1}_i(z, t; s)+z\\
=&\partial_t\varphi(y_\epsilon, T_{\epsilon})(s+t-2T_\epsilon)-\lambda_i(w^m(y_\epsilon, T_\epsilon))(s+t-2T_\epsilon)+\partial^2_{ty}\varphi(y_\epsilon, T_\epsilon)(s+t-2T_\epsilon)(y-y_\epsilon)\\
&+O(1)\Big((s-T_\epsilon)^2+(t-T_\epsilon)^2\Big)\\
=&\partial^2_{ty}\varphi(y_\epsilon, T_\epsilon)(s-t)(y-y_\epsilon)+2\partial^2_{ty}\varphi(y_\epsilon, T_\epsilon)(t-T_\epsilon)(y-y_\epsilon)+O(1)\Big((s-T_\epsilon)^2+(t-T_\epsilon)^2\Big).
\end{split}
\end{equation}
Collecting \eqref{YHC-01} and \eqref{YCH-02} yields
\[
\begin{split}
(\xi^{m+1}_i(z, t; s))^2=&(\partial^2_{ty}\varphi(y_\epsilon, T_\epsilon))^2(s-t)^2(y-y_\epsilon)^2+z^2+2(\partial^2_{ty}\varphi(y_\epsilon, T_\epsilon))^2(t-T_\epsilon)^2(y-y_\epsilon)^2\\
&-2\partial^2_{ty}\varphi(y_\epsilon, T_\epsilon)(t-T_\epsilon)(y-y_\epsilon)z+O(1)\Big((t-T_\epsilon)^4+(s-T_\epsilon)^4\Big)\\
\geq &\frac{1}{20}z^2+\frac{4}{5}k^2_0(\partial^2_{ty}\varphi(y_\epsilon, T_\epsilon))^2(t-T_\epsilon)^3
+(\partial^2_{ty}\varphi(y_\epsilon, T_\epsilon))^2k^2_0(t-T_\epsilon)(s-t)^2,
\end{split}
\]
where we have used the inequality
\[
-2\partial^2_{ty}\varphi(y_\epsilon, T_\epsilon)(t-T_\epsilon)(y-y_\epsilon)z\geq -\frac{19}{20}z^2-\frac{20}{19}\Big(\partial^2_{ty}\varphi(y_\epsilon, T_\epsilon)(t-T_\epsilon)(y-y_\epsilon)\Big)^2.
\]
From \eqref{eq:3.2}, one has
\[
6h\partial_t h\partial_y h-A'(t)\partial_y h+(3h^2-A)\partial^2_{yt}h(y, t)=\partial^2_{yt}\varphi(y, t)
\]
and
\[
6(\partial_y h)^3+18h\partial_y h\partial^2_y h+(3h^2(y, t)-A(t))\partial^3_y h(y, t)=\partial^3_y\varphi(y, t).
\]
This, together with $h(y_\epsilon, T_\epsilon)=A(T_\epsilon)=0$ and $\partial_y h(y_\epsilon, T_\epsilon)=A'(T_\epsilon)=1$, it holds that
\[
\partial^2_{yt}\varphi(y_\epsilon, T_\epsilon)=-1, \quad \partial^3_y\varphi(y_\epsilon, T_\epsilon)=6.
\]
Therefore, it yields that for $T_{\epsilon}\leq s\leq t\leq T_\epsilon+1$, there is a positive constant $C<1$, independent of the approximate solution $(w^m, \sigma^m)$, such that
\begin{equation}\label{YHC}
(s-T_\epsilon)^3+(\xi^{m+1}_i(z, t; s))^2\geq C((t-T_\epsilon)^3+z^2).
\end{equation}

Next, we  prove the estimate \eqref{eq:3.54}. Note that
\begin{equation}\label{YHC-5}
\begin{split}
\int^t_{T_{\epsilon}}\partial_{\eta}\lambda_i(w^m_+)(\eta, s)|_{\eta=\xi^{m+1}_i(z, t; s)}\text{d}s=&
\int^t_{T_\epsilon}\frac{\partial^2\xi^{m+1}_i}{\partial s\partial z}\frac{\partial z}{\partial \xi^{m+1}_i}\text{d}s
=\int^t_{T_\epsilon}\partial_s(\ln
|\partial_z\xi^{m+1}_i(z, t; s)|)\text{d}s\\
=&-ln|\partial_z\xi^{m+1}_i(z, t; T_\epsilon)|.
\end{split}
\end{equation}
Due to
\begin{equation}\label{eq:6.6}
h^3(y, t)-A(t)h(y, t)+B(t)=z+\phi^m(t)=\varphi(y, t),
\end{equation}
then one can obtain
\[
\partial_y z=(3h^2(y, t)-A(t))\partial_y h(y, t).
 \]
In addition, from formula \eqref{d1} of the real root to the cubic algebraic equation \eqref{eq:6.6} on $h$,
it is known that $\varphi(y, t)-B(t)$ and $h(y, t)$ have the same sign, then $h^2(y, t)-A(t)>0$ and further
\begin{equation}\label{Eq:6.6}
\begin{split}
|3h^2(y, t)-A(t)|=&|3(h^2(y, t)-A(t))+2A(t)|=3|h^2(y, t)-A(t)|+2A(t)\\
=&3|h^{-1}(y, t)||\varphi(y, t)-B(t)|+2A(t)\\
\geq&2A(t)=2(t-T_\epsilon)+C(t-T_\epsilon)^2,
\end{split}
\end{equation}
On the other hand,
\[
h^3(y, T_\epsilon)-A(T_\epsilon)h(y, T_\epsilon)+B(T_\epsilon)=\xi^{m+1}_i(z, t; T_\epsilon)+\phi^m(T_\epsilon)=\varphi(y, T_\epsilon),
\]
then we have
\begin{equation}\label{eq:6.7}
\begin{split}
\partial_y\xi^{m+1}_i(z, t; T_\epsilon)=&3h^2(y, T_\epsilon)\partial_y h(y, T_\epsilon)=\partial_y\varphi(y, T_\epsilon)\\
=&\partial_y\varphi(y_\epsilon, T_\epsilon)+\partial^2_y\varphi(y_\epsilon, T_\epsilon)(y-y_\epsilon)
+\frac{1}{2}\partial^3_y\varphi(y_\epsilon,
T_\epsilon)(y-y_\epsilon)^2+O(1)(y-y_\epsilon)^3\\
=&3(y-y_\epsilon)^2+C(t-T_\epsilon)^{\frac32}.
\end{split}
\end{equation}
Since
\begin{equation}\label{YHC-6}
\begin{split}
\partial_z\xi^{m+1}_i(z, t; T_\epsilon)=&\partial_y\xi^{m+1}_i(z, t; T_\epsilon)
(\partial_y z)^{-1}|_{t=T_\epsilon}\\[3pt]
=&\frac{3h^2(y, T_\epsilon)\partial_y h(y, T_\epsilon)}{(3h^2(y, t)-A(t))\partial_y h(y, t)}=\frac{\partial_y\varphi(y, T_\epsilon)}{\partial_y\varphi(y,t)}
\end{split}
\end{equation}
and
\[
\begin{split}
&\partial^2_{y}\varphi(y, T_\epsilon)\partial_y\varphi(y, t)-\partial_y\varphi(y, T_\epsilon)\partial^2_y\varphi(y, t)\\
=&\Big(\partial^3_y\varphi(y_\epsilon, T_\epsilon)(y-y_\epsilon)+O(1)(y-y_\epsilon)^2\Big)\Big(\partial^2_{yt}\varphi(y_\epsilon, T_\epsilon)(t-T_\epsilon)
+\frac{1}{2}\partial^3_y\varphi(y_\epsilon, T_\epsilon)(y-y_\epsilon)^2\\
&+O(1)(y-y_\epsilon)(t-T_\epsilon)\Big)-\Big(\frac{1}{2}\partial^3_y\varphi(y_\epsilon, T_\epsilon)(y-y_\epsilon)^2+O(1)(y-y_\epsilon)^3\Big)\Big(\partial^3_y\varphi(y_\epsilon, T_\epsilon)(y-y_\epsilon)\\
&+O(1)(t-T_\epsilon)\Big)\\
=&-6(y-y_\epsilon)(t-T_\epsilon)+O(1)(t-T_\epsilon)^2<0\quad\text{for small $(t-T_\epsilon)$},
\end{split}
\]
then  one can derive that $\partial_z\xi^{m+1}_i(z, t; T_\epsilon)$ is decreasing with respect to $y$.

Note that
\[
y-y_\epsilon=k_0(t-T_\epsilon)^{\frac12}+O(1)(t-T_\epsilon),\quad k_0>0.
\]
\begin{itemize}
\item If $0<k_0\leq 1$,
then it follows from  \eqref{Eq:6.6}-\eqref{YHC-6} that
\[
|\partial_z\xi^{m+1}_i(z, t; T_\epsilon)|\leq \frac{3}{2}+C_M\sqrt{t-T_\epsilon}.
\]

\item If $k_0>1$, then we can choose $y_*<y$ with
\[
y_*=y_\epsilon+(t-T_\epsilon)^{\frac{1}{2}}+O(1)(t-T_\epsilon).
\]
Since $\partial_z\xi^{m+1}(z, t; T_\epsilon)$ is decreasing with respect to $y$, then
\[
\partial_z\xi^{m+1}_i(z, t; T_\epsilon)=\frac{3h^2(y, T_\epsilon)\partial_y h(y, T_\epsilon)}{(3h^2(y, t)-A(t))\partial_y h(y, t)}\leq
\frac{3h^2(y_*, T_\epsilon)\partial_y h(y_*, T_\epsilon)}{(3h^2(y_*, t)-A(t))\partial_y h(y_*, t)}.
\]

In addition, it follows from \eqref{YHC-6} and
\[
\begin{split}
\partial_y\varphi(y, t)=&\partial_y\varphi(y_\epsilon, T_\epsilon)+\partial^2_{yt}\varphi(y_\epsilon, T_\epsilon)(t-T_\epsilon)
+\partial^2_{y}\varphi(y_\epsilon, T_\epsilon)(y-y_\epsilon)\\[5pt]
&+\frac{1}{2}\partial^3_y\varphi(y_\epsilon, T_\epsilon)(y-y_\epsilon)^2+O(1)(t-T_\epsilon)^2\\[5pt]
=&(3k^2_0-1)(t-T_\epsilon)+O(1)(t-T_\epsilon)^2>0,\\[3pt]
\partial_y\varphi(y, T_\epsilon)=&\partial_y\varphi(y_\epsilon, T_\epsilon)+\partial^2_y\varphi(y_\epsilon, T_\epsilon)(y-y_\epsilon)^2
+\frac{1}{2}\partial^3_y\varphi(y_\epsilon, T_\epsilon)(y-y_\epsilon)^2+O(1)(y-y_\epsilon)^3\\
=&3(y-y_\epsilon)^2+O(1)(y-y_\epsilon)^3>0
\end{split}
\]
that $\partial_z\xi^{m+1}_i(z, t; T_\epsilon)>0$ holds.
Therefore, it yields
\[
|\partial_z\xi^{m+1}_i(z, t; T_\epsilon)|
\leq \frac{3h^2(y_*, T_\epsilon)\partial_y h(y_*, T_\epsilon)}{(3h^2(y_*, t)-A(t))\partial_y h(y_*, t)}
\leq  \frac{3}{2}+C_M\sqrt{t-T_\epsilon}.
\]
\end{itemize}

In conclusion, together with \eqref{YHC-5}, the proof of \eqref{eq:3.54} is completed.

\vskip 0.3 true cm

{\bf Acknowledgements}. Yin Huicheng wishes to express his deep gratitude to Professor Xin Zhouping, Chinese
University of Hong Kong, and Professor Chen Shuxing, Fudan University, Shanghai, for their constant interests
in this problem and many fruitful discussions in the past. In addition, the authors would like to thank the
editor and the referees very much for their invaluable suggestions
and comments that lead to the essential improvement of our paper.

\vskip 0.5 true cm






\end{document}